\newcommand{\from}{\colon}
\newcommand{\fp}{\mathop{\scalebox{1.2}{\raisebox{0ex}{$\ast$}}}}
\newcommand{\fpGammaiI}{\fp_i \Gamma_i}
\newcommand{\inters}{\cap}
\renewcommand{\implies}{\Rightarrow}
\newcommand{\thet}{{\{\theta_i\}}}
\newcommand{\ph}{{\{\varphi_i\}}}
\newcommand{\E}{\mathrel{E}}
\newcommand{\concat}{{}^{\smallfrown}}
\renewcommand{\subset}{\subseteq}
\renewcommand{\supset}{\supseteq}
\newcommand{\<}{\langle}
\renewcommand{\>}{\rangle}
\renewcommand{\P}{\mathbb{P}}
\newcommand{\comp}[1]{{#1}^{\text{c}}} %complement
\newcommand{\embeds}{\sqsubseteq}
\newcommand{\restrict}{\restriction}
\newcommand{\cantor}{2^\omega}
\newcommand{\baire}{\omega^\omega}
\newcommand{\define}[1]{\textbf{#1}}
\newcommand{\join}{\oplus}
\newcommand{\bigjoin}{\bigoplus}
\newcommand{\union}{\cup}
\newcommand{\bigunion}{\bigcup}
\newcommand{\disjointunion}{\sqcup}
\newcommand{\biginters}{\bigcap}
\DeclareMathOperator{\dom}{dom}
\DeclareMathOperator{\ran}{ran}
\DeclareMathOperator{\Stab}{Stab}
\DeclareMathOperator{\Free}{Free}
\newcommand{\AD}{\mathrm{AD}}
\newcommand{\ZZ}{\mathrm{Z}}
\newcommand{\DC}{\mathrm{DC}}
\newcommand{\Z}{\mathbb{Z}}
\newcommand{\R}{\mathbb{R}}
\newcommand{\N}{\mathbb{N}}
\newcommand{\F}{\mathbb{F}}
\newcommand{\K}{\mathcal{K}}
\newcommand{\T}{\mathcal{T}}
\theoremstyle{plain}
\newtheorem{thm}{Theorem}[section]
\newtheorem{lemma}[thm]{Lemma}
\newtheorem{prop}[thm]{Proposition}
\newtheorem{cor}[thm]{Corollary}
\newtheorem{question}[thm]{Question}
\newtheorem{conj}[thm]{Conjecture}
\theoremstyle{definition}
\newtheorem{defn}[thm]{Definition}
\newtheorem{remark}[thm]{Remark}
\newtheorem{convention}[thm]{Convention}
\newtheorem*{rep@theorem}{\rep@title}
\newcommand{\newreptheorem}[2]{%
\newenvironment{rep#1}[1]{%
 \def\rep@title{#2 \ref{##1}}%
 \begin{rep@theorem}}%
 {\end{rep@theorem}}}
\begin{document}

\author{Andrew S.~Marks\thanks{The author was partially supported by the
National Science Foundation under grants DMS-1204907 and DMS-1500974 and
the Turing Centenary research project ``Mind, Mechanism and Mathematics",
funded by the John Templeton Foundation under Award No.\ 15619.}}

\title{Uniformity, Universality, and Computability Theory}

\date{\today}

\maketitle

\begin{abstract}
We prove a number of results motivated by global questions of uniformity in computability theory, and universality of countable Borel equivalence relations. Our main technical tool is a game for constructing functions on free products of countable groups. 

We begin by investigating the notion of uniform universality, first proposed by Montalb\'an, Reimann and Slaman. This notion is a strengthened form of a countable Borel equivalence relation being universal, which we conjecture is equivalent to the usual notion. With this additional uniformity hypothesis, we can answer many questions concerning how countable groups, probability measures, the subset relation, and increasing unions interact with universality. For many natural classes of countable Borel equivalence relations, we can also classify exactly which are uniformly universal. 

We also show the existence of refinements of Martin's ultrafilter on Turing invariant Borel sets to the invariant Borel sets of equivalence relations that are much finer than Turing equivalence. For example, we construct such an ultrafilter for the orbit equivalence relation of the shift action of the free group on countably many generators. These ultrafilters imply a number of structural properties for these equivalence relations. 

\end{abstract}

\pagebreak

\tableofcontents

\pagebreak

\section{Introduction}
\label{sec:intro}
\subsection{Introduction}

In this paper, we investigate a number of problems concerning uniformity
and universality among countable Borel equivalence relations and in
computability theory. 

In a sense, this paper is a sequel to \cite{MR3454384}. In that paper, we used
Borel determinacy to prove a
Ramsey-like theorem (\cite[Lemma 2.1]{MR3454384}) for the shift
action of a free product of two groups. We then gave applications of this result to
several problems in Borel graph combinatorics. In this paper, we generalize
the games used in \cite{MR3454384} to handle shift actions of 
free products of \emph{countably} many groups. The resulting
theorems we prove above these shift actions are the main technical tool of this paper. However, the
applications we give in this paper are to problems in the study of
countable Borel equivalence relations and computability theory, rather than
Borel graph combinatorics. 

We briefly mention two of the results proved using these games. 
Recall that if a group $\Gamma$ acts on the spaces $X$ and
$Y$ then a function $f \from X \to Y$ is said be
\define{$\Gamma$-equivariant} if
$f$ commutes with the actions of $\Gamma$ on $X$ and $Y$, so $\gamma \cdot
f(x) = f(\gamma \cdot x)$ for every $x \in X$ and $\gamma \in \Gamma$.
If $\Gamma$ is a countable discrete group and $X$ is a Polish space, then
$\Gamma$ acts on the Polish space $X^\Gamma$ via the \define{left shift
action}
where for all $x \in X^\Gamma$ and $\gamma, \delta \in \Gamma$ 
\[\gamma \cdot x(\delta) = x(\gamma^{-1} \delta).\] 
One importance of the shift action is
that it is universal in the sense that every Borel action of $\Gamma$ on a Polish space
$X$ admits a Borel embedding 
into the shift action by sending $x \in X$ to the function $\gamma \mapsto
\gamma^{-1} \cdot x$. We will use the notation $\fpGammaiI$ to denote the
free product of the groups $\{\Gamma_i\}_{i \in I}$.

\begin{thm} \label{g1}
  Suppose $I \leq \omega$ and $\{\Gamma_i\}_{i \in I}$ is a set of countable
  discrete groups. Let $\{A_i\}_{i \in I}$
  be a Borel partition of $\left(\cantor \right)^{\fpGammaiI}$. Then there
  exists some $j \in I$ and an injective continuous 
  function $f \from \left(\cantor \right)^{\Gamma_j} \to \left(\cantor
  \right)^{\fpGammaiI}$ that is 
  $\Gamma_j$-equivariant with respect to the shift actions, and 
  such that $\ran(f) \subset A_j$. Furthermore, $f$
  can be chosen to be a Borel reduction between the orbit equivalence
  relations induced by the shift actions of $\Gamma_j$ on $\left(\cantor
  \right)^{\Gamma_j}$ and $\fpGammaiI$ on $\left(\cantor
  \right)^{\fpGammaiI}$.
\end{thm}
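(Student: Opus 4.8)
The plan is to build $f$ by extracting it from a winning strategy in a two-player game of perfect information, and to apply Martin's Borel determinacy. Throughout, write $G = \fpGammaiI$ and recall that every $g \in G \setminus \{e\}$ has a unique reduced-word normal form $g = a_1 \cdots a_n$ with each $a_k \in \Gamma_{i_k} \setminus \{e\}$ and $i_k \neq i_{k+1}$; this gives the Cayley graph of $G$ the structure of a tree of groups in which, around every vertex, the edges split into one direction for each factor $\Gamma_i$. First I would use equivariance to cut down the problem: since we demand $\gamma \cdot f(y) = f(\gamma \cdot y)$ for $\gamma \in \Gamma_j$, the map $f$ is determined by the values $f(y)(\delta)$ as $\delta$ ranges over a fixed transversal $T$ of the right cosets $\Gamma_j \backslash G$ (say the reduced words not beginning with a letter of $\Gamma_j$, together with $e$), via $f(y)(\gamma\delta) = f(\gamma^{-1} \cdot y)(\delta)$. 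So it suffices to produce the restriction of $f$ to $T$ as a continuous function of $y$ and then extend by this formula.

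For each $j \in I$ I would define a Borel game $\mathbb{G}_j$ in which the two players cooperatively build a single point $x \in \cantor^G$ by specifying its coordinates bit by bit, working outward along the tree. Player II is assigned the factor $\Gamma_j$: she controls, and faithfully copies into $x$ along the $\Gamma_j$-axis, an auxiliary input $y \in \cantor^{\Gamma_j}$, while Player I controls the coordinates lying off that axis, i.e.\ the part of $x$ governed by the complementary free product $H_j = \fp_{i \neq j} \Gamma_i$. The game is arranged to be homogeneous under the $\Gamma_j$-shift, so that a single strategy replayed simultaneously along every coset $\Gamma_j \delta$ yields a $\Gamma_j$-equivariant outcome. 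The winning condition for Player II is simply $x \in A_j$; this is a Borel set, so by Martin's theorem each $\mathbb{G}_j$ is determined.

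If Player II has a winning strategy in $\mathbb{G}_j$ for some $j$, then I would read $f$ off of it: given $y$, simulate the play in which Player I's off-axis moves are fixed and Player II responds by her strategy, and let $f(y)$ be the resulting point. Continuity is immediate because each coordinate of $f(y)$ is decided after finitely many moves, hence depends on only finitely much of $y$; equivariance follows from the homogeneity built into the game; and injectivity holds because $y$ is copied verbatim into $f(y)$ along the $\Gamma_j$-axis. Since II wins, $\ran(f) \subset A_j$. For the final clause, that $f$ is a reduction, the forward implication $y \mathrel{E_{\Gamma_j}} y' \Rightarrow f(y) \mathrel{E_G} f(y')$ is exactly equivariance. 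For the converse I would strengthen II's winning condition so that she also lays down a rigid system of markers recording the coset structure of $G$ relative to the $\Gamma_j$-axis; then any $g \in G$ with $g \cdot f(y) = f(y')$ must preserve these markers, which forces $g \in \Gamma_j$ and hence $y \mathrel{E_{\Gamma_j}} y'$. This upgrades $f$ to a Borel reduction.

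The main obstacle is the remaining case, in which Player I wins every $\mathbb{G}_j$; here I must derive a contradiction with the fact that $\{A_i\}_{i \in I}$ covers $\cantor^G$. Fixing a winning strategy $\sigma_j$ for Player I in each $\mathbb{G}_j$, I would amalgamate the countably many $\sigma_j$ into the construction of a single global point $x^* \in \cantor^G$ that lies outside every $A_j$ at once. This is possible precisely because of the tree-of-groups structure: around each vertex the factors occupy disjoint directions, so the coordinates that $\sigma_j$ must control (the $H_j$-part seen from that location) can be allotted to the distinct strategies without collision, and a fair bookkeeping over the countably many pairs $(j, g)$ lets each $\sigma_j$ be run to completion on its own region. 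The delicate points are guaranteeing this non-interference and verifying that the configuration witnessed by each $\sigma_j$ really is a legal, completed play of $\mathbb{G}_j$, so that $\sigma_j$ genuinely forces $x^* \notin A_j$. Since then $x^*$ belongs to no piece of the partition, this is a contradiction, completing the proof. This simultaneous amalgamation of infinitely many opposing strategies, which has no counterpart in the two-factor argument of \cite{MR3454384}, is where I expect essentially all of the difficulty to lie.
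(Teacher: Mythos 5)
Your proposal mirrors the paper's architecture --- both build a point of the shift space via an infinite game, invoke Borel determinacy, derive the contradiction by amalgamating the strategies of the player who wants to \emph{avoid} the sets $A_j$, and get equivariance by replaying the other player's winning strategy along $\Gamma_j$-translates --- but you have given the two players the wrong sets of coordinates, and with your assignment the crucial dichotomy is simply false. In your game $\mathbb{G}_j$ the player who wants the outcome in $A_j$ (your Player II) controls the $\Gamma_j$-axis, and the avoiding player (your Player I) controls everything off it. Now take $I = \omega$ (or any $I \geq 3$), all $\Gamma_i = \Z$, fix a nonidentity $\delta \in \Gamma_2$, and let $A_0 = \{x : x(\delta)(0) = 0\}$, $A_1 = \comp{A_0}$, and $A_j = \emptyset$ for $j \geq 2$. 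The coordinate $x(\delta)$ lies off both the $\Gamma_0$-axis and the $\Gamma_1$-axis (indeed $\delta$ lies in your $H_0 \cap H_1$), so your Player I controls it in both $\mathbb{G}_0$ and $\mathbb{G}_1$: he wins $\mathbb{G}_0$ by playing $x(\delta)(0) = 1$, wins $\mathbb{G}_1$ by playing $x(\delta)(0) = 0$, and wins every $\mathbb{G}_j$ with $j \geq 2$ trivially. So Player I can win all of your games against a legitimate Borel partition, no $f$ can ever be extracted, and no bookkeeping in the amalgamation step can repair this: the overlap of the regions claimed by the amalgamated strategies is fatal, not delicate. The paper's design avoids exactly this by inverting the assignment: in its game $G^{A_j}_j$ the \emph{avoiding} player is the one who controls the coordinates indexed by $\Gamma_j$-words (reduced words beginning with a $\Gamma_j$-letter), and since those sets partition the group as $j$ varies, the countably many avoiding strategies can legally be played against one another; that disjointness is precisely what makes Lemma~\ref{player_I_lemma} true.

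The dual choice is forced on the other side as well, and it also undoes your extraction of $f$. Equivariance in the paper comes from replaying the single winning strategy simultaneously in the plays producing $\gamma \cdot y$ for all $\gamma \in \Gamma_j$ (Lemma~\ref{player_II_lemma}); this requires the regions that the strategy controls in the different plays to be pairwise disjoint, which holds for the complement of the $\Gamma_j$-words (a reduced word $\gamma^{-1}\alpha$ with $\alpha$ not a $\Gamma_j$-word determines both $\gamma$ and $\alpha$) but fails for your Player II, whose translated regions $\gamma^{-1}\Gamma_j = \Gamma_j$ all coincide. Moreover, as written your extraction is circular: Player II's moves cannot both be dictated by her winning strategy and ``faithfully copy'' an arbitrary $y$, and if Player I's moves are ``fixed'' the outcome is a single point, not an injection. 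In the paper the input enters through the \emph{opponent's} moves: the $\Gamma_j$-many linked plays are required to have identity coordinates spelling out the input $z$, which together with mutual consistency forces all of the axis-player's moves, while the winning strategy (controlling the complementary, pairwise disjoint regions) keeps every outcome in $A_j$; injectivity is then automatic from $\pi_j(f(z)) = z$. Finally, your marker idea for the ``furthermore'' clause changes the payoff sets, so they no longer partition and the contradiction argument dies with them; in any case, when replaying a strategy one can impose extra structure only on the moves one controls, never on the strategy's own responses. The paper instead keeps the payoffs $A_j$, retimes the game by a ``good'' sequence $(n_k)$, and chooses the continuous encoding $g \from (\cantor)^{\Gamma_j} \to \cantor$ of the input \emph{generically} (Lemma~\ref{equivariant_emb}), so that the at most $|\ran(p)|^{|S_k|}$ possible strategy responses on a suitable block of coordinates are outnumbered by the $2^{|N_{\alpha,j,k}|}$ candidate values and can be diagonalized against; that counting argument, not markers, is what upgrades $f$ to a reduction.
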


That is, if we partition the shift action of the group $\fp_{i \in I}
\Gamma_i$ into $I$ many Borel sets $\{A_i\}_{i \in I}$, not only does one of
the sets $A_j$ contain a copy of the shift action of $\Gamma_j$ (as
witnessed by a continuous equivariant injection), the Borel cardinality of
the shift action of $\Gamma_j$ is also preserved. 

We also have a version of this theorem for the free part of the action. Given a group $\Gamma$ and a Polish space $X$, let 
\[\Free(X^\Gamma) = \{x
\in X^\Gamma : \Stab(x) = \{1\}\}\]
be the free part of the left
shift action: the largest set where every stabilizer under this action is trivial. We have an
analogous theorem for the free parts of these actions:

\begin{thm}\label{g2}
  Suppose $I \leq \omega$ and $\{\Gamma_i\}_{i \in I}$ is a set of countable
  discrete groups. Let $\{A_i\}_{i \in \omega}$ be a Borel partition of
  $\Free(\left( \cantor \right)^{\fpGammaiI})$. Then there exists some $j
  \in I$ and an injective continuous function 
  $f \from \Free((\cantor)^{\Gamma_j}) \to \Free(\left(\cantor
  \right)^{\fpGammaiI})$ that is 
  $\Gamma_j$-equivariant with respect to the shift actions, and
  such that $\ran(f) \subset A_j$. Furthermore, $f$
  can be chosen to be a Borel reduction between the orbit equivalence
  relations induced by the shift actions of $\Gamma_j$ on $\Free((\cantor
  )^{\Gamma_j})$ and $\fpGammaiI$ on $\Free((\cantor)^{\fpGammaiI})$
\end{thm}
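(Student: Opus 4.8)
The plan is to deduce Theorem \ref{g2} from Theorem \ref{g1} by post-composing the reduction produced by Theorem \ref{g1} with a map that forces its image into the free part. There are two obstructions to applying Theorem \ref{g1} directly. First, Theorem \ref{g1} requires a Borel partition of all of $(\cantor)^{\fpGammaiI}$, whereas here we are only handed a partition of $\Free((\cantor)^{\fpGammaiI})$. Second, even if we extended the partition, the reduction $f$ produced by Theorem \ref{g1} need not send free points to free points, so we cannot simply restrict it to the free part of its domain. I plan to resolve both difficulties with a single device.

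First I would construct a continuous, injective, $\fpGammaiI$-equivariant map $\Psi \from (\cantor)^{\fpGammaiI} \to \Free((\cantor)^{\fpGammaiI})$ whose range lies entirely in the free part. To do this, fix any $z \in \Free((\cantor)^{\fpGammaiI})$; such a $z$ exists because the free part of the shift action of a countable group is comeager, hence nonempty. Identifying $\cantor$ with $\cantor \times \cantor$, define $\Psi(w)(\gamma) = (w(\gamma), z(\gamma))$. Continuity, injectivity, and equivariance are immediate, and $\Psi(w)$ is free since its stabilizer is contained in $\Stab(z) = \{1\}$. Crucially, because $\Psi$ sends \emph{every} point of $(\cantor)^{\fpGammaiI}$ into $\Free((\cantor)^{\fpGammaiI}) = \bigsqcup_i A_i$, the pullbacks $\{\Psi^{-1}(A_i)\}_i$ form a Borel partition of the whole space $(\cantor)^{\fpGammaiI}$.

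Next I would apply Theorem \ref{g1} to the partition $\{\Psi^{-1}(A_i)\}_i$, obtaining an index $j$ and a continuous injective $\Gamma_j$-equivariant Borel reduction $f_0 \from (\cantor)^{\Gamma_j} \to (\cantor)^{\fpGammaiI}$ with $\ran(f_0) \subset \Psi^{-1}(A_j)$, and then set $f = (\Psi \circ f_0) \restrict \Free((\cantor)^{\Gamma_j})$. The composite $\Psi \circ f_0$ is continuous, injective, and $\Gamma_j$-equivariant (using that $\Psi$ is equivariant for the whole group, hence for $\Gamma_j$), and its range lies in $\Psi(\Psi^{-1}(A_j)) \subset A_j \subset \Free((\cantor)^{\fpGammaiI})$. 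For the reduction property, note that $f_0$ is a reduction by Theorem \ref{g1} and that $\Psi$, being an equivariant injection, is automatically a reduction of the orbit equivalence relation of $\fpGammaiI$ to itself; hence their composite reduces the orbit equivalence relation of $\Gamma_j$ on $(\cantor)^{\Gamma_j}$ to that of $\fpGammaiI$ on $(\cantor)^{\fpGammaiI}$. Since $\Free((\cantor)^{\Gamma_j})$ and $\Free((\cantor)^{\fpGammaiI})$ are invariant and $\ran(f)$ lands in the latter, restricting to $\Free((\cantor)^{\Gamma_j})$ yields exactly a reduction between the orbit equivalence relations on the free parts, with range contained in $A_j$.

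The only genuine content beyond bookkeeping is the freeness-forcing map $\Psi$, and I expect the point to verify most carefully is that post-composing by $\Psi$ — rather than attempting to restrict $f_0$ directly — simultaneously guarantees the image is free and turns the given partition of the free part into a bona fide partition of the entire space, thereby meeting precisely the two hypotheses of Theorem \ref{g1} that are not immediately available. Everything else (continuity, injectivity, equivariance, and the reduction property) is preserved under composition and restriction to invariant Borel sets.
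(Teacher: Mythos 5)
Your key device $\Psi$ does not exist, and this breaks the proposal at its core. With $\Psi(w)(\gamma) = (w(\gamma), z(\gamma))$ for a fixed free point $z$, equivariance fails: $\Psi(\delta \cdot w)(\gamma) = (w(\delta^{-1}\gamma),\, z(\gamma))$, whereas $(\delta \cdot \Psi(w))(\gamma) = (w(\delta^{-1}\gamma),\, z(\delta^{-1}\gamma))$, and these agree for all $w, \gamma, \delta$ only if $\delta \cdot z = z$ for all $\delta$, i.e.\ only if $z$ is a shift-fixed (constant) point --- the opposite of free. Your stabilizer computation $\Stab(\Psi(w)) \subset \Stab(z) = \{1\}$ is correct for the diagonal action on pairs, but the map $w \mapsto (w,z)$ is not equivariant for that action precisely because $z$ is held fixed while the action moves it. Worse, no repair is possible: any $\fpGammaiI$-equivariant map defined on \emph{all} of $(\cantor)^{\fpGammaiI}$ must send fixed points to fixed points, and constant functions are fixed by the whole group; so once some $\Gamma_i$ is nontrivial, the range of an everywhere-defined equivariant map can never lie inside $\Free((\cantor)^{\fpGammaiI})$. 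The same obstruction kills even a merely $\Gamma_j$-equivariant $\Psi$ (constant functions are $\Gamma_j$-fixed, while points of $\Free((\cantor)^{\fpGammaiI})$ have trivial $\Gamma_j$-stabilizer). Hence there is no post-composition trick that forces the entire image into the free part, and the pulled-back partition $\{\Psi^{-1}(A_i)\}$ is not available.

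The paper resolves the two obstructions you identified in the opposite direction: instead of moving the image into the free part, it extends the given partition of the free part to a partition of the whole space, and then controls only where the \emph{free part of the domain} lands. Concretely (Lemma \ref{free_equivariant_hom} and the proof of Theorem \ref{g2}), one sets $A_i' = A_i \cup B_i \cup C_i$, where the $C_i$ partition the complement of the largest invariant set on which every $\Gamma_i$ acts freely (Lemma \ref{Y_complement_lemma}), and the $B_i$ partition the remaining non-free part using the everywhere non-independence of the subequivalence relations $E_i$ there (Lemma \ref{partition_non_independent}). Running the game machinery on $\{A_i'\}$ (Lemma \ref{equivariant_emb}, with a good sequence, for the reduction clause) produces $f$ defined on all of $(\cantor)^{\Gamma_j}$ with range in $A_j'$; the sets $B_j$ and $C_j$ are engineered so that $\Gamma_j$-invariance of $\ran(f \restrict \Free((\cantor)^{\Gamma_j}))$ rules out meeting $B_j$, and injectivity of $f$ together with Lemma \ref{Y_complement_lemma} rules out meeting $C_j$. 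Thus $f \restrict \Free((\cantor)^{\Gamma_j})$ lands in $A_j$, even though $f$ itself need not map into the free part at all. That asymmetry --- controlling the image of free points only, rather than of all points --- is exactly what any black-box deduction of Theorem \ref{g2} from Theorem \ref{g1} along your lines cannot achieve.
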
  

The use of Borel determinacy is necessary to prove Theorems~\ref{g1} and
\ref{g2} as well as the main lemmas from our earlier paper
\cite{MR3454384}.
We show this by proving the following reversal:

\begin{thm}
  Theorems~\ref{g1} and \ref{g2} as well as Lemmas 2.1 and
  3.12 from \cite{MR3454384} are equivalent to Borel determinacy over 
  the base theory $\mathrm{Z}^- + \Sigma_1-\text{replacement} + \DC$.
\end{thm}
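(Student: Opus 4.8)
The plan is to establish the equivalence as two implications, after first reducing the four statements (Theorems~\ref{g1} and \ref{g2}, and Lemmas 2.1 and 3.12 of \cite{MR3454384}) to one another over the base theory $T := \mathrm{Z}^- + \Sigma_1$-replacement $+ \DC$, so that it suffices to treat a single representative --- say Theorem~\ref{g1} --- while tracking Borel complexity level-by-level. The two lemmas of \cite{MR3454384} are essentially the case $I = 2$, and the free-part version \ref{g2} is obtained by restricting to $\Free$; once the level bookkeeping is fixed, these reductions are routine, so the work concentrates on one game-theoretic core.

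For the forward implication I would re-run the proofs of Theorems~\ref{g1} and \ref{g2} from the body of the paper. Those proofs take a Borel partition, form the associated game whose payoff set is Borel of the same complexity as the partition, invoke Borel determinacy to obtain a winning strategy for the relevant player, and then build the continuous equivariant injection (and the accompanying Borel reduction) by an explicit recursion along $\omega$ driven by that strategy. The only point needing care is that this is available in $T$: the recursion, and the verification that the resulting map is injective, continuous, $\Gamma_j$-equivariant, and a reduction, use only $\DC$ and $\Sigma_1$-replacement (to assemble the map and its Borel code from the strategy), together with the hypothesis that the \emph{single} Borel game in question is determined. Hence $T + \text{Borel determinacy}$ proves all four statements.

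The substance is the reversal. Here I would invoke the Martin--Friedman analysis of the strength of Borel determinacy: over $T$, Borel determinacy is equivalent to the set-existence principle $(\ast)$ asserting that for every countable ordinal $\alpha$ the iterated power set $\pow^{\alpha}(\omega)$ exists, with determinacy of $\mathbf{\Delta}^0_{\alpha}$ games matching the existence of $\pow^{\beta}(\omega)$ for the corresponding $\beta$. Since ``$(\ast) \Rightarrow$ Borel determinacy'' is precisely Martin's proof formalized with these resources, it suffices to derive $(\ast)$ from Theorem~\ref{g1}. I would argue by contraposition and explicit coding: assuming toward a contradiction a countable $\alpha$ for which $\pow^{\alpha}(\omega)$ does not exist, I would define a specific Borel partition $\{A_i\}$ of $(\cantor)^{\fpGammaiI}$, of complexity calibrated to $\alpha$, such that any $\Gamma_j$-equivariant continuous injection $f$ with $\ran(f) \subset A_j$ would let one read off, uniformly and hence define, the missing set $\pow^{\alpha}(\omega)$ from $f$ together with its Borel code. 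The $f$ granted by Theorem~\ref{g1} then contradicts the nonexistence of $\pow^{\alpha}(\omega)$, establishing $(\ast)$.

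The main obstacle is this coding step: one must design the partitions so that the Borel complexity of $\{A_i\}$ corresponds \emph{exactly} to the level of iterated power set being coded, so that the reversal lands on full Borel determinacy rather than some proper fragment, and one must confirm that the extraction of $\pow^{\alpha}(\omega)$ from the equivariant reduction is carried out within $T$ without smuggling in extra instances of replacement or of the power set axiom. A secondary difficulty is to verify that the forward-direction constructions genuinely consume only a single instance of determinacy at the matching level, so that the two directions align level-for-level and yield an exact equivalence with Borel determinacy.
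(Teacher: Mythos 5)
Your forward direction is fine and matches the paper's (implicit) treatment: the games are Borel, so one instance of Borel determinacy per partition suffices, and the construction of the equivariant map from a strategy uses only the resources of the base theory. The problem is your reversal, which contains a genuine gap exactly where you locate the ``main obstacle.'' You propose to derive the iterated-power-set principle directly by designing, for each countable $\alpha$ such that $\pow^{\alpha}(\omega)$ fails to exist, a Borel partition $\{A_i\}$ of calibrated complexity from which an equivariant continuous injection would let one ``read off'' $\pow^{\alpha}(\omega)$. No such construction is given, and this step is not routine: it would amount to redoing Friedman's reversal from scratch in the setting of equivariant injections into partitions, and it is entirely unclear how the existence of a continuous $\Gamma_j$-equivariant injection with range in $A_j$ (a statement about reals and continuous functions, $\Sigma^1_2$-ish in a code for the partition) is supposed to certify the existence of a set of high rank such as $\pow^{\alpha}(\omega)$. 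The sentence ``would let one read off, uniformly and hence define, the missing set'' is precisely the theorem to be proved, not a proof.

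The paper's proof of Theorem~\ref{thm:reversal} avoids this entirely by routing through \emph{Borel Turing determinacy}. Given a Turing-invariant Borel set $A \subset Y^{\Gamma*\Delta}$ with $\Gamma = \Delta = \Z$, apply the statement in question to the two-piece partition $\{A, \comp{A}\}$ to get a continuous equivariant $f$ with range inside $A$ or inside $\comp{A}$. The key technical point is Lemma~\ref{lem:equivariant_preserves_computable}: any continuous $\Gamma$-equivariant $f$ on $\Free(X^\Gamma)$ whose range is not a singleton admits a continuous $g \from \cantor \to \Free(X^\Gamma)$ with $f(g(x)) \geq_T x$ (one codes $x$ into the orbit of $g(x)$ using two clopen sets that $f$ separates). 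Since $f \circ g$ is continuous, $f(g(x)) \equiv_T x$ on the cone above a code for $f \circ g$, so $A$ or $\comp{A}$ contains representatives of a Turing cone; this is Borel Turing determinacy, which is known (Friedman, as refined in \cite{MR1008089} and \cite{Hachtman}) to imply Borel determinacy over $\ZZ^- + \Sigma_1\text{-replacement} + \DC$. Note also that this route needs no level-by-level calibration of partition complexity against $\mathbf{\Delta}^0_\alpha$ determinacy, and in fact only uses the single simplest instance $\Gamma = \Delta = \Z$; your insistence on an exact level-for-level matching is unnecessary once the reduction to Turing determinacy is in hand. To repair your proof, replace the coding step by Lemma~\ref{lem:equivariant_preserves_computable} and the citation of the Turing-determinacy-to-determinacy transfer.
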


In fact, the reversal of all these theorems requires only one of their
simplest nontrivial cases: when there are two groups in the free product which are
both copies of $\Z$. It remains an open question whether any of the
consequence of these theorems--both in the current paper and in 
\cite{MR3454384}--require the use of Borel determinacy in their
proofs.\footnote{Sherwood Hachtman has resolved these questions. See the
footnote before Question~\ref{rq1}.}

In Section~\ref{sec:ri}, we turn to questions of uniformity in the study of
universal countable Borel equivalence relations. We work here in the
setting where a countable Borel equivalence relation $E$ on a standard
Borel space $X$ is given
together with some fixed family of partial Borel functions $\ph_{i \in
\omega}$ on $X$ that
generate $E$, for which we use the notation $E_{\ph}$. In this context, we
say that a homomorphism $f$ between $E_{\ph}$ and $E_{\thet}$ is
uniform if a witness
that $x E_{\ph} y$ (that is, a pair of indices $(i,j)$ so that $\varphi_i(x) =
y$ and $\varphi_j(y) = x$) can be
transformed into a witness that $f(x) E_{\thet} f(y)$ in a way that is
independent of $x$ and $y$ (see Section~\ref{subsec:intro_to_uu} for a
precise definition). If these
equivalence relations $E_{\ph}$ and $E_{\thet}$ are generated by free actions of countable groups,
this is equivalent to the assertion that the \define{cocycle} (see \cite[Appendix B]{MR2155451}) associated to $f$ is
group homomorphism. 

This sort of uniformity arises in the study of Martin's conjecture, where
Slaman and Steel have shown that Martin's conjecture for Borel functions is
equivalent to the statement that every homomorphism from Turing equivalence
to itself is equivalent to a uniform homomorphism on a Turing
cone~\cite{MR960895}. 
This idea also arises often in proving
nonreducibility results between countable Borel equivalence relations;
first we analyze the class of homomorphisms which are uniform on some
``large'' (e.g. conull) set, and then prove that
this analysis is complete by showing every homomorphism is equivalent to a
uniform homomorphism on a large set.
For instance, this is a typical proof strategy in applications of
cocycle superrigidity to the field of countable Borel equivalence
relations. See for example~\cites{MR776417, MR1775739, MR2155451, MR1903855,
MR2500091}.

One of the central concepts we study is the idea of uniform universality
for countable Borel equivalence relations, which was introduced in
unpublished work by 
Montalb\'an, Reimann and Slaman. 
Precisely, $E_{\ph}$ is said to be uniformly universal (with respect to
$\ph_{i \in \omega}$) if there is a uniform reduction of every countable
Borel equivalence relation, presented as $E_{\thet}$, to $E_{\ph}$.
Certainly, 
all known universal
countable Borel equivalence relations are uniformly universal with respect
to the way they are usually generated, so it is fair to say that the 
uniformly universal countable
Borel equivalence relations include all countable Borel equivalence relations that we
can hope to prove universal without 
dramatically new techniques. We make the following
stronger conjecture (see Conjecture~\ref{conj:uniformly_universal}):

\begin{conj}\label{intro_uu_conj}
  A countable Borel equivalence relation is universal if and only if it is
  uniformly universal with respect to every way it can be generated.
\end{conj}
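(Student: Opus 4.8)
The backward direction is immediate and carries no content: a uniform reduction is in particular a reduction, so if $E$ is uniformly universal with respect to even one way it can be generated, then it is universal. Thus the entire substance of the conjecture lies in the forward direction, that universality forces uniform universality with respect to \emph{every} generating family. I will describe the line of attack I would pursue, and where I expect it to break down.

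Fix a universal $E_\ph$ on a standard Borel space $X$, presented by an arbitrary generating family $\ph = \{\varphi_i\}_{i \in \omega}$ of partial Borel functions. Because uniform reductions compose and $\Einfty$ in its standard presentation is already uniformly universal, it suffices to produce a single uniform reduction from $\Einfty$ into $E_\ph$. Since $E_\ph$ is universal, fix any Borel reduction $g \from \Einfty \to E_\ph$ and associate to it its cocycle $\rho$: for each generator $s$ of $F_2$ and each point $x$, the value $\rho(s,x)$ records a witness --- a word in the $\varphi_i$ --- certifying that $g(s \cdot x) \mathrel{E_\ph} g(x)$. By the discussion preceding the conjecture, $g$ is uniform exactly when $\rho$ can be chosen independent of $x$, that is, when it descends to a homomorphism out of $F_2$ into the monoid of witnesses generated by $\ph$. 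The goal is therefore to untwist $\rho$ to such a homomorphism after suitably modifying $g$.

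The natural instrument for manufacturing uniform (equivariant) maps is Theorem~\ref{g1} together with its free-part refinement Theorem~\ref{g2}: any Borel partition of a free-product shift action contains, within a single piece, a continuous injective equivariant copy of a factor's shift, and for free actions equivariance is precisely the homomorphism-cocycle condition that defines uniformity. The plan is thus to realize $E_\ph$ (or a universal reduct of it) as the shift action of a suitable free product, partition the space according to the local behavior of $\rho$, and then invoke the game to extract an equivariant copy on which the cocycle is forced onto a homomorphism. Equivariance of the copy would then push back through $g$ to yield the desired uniform reduction.

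The hard part will be bridging an \emph{arbitrary} generating family and the rigid free-product structure that Theorems~\ref{g1} and~\ref{g2} require. The functions $\varphi_i$ are allowed to be arbitrary partial Borel maps --- not injective, not closed under composition, not forming any group --- whereas the games speak only about shift actions of free products. Showing that every universal $E_\ph$, however pathologically presented, admits a uniform reduction from a free-product shift \emph{whose equivariance survives translation back through the given generators} is exactly the point at which the uniformity constraint bites, and I expect it to be the crux of the whole problem. A secondary obstacle is the absence of any Martin-style measure or cone on which to run an ``almost everywhere'' uniformization in the spirit of the Slaman--Steel analysis of Martin's conjecture; supplying such a measure on a relation this fine --- as the ultrafilters constructed later in the paper begin to do --- may well be a prerequisite to carrying the argument through.
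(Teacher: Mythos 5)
There is nothing to compare your proposal against: the statement you were given is not a theorem of the paper but an open conjecture (it is restated later as Conjecture~\ref{conj:uniformly_universal}, which the author describes as ``ridiculously optimistic''), and the paper contains no proof of it. Your proposal, to its credit, does not pretend otherwise --- the backward direction you dispose of is indeed trivial, and everything else you write is a research plan together with an honest account of where it fails. So the review reduces to assessing whether your plan plausibly closes the gap, and it does not, for exactly the reason you flag: Theorems~\ref{g1} and~\ref{g2} produce equivariant maps into pieces of a partition of a \emph{free-product shift action}, whereas the conjecture quantifies over \emph{arbitrary} generating families $\ph$. The paper's own partial results make clear how wide that gulf is. Proposition~\ref{some_family} shows only that a universal relation is uniformly universal with respect to \emph{some} generating family (obtained by pulling back a nice presentation through an embedding), which is the cheap direction; and Theorem~\ref{thm:closed_under_countable_joins} shows that for many natural families (Turing reductions, many-one reductions, anything closed under countable uniform joins on $\cantor$) uniform universality outright \emph{fails}. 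Consequently the conjecture implies, among other things, that Turing equivalence is not universal --- a long-standing open question of Kechris --- and it contradicts Thomas' conjecture on Burnside groups. Any proof of the forward direction must therefore resolve these problems as corollaries; no amount of repackaging of the game machinery alone can do that.

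A second, more specific defect in the plan: your proposed ``untwisting'' of the cocycle of an arbitrary Borel reduction $g$ into a homomorphism is precisely the kind of statement that, for Turing equivalence, is equivalent to Martin's conjecture for Borel functions (the Slaman--Steel result cited in Section~\ref{sec:intro}). So the step you describe as the natural instrument is itself at least as hard as a major open conjecture, and is not something the partition games supply: the games give you equivariance with respect to a group action you build into the space beforehand, not uniformity with respect to a generating family handed to you adversarially. In short, your proposal correctly identifies the crux but does not overcome it, which is the appropriate state of affairs for a statement the paper itself leaves open.
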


One attraction of the notion of uniform universality is that we are able to 
settle many open questions about universality with this additional
uniformity assumption,
and we can also prove precise characterizations of what equivalence
relations are uniformly universal in many settings. 
The bulk of Section~\ref{sec:ri} is devoted to theorems of these sorts, and
we outline some of our main results:

\begin{thm}[Properties of uniform universality]\label{uu_properties}
\mbox{ }
\begin{enumerate}
  \item For every countable group $\Gamma$, there is a Borel action of $\Gamma$
  generating a uniformly universal countable Borel equivalence relation if
  and only if $\Gamma$ contains a copy of $\F_2$, the free group on two
  generators. 

  \item Given any uniformly universal countable Borel equivalence relation
  $E_{\ph}$ on a standard probability space $(X,\mu)$, there is a
  $\mu$-conull set $A$ for which $E_{\ph} \restrict A$ is not uniformly
  universal. 

  \item Both the uniformly universal and non-uniformly universal countable Borel equivalence relations
  are cofinal under $\subset$.

  \item An increasing union of non-uniformly universal countable Borel
  equivalence relations is not uniformly universal. An increasing union of
  uniformly universal countable Borel equivalence relations need not be
  uniformly universal.
\end{enumerate}
\end{thm}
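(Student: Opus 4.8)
The plan is to prove the four assertions separately, unified by the principle recalled in the introduction that a uniform reduction between relations generated by free actions is, up to its cocycle, a group homomorphism, so that uniform universality becomes a question about which homomorphisms into the acting group can exist and be rich enough to carry a universal relation. For (1) the forward direction is the soft one: if $\F_2 \le \Gamma$, then the free shift on $\Free((\cantor)^{\F_2})$, whose orbit equivalence relation is $\Einfty$ and which is uniformly universal for its standard presentation by the two generators, can be induced up to a free Borel action of $\Gamma$, and since induction carries uniform reductions to uniform reductions, the induced $\Gamma$-action is uniformly universal. The converse is the substantial direction and is where the uniformity hypothesis earns its keep. Given a Borel action of $\Gamma$ generating a uniformly universal $\EXph$, I would pass to the free part and take a uniform reduction of $\Einfty$, presented by the two shift generators $a,b$ of $\F_2$; uniformity forces the witness-transformation to be multiplicative, producing a homomorphism $\pi \from \F_2 \to \Gamma$ with $f(w \cdot x) = \pi(w) \cdot f(x)$. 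One cannot simply argue that $\pi$ is injective—a nontrivial kernel only collapses $f$ within orbits, which is harmless for a reduction—so the real content is that the image $\pi(\F_2)$ must act richly enough to carry $\Einfty$. I would extract an honest free subgroup by feeding the uniform reduction into the free-product game underlying Theorem~\ref{g2}, playing with $\Gamma$-labels so as to realize a genuinely free $\F_2$-shift inside the $\Gamma$-action, whence $\F_2 \hookrightarrow \Gamma$. This is exactly the step that rules out the absence of $\F_2$, in contrast with plain universality where it remains open.

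For (2), fix uniformly universal $\EXph$ on $(X,\mu)$ and diagonalize against uniform reductions while discarding only null sets. The key observation is that the data of a uniform reduction of the fixed, finitely generated relation $\Einfty$—the multiplicative transformation of the two generator-witnesses into finitely many target witnesses—is a countable object, so there are only countably many candidate shapes. For each shape I would produce a $\mu$-null set whose removal prevents any reduction of that shape from remaining total and orbit-injective, using that such a reduction pushes the natural invariant measure of $\Einfty$ forward rigidly and so can be trimmed away on a set of measure zero. Taking $A$ to be the complement of the countable union of these null sets leaves a conull $A$ on which no uniform reduction of $\Einfty$ survives, so $\EXph \restrict A$ is not uniformly universal. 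The measure-theoretic lemma certifying that each shape dies on a conull set is the crux here.

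For (3), the uniformly universal side is immediate: given any $E$ on $X$, transport a free $\F_2$-shift onto $X$ to obtain a uniformly universal $U$ and form $E \erjoin U$ with the combined presentation; it contains $E$ and inherits the uniform reductions of $U$, hence is uniformly universal. For the non-uniformly universal side, given $E$ I would enlarge it to a universal $F \supseteq E$ and then present $F$ over countably many Borel sorts arranged so that the sort is invisible to any single multiplicative witness-transformation; a uniform reduction of a universal source would then, by Theorem~\ref{g1} applied to this partition, be forced into one sort and fail to be total, so $F$ with this presentation is not uniformly universal while still containing $E$. Keeping $F$ a genuine countable Borel equivalence relation while defeating uniformity is the delicate point.

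For (4), the first assertion has a clean proof by factoring. Write $E_\ph = \bigcup_n E_n$ as an increasing union with $\ph$ the union of finite generating families for the $E_n$, and suppose toward a contradiction that $E_\ph$ were uniformly universal. Applying its uniform reduction to the finitely generated universal relation $\Einfty$, the transformation names only finitely many of the functions $\varphi_i$, all of which already occur at some level $n$, so the reduction is in fact a uniform reduction of $\Einfty$ into $E_n$; since uniform reductions compose and every countable Borel equivalence relation uniformly reduces to $\Einfty$, this makes $E_n$ uniformly universal, contradicting the hypothesis. For the second assertion I would exhibit uniformly universal $E_n$ increasing to a non-uniformly universal $E_\ph$, arranging that the witness data certifying universality at level $n$ grows without bound so that no single uniform reduction works in the limit, with non-uniform-universality of the limit certified by the obstruction from (1) (equivalently, by running Theorem~\ref{g2} against the limiting presentation). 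Building this example—each finite stage genuinely uniformly universal yet the union provably not—is, together with the converse of (1) and the measure-theoretic lemma of (2), where I expect the main difficulty to lie; the forward half of (1), the uniformly universal side of (3), and the first half of (4) are by comparison routine.
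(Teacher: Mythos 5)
Your part (4) first assertion and the overall shape of part (1) do match the paper. The part (4) argument is exactly Proposition~\ref{increasing_union_1}: the two generator-witnesses of a uniform reduction of $E(\F_2,\cantor)$ live at a single finite level of the union, so that level is already uniformly universal. For part (1) you correctly identify both the key subtlety (the uniformity homomorphism $\pi$ need not be injective) and the right tool (the game behind Theorem~\ref{g2}), but you omit the mechanism that makes it work: the paper starts from a uniform reduction $f$ of $F(\F_2 * \F_2 * \cdots,\cantor)$, uses Lusin--Novikov to partition the domain into countably many Borel sets on which $f$ is injective, applies Lemma~\ref{free_equivariant_hom} to \emph{that} partition to obtain an equivariant injection $g$ of $F(\F_2,\cantor)$ into one piece, and only then concludes that injectivity of $f \circ g$ forces the images of the two generators to generate a copy of $\F_2$. ``Playing with $\Gamma$-labels'' does not supply this step. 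The remaining parts have genuine gaps.

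Part (3), first half, is wrong as stated: $E \erjoin U$ does not ``inherit the uniform reductions of $U$.'' A uniform reduction $f$ of $E^Y_\thet$ to $U$ is only a uniform \emph{homomorphism} into $E \erjoin U$, because the join can merge distinct $U$-classes, so $f(x) \mathrel{(E \erjoin U)} f(y)$ no longer implies $x \mathrel{E^Y_\thet} y$. Whether containing a (uniformly) universal relation forces (uniform) universality is precisely Hjorth's question \cite{MR1815088} that this part is designed to address, so it cannot be ``immediate.'' Your second half of (3) is also misdirected: a \emph{universal} $F$ carrying a presentation that defeats uniform universality would refute Conjecture~\ref{conj:uniformly_universal}. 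The paper instead proves both halves of (3), and produces the example needed for the second half of (4), with one construction: every countable Borel equivalence relation is contained in $\equiv_{(<\alpha)}$ relativized to some real, and Theorem~\ref{coarser_classification} shows $\equiv_{(<\alpha)}$ is uniformly universal exactly when $\alpha = \beta\cdot\omega$ --- universality via transfinite jump codings extending Slaman--Steel, non-universality via Remark~\ref{remark:careful}. Note also that for the chain $\equiv_{(<\omega)} \subset \equiv_{(<\omega^2)} \subset \cdots$ with union $\equiv_{(<\omega^\omega)}$, the obstruction cannot be ``the obstruction from (1)'' as you suggest: the limit presentation contains the presentations of the uniformly universal stages, so no group-theoretic obstruction applies; what kills the limit is closure under countable uniform joins.

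Part (2): your crux lemma --- that each ``shape'' can be killed on a conull set --- is exactly the hard content, and your justification does not work. The conull set $A$ must be fixed before a candidate reduction is given; for a fixed shape the possible shape-$s$ uniform reductions of $E(\F_2,\cantor)$ have ranges that are not confined to any null set, and the pushforward of the Bernoulli measure under such a reduction bears no relation to $\mu$, so there is nothing to ``trim away.'' Even the single-shape instance where $E_\ph$ is $F(\F_2,2)$ with its group presentation is obtained in the paper only via the full machinery (Theorem~\ref{treeable_measure}). The paper's proof of (2) kills all shapes simultaneously by composition: for some conull $A$, $E_\ph \restrict A$ uniformly embeds into many-one equivalence on $\cantor$ (measure universality of recursive isomorphism and many-one equivalence, Theorem~\ref{group_ri} and Corollary~\ref{universality_of_ri}, which rests on the measure-theoretic combinatorial lemmas), while many-one equivalence is not uniformly universal because it is closed under countable uniform joins (Theorem~\ref{thm:closed_under_countable_joins}); since uniform reductions compose, $E_\ph \restrict A$ cannot be uniformly universal.
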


\begin{thm}[Classifications]
\label{uu_classifications}
\mbox{ }
\begin{enumerate}
  \item If $\Gamma$ is a countable group, then the shift action of $\Gamma$
  on $2^\Gamma$ generates a uniformly universal countable Borel equivalence
  relation if and only if $\Gamma$ contains a subgroup isomorphic to
  $\F_2$.
  
  \item If $\Gamma$ is a countable group, then the 
  conjugacy action of $\Gamma$ on its subgroups is uniformly universal
  if and only if $\Gamma$ contains a subgroup isomorphic to $\F_2$. 

  \item (Joint with Jay Williams) If $G$ is a countable subgroup of
  $S_\infty$ and $X$ is a standard Borel space of cardinality at least $3$,
  then the permutation action of $G$ on $X^\omega$ is uniformly
  universal if and only if there exists some $n \in \omega$ and a subgroup
  $H \leq G$ isomorphic to $\F_2$ such that the map $H \to
  \omega$ given by $h \mapsto h(n)$ is injective.

  \item For every additively indecomposable $\alpha < \omega_1$, define the 
  equivalence relation $\equiv_{(<\alpha)}$ on $\cantor$ by $x
  \equiv_{(<\alpha)} y$ if there exists $\beta < \alpha$ such that
  $x^{(\beta)} \geq_T y$ and $y^{(\beta)} \geq_T x$. Then $\equiv_{(<\alpha)}$
  is uniformly universal if and only if there is a
  $\beta < \alpha$ such that $\beta \cdot \omega = \alpha$. 

  \item If $E_{\ph_{i \in \omega}}$ is a countable Borel equivalence relation on
  $\cantor$ coarser than recursive isomorphism and closed under countable
  uniform joins, then $E_{\ph_{i \in \omega}}$ is not uniformly
  universal. (This includes many-one equivalence, tt equivalence, wtt
  equivalence, Turing equivalence, enumeration equivalence, etc.).
  Further, this result is not true when $2^\omega$ is replaced by
  $3^\omega$.
\end{enumerate}
\end{thm}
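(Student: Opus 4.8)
The plan is to argue by contradiction: assuming $E_\ph$ is uniformly universal, I will produce a uniform reduction of a free-product shift and extract from it a rigid algebraic structure that collides with the two closure hypotheses. Concretely, let $\Gamma = \fp_{i \in \omega} \Z$ be the free group on generators $\{s_i\}_{i\in\omega}$. Since $\Gamma$ contains $\F_2$, the orbit equivalence relation of the shift on $\Free((\cantor)^\Gamma)$ is universal, so uniform universality of $E_\ph$ supplies a uniform Borel reduction $f \from \Free((\cantor)^\Gamma) \to \cantor$ of this relation into $E_\ph$. Uniformity means exactly that the witness ``$\gamma$'' that $x$ and $\gamma\cdot x$ are orbit-equivalent is transformed, independently of $x$, into indices $(a_\gamma,b_\gamma)$ with $\varphi_{a_\gamma}(f(x)) = f(\gamma\cdot x)$ and $\varphi_{b_\gamma}(f(\gamma\cdot x)) = f(x)$. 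Because $\Gamma$ is free and the assignment respects composition (so $\varphi_{a_{\gamma\delta}} = \varphi_{a_\gamma}\circ\varphi_{a_\delta}$ on $\ran(f)$), the map $\gamma\mapsto(a_\gamma,b_\gamma)$ is a homomorphism determined by its values on the generators $s_i$; and because $E_\ph$ refines recursive isomorphism, I can move freely within classes by recursive isomorphisms when later assembling joins.

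The heart of the argument is to confront this single homomorphism with infinitely many factors simultaneously. First I would feed a carefully chosen Borel partition of $\Free((\cantor)^\Gamma)$ into Theorem~\ref{g2}, taking the pieces to record the local data the reduction uses on a bounded neighborhood of the identity (finitely many bits of $f(x)$ together with the relevant witness indices). The theorem returns an index $j$ and a continuous $\Gamma_j$-equivariant injection realizing a single $\Z$-factor shift inside one piece, on which the reduction's behavior is homogeneous and whose generator acts by one fixed operator $\varphi_{a_j}$. Running this for each factor and then invoking closure under countable uniform joins, I would assemble the countably many single-factor reductions into one reduction of the full product shift by a computable join: the point is that the $E_\ph$-class of the join is computed uniformly from the component classes, so the aggregated reduction must send every orbit into a single $E_\ph$-class governed by a \emph{fixed} countable family of operators. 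A diagonal argument against this fixed family---choosing an orbit whose image is forced through the join to coincide $E_\ph$ with the image of a different orbit---then contradicts the injectivity of $f$ on $\Gamma$-orbit classes.

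The main obstacle is precisely this aggregation-and-diagonalization step: one must verify that the witness data produced factor-by-factor by Theorem~\ref{g2} is uniform enough to be admissible for the countable-join hypothesis, and that the resulting collapse genuinely identifies two inequivalent orbits rather than merely re-describing a single class. This is where both hypotheses are indispensable and where the restriction to $\cantor$ enters: with only two symbols the reduction has no marker available to keep the joined coordinates separated, so it cannot evade the forced collision. I expect to be able to organize this around the increasing-union phenomenon of Theorem~\ref{uu_properties}, exhibiting the join-closed relation as an increasing union of relations each non-uniformly-universal for a bounded reason, so that non-uniform-universality passes to the union.

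Finally, for the failure over $3^\omega$ the plan is to exhibit an explicit counterexample rather than to repair the argument. Using the third symbol as a separating marker---the same extra room that distinguishes the $|X|\ge 3$ case in the permutation-action classification of Theorem~\ref{uu_classifications}---one builds a recursive-isomorphism-coarse, countable-uniform-join-closed equivalence relation on $3^\omega$ together with a uniform reduction of $\Einfty$ that encodes each joined coordinate in its own marked block. This reverses the aggregation step above, so no diagonal collision is forced, showing that the two-symbol hypothesis in the statement cannot be dropped.
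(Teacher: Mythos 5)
Your ingredient list overlaps with the paper's toolkit (a uniform reduction of a free-product shift, Theorem~\ref{g2}, countable joins, a diagonalization, the two-versus-three-symbol coloring phenomenon), but there are genuine gaps. First, scope: you address only part (5); parts (1)--(4) need separate arguments (Theorem~\ref{F_2_required} for (1) and (2), Theorem~\ref{permutation_action_characterization} for (3), and the jump-coding classification of Theorem~\ref{coarser_classification} for (4)). Second, and more seriously, the step you yourself flag as the main obstacle --- the ``aggregation-and-diagonalization'' --- is not merely unverified, it is structurally unworkable. Theorem~\ref{g2} is existential: given \emph{one} countable Borel partition it returns \emph{one} factor $j$ and one equivariant injection into one piece; it cannot be ``run for each factor'' and the outputs assembled, and the closure hypothesis concerns joins $\bigjoin_i y_i$ of points uniformly $E_\ph$-equivalent to a given point, not ``joins of reductions.'' Worse, with your setup no collision is forced at all: if $f$ uniformly reduces the full shift relation of $\F_\omega$ and one sets $\hat f(x) = \bigjoin_i f(\alpha_i \cdot x)$ with $\{\alpha_i\}$ listing the whole group, then for every $\gamma$ the reals $\hat f(x)$ and $\hat f(\gamma \cdot x)$ are simultaneously recursively isomorphic \emph{and} $E_\ph$-equivalent --- the join merely re-describes each class, which is exactly the degenerate outcome you worry about. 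Your fallback via Proposition~\ref{increasing_union_1} is circular: it presupposes the approximating relations are non-uniformly universal, which is what is to be proved, and relations such as many-one or Turing equivalence admit no such decomposition ``for a bounded reason.''

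The missing idea is the coset trick in the paper's proof of Theorem~\ref{thm:closed_under_countable_joins}. Present $\F_\omega$ on generators $\gamma_0, \gamma_1, \ldots, \gamma_\omega$ and uniformly reduce only $F_\Gamma$, the orbit relation of the proper subgroup $\Gamma = \<\gamma_i \gamma_\omega^{-1} : i < \omega\> \cong \F_\omega$, which omits $\gamma_\omega$. Then for each $x$ the points $\gamma_j \cdot x$, $j \leq \omega$, lie in a single $F_\Gamma$-class distinct from that of $x$, so the uniform joins $\hat f(\gamma_j \cdot x)$, $j < \omega$, are recursively isomorphic to $\hat f(\gamma_\omega \cdot x)$ by column permutations (they join the same set of values of $f$, each infinitely often), while join-closure (with a computable uniformity function, Lemma~\ref{recursive_uniformity_lemma}) plus coarseness under recursive isomorphism forces $\hat f(x)$ \emph{not} to be recursively isomorphic to $\hat f(\gamma_\omega \cdot x)$. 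Lemma~\ref{diagonal_permutation} --- another ingredient absent from your outline --- then produces computable bijections $r,s$ so that every $x$ has some $i$ with $\hat f(x)(i) \neq \hat f(\gamma_{s(i)} \cdot x)(i)$, and only now is the game theorem invoked, exactly once, on the partition by the least such $i$: Lemma~\ref{free_equivariant_hom} converts it into a Borel $2$-coloring of $G(\Z,\cantor)$, the contradiction. Finally, the $3^\omega$ claim requires an actual construction, not a heuristic: the paper shows many-one equivalence on $3^\omega$ is uniformly universal via Theorem~\ref{group_ri}, and the third symbol's role there is not a ``separating marker'' (the marker sets $S_0, S_1$ in that proof use only the symbols $0$ and $1$) but to admit Borel $3$-colorings of the degree-at-most-$2$ graphs generated by the partial injections $g_{\rho,n}$ --- precisely the coloring that Theorem~\ref{countable_3_coloring} shows cannot be carried out with two colors.
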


Note that several of these results settle open questions about universal
equivalence relations in our more restrictive uniform context. 
For example, Hjorth~\cite{MR1815088}*{Question 1.4}
\cite{MR1900547}*{Question 6.5.(A)} has asked if 
$E$ and $F$ are 
countable Borel equivalence relations, $E$ is universal, and $E \subset F$,
must $F$ be universal? This is addressed by part (3) of
Theorem~\ref{uu_properties}. Thomas~\cite{MR2500091}*{Question 3.22} has
asked whether every universal countable Borel equivalence relation on a
standard probability space is non-universal on a conull set. This is
addressed by part (2) of Theorem~\ref{uu_properties}. Finally, Thomas has
conjectured~\cite{MR2914864}*{Conjecture 1.5} that free Burnside groups of sufficiently high
rank admit Borel actions generating universal countable Borel equivalence
relations. This contradicts part (1) of Theorem~\ref{uu_properties}, when
combined with Conjecture~\ref{intro_uu_conj}.

Many of these abstract properties of uniformly universal equivalence
relations
hinge upon an analysis of equivalence relations
from computability theory (some of which are mentioned in the second theorem we
have stated above).
Of course, the investigation of the universality
of equivalence relations from computability theory is interesting in its own
right. For example, whether Turing equivalence is a universal countable Borel
equivalence relation is a long open question of Kechris~\cite{MR1233813},
and is closely connected with Martin's conjecture on Turing invariant
functions, as discussed in~\cite{1109.1875} and~\cite{MR1770736}. 

Another important example of a computability-theoretic equivalence relation
is that of recursive isomorphism.
Suppose $Z$ is a countable set and $G$ is a group of permutations of $Z$. Then
the \define{permutation action} of $G$ on $Y^Z$ is defined by $(g \cdot y)(z) =
y(g^{-1}(z))$ for $g \in G$, $y \in Y^Z$, and $z \in Z$. Now the
equivalence relation of \define{recursive isomorphism} on $Y^{\omega}$ is
defined to be the orbit equivalence relation of the permutation action of
the group of computable bijections of $\omega$ on $Y^\omega$.
The universality of recursive isomorphism on $\cantor$ is a long open
question. However, several partial results are known: Dougherty
and Kechris have shown that recursive isomorphism on $\baire$ is
universal~\cites{DoughertyKechris,MR1770736}, which was later improved to
$5^\omega$ by Andretta, Camerlo, and Hjorth~\cite{MR1815088}. 
We further improve this result to $3^\omega$:

\begin{thm}
  Recursive isomorphism on $3^\omega$ is a universal countable Borel
  equivalence relation.
\end{thm}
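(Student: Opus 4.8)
The plan is to exhibit a Borel reduction of a known universal countable Borel equivalence relation into recursive isomorphism on $3^\omega$, using the free product game of Theorem~\ref{g2} to supply the rigidity that three symbols alone do not obviously provide. Recall that recursive isomorphism is the orbit equivalence relation of the full group $G$ of computable permutations of $\omega$ acting on $3^\omega$ by the permutation action. The basic structural observation I would exploit is that if a countable group $\Gamma$ is realized as a subgroup of $G$ through a free, transitive, \emph{computable} action on $\omega$, then fixing a computable bijection $\omega \cong \Gamma$ under which left translations are computable turns the permutation action of $\Gamma \leq G$ on $3^\omega = 3^\Gamma$ into exactly the left shift action of $\Gamma$ on $3^\Gamma$. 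I would therefore fix a free product $\Gamma = \fpGammaiI$ with $I = \omega$ in which \emph{every} factor $\Gamma_i$ is isomorphic to $\F_2$; then the free part of the shift of each individual $\Gamma_i$ is Borel bireducible with $\Einfty$, and in particular universal. This uniformity across factors is what lets the ``some $j$'' conclusion of Theorem~\ref{g2} still land on a universal object no matter which index the game selects.

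The reduction is then a $\Gamma$-equivariant Borel map $F$ sending a point $x \in \Free((\cantor)^\Gamma)$ to a three-coloring $F(x) \in 3^\Gamma = 3^\omega$. This coloring must simultaneously (i) depend $\Gamma$-equivariantly on $x$; (ii) encode enough of $x$ that distinct shift classes receive images that are not recursively isomorphic; and (iii) carry rigidifying markers laid out along the Cayley graph of $\Gamma$ from which the $\Gamma$-structure can be recovered. The forward implication of the reduction is then immediate from equivariance: if $x$ and $y$ lie in the same $\Gamma$-orbit, say $y = \gamma \cdot x$, then the left translation by $\gamma$ is an element of $\Gamma \subseteq G$, hence a computable permutation, and it carries $F(x)$ to $F(\gamma \cdot x) = F(y)$, so $F(x)$ and $F(y)$ are recursively isomorphic.

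The substance of the argument is the reverse implication: whenever a computable permutation $\pi$ satisfies $\pi \cdot F(x) = F(y)$, I must conclude that $x$ and $y$ are $\Gamma$-shift equivalent. Equivalently, no computable permutation lying outside the copy of $\Gamma$ inside $G$ may identify images of inequivalent points. This is where the game enters. There are only countably many computable permutations $\pi_e$, and I would organize the diagonalization against them as a Borel partition $\{A_i\}_{i \in \omega}$ of $\Free((\cantor)^\Gamma)$, designed so that on each piece every $\pi_e$ either already agrees with the action of an element of $\Gamma$ or else fails to carry any image into another. Applying Theorem~\ref{g2} to this partition produces an index $j$ and a continuous $\Gamma_j$-equivariant injection that is a Borel reduction of the shift equivalence relation and whose range lies in $A_j$. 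Precomposing $F$ with this injection yields a map for which recursive isomorphism among images collapses to exactly $\Gamma_j$-shift equivalence; since $\Gamma_j \cong \F_2$, the source is universal, and the composite is the desired reduction of a universal relation into recursive isomorphism on $3^\omega$.

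The main obstacle is condition (iii) with an alphabet of only three symbols. With five or more colors, as in the earlier results, there is ample room to stamp a self-locating, asymmetric pattern along the Cayley graph of $\Gamma$ that defeats every computable permutation while still leaving a positive density of coordinates free to encode a full copy of $\cantor$ per orbit; compressing such markers into three colors, so that they remain both recoverable up to the $\Gamma$-action and robust against all the $\pi_e$, is the delicate part, and it is precisely the Ramsey-type rigidity delivered by the free product game that is meant to make three colors suffice. I expect the bookkeeping of this marker scheme, together with the verification that the diagonalizing partition $\{A_i\}$ can be taken Borel and that the piece selected by the game is genuinely rigid, to be the most technical steps; by contrast, the equivariance of $F$ and the forward implication are routine once the computable realization of $\Gamma$ inside $G$ has been fixed.
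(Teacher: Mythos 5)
Your forward direction is fine, but the heart of the proof --- the reverse implication --- rests on a use of Theorem~\ref{g2} that cannot do what you ask of it. Theorem~\ref{g2} is a pigeonhole statement about the \emph{domain}: given a Borel partition $\{A_i\}$ of $\Free((\cantor)^{\fpGammaiI})$, \emph{some} piece $A_j$ contains an equivariant copy of the shift of $\Gamma_j$. It gives you no control over which piece, and, more importantly, it provides no mechanism for defeating computable permutations acting on the \emph{image} space $3^\omega$: if you build a partition whose pieces are indexed by the permutations $\pi_e$ (say, $A_e$ is the set of $x$ on which $\pi_e$ misbehaves relative to $F$), the conclusion of Theorem~\ref{g2} is that the copy of the shift lands \emph{inside} one such $A_j$, i.e., the bad behaviour of $\pi_j$ persists on the entire range --- exactly the wrong polarity. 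This is why, in the paper, the game theorems are used only for the \emph{negative} results (Theorem~\ref{cor:diagonalization}, Theorem~\ref{thm:closed_under_countable_joins}): one assumes the desired diagonalizing colorings exist on every piece, applies the game, and derives a Borel $2$-coloring of $G(\Z,\cantor)$, which does not exist. Your statement that ``the Ramsey-type rigidity delivered by the free product game is meant to make three colors suffice'' has the roles reversed: the game is what shows that \emph{two} colors do \emph{not} suffice.

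What actually makes three colors suffice is an elementary positive fact your proposal never invokes: every Borel graph of degree at most $2$ admits a Borel $3$-coloring (Kechris--Solecki--Todorcevic \cite{MR1667145}). The paper's proof (Theorem~\ref{group_ri}, then Corollary~\ref{universality_of_ri}) constructs the coding map $f$ (hence the homomorphism $\hat{f}$) directly, with no games: coordinates in two fixed sets $S_0, S_1$ carry the constant values $0,1$, which kills every computable permutation that mixes infinitely many columns or interacts too much with the leftover set $Z$; coordinates in $S_2$ carry a reduction of equality on $X$ to equality-mod-finite, which pins down, for each remaining permutation $\rho$, a single candidate partial Borel injection $g_\rho$ (with coordinatewise versions $g_{\rho,n}$) that $\rho$ could possibly implement; and finally, for each such $\rho$ and each $n$ in a reserved set $S_{3,\rho}$, one sets $f(y)(n) = c_{\rho,n}(y)$ where $c_{\rho,n}$ is a Borel $3$-coloring of the degree-$\leq 2$ graph generated by $g_{\rho,n}$. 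The coloring guarantees $f(y)(n) \neq f(g_{\rho,n}(y))(n)$ whenever $g_{\rho,n}(y)$ is defined and distinct from $y$, so $\rho$ can never identify images of inequivalent points. This is precisely the step your ``marker scheme'' needs and that the game cannot supply; it is also exactly why the same construction on $2^\omega$ yields only \emph{measure} universality (degree-$2$ Borel graphs need not have Borel $2$-colorings, but they do modulo a null set, Lemma~\ref{measure_diagonalization}).
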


This argument hinges on reducing the problem of universality to a
combinatorial problem involving Borel colorings of a family of $2$-regular
Borel graphs. The problem can be solved if we are working on $3^\omega$,
however on $2^\omega$, the problem can not be solved, as we demonstrate
using our new game-theoretic tools (see Theorem~\ref{countable_3_coloring}). 
Indeed, the games of this paper and of~\cite{MR3454384} were originally developed specifically to solve this
problem.
We also restate
this combinatorial problem in computability-theoretic language, showing that it
is equivalent to an inability to control the computational power of countable uniform joins
in a seemingly simple context (see Lemma~\ref{cor:diagonalization}). These
results are the basis of many of our results mentioned above giving
families of equivalence relations which are not uniformly universal.

Now while this combinatorial problem related to the universality of
recursive isomorphism on $\cantor$ can not be solved in general, it turns
out that it can be solved modulo a nullset with respect to any Borel
probability measure. 
This leads to the following theorem, and part
(2) of Theorem~\ref{uu_properties}. Recall that a countable Borel
equivalence relation $E$ is said to be measure universal if given any Borel
equivalence relation $F$ on a standard probability space $(X,\mu)$, there
is a Borel $\mu$-conull set $A$ such that $F \restrict A$ can be Borel
reduced to $E$.

\begin{thm}
  Recursive isomorphism on $2^\omega$ is a measure universal countable
  Borel equivalence relation.
\end{thm}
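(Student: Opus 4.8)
The plan is to show that recursive isomorphism on $2^\omega$ is measure universal by combining the general reduction of universality to a Borel-coloring problem (the same reduction that yields universality of recursive isomorphism on $3^\omega$) with the observation that, although the underlying combinatorial problem cannot be solved everywhere on $2^\omega$, it can always be solved after discarding a nullset. Concretely, I would start from a known uniformly universal countable Borel equivalence relation $E$ generated by a free Borel action of $\F_2$, and recall the combinatorial reformulation alluded to in the excerpt: reducing $E$ to recursive isomorphism amounts to producing a Borel assignment of codes that respects the group action, which in turn reduces to finding a Borel coloring (with values coding elements of $2$) of a certain family of $2$-regular Borel graphs $G$ built from the generators of the action. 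On $3^\omega$ one has a third symbol available to break the relevant parity obstruction, which is what makes the coloring exist outright; on $2^\omega$ the obstruction is genuine and is exactly the phenomenon exhibited by Theorem~\ref{countable_3_coloring}.

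The key step is then to invoke the measure-theoretic escape. For a $2$-regular Borel graph, the obstruction to the needed Borel coloring is a cohomological/parity condition concentrated on the aperiodic part and on bi-infinite lines and cycles of odd length; standard Borel-combinatorial facts (e.g.\ that a $2$-regular acyclic Borel graph admits a Borel proper $2$-coloring off a nullset, and that odd cycles can be handled by deleting a single Borel-chosen edge per cycle) show that for \emph{any} fixed Borel probability measure $\mu$ the coloring problem has a solution on a $\mu$-conull Borel set. I would make this precise by fixing $\mu$, decomposing the graph into its periodic and aperiodic parts, using a Borel maximal-independent-set or a Borel marker argument to $2$-color the aperiodic part modulo $\mu$-null error, and handling the periodic (cycle) part by a Borel transversal that selects one edge per cycle to omit, the omitted edges forming a $\mu$-null set once we verify there are only countably many "bad" parity classes with positive measure (in fact none, after a suitable ergodic-decomposition or Lusin–Novikov enumeration argument). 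Feeding this conull coloring back through the reduction produces, on a $\mu$-conull Borel set $A$, a Borel reduction of $E \restrict A$ into recursive isomorphism on $2^\omega$, and since $E$ is universal this exhibits recursive isomorphism on $2^\omega$ as measure universal.

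The main obstacle I anticipate is controlling the nullset uniformly: the coloring must be fixed \emph{before} knowing the target measure in the definition of measure universality, yet the definition in fact allows the conull set $A$ to depend on $(X,\mu)$, so the real content is to verify that the parity/cohomological obstruction sitting on the $2$-regular graphs is always supported on a $\mu$-nullset, rather than merely being removable by an uncontrolled correction. This is where one must argue that the only places the coloring genuinely fails are (i) periodic points lying on odd cycles and (ii) a measure-theoretically negligible boundary arising from the marker construction; both are handled by classical results (the Borel cycle-selection lemma and the fact that aperiodic $2$-regular Borel graphs are $\mu$-hyperfinite hence $2$-colorable modulo null), so the crux is packaging these into the specific combinatorial statement supplied by Lemma~\ref{cor:diagonalization} and checking that its failure set is $\mu$-null. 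Once that measure-zero localization of the obstruction is established, the remainder of the argument is a direct transport of the $3^\omega$ universality proof through the conull coloring.
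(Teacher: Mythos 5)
Your high-level plan is indeed the paper's plan: run the same coding scheme that proves universality of recursive isomorphism on $3^\omega$ (Theorem~\ref{group_ri}), and replace the one step that genuinely needs a third symbol --- proper Borel colorings of the degree-$\leq 2$ graphs generated by the partial injections $g_{\rho,n}$ arising in the construction --- by a statement that only needs to hold modulo a null set for each fixed measure. But the measure-theoretic step you rely on is false. A $2$-regular acyclic Borel graph need \emph{not} admit a proper Borel $2$-coloring off a nullset: consider $G(\Z,2)$ on $\Free(2^{\Z})$ with the Bernoulli measure and the shift $S$. If $c$ were a measurable proper $2$-coloring defined on a conull set, then on the conull invariant set $\biginters_n S^n(\dom(c))$ we would have $c \circ S = 1 - c$ a.e., hence $c \circ S^2 = c$ a.e.; since $S^2$ is ergodic, $c$ would be a.e.\ constant, a contradiction. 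In particular ``$\mu$-hyperfinite hence $2$-colorable modulo null'' is a non sequitur: hyperfiniteness does not remove the parity obstruction, which is an ergodicity obstruction and survives the passage to conull sets. Since the graphs occurring in the construction can perfectly well be ergodic (they come from elements of a shift action), your plan of $2$-coloring each graph separately mod null fails at the very first such graph.

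The paper's actual escape (Lemma~\ref{measure_diagonalization}) proves something weaker in exactly the way needed, with a different quantifier structure: given countably many partial Borel injections $g_i$ and a measure $\mu$, there are Borel maps $c_i$ on a conull set such that for a.e.\ $x$ \emph{some} $i$ satisfies $c_i(x) \neq c_i(g_i(x))$ (or $g_i$ is trivial at $x$); no single graph gets properly colored. This is achieved by coordinating two of the graphs against each other: Lemma~\ref{dcs_small} (resting on Theorem 1.7 of \cite{MR3454384}, a measure-theoretic statement that is provably false in the pure Borel setting) provides a Borel set $B$ meeting $\mu$-a.e.\ class of the first graph whose complement meets $\mu$-a.e.\ class of the second; one then deletes the edges of the first graph over $B$ and of the second over $\comp{B}$. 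Each cut graph becomes properly $2$-colorable off a null set (its components acquire endpoints, so parity can be anchored), while every point keeps an intact, properly colored edge in at least one of the two graphs --- which is exactly what the ``some $i$'' conclusion requires. Your proposal has nothing playing this coordinating role: deleting edges (per odd cycle, or at marker boundaries) independently in each graph can leave a point all of whose edges are deleted across the entire family, and at such a point nothing certifies that the candidate map is a reduction. So the gap is concrete: the per-graph mod-null coloring claim is false, and the repair is not a local patch but the complementary-cutting lemma, which is the real content of the measure case.
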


Hence, measure-theoretic tools can not be used to show that recursive
isomorphism on $\cantor$ is not universal.

Though it remains open whether
recursive isomorphism is uniformly universal, we can rule out a large class
of uniform proofs based on the close connection between recursive
isomorphism and many-one equivalence. Thus, we conjecture the following: 

\begin{conj}\label{ri_not_universal}
  Recursive isomorphism on $2^\omega$ is not a universal countable Borel
  equivalence relation.   
\end{conj}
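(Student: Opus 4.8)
Since this statement is a conjecture rather than a theorem, I outline a strategy one might pursue and indicate where the essential difficulty lies. The natural plan is to route through Conjecture~\ref{intro_uu_conj}: a universal countable Borel equivalence relation ought to be uniformly universal with respect to every way it is generated, so it would suffice to exhibit a single generating family for recursive isomorphism on $\cantor$ under which it fails to be uniformly universal. Recursive isomorphism has a canonical presentation as the orbit equivalence relation of the computable permutations of $\omega$, and it is the finest of the computability-theoretic relations appearing in part (5) of Theorem~\ref{uu_classifications}. First I would try to show that this canonical presentation is \emph{not} uniformly universal, and then invoke the ``universal implies uniformly universal'' direction of Conjecture~\ref{intro_uu_conj} to deduce non-universality.

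To attack uniform universality of the canonical presentation, the plan is to build on the proof of part (5) of Theorem~\ref{uu_classifications} and the combinatorial obstruction underlying it. By Theorem~\ref{countable_3_coloring}, on $\cantor$ the relevant family of $2$-regular Borel graphs cannot be Borel-colored using only the two symbols available, whereas three colors—available in $3^\omega$—do suffice; equivalently, by Lemma~\ref{cor:diagonalization}, one cannot Borel-control the computational power of countable uniform joins. The key step would be to show that a uniform reduction of a universal relation into recursive isomorphism on $\cantor$ forces such a coloring: uniformity makes the associated cocycle a group homomorphism, which pins down coherently along each orbit which of the symbols $0,1$ is placed at each coordinate, and this coherence is precisely a proper coloring of the graphs in question with too few colors. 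That only two symbols are available is exactly what makes the obstruction bite here, in contrast with recursive isomorphism on $3^\omega$, where the third symbol supplies the missing color and the relation is genuinely universal.

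The principal obstacle is that this coloring argument, as in part (5) of Theorem~\ref{uu_classifications}, really governs only many-one equivalence, which is strictly coarser than recursive isomorphism. A uniform reduction into recursive isomorphism is a uniform reduction into many-one equivalence in the forward direction only—two reals can be many-one equivalent without being recursively isomorphic—so the non-uniform-universality of many-one equivalence does not transfer wholesale. Thus the coloring obstruction rules out precisely those uniform reductions that factor through many-one equivalence (a large class, as the introduction notes, but not all of them), leaving open the reductions that exploit the finer permutation structure of recursive isomorphism. Closing this gap unconditionally would require both a proof of Conjecture~\ref{intro_uu_conj} and a further argument handling uniform reductions that use the full strength of computable permutations rather than arbitrary many-one reductions; I expect this latter rectification, rather than the coloring step itself, to be the crux of any eventual proof.
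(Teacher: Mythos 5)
This statement is a \emph{conjecture} in the paper; there is no proof to compare against, only the motivating discussion surrounding it, and your outline reproduces that motivation faithfully: route non-universality through uniform universality via Conjecture~\ref{intro_uu_conj}, and attack uniform universality through the countable-joins/coloring obstruction (Theorem~\ref{cor:diagonalization}, Theorem~\ref{countable_3_coloring}) underlying Theorem~\ref{thm:closed_under_countable_joins}. Your closing diagnosis also matches the paper's own: the obstruction genuinely applies only to relations closed under countable uniform joins, such as many-one equivalence, and the paper states explicitly that recursive isomorphism on $\cantor$ is \emph{not} closed under countable uniform joins, that Theorem~\ref{thm:closed_under_countable_joins} therefore does not apply to it, and that its uniform universality remains open. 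So both steps of your plan --- proving Conjecture~\ref{intro_uu_conj} and proving that the canonical presentation of recursive isomorphism is not uniformly universal --- are themselves open problems, which is exactly why the statement is a conjecture rather than a theorem.

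One correction is needed: your claim that recursive isomorphism ``is the finest of the computability-theoretic relations appearing in part (5) of Theorem~\ref{uu_classifications}'' is wrong, and it contradicts your own final paragraph. Part (5) covers relations on $\cantor$ that are coarser than recursive isomorphism \emph{and} closed under countable uniform joins; recursive isomorphism itself fails the closure hypothesis and is not among the relations that theorem declares non-uniformly-universal. Had it been, your step 2 would already be a theorem and only Conjecture~\ref{intro_uu_conj} would remain open. Relatedly, saying the obstruction rules out ``precisely'' the uniform reductions that factor through many-one equivalence overstates what is established: the paper's point (see the constructions of Theorem~\ref{group_ri} and Corollary~\ref{universality_of_ri}, whose ranges consist of reals in which every bit is duplicated infinitely often, so that recursive isomorphism and many-one equivalence coincide there) is that all \emph{known} universality constructions factor this way and are therefore blocked on $\cantor$; no exact characterization of the blocked reductions is claimed. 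With those adjustments, your proposal is an accurate account of the evidence for the conjecture, but it is a conditional strategy with two open gaps, not a proof.
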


If this conjecture is true, it implies the existence
of a pair of Borel equivalence relations of different Borel cardinalities
($E_\infty$ and recursive isomorphism), for which we can not prove this
fact using measure-theoretic tools. Conjecture~\ref{ri_not_universal} also
implies that Turing equivalence is not universal, since it is Borel
reducible to recursive isomorphism via the Turing jump.

One of the motivations of this paper is the search for new tools with which to
study countable Borel equivalence relations. Currently, measure theoretic
methods are the only known way of proving nonreducibility among countable
Borel equivalence relations of complexity greater than $E_0$ (see
Question~\ref{reducible_vs_measure_reducible}). However, many important
open problems in the subject are known to be resistant to measure-theoretic
arguments. For example, it is open whether an increasing
union of hyperfinite Borel equivalence relations is
hyperfinite~\cite{MR1149121}, and whether recursive isomorphism on
$2^\omega$ is a universal countable Borel equivalence
relation~\cite{MR1770736}. However, an increasing union of hyperfinite
Borel equivalence relations is hyperfinite modulo a nullset with respect to
any Borel probability measure by a theorem of Dye and Kreiger~\cites{MR0131516, MR0158048,
MR0240279}, and every countable Borel equivalence relation can be embedded
into recursive isomorphism on $\cantor$ modulo a nullset by
Theorem~\ref{universality_of_ri} of this paper. 

One promising candidate for such a new, non-measure-theoretic tool is
Martin's ultrafilter on the Turing-invariant sets. Martin's
ultrafilter is sometimes called Martin measure. However, because we
will discuss Borel probability measures often in this paper, we will use
the terminology of ultrafilters to keep this distinction clear. Martin has
conjectured a complete classification of the homomorphisms from Turing
equivalence to itself with respect to this ultrafilter which would have
many consequences for the study of countable Borel equivalence relations. See~\cite{1109.1875} for a survey of
connections between Martin's conjecture and countable Borel equivalence
relations. Many of the results discussed there are due to Simon Thomas, who
first recognized the tremendous variety of consequences of Martin's
conjecture for the field of countable Borel equivalence relations, beyond
just the non-universality of Turing equivalence.

In Section~\ref{sec:ultrafilters}, we prove several
structure theorems for certain countable Borel equivalence relations using
ultrafilters related to Martin's ultrafilter, but which are defined on the
quotient of equivalence relations finer than Turing equivalence. 
To begin, 
we generalize results from~\cite{1109.1875}, where we showed
that $E_\infty$ is not a smooth disjoint union of Borel equivalence
relations of smaller Borel cardinality and that
$E_\infty$ achieves its universality on a nullset with respect to any Borel
probability measure (which answered questions of
Thomas~\cite{MR2500091}*{Question 3.20} and Jackson, Kechris, and
Louveau~\cite{MR1900547}*{Question 6.5.(C)}). We show that these results
are all true with $E_\infty$ replaced with a larger class of universal
structurable Borel equivalence relations which also includes, for example,
the universal treeable countable Borel equivalence relation, and the
equivalence relation of isomorphism of contractible simplicial complexes.

Our proof of these results uses our games to define a $\sigma$-complete
ultrafilter $U$ on the $\sigma$-algebra of Borel $E$-invariant sets for
each of these equivalence relations $E$. These ultrafilters are very
closely related to Martin's ultrafilter; the equivalence relations $E$ we
consider are all subsets of Turing equivalence, and our new ultrafilters
agree with Martin's ultrafilter when restricted to Turing invariant sets.
Further, these ultrafilters have structure-preserving properties
reminiscent of Martin's ultrafilter; if $A \in U$, then $E \leq_B E
\restrict A$ (i.e. $U$ preserves the Borel cardinality of $E$). 

We briefly describe the class of equivalence relations for which we obtain
these ultrafilters. 
Suppose $\K$ is a
Borel class of countable structures closed under isomorphism. Then a
countable Borel equivalence relation $E$ is said to be $\K$-structurable if
there is a Borel way of assigning a structure from $\K$ to every $E$-class
whose universe is that $E$-class. This notion was defined by Jackson,
Kechris and Louveau in~\cite{MR1900547}, and Ben Miller has pointed out
that their ideas can be used to show that 
for every such $\K$, there
is a universal $\K$-structurable countable Borel equivalence relation
$E_{\infty \K}$ (see Theorem~\ref{universal_structurable}). 
It is these equivalence relations for which we obtain our ultrafilters,
under the assumption of one more condition: that the class
 of
$\K$-structurable equivalence relations is closed under independent joins
(see Section~\ref{subsec:k_structurable} for a definition).
We now state our result precisely (see Theorems~\ref{universal_on_null},
\ref{K_ultrafilters}, and \ref{K_embedding}). 

\begin{thm}\label{KS_properties}
  Suppose $\K$ is a Borel class of countable structures closed under
  isomorphism, and let $E_{\infty \K}$ be the universal $\K$-structurable
  equivalence relation on the space $Y_{\infty \K}$. Then
  \begin{enumerate}
  \item If $\mu$ is a Borel probability measure on $Y_{\infty \K}$, there
  is a $\mu$-null Borel set $A$ so that $E_{\infty \K} \embeds^i_B
  E_{\infty \K} \restriction A$.
  \item If 
  the class of $\mathcal{K}$-structurable countable Borel
  equivalence relations is closed under binary independent joins, then there is a
  Borel cardinality preserving ultrafilter on the quotient space of
  $E_{\infty \K}$ and hence $E_{\infty \K}$ is not a smooth disjoint union of
  equivalence relations of smaller Borel cardinality. 
  \item If the class of $\mathcal{K}$-structurable countable Borel
  equivalence relations is closed under countable independent joins, then
  if $\{A_i\}_{i \in \omega}$ is a Borel partition of $E_{\infty \K}$ into
  countably many sets, then there exists some $A_i$ such that $E_{\infty
  \K} \embeds_B E_{\infty \K} \restriction A_i$. It follows that  
  for all
  countable Borel equivalence relations $F$, $E_{\infty K} \leq_B F$
  implies $E_{\infty K} \embeds_B F$.
  \end{enumerate}
\end{thm}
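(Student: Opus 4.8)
The plan is to prove all three parts from a single model of $E_{\infty\K}$ as a free-product shift, so that Theorems~\ref{g1} and~\ref{g2} apply on the nose. The construction of $E_{\infty\K}$ in Theorem~\ref{universal_structurable} realizes it (up to Borel biembeddability) as the orbit equivalence relation of the free-part shift of a countable group $\Gamma$, and the independent join of countably many group actions is generated by the shift of their free product. I would therefore replace $\Gamma$ by a free product of $\omega$ (resp.\ two) copies of itself: under closure of the $\K$-structurable relations under $\omega$-fold (resp.\ binary) independent joins this free product is again $\K$-structurable, hence lies below $E_{\infty\K}$ by universality, while it visibly contains a copy of $E_{\infty\K}$, so the two are biembeddable. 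By the Borel Schr\"oder--Bernstein theorem for countable Borel equivalence relations this upgrades to a Borel isomorphism, so I may assume outright that $E_{\infty\K}$ \emph{is} the shift relation of a free product $\fpGammaiI$ on $\Free((\cantor)^{\fpGammaiI})$, with $I$ of size $\omega$ for parts (1),(3) and size $2$ for part (2), and with each factor's shift itself biembeddable with $E_{\infty\K}$. Every conclusion below transfers back along this isomorphism.

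For part (3), take $I=\omega$ and let $\{A_i\}_{i\in\omega}$ be a Borel partition. Theorem~\ref{g2} supplies some $j$ and a $\Gamma_j$-equivariant injective continuous Borel reduction $f\from\Free((\cantor)^{\Gamma_j})\to A_j$; since the $j$-th factor's shift is biembeddable with $E_{\infty\K}$, this gives $E_{\infty\K}\embeds_B E_{\infty\K}\restrict A_j$. For the stated consequence, suppose $E_{\infty\K}\leq_B F$ via a Borel map $g$. Each fibre of $g$ sits inside a single $E_{\infty\K}$-class and is thus countable, so by Lusin--Novikov there is a Borel partition $\{Y_n\}_{n\in\omega}$ with each $g\restrict Y_n$ injective. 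The partition property yields an $n$ with $E_{\infty\K}\embeds_B E_{\infty\K}\restrict Y_n$, and then $g\restrict Y_n$ is an injective reduction witnessing $E_{\infty\K}\restrict Y_n\embeds_B F$; composing gives $E_{\infty\K}\embeds_B F$.

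For part (2), take $I=2$ and define $U$ on the $\sigma$-algebra of Borel $E_{\infty\K}$-invariant sets through the determinacy game underlying Theorems~\ref{g1} and~\ref{g2}: for invariant Borel $A$ declare $A\in U$ exactly when Builder wins the game for the two-piece partition $(A,\comp{A})$ of $\Free((\cantor)^{\Gamma_0 \ast \Gamma_1})$. Borel determinacy makes $U$ a genuine $2$-valued ultrafilter, and a Builder win produces an equivariant embedded copy inside $A$, so $A\in U$ implies $E_{\infty\K}\embeds_B E_{\infty\K}\restrict A$; this is the Borel cardinality preservation. Because the game has length $\omega$, Builder can interleave her moves to satisfy countably many objectives simultaneously, which is what yields $\sigma$-completeness of $U$. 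Finally, were $E_{\infty\K}$ a smooth disjoint union of relations of strictly smaller Borel cardinality, the union would be coded by an invariant Borel $c\from Y\to\cantor$ with components $c^{-1}(z)$; picking $z(n)$ so that $\{y : c(y)(n)=z(n)\}\in U$ and using $\sigma$-completeness, the single component $c^{-1}(z)$ lands in $U$, so $E_{\infty\K}\embeds_B E_{\infty\K}\restrict c^{-1}(z)$, contradicting that this component is strictly smaller.

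Part (1) is where the real work lies, and I expect the measure-refined construction to be the main obstacle. A pure partition argument cannot work: a probability measure forbids partitioning $\Free((\cantor)^{\fpGammaiI})$ into countably many null pieces, so the piece the game selects can always be forced into positive measure. Instead I would run a measure-refined version of the game of Theorem~\ref{g2}. Given $\mu$, Builder plays not only to construct the $\Gamma_j$-equivariant injective copy but additionally to drive the $\mu$-measure of the cylinders tracking its image to $0$, using the infinitely many mutually free directions of $\fpGammaiI$ to absorb this extra demand. The augmented payoff set is still Borel, so Borel determinacy delivers an equivariant injective copy with $\mu$-null image, that is, a $\mu$-null Borel set $A$ with $E_{\infty\K}\embeds^i_B E_{\infty\K}\restrict A$. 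The crux, and the reason part (1) warrants a separate and more delicate argument than parts (2) and (3), is verifying that the measure-shrinking objective is genuinely compatible with the equivariance and injectivity objectives inside one Borel game; the transfer and determinacy bookkeeping are routine by comparison.
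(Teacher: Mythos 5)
Your proposal gets the game-theoretic applications in parts (2) and (3) right in spirit (they do match the paper's use of Theorems~\ref{g1} and \ref{g2}, and your Lusin--Novikov and ergodicity arguments for the stated consequences are exactly the paper's), but the load-bearing first step --- realizing $E_{\infty \K}$ up to Borel \emph{isomorphism} as the free-part shift relation of a free product, with each factor a copy of $E_{\infty\K}$ --- has two genuine gaps, and closing them is precisely the technical content of the paper's proofs of Theorems~\ref{K_ultrafilters} and \ref{K_embedding}. First, $\K$-structurability transfers only along class-bijective homomorphisms, and nothing guarantees that the factor relations $E_i$ (the $\Gamma_i$-orbit relations on $\Free((\prod_i \cantor)^{\fpGammaiI})$) are $\K$-structurable: the natural projections $\pi_i$ need not be class bijective there, and the full shifts $E(\Gamma_i,\cantor)$ are in general not $\K$-structurable (for $\K$ the class of trees they are universal, hence not treeable). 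Worse, your factors act \emph{freely} on the free part; so for $\K$ the class of all countable structures, where your closure hypotheses hold trivially and $E_{\infty\K}=E_\infty$, your claim would make $E_\infty$ bi-embeddable with the orbit relation of a free Borel action of a countable group --- an open problem which Theorem~\ref{universal_structurable} certainly does not settle (the $\F_\omega$-action it produces is far from free). This is exactly why the paper does not work on the free part: it fixes class-bijective embeddings of $E_{\infty\K}$ with ranges $\hat{X_i} \subset (\cantor)^{\Gamma_i}$, uses modified projections $\pi_i'$ and the stabilizer-matching sets $X_i$ to cut down to an invariant set $\hat{Y}$ on which each $E_i$ \emph{is} $\K$-structurable and the factors are independent, pads partitions with the sets $B_i, C_i$ from Lemmas~\ref{partition_non_independent} and \ref{Y_complement_lemma}, and concludes only that $E=\bigvee_i (E_i \restrict \hat{Y})$ is bireducible and bi-embeddable with $E_{\infty\K}$ --- which suffices to transfer parts (2) and (3). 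Second, there is no ``Borel Schr\"oder--Bernstein theorem'' for arbitrary embeddings of countable Borel equivalence relations: $E_0 \disjointunion \Delta(\cantor)$ and $E_0$ are Borel bi-embeddable but not Borel isomorphic (isomorphism preserves class cardinalities, and only one of them has singleton classes); the classical back-and-forth bijection fails to be a reduction because $g(f(x))$ need not be equivalent to $x$. Schr\"oder--Bernstein is available only for \emph{invariant} embeddings, and you do not have invariance in both directions. A smaller point: $\sigma$-completeness of the game ultrafilter is not obtained by ``interleaving countably many objectives in one play'' (distinct strategies would conflict on the same moves); the paper's Lemma~\ref{ultrafilter_lemma} instead uses Lemma~\ref{player_II_lemma} to realize different winning strategies at different group translates of a single point, together with invariance of the sets.

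Part (1) is a second genuine gap, and your instinct that it is ``where the real work lies'' points in the wrong direction: in the paper it is the easiest part and involves no games at all. Your measure-refined game cannot work as described, because ``the range of the induced equivariant map is $\mu$-null'' is a property of an entire strategy, not of any individual play, so it is not a payoff condition; and the determinacy machinery gives no control over which of an invariant null set and its conull complement ends up on the winning side. The paper's Theorem~\ref{universal_on_null} instead relativizes Sacks's theorem: choose $y$ Turing above a code for $\mu$, and modify the invariant embedding of Theorem~\ref{universal_structurable} using $g(x)(\gamma) = y \oplus \gamma^{-1}\cdot x$, so that the range lands inside the Turing cone above $y$, which has $\mu$-measure zero. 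That is the whole proof. Since you explicitly leave your crux unverified, part (1) is unproven in your proposal --- and even its statement on the free-product model would require the isomorphism from the first gap to transfer $\mu$ back to $Y_{\infty\K}$.
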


In a future paper joint with Adam Day, we use the ultrafilter constructed
here for the universal treeable countable Borel equivalence relation to
prove a strengthening of Slaman and Steel's result from \cite{MR960895}
that Martin's conjecture for Borel functions is equivalent to every Turing
invariant function being uniform Turing invariant on a pointed perfect set.

We hope that these
ultrafilters will continue to be useful tools for studying countable Borel
equivalence relations in the future. Because they preserve
Borel cardinality, they have the potential for proving much sharper
theorems than other tools which do not necessarily have this property, such
as Borel probability measures. 
Finally, we remark that equivalence
relations of the form $E_{\infty \K}$ appear to be underappreciated as
natural examples of countable Borel equivalence relations, 
and many interesting open questions exist regarding how
model-theoretic properties of structures in the class $\K$ influence the
complexity of the resulting $E_{\infty \K}$\footnote{Since a first draft
of this paper was circulated, these questions have been 
investigated by Chen and Kechris \cite{ChenKechris}}.

\subsection{Basic definitions, notation, and conventions}
\label{subsec:notation}

Throughout we will use $X$, $Y$, and $Z$ for standard Borel spaces, $x$,
$y$, and $z$ for
elements of these spaces, and $A$, $B$, and $C$ for subsets of standard Borel spaces
(which will generally be Borel). We will use $f$, $g$, and $h$ for functions
between standard Borel spaces. If $A$ is a subset of a standard Borel
space, we will use $\comp{A}$ to denote its complement.

A \define{Borel equivalence relation} on a standard Borel space $X$ is an
equivalence relation on $X$ that is Borel as a subset of $X \times X$. We
will generally use $E$ and $F$ to denote Borel equivalence relations. If
$E$ and $F$ are Borel equivalence relations on the standard Borel spaces
$X$ and $Y$, then $f \from X \to Y$ is said to be a \define{homomorphism} from
$E$ to $F$ if for all $x, y \in X$, we have $x E y \implies f(x) F f(y)$.
We say that $E$ is \define{Borel reducible}
to $F$, noted $E \leq_B F$, if there is a Borel function $f \from X \to Y$
so that for all $x, y \in X$, we have $x E y \iff f(x) F f(y)$. 
Such a function
induces an injection $\hat{f} \from X/E \to Y/F$. The class of Borel equivalence
relations under $\leq_B$ has a rich structure that been a major topic of
research in descriptive set theory in the past few decades. The field
has had remarkable success both in calibrating the difficulty of
classification problems of interest to working mathematicians, and also in
understanding the abstract structure of the space of all classification problems.

If $E$ is a Borel equivalence relation on the standard Borel
space $X$, then $A \subset X$ is said to be \define{$E$-invariant} if $x \in A$ and
$x E y$ implies $y \in A$. If a group $\Gamma$ acts on a space $X$, then we
say that $A \subset X$ is \define{$\Gamma$-invariant} if it is invariant
under the orbit equivalence relation of the $\Gamma$ action.
If $P$ is a Borel property of elements of $x$,
then we will often consider the largest $E$-invariant subset of $X$
possessing this property. Precisely, this is the set of $x$ such that for
all $y \in X$ where $y E x$, $y$ has property $P$. 

If $E$ and $F$ are equivalence relations on standard Borel spaces $X$ and
$Y$, then a \define{Borel embedding} of $E$ into $F$ is an injective Borel reduction
from $E$ to $F$. If there is a Borel embedding from $E$ to $F$ we denote
this by $E \embeds_B F$. An \define{invariant Borel
embedding} is one whose range is $F$-invariant. If there is an invariant
Borel embedding from $E$ to $F$ we denote this by $E \embeds_B^i F$. 

A Borel equivalence relation is said to be \define{countable} if all of its
equivalence classes are countable. 
A countable Borel equivalence relation
$E$ is said to be \define{universal} if for all countable Borel equivalence
relations $F$, we have $F \leq_B E$. Universal countable Borel equivalence
relations arise naturally in many areas of mathematics. For example,
isomorphism of finitely generated groups~\cite{MR1700491}, conformal equivalence of Riemann
surfaces~\cite{MR1731384}, and isomorphism of locally finite connected
graphs~\cite{MR1791302} are all
universal countable Borel equivalence relations.

If $E$ is a countable Borel equivalence relation on $X$, we will use $\phi,
\psi, \theta$ to denote partial Borel functions $X \to X$ whose graphs are
contained in $E$. By a theorem of Feldman and Moore~\cite{MR0578656}, for
every countable Borel equivalence relation $E$, there exists 
countably many Borel involutions $\{\phi_i\}_{i \in \omega}$ of $X$ such that $x E y$ if
and only if there exists an $i$ such that $\phi_i(x) = y$. Hence, every
countable Borel equivalence relation is generated by the Borel action of
some countable group. 

Throughout, we use $\Gamma$ and $\Delta$ to denote countable groups, which
we always assume to be discrete, and we use the lowercase $\alpha, \beta, \gamma,
\delta$ for their elements. 
If $X$ is a standard Borel space, then so is
the space $X^{\Gamma}$ of functions from $\Gamma$ to $X$ whose standard
Borel structure arises from the product topology. 
We let $E(\Gamma,X)$ denote the equivalence relation on
$X^{\Gamma}$ of orbits of the left shift action where $x E(\Gamma,X) y$ if
there is a $\gamma \in \Gamma$ such that $\gamma \cdot x = y$. By
\cite{MR1149121}, if $\Gamma$ contains a subgroup isomorphic
to the free group $\F_2$ on two generators, 
and $X$ has cardinality $\geq 2$, then $E(\Gamma,X)$ is a universal countable Borel
equivalence relation. 

If a group $\Gamma$ acts on a space $X$, then the \define{free part} of this action
is the set $Y$ of $x \in X$ such that for every nonidentity $\gamma \in
\Gamma$, we have $\gamma \cdot x \neq x$. We use the notation
$\Free(X^{\Gamma})$ to denote the free part of the left shift action of
$\Gamma$ on $X^\Gamma$. We will also let $F(\Gamma,X)$ denote the
restriction of the equivalence relation $E(\Gamma,X)$ to this free part. 

A \define{Borel graph} on a standard Borel space $X$ is a symmetric irreflexive
relation on $X$ that is Borel as a subset of $X \times X$. If $\Gamma$ is a
marked countable group (i.e. a group equipped with a generating set) and $X$ is a standard Borel space, then we let
$G(\Gamma, X)$ note the graph on $\Free(X^\Gamma)$ where there is an edge
between $x$ and $y$ if there is a generator $\gamma$ of $\Gamma$ such that
$\gamma \cdot x = y$ or $\gamma \cdot y = x$. Hence, the connected
components of $G(\Gamma,Y)$ are the equivalence classes of $F(\Gamma,X)$.

A \define{Borel $n$-coloring} of a Borel
graph $G$ is a function $f$ from the vertices of $G$ to $n$ such that if
$x$ and $y$ are adjacent vertices, then $f(x) \neq f(y)$. A fact we use
several times is that $G(\Z,2)$ has no Borel $2$-coloring, equipping the
additive group $\Z$ with its usual set of $\{1\}$
(see~\cite{MR1667145}). A graph is said to be \define{$d$-regular} if every vertex
of the graph has exactly $d$ neighbors. We also have from~\cite{MR1667145}
that every Borel $d$-regular graph has a Borel $(d+1)$-coloring.

Give two reals $x, y \in \cantor$, the \define{join} of $x$ and $y$, noted $x \join
y$ is defined by setting $(x \join y)(2n) = x(n)$ and $(x \join y)(2n+1) =
y(n)$ for all $n$. The join of finitely many reals is defined analogously.
If we fix some computable bijection $\<\cdot,\cdot\>
\from \omega^2 \to \omega$, we define the uniform computable
join $\bigjoin_{i \in \omega} x_i$ of countably many reals $x_0, x_1,
\ldots$ by setting $\bigjoin_{i \in \omega} x_i (\<n,m\>) = x_n(m)$.
If $s \in 2^{< \omega}$ and $x \in \cantor$, we use $s \concat x$ to denote
the concatenation of $s$ followed by $x$.

We use $x'$ to denote the Turing jump of a real $x$. If $\alpha$ is a
notation for a computable ordinal, then we let $x^{(\alpha)}$ denote the 
$\alpha$th iterate of the Turing jump relative to $x$. 

\subsection{Acknowledgments}

Theorems~\ref{some_family} and \ref{universality_of_ri} are from the
author's thesis \cite{MarksPhD}. The author would like his thesis advisor,
Ted Slaman, for many years of wise advice.

The author would also like to thank Clinton Conley, Adam Day, Alekos
Kechris, Ben Miller, Jan Reimann, Richard Shore, John Steel, Simon Thomas,
Anush Tserunyan, Robin Tucker-Drob, Jay Williams, Hugh Woodin, and Jind\v{r}ich
Zapletal for many helpful conversations. 

\pagebreak

\section{Games and equivariant functions} \label{sec:main_idea}

\subsection{The main game}
\label{subsec:games_intro}

In this section we introduce the games which are the main technical tool of
this paper. They are the natural
generalization of the games in \cite{MR3454384} to free products of
countably many groups, and we use many of the same ideas as that paper.
Throughout this section, we fix $I \leq \omega$ and 
some countable collection $\{\Gamma_i\}_{i \in I}$ of disjoint countable
groups. For each $i \in I$, we also fix a listing $\gamma_{i,0}, \gamma_{i,1}
\ldots$ of the nonidentity elements of $\Gamma_i$. 
We will often abbreviate our indexing for clarity. For example, we write
$\left(\prod_i \cantor
\right)^{\fpGammaiI}$ instead of $\left(\prod_{i \in I} \cantor
\right)^{\fp_{i \in I} \Gamma_i}$.

Of course, a countable product of copies of $\cantor$ is homeomorphic to
$\cantor$. However, throughout this section we will work with the space $\prod_i
\cantor$ instead of $\cantor$ to streamline the notation in some of our proofs. We also
will not use any particular properties of $\cantor$ in this section, which
could be replaced by $\baire$ or even the space $2$ (that is, $\prod_i
\cantor$ would become $\prod_i 2$). We use the space
$\cantor$ since it will be convenient in Section~\ref{sec:ultrafilters}. 

We begin with a definition we use throughout in order to partition $\fp_i \Gamma_i$:

\begin{defn}
A group element $\alpha \in \fpGammaiI$ is called a  
\define{$\Gamma_j$-word} if $\alpha$ is not the identity and it begins with an element of
$\Gamma_j$ as a reduced word.
\end{defn}

Thus, the group $\fp_i \Gamma_i$ is the disjoint union of the set
containing the identity $\{1\}$, and the set of $\Gamma_j$-words for
each $j \in I$.

Fix any $j \in I$. 
We will be considering games
for building an element $y \in \left(\prod_i \cantor \right)^{\fpGammaiI}$
where player I defines $y$ on $\Gamma_j$-words, player II
defines $y$ on all other nonidentity group elements, and both
players contribute to defining $y$ on the identity. 
We begin by giving a definition that we use to organize the turns on
which the bits of $y(\alpha)$ are defined. 

\begin{defn}\label{turn_defn}
  We define the \define{turn
  function}
  $t \from \fpGammaiI \to \omega$ as follows. First we set $t(1) = 0$. Then, for each
  nonidentity element $\alpha$ of the free product $\fpGammaiI$, there is a
  unique sequence $(i_0,k_0), \ldots, (i_n,k_n)$ such that $i_m \neq i_{m+1}$
  for all $m$ and $\alpha = \gamma_{i_0,k_0} \ldots
  \gamma_{i_n,k_n}$. We define $t(\alpha)$ to be the 
  least $l$ such that  
  $i_m + m < l$ and $k_m +m < l$ for all $m \leq n$. 
\end{defn}

The key property of this definition is that if $i,k \leq l+1$ and $\alpha$
is not a $\Gamma_i$-word, then $t(\gamma_{i,k} \alpha) \leq l+1$ if and
only if $t(\alpha) \leq l$. We also have that for each $l$ there are only
finitely many $\alpha$ with $t(\alpha) = l$. 

We will also define a partition $\{W_i\}_{i \in I}$ of the set $\fpGammaiI
\times I$. 
\begin{defn}
  For each $j \in I$, let 
  $W_j$ be the set of $(\alpha,i) \in \fpGammaiI \times I$
  such that $\alpha$ is a $\Gamma_j$-word, or $\alpha = 1$ and $i = j$.
\end{defn}

We are now ready to define our main game:

\begin{defn}[The main game]\label{game_defn}
  Fix a bijection $\< \cdot, \cdot\> \from I \times \omega \to \omega$. 
  Given any $A \subset \left(
  \prod_{i} \cantor \right)^{\fpGammaiI}$, and any $j \in I$, we define
  the following game $G^A_j$ for producing $y \in \left(
  \prod_{i} \cantor \right)^{\fpGammaiI}$. 
  Player I goes
  first, and on each turn the players alternate defining $y(\alpha)(i)(n)$ for finitely
  many triples $(\alpha,i,n)$. 
  Player I will determine $y(\alpha)(i)(n)$ if
  $(\alpha,i) \in W_j$ and Player II will determine $y(\alpha)(i)(n)$
  otherwise. Finally, the value $y(\alpha)(i)(n)$ will be defined on turn $k$ of the
  game by the appropriate player 
  if $t(\alpha) + \<i,n\> = k$. Player I wins the game if and only if the $y$ that is
  produced is not in $A$.
\end{defn}

Note that $y(\alpha)(i)$ is an element of
$\cantor$ and so $y(\alpha)(i)(n)$ is its $n$th bit. Note also that 
$t(\alpha)$ is the first turn on
which $y(\alpha)(i)(n)$ is defined for some $i$ and $n$, and on turn
$t(\alpha) + l$, we have that $y(\alpha)(i)(n)$ is defined for the $l$th
pair $(i,n)$. 

Now we prove two key lemmas. The first concerns strategies for player
I:

\begin{lemma}\label{player_I_lemma}
  Suppose $\{A_i\}_{i \in I}$ is a partition of $\left(\prod_{i} \cantor
  \right)^{\fpGammaiI}$. Then player I can not have a winning strategy in
  $G^{A_i}_i$ for every $i \in I$.
\end{lemma}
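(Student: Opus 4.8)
The plan is to argue by contradiction. Suppose that for every $i \in I$ Player I has a winning strategy $\sigma_i$ in the game $G^{A_i}_i$. I would then play all of these strategies against one another simultaneously, feeding the moves produced by each strategy into the others as Player II moves, and thereby construct a single $y \in \left(\prod_i \cantor\right)^{\fpGammaiI}$ that is at once the outcome of a run of \emph{every} $G^{A_i}_i$ in which Player I follows $\sigma_i$. Since each $\sigma_i$ is winning, this $y$ will satisfy $y \notin A_i$ for all $i$, contradicting that $\{A_i\}_{i \in I}$ is a partition.

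The observation that makes this possible is that the bits of $y$ are partitioned among the games according to which player controls them. Each pair $(\alpha,i)$ lies in $W_j$ for exactly one $j \in I$: if $\alpha \neq 1$ then $\alpha$ is a $\Gamma_j$-word for a unique $j$, while if $\alpha = 1$ then $(\alpha,i) \in W_j$ precisely when $j = i$. Hence every bit $y(\alpha)(i)(n)$ is a Player I move in exactly one game $G^{A_j}_j$ — call this game the \emph{owner} of the bit — and a Player II move in all the others. I would build $y$ by induction on the turn number $k$, using the fact that $y(\alpha)(i)(n)$ is scheduled for turn $t(\alpha) + \<i,n\>$ and that only finitely many bits are scheduled at each turn. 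Assuming all bits scheduled at turns $< k$ have been fixed — this is exactly the complete history of every game through turn $k-1$ — I define each bit scheduled at turn $k$ to be the move its owning strategy $\sigma_j$ prescribes in $G^{A_j}_j$ in response to that history. Because Player I moves first on each turn, $\sigma_j$'s turn-$k$ move depends only on the history through turn $k-1$, so the recursion is well-defined and the key property of the turn function guarantees that everything $\sigma_j$ needs to consult has already been determined.

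The point requiring care, which I expect to be the main obstacle, is verifying that this single object $y$ is genuinely a legal run of each $G^{A_j}_j$ with Player I following $\sigma_j$. The subtlety is that one and the same bit is a Player I move in its owning game but a Player II move in every other game; in particular, the Player II moves at turn $k$ in $G^{A_j}_j$ are supplied by the turn-$k$ Player I moves of the other strategies. I would resolve this by noting that there is no circular dependence within a single turn: since each owning strategy computes its turn-$k$ move from the through-turn-$(k-1)$ history alone, the bits we use as Player II's turn-$k$ responses in $G^{A_j}_j$ have already been computed without reference to Player I's turn-$k$ moves in $G^{A_j}_j$. As Player II is always free to ignore available information, the resulting sequence is a bona fide play of $G^{A_j}_j$, and the restriction of the global $y$ to this game is exactly that play. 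Because $\sigma_j$ is winning and Player I follows it throughout, the outcome satisfies $y \notin A_j$. Since this holds for every $j \in I$, the point $y$ lies in no $A_j$, contradicting that $\{A_i\}_{i \in I}$ partitions $\left(\prod_i \cantor\right)^{\fpGammaiI}$. Note that this argument is purely combinatorial and does not itself invoke Borel determinacy; determinacy enters only when one wants to convert the failure of a winning strategy for Player I into a winning strategy for Player II.
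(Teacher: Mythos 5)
Your proposal is correct and is essentially the paper's own proof: both argue by contradiction, play the strategies $\sigma_i$ against one another simultaneously using the fact that the sets $W_j$ partition $\fpGammaiI \times I$ (so each bit has a unique owner and the moves never conflict), and induct on the turn number to produce a single $y$ lying outside every $A_i$. Your additional care about the absence of circular dependence within a single turn, and the remark that Player II may ignore Player I's turn-$k$ move, is exactly the content of the paper's observation that player I's moves in all games collectively determine the information needed for player II's responses.
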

\begin{proof}
  We proceed by contradiction. We claim that from such winning strategies,
  we could produce a $y$ that was
  simultaneously a winning outcome of player I's strategy in $G^{A_i}_i$
  for every
  $i \in I$, and hence $y \notin A_i$ for all $i \in I$, contradicting the fact
  that $\{A_i\}_{i \in I}$ is a partition.

  To see this, fix winning strategies for player I in each $G^{A_i}_i$.  
  %We
  %need to show that we can move for player II in each game $G^{A_i}_i$ in a
  %way that is consistent with the $y$ that will be constructed by the
  %strategies for player I in all the other games $G^{A_j}_j$ for $j \in I$.
  Inductively, assume all turns $< k$ of the games $G^{A_i}_i$ have been played and
  that we have already defined $y(\alpha)(i)(n)$ for all $(\alpha,i,n)$
  where $t(\alpha) + \<i,n\> < k$. Now on turn $k$ of the game, the winning
  strategies for player I in the games $G^{A_i}_i$ collectively define
  $y(\alpha)(i)(n)$ on all $(\alpha,i,n)$ where $t(\alpha) + \<i,n\> = k$.
  Since the sets $W_j$ partition $\fpGammaiI \times I$, there is no
  inconsistency between any of these moves in different games. This defines 
  $y(\alpha)(i)(n)$ for all $(\alpha,i,n)$ where $t(\alpha) + \<i,n\> < k+1$.
  We can now move
  for player II in each of the games $G^{A_i}_i$ using this information,
  finishing the $k$th turn of all these games. This completes the
  induction.
\end{proof}

There is a different way of viewing Lemma~\ref{player_I_lemma} which the
reader may find helpful. One can regard the games $G^{A_i}_i$ in
Definition~\ref{game_defn} as constituting a single game with countably
many players: one player for each $i \in I$. The role of player $i$ in this
multiplayer game corresponds to the role of player $I$ in the game
$G^{A_i}_i$. That is, in this multiplayer game we are still building an
element $y \in \left( \prod_{i} \cantor \right)^{\fpGammaiI}$, but now
player $i$ defines $y(\alpha)(i)$ for all $(\alpha,i) \in W_i$. The proof
of Lemma~\ref{game_defn} is essentially checking that the games $G^{A_i}_i$
fit together in this way. 

Now in this multiplayer game, instead of declaring a winner, one of the
players is instead declared the loser once play is over. That is, the
payoff set for the multiplayer game is a Borel partition $\{A_i\}_{i \in I}$ of
$\left( \prod_{i} \cantor \right)^{\fpGammaiI}$, and player $i$ loses if
the $y$ that is created during the game is in $A_i$. It is a trivial
consequence of Borel determinacy that in such a multiplayer game, there
must be some player $i$ so that the remaining players have a strategy to
collaborate to make player $i$ lose. (Otherwise, if every player has a
strategy to avoid losing, playing these strategies simultaneously yields an
outcome of the game not in any element of the partition).

We will make use of following projections from $(\prod_i \cantor)^{\fpGammaiI}$
to $(\cantor)^{\Gamma_i}$.

\begin{defn}\label{easy_proj}
  For each $i \in I$, let $\pi_i \from (\prod_i \cantor)^{\fpGammaiI} \to
  (\cantor)^{\Gamma_i}$ be the function $\pi_i(x)(\gamma) = x(\gamma)(i)$,
\end{defn}

  Note that $\pi_i$ is $\Gamma_i$-equivariant.

Our second key lemma is a way of combining strategies for player II
in the game $G_i$. In the alternate way of viewing things described
above, since there must be some player $i$ so that the remaining players
(which viewed jointly are player II in the game $G_i$) can collaborate to
make player $i$ lose, we are now interested in what can be deduced from the
existence of such a strategy. 

Note that below we speak of a strategy in the game $G_j$ instead of $G_j^A$
(suppressing the superscript) to emphasize that this lemma does not
consider a particular payoff set. Eventually in Lemma~\ref{ultrafilter_lemma} we
will apply this lemma when $s_\gamma$ depends on $\gamma$.

\begin{lemma}\label{player_II_lemma}
  Fix a $j \in I$, and suppose that to each element $\gamma$ of $\Gamma_j$ we
  associate a strategy $s_\gamma$ for player II in the game $G_j$. Then
  there is a $y \in \left(\prod_i\cantor\right)^{\fpGammaiI}$ such that for
  all $\gamma \in \Gamma_j$, $\gamma \cdot y$ is an outcome of
  the game $G_j$ where player II uses the strategy 
  $s_\gamma$. Further, for 
  every $z \in (\cantor)^{\Gamma_j}$, there is a unique such $y$ so that 
  $\pi_j(y) = z$.
\end{lemma}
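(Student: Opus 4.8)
The plan is to build $y$ by a single recursion organized by the turn function $t$, exploiting a clean combinatorial dichotomy: once $j$ is fixed, left multiplication by elements of $\Gamma_j$ sorts the group elements so that each coordinate of $y$ is either pinned down by $z$ or is forced to be the output of exactly one of the strategies $s_\gamma$.

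First I would prove the key combinatorial classification. Since $(\gamma\cdot y)(\delta)(i) = y(\gamma^{-1}\delta)(i)$, the coordinate $(\beta,i)$ of $y$ sits at position $(\gamma\beta,i)$ in the play $\gamma\cdot y$, and it is a Player II coordinate of $G_j$ (hence governed by $s_\gamma$) exactly when $(\gamma\beta,i)\notin W_j$. Analyzing the three cases for $\beta$ (a $\Gamma_{j'}$-word with $j'\neq j$, the identity, or a $\Gamma_j$-word, using that $\gamma\in\Gamma_j$ contributes a single syllable), I would show: (i) the coordinates $(\alpha,j)$ with $\alpha\in\Gamma_j$ are never Player II coordinates in any $\gamma\cdot y$ — and these are exactly the coordinates recorded by $\pi_j$, so they are the \emph{free} coordinates to be set by $z$; and (ii) every other coordinate $(\beta,i)$ is a Player II coordinate for exactly one $\gamma\in\Gamma_j$, namely the inverse of the first syllable of $\beta$ (with $\gamma=1$ when $\beta$ is not a $\Gamma_j$-word). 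Writing $\beta^\ast=\gamma\beta$, this $\gamma$ is the unique element of $\Gamma_j$ making $\gamma\beta$ a non-$\Gamma_j$-word (or the identity), and $y(\beta)(i)=(\gamma\cdot y)(\beta^\ast)(i)$ is precisely the value $s_\gamma$ produces at the Player II position $\beta^\ast$.

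With this dictionary in hand, I would define $y$ by recursion. Set $y(\alpha)(j)=z(\alpha)$ for $\alpha\in\Gamma_j$; this forces $\pi_j(y)=z$. For every other coordinate, run the unique game $\gamma\cdot y$ to which it belongs and copy the move dictated by $s_\gamma$ at the corresponding Player II position, feeding $s_\gamma$ the Player I moves, which are themselves values of $y$ at coordinates of the form $(\gamma^{-1}\delta,i')$ with $\delta$ a $\Gamma_j$-word. The recursion is organized by the turn on which a value is revealed: in the play $\gamma\cdot y$ the bit at position $\beta^\ast$ is defined on turn $t(\beta^\ast)+\langle i,n\rangle$ and depends only on Player I bits from turns $\leq t(\beta^\ast)+\langle i,n\rangle$ and earlier Player II bits. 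The hard part — and the reason the turn function is engineered as it is — is verifying that this recursion is well founded \emph{across the different games simultaneously}: I must check that each bit $s_\gamma$ asks us for corresponds to a coordinate of $y$ already defined. This is exactly where the key property of Definition~\ref{turn_defn} enters: because $\gamma$ is a single syllable, the property $t(\gamma_{i,k}\alpha)\leq l+1 \iff t(\alpha)\leq l$ (for $i,k\leq l+1$ and $\alpha$ not a $\Gamma_i$-word) controls $t(\gamma^{-1}\delta)$ in terms of $t(\delta)$ and lets me bound the turn at which each requested input was defined, guaranteeing it precedes the bit being computed. Carrying out this bookkeeping — tracking how $t$ changes under the relevant single-generator left translations and confirming that no circular dependency arises — is the main technical obstacle.

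Finally, uniqueness and correctness are immediate from the construction. Given $z$, the free coordinates are forced to equal $z$, and every remaining coordinate is forced, turn by turn, by the relevant strategy applied to already-determined values; so $y$ is the unique element with $\pi_j(y)=z$ obeying all the constraints. Conversely, the $y$ produced satisfies, for each $\gamma\in\Gamma_j$, that the Player II coordinates of $\gamma\cdot y$ agree with $s_\gamma$ played against the Player I coordinates of $\gamma\cdot y$, which is exactly the assertion that $\gamma\cdot y$ is an outcome of $G_j$ in which Player II uses the strategy $s_\gamma$.
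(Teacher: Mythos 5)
Your proposal is correct and is essentially the paper's own proof: your coordinate classification (i)--(ii) is exactly the paper's observation that the sets $V_\gamma$ of player II coordinates in the game for $\gamma \cdot y$ are pairwise disjoint (strengthened to the clean statement that they partition everything outside the $\pi_j$-coordinates), and your turn-indexed recursion, with player I's moves forced either by $z$ at the identity or by already-defined values from the game associated to the inverse of the first syllable, is precisely the paper's induction on $k$, where the well-foundedness you flag as the main obstacle is discharged by writing a $\Gamma_j$-word as $\beta = \gamma_{j,l}\alpha$ with $t(\alpha) < t(\beta)$ so that the needed value $(\gamma_{j,l}^{-1}\gamma \cdot y)(\alpha)(i)(n)$ falls under the induction hypothesis. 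The uniqueness argument (every coordinate is forced) is likewise the same.
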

\begin{proof}
  Fix a $z \in (\cantor)^{\Gamma_j}$. For each $\gamma \in
  \Gamma_j$ we will play an instance of the game $G_j$ whose outcome will
  be $\gamma \cdot y$, where the moves for player II are made by the
  strategy $s_\gamma$. We will specify how to move
  for player I in these games. Indeed, at each turn, there will be a unique
  move for player I that will satisfy our conditions above.
  We play these games for all $\gamma \in \Gamma_j$
  simultaneously. 
  
  If $\gamma \in \Gamma_j$ and so $\gamma \cdot y$ is an outcome of a play of $G_j$ where player II uses
  the strategy $s_\gamma$, then this strategy determines $(\gamma \cdot
  y)(\alpha)(i) = y(\gamma^{-1} \alpha)(i)$ for $(\alpha,i) \notin W_j$. So
  let $V_{\gamma} = \{(\gamma^{-1} \alpha,i): (\alpha,i) \notin W_j\}$ so
  that $V_{\gamma}$ is the set of $(\beta,i)$
  such that player II determines $y(\beta)(i)$ when they move in the game
  associated to $\gamma$ whose outcome
  is $\gamma \cdot y$. Note that the $V_\gamma$ are pairwise disjoint.
  This is because if
  $\alpha$ is not a $\Gamma_j$-word and $\gamma \in \Gamma_j$, then 
  $\gamma^{-1} \alpha$ is a reduced word, and so both $\gamma^{-1}$ and
  $\alpha$ can be uniquely determined from $\gamma^{-1} \alpha$.
  Hence the strategies for player II in these different games do not
  interfere with each other. 
  
  Inducting on $k$, suppose $(\gamma \cdot y)(\alpha)(i)(n)$ is defined for
  all $\gamma \in \Gamma_j$, $\alpha \in \fpGammaiI$, and $i, n \in \omega$
  such that $t(\alpha) + \<i,n\> < k$, and all moves on turns $< k$ have been
  played in the games. Suppose $\gamma \in \Gamma_j$. We will begin by
  making the $k$th move for player I in the game defining $\gamma \cdot y$.
  First, if $k = \<j,n\>$ for some $n$, we must define $(\gamma \cdot
  y)(1)(j)(n) = y(\gamma^{-1})(j)(n) = z(\gamma^{-1})(n)$ to ensure that $\pi_j(y) = z$.
  Next, suppose $\beta$ is a
  $\Gamma_j$-word with $t(\beta) \leq k$ such that we can write $\beta =
  \gamma_{j,l} \alpha$ for some $l < k$ and $\alpha$ which is not
  a $\Gamma_j$-word such that
  $t(\alpha) < t(\beta)$. Player I must define $y(\beta)(i)(n)$ on turn $k$
  where $\<i,n\> = k - t(\beta)$. Now for all $\gamma \in \Gamma_j$,
  we have
  $(\gamma \cdot y)(\gamma_{j,l} \alpha) = y(\gamma^{-1} \gamma_{j,l} \alpha)
  = \gamma_{j,l}^{-1} \gamma \cdot
  y(\alpha)$ where of course $\gamma_{j,l}^{-1} \gamma \in \Gamma_j$.
  Hence, since $t(\alpha) + \<i,n\> < t(\beta) + \<i,n\> = k$, we have
  that $(\gamma \cdot y)(\gamma_{j,l} \alpha)(i)(j)$ has already been defined
  by the induction hypothesis. Thus, we must make the $k$th move for player
  I in the game associated to $\gamma \cdot y$ using this information. Now
  player II responds by making their $k$th move in the games, and so we
  have played the first $k$ turns of the games, and defined $y(\alpha)(i)(n)$
  for all $\alpha \in \fpGammaiI$, and $i, n \in \omega$ such that
  $t(\alpha) + \<i,n\> \leq k$.
\end{proof}

Now we combine the above two lemmas to give the following lemma, which is
part of Theorem~\ref{g1}.

\begin{lemma} \label{equivariant_hom}
  Suppose $I \leq \omega$ and $\{\Gamma_i\}_{i \in I}$ are countable groups.
  Let $\{A_i\}_{i \in I}$ be a Borel partition of $\left(\prod_{i} \cantor
  \right)^{\fpGammaiI}$. Then there exists some $j \in I$ and an injective
  continuous function $f \from \left(\cantor\right)^{\Gamma_j} \to
  \left(\prod_i \cantor \right)^{\fpGammaiI}$ that is $\Gamma_j$-equivariant
  with respect to the shift actions and such that $\ran(f) \subset A_j$. 
\end{lemma}

\begin{proof}
  By Borel determinacy, either player I or player II has a winning strategy
  in each game $G^{A_j}_j$. By Lemma~\ref{player_I_lemma}, player II must win
  $G^{A_j}_j$ for some $j$. Fix this $j$, and a winning strategy in this
  game.

  We now define the equivariant continuous function $f \from \cantor \to
  \left(\prod_i \cantor \right)^{\fpGammaiI}$. We do this using
  Lemma~\ref{player_II_lemma}: let
  $f(x)$ be the unique $y$ such that for all $\gamma \in \Gamma_j$, and all
  $x \in \cantor$, we have $\pi_j(y) = x$, and that 
  $\gamma \cdot y$ is an outcome of the winning strategy for
  player II in the game $G^{A_j}_j$. Now $f$ is injective since
  $\pi_j(f(x)) = x$. 
  The equivariance of $f$ follows
  from the uniqueness property of Lemma~\ref{player_II_lemma} and the
  equivariance of $\pi_j$, which implies that
  $\gamma \cdot f(x)$ and $f(\gamma \cdot x)$ are equal. 
  Finally, since $f(x)$ is a winning outcome of player II's
  strategy in $G^{A_j}_j$, we have that $f(x) \in A_j$ for all $x$. It is
  easy to check from the proof of Lemma~\ref{player_II_lemma} that $f$ is
  continuous. Roughly, the value of each bit of $f(x)$ depends only on
  finitely many moves in finitely many games which depend on only finitely many bits
  of $x$.
\end{proof}

\begin{remark}\label{identity_projection}
  The proof of Lemma~\ref{equivariant_hom} shows that $f$ can be chosen such
  that $\pi_j(f(x)) = x$. 
\end{remark}

\subsection{The free part of the shift action}
\label{subsec:comb_F_omega}

Our next goal is to prove a version of Lemma~\ref{equivariant_hom} for the
free part of the shift action. 
To begin, we recall a 
lemma from~\cite{MR3454384}.
Suppose that $I \leq \omega$ and $\{E_i\}_{i \in I}$ is a collection of at
least two equivalence relations on $X$. Then the $E_i$ are said to be
\define{independent} if there does not exist a sequence $x_0, x_1, \ldots, x_n$
of distinct elements of $X$, and $i_0, i_1, \ldots i_n \in \omega$ with $n
\geq 2$ such that $i_j \neq i_{j+1}$ for $j < n$ and $x_0 \E_{i_0} x_1
\E_{i_1} x_2 \ldots x_n \E_{i_n} x_0$. The \define{join of the $E_i$}, denoted $\bigvee_{i \in I}
E_i$, is the smallest equivalence relation containing all the $E_i$.
Precisely, $x$ and $y$ are $\bigvee_{i \in I} E_i$-related if there is a
sequence $x_0, x_1, \ldots x_n$ of elements in $X$ such that $x = x_0$, $y
= x_n$, and for all $j < n$, we have $x_j \E_i x_{j+1}$ for some $i \in I$.
Finally, we say that the $E_i$ are \define{everywhere non-independent} if
for every $\bigvee_{i \in I} E_i$ equivalence class $A \subset X$, the
restrictions of the $E_i$ to $A$ are not independent.

\begin{lemma}[\cite{MR3454384}*{Lemma 2.3}]\label{partition_non_independent}
Suppose that $I \leq \omega$ and $\{E_i\}_{i \in I}$ are
countable Borel equivalence relations on a standard Borel space $X$ that
are everywhere non-independent. Then there exists a Borel partition
$\{B_i\}_{i \in I}$ of $X$ such that for all $i \in I$, 
$\comp{(B_i)}$ meets every $E_i$-class.
\end{lemma}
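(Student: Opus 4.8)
The plan is to recast the conclusion as a coloring problem and then to use the dependence cycles supplied by everywhere non-independence to solve it in a Borel way. Writing $c \from X \to I$ for the coloring with $c(x) = i \iff x \in B_i$, the requirement that $\comp{(B_i)}$ meet every $E_i$-class says exactly that no $E_i$-class is monochromatically colored $i$; that is, I want a Borel $c$ such that every $E_i$-class contains a point $x$ with $c(x) \ne i$. Dually, setting $D_i = \comp{(B_i)}$, this is the same as producing Borel sets $D_i \subseteq X$, each meeting every $E_i$-class, with $\bigcap_{i \in I} D_i = \emptyset$: from such $D_i$ one recovers the partition by letting $c(x)$ be the least $i$ with $x \notin D_i$, which is total precisely because the intersection is empty, and then $\comp{(B_i)} \supseteq D_i$ is $E_i$-complete. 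I will move freely between these two formulations.

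Everywhere non-independence is exactly what makes a Borel solution possible, and it is worth seeing why it cannot be dropped. If the join $\bigvee_{i\in I} E_i$ had a singleton class $\{x\}$, then $x$ would be $E_i$-isolated for every $i$, forcing $x \in D_i$ for all $i$ and hence $\bigcap_i D_i \ne \emptyset$; a dependence cycle through every join-class rules this out, and more. More tellingly, for the two copies of $\Z/2\Z$ inside $\Z/2\Z * \Z/2\Z$ acting on the free part of the shift, the relations $E_0, E_1$ are independent, each join-class is an alternating bi-infinite line, and the combinatorial coloring that solves the problem is a parity function on the line; by the cited fact that $G(\Z,2)$ carries no Borel $2$-coloring, no Borel $c$ exists. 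So the cycles forced by non-independence are doing essential work.

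For the construction I would first present each $E_i$ by Borel involutions via Feldman--Moore \cite{MR0578656}, so that the join-skeleton graph $\mathcal{G}$ (join distinct $x,y$ when $x \mathrel{E_i} y$ for some $i$) is a locally countable Borel graph whose connected components are the join-classes. I would then split each $E_i$ into its finite and infinite parts. On the $E_i$-invariant Borel set where $E_i$-classes are finite, $E_i$ is smooth, so it admits a Borel transversal and I have complete control over $D_i$ there. On the aperiodic part I would invoke standard Borel marker lemmas to obtain $E_i$-complete Borel sets that are ``thin'', i.e.\ that can be arranged to avoid any prescribed sparse set, and in particular to vacate any single chosen point while staying $E_i$-complete (removing one point from an infinite class loses no completeness). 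Securing this flexibility is the whole point of passing to the $D_i$-formulation.

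The main obstacle is the coordination step: making the countably many choices simultaneously and Borel-ly so that $\bigcap_{i\in I} D_i = \emptyset$ while every $D_i$ remains $E_i$-complete, i.e.\ ensuring that no point is forced into all the $D_i$. This is exactly where non-independence enters. In every join-class I would fix, in a Borel complete-section (rather than transversal) manner --- as a maximal family of vertex-disjoint dependence cycles in $\mathcal{G}$, which exists because $\mathcal{G}$ is locally countable --- a cyclic dependence $x_0 \mathrel{E_{i_0}} x_1 \mathrel{E_{i_1}} \cdots \mathrel{E_{i_n}} x_0$ with consecutive distinct colors. The cycle guarantees that for each represented color $i_m$ the corresponding $E_{i_m}$-class has a second member available along the cycle, so a completeness-witness for $E_{i_m}$ can always be relocated off any particular point; threading the marker data through these cycles then lets me perturb the thin complete sections into general position with empty total intersection. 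Verifying that this perturbation is Borel and convergent --- via Luzin--Novikov and the continuity of the marker construction --- is the technical heart of the argument, and is precisely what circumvents the $\Z$-line obstruction above. I would finish by checking $E_i$-completeness of each $D_i$ and reading off the partition from $c(x) = $ the least $i$ with $x \notin D_i$.
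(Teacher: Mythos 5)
Your reformulation is correct (a Borel coloring $c$ with no $E_i$-class monochromatic in its own color $i$, equivalently $E_i$-complete sections $D_i$ with $\bigcap_i D_i = \emptyset$), and your discussion of why non-independence is indispensable is exactly right. But the proof has a genuine gap: the step you yourself call ``the technical heart'' --- threading the marker data through the cycles so as to ``perturb the thin complete sections into general position with empty total intersection'' --- is never actually described, and it is where the entire content of the lemma lies. Moreover, the scaffolding you erect around that step cannot supply it even in principle. The finite/aperiodic split, per-relation transversals, and Slaman--Steel-type markers for each $E_i$ separately are completely insensitive to whether the family $\{E_i\}$ is independent or not, and the statement fails for independent families; so no argument assembled purely from those tools can close the gap. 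Your own counterexample makes this vivid: there every $E_i$-class has size $2$, so each $E_i$ is smooth and admits a Borel transversal --- precisely the situation in which you claim ``complete control'' --- yet no Borel solution exists. The only place non-independence enters your plan is through the maximal disjoint family of cycles, but as you describe it the cycles only protect the $E_{i_m}$-classes that actually meet them (``for each \emph{represented} color $i_m$''); nothing in the proposal explains how a point, or an $E_i$-class, lying far from every chosen cycle is handled, and ``general position'' is a name for the problem, not a mechanism. (A smaller point: Borel maximal disjoint families of cycles do exist, but not ``because $\mathcal{G}$ is locally countable'' --- locally countable Borel graphs can fail to have Borel maximal independent sets, the graph $G_0$ being the standard example; what you need is Kechris and Miller's lemma on maximal disjoint Borel families of finite subsets of classes of a countable Borel equivalence relation, Lemma 7.3 of \emph{Topics in Orbit Equivalence}.)

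For comparison, the cited proof makes the witnesses do global, not local, work, and needs neither the finite/aperiodic split nor any markers. Take a Borel maximal pairwise disjoint family $W$ of non-independence witnesses (Kechris--Miller); its union $S$ meets every join class, since every join class contains a witness and that witness meets a member of $W$. Because the members of $W$ are pairwise disjoint, each $y \in S$ lies on a \emph{unique} witness in $W$, so coloring $y$ by the label $i_j$ of its outgoing edge along that witness is well defined and coherent --- this is exactly the coherence that a non-invariant, Lusin--Novikov-style choice of cycles cannot give. Such a $y$ is then safe: the next point along its witness lies in the same $E_{i_j}$-class and carries a different color (stepping once more along the cycle, via transitivity of $E_{i_j}$, if the wrap-around labels happen to agree). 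Every point $x \notin S$ is colored by the label $k$ of the first edge of a chosen geodesic in the join graph from $x$ to $S$, the choice being Borel via Feldman--Moore; consistency is automatic because the distance function to $S$ is canonical: if $x$ and its successor $y$ on the geodesic carried the same label $k$, transitivity of $E_k$ would let the geodesic shortcut past $y$, a contradiction, and at distance $1$ from $S$ one again uses transitivity to jump to the second point of the adjacent witness. That single canonical object --- the distance to a maximal disjoint witness family --- performs all the coordination that your unperformed perturbation was meant to do, and it propagates the non-independence from the cycles to every point of the space.
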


We will combine this lemma with one other lemma that we will use to deal with
the non-free part of the action.

\begin{lemma}\label{Y_complement_lemma}
  Suppose that for every $i \in I$, $X_i$ is a $\Gamma_i$-invariant Borel subset of $(\prod_i
  \cantor)^{\fpGammaiI}$. Let $Y$ be the largest invariant set of $y$ such that $y \in X_i$ for all
  $i \in I$. Then there is a Borel partition $\{C_i\}_{i \in I}$ of the
  complement of $Y$ such that if $A \subset C_i$ is $\Gamma_i$-invariant, 
  then $A \inters X_i = \emptyset$. 
\end{lemma}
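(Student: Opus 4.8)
The plan is to reduce the statement to a Borel coloring problem and solve it by pointing each point of $\comp{Y}$ toward the nearest place where its orbit leaves some $X_i$. Write $G = \fpGammaiI$. First note that $Y$ is a Borel, $G$-invariant set: its complement is $\bigcup_i G \cdot \comp{X_i}$, and the $G$-saturation of a Borel set under a countable group action is Borel. A partition $\{C_i\}$ has the desired property exactly when, for each $i$, the largest $\Gamma_i$-invariant subset of $C_i$ misses $X_i$; since $X_i$ is $\Gamma_i$-invariant, this is equivalent to the combinatorial requirement that \emph{no $\Gamma_i$-orbit contained in $X_i \inters \comp{Y}$ lies entirely inside $C_i$}, i.e. that $\comp{C_i}$ meets every such orbit. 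So it suffices to produce a Borel map $c \from \comp{Y} \to I$, put $C_i = c^{-1}(i)$, and arrange that on every $\Gamma_i$-orbit $O \subseteq X_i$ inside $\comp{Y}$ the color $i$ is avoided at some point.

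Let $P = \{z \in \comp{Y} : z \notin X_k \text{ for some } k\}$ be the Borel set of \emph{exit points}. Because $Y$ is the \emph{largest} $G$-invariant subset of $\bigcap_k X_k$, every $G$-orbit inside $\comp{Y}$ meets $P$: an orbit missing $P$ would be a $G$-invariant subset of $\bigcap_k X_k$, hence contained in $Y$. Consider the orbit graph on $\comp{Y}$ whose edges join $z$ to $\delta \cdot z$ for each generator $\delta$ of each factor $\Gamma_k$, so that its components are the $G$-orbits, and let $d(y)$ be the graph-distance from $y$ to $P$. I color $y$ as follows: if $y \in P$, set $c(y)$ to the least $k$ with $y \notin X_k$; otherwise let $c(y)$ be the factor index of the first edge of a shortest path from $y$ to $P$, breaking ties among shortest paths by a fixed Borel linear order on $G$. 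This is Borel. To verify the requirement, fix a $\Gamma_i$-orbit $O \subseteq X_i \inters \comp{Y}$ and choose $y \in O$ with $d(y)$ minimal over $O$. If $y \in P$, then since $y \in X_i$ the witnessing index $k$ satisfies $k \neq i$, so $c(y) = k \neq i$. If $y \notin P$, the first edge of a shortest path from $y$ to $P$ cannot be a $\Gamma_i$-edge, for such an edge leads to a point of $O$ with $d$-value $d(y)-1$, contradicting minimality; hence $c(y) \neq i$. Either way $O$ is not monochromatically colored $i$.

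This directional argument is valid exactly where each orbit is a genuine tree, so that a $\Gamma_i$-edge leaving $y$ strictly decreases the distance to $P$; it therefore covers the free part of the action (and, as a sanity check, the degenerate cases where $P$ is large, e.g. some $X_k = \emptyset$, are handled automatically since then $d \equiv 0$). Where stabilizers are nontrivial the components contain cycles, a $\Gamma_i$-step need not decrease $d$, and the minimization breaks. On that region I would instead invoke Lemma~\ref{partition_non_independent}: take $E_i$ to be the restriction of the $\Gamma_i$-orbit equivalence relation to $X_i \inters \comp{Y}$, observe that a nontrivial stabilizing reduced word of syllable length $\geq 2$ supplies, on each join class, a cycle $x_0 \mathrel{E_{i_0}} x_1 \cdots \mathrel{E_{i_n}} x_0$ with alternating indices, hence everywhere non-independence, and obtain a Borel partition whose $i$-th complement meets every $E_i$-class. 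The exit points $P$ themselves are always colored by the direct rule and cause no trouble, so only the non-exit points of non-free orbits require the lemma. The main obstacle is precisely this gluing: separating the non-free region in a Borel-measurable way, confirming everywhere non-independence there (and that the complementary region is genuinely tree-like, so the shortest-path coloring is well defined and Borel, tie-breaking included), and checking that the two constructions combine into a single Borel partition of $\comp{Y}$ whose constraints remain satisfied across the boundary.
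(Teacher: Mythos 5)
Your first two paragraphs already constitute a complete and correct proof, and it is essentially the paper's own argument; the ``main obstacle'' you identify in your final paragraph does not exist, so the case split, the appeal to Lemma~\ref{partition_non_independent}, and the gluing problem should simply be deleted. Re-examine your own verification: the only two facts it uses are (a) the first edge of a shortest path from $y$ to $P$ ends at a point $z$ with $d(z) = d(y)-1$, which is automatic from the definition of graph distance and has nothing to do with the components being trees, and (b) if that first step is labelled by a generator of $\Gamma_i$, then $z$ lies in the $\Gamma_i$-orbit $O$ of $y$, which is automatic whether or not stabilizers are trivial. The statement you worry about in the last paragraph --- that \emph{every} $\Gamma_i$-edge leaving $y$ strictly decreases $d$ --- is indeed false in the presence of cycles, but your argument never needs it: it only needs the decrease for the first edge of a shortest path, where it comes for free. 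The one genuine ambiguity in the non-free case, namely that a single edge $\{y,z\}$ can be simultaneously a $\Gamma_i$-edge and a $\Gamma_j$-edge, is harmless: either color by the label of a chosen shortest \emph{labelled} path, or more simply define $c(y)$ to be the least $k$ such that some generator $s$ of $\Gamma_k$ satisfies $d(s \cdot y) = d(y)-1$; the minimality argument rules out $k = i$ either way, and this version is manifestly Borel.

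For comparison, the paper's proof is the same idea in a slightly different metric. It calls a pair $(\delta, j)$ with $\delta^{-1} \cdot y \notin X_j$ a witness that $y \notin Y$, puts $y \in C_{i,m}$ when $m$ is the minimal syllable length of a witnessing $\delta$ and $i$ is least such that a minimal-length witness can be taken to be a $\Gamma_i$-word, and then runs the descent directly on a $\Gamma_i$-invariant set $A \subset C_i$ meeting $X_i$: at a point of $A \inters C_{i,m} \inters X_i$ with $m$ minimal, stripping the leading $\Gamma_i$-syllable of the witness produces a point of $A \inters X_i$ with a strictly shorter witness, a contradiction. Your distance $d$ to the exit set $P$ and your choice of a minimal-$d$ point in an orbit play exactly the roles of the paper's minimal witness length and minimal $m$; neither argument uses freeness anywhere, which is just as well, since the paper later applies this lemma (in Theorem~\ref{K_ultrafilters}) to sets $X_i$ whose points need not have trivial stabilizers.
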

\begin{proof}
  We will define Borel sets $C_{i,n}$ for $i \in I$ and $n \in
  \omega$ which partition $\comp{Y}$. We will then let $C_i = \bigunion_{n \in
  \omega} C_{i,n}$.
  If $\delta^{-1} \cdot y \notin X_i$, say that the pair $(\delta,i)$
  witnesses $y \notin Y$. Note that if $(\delta,i)$ witnesses $y \notin Y$,
  then $(\gamma^{-1}\delta, i)$ witnesses $\gamma \cdot y \notin Y$.

  Let $C_{i,0}$ be the set of $y \in
  \comp{Y}$ such that
  $(1,i)$ witnesses $y \notin Y$ and there is no $j < i$ such that
  $(1,j)$ witnesses $y \notin Y$. Note that $C_{i,0}$ does not meet $X_i$.
  For $m > 0$, let $C_{i,m}$ be the set of $y \in \comp{Y}$ so that $m$ is
  the 
  minimal length of a $\delta$ so that some $(\delta,j)$ witnesses $y
  \notin Y$, and $i$ is least such that such a $\delta$ may be
  chosen to be a $\Gamma_i$-word. 
  %Precisely, $y \in C_{i,m}$ if 
  %\begin{enumerate}
  %\item $y \notin C_{j,m'}$ for all $m' < m$ and $j \in I$, and 
  %\item there exists $(\delta,j)$ witnessing $y \notin Y$ where $\delta$ is
  %a $\Gamma_i$-word of length $m$, and 
  %there is no
  %$(\delta',j')$ witnessing $y \notin Y$ where
  %$\delta'$ is a $\Gamma_k$-word of length $m$ and $k < i$. 
  %\end{enumerate}
  Note that by the length of a word $\delta \in \fpGammaiI$, we
  mean that if $\delta = \gamma_{i_0,k_0} \ldots \gamma_{i_n,k_n}$ with
  $i_m \neq i_{m+1}$ for all $m$, then the length of $\delta$ is $n + 1$. 

  So suppose now that $A \subset C_i$ is $\Gamma_i$-invariant, and for a
  contradiction suppose that $m$ was least such that $(A \inters C_{i,m})
  \inters X_i$ is nonempty. Let $y$ be an element of this set and note that
  $m > 0$ since $C_{i,0}$ does not meet $X_i$. Since $m > 0$, the
  associated witness that $y \notin Y$ must be of the form $(\delta,j)$,
  where $\delta = \gamma_{i_0,k_0} \ldots \gamma_{i_n,k_n}$ is a
  $\Gamma_i$-word in reduced form, so $i_0 = i$. But then
  $(\gamma_{i_1,k_1} \ldots \gamma_{i_n,k_n},j)$ witnesses
  $\gamma_{i_0,k_0}^{-1} \cdot y \notin Y$, and $\gamma_{i_0,k_0}^{-1}
  \cdot y \in A$ since $A$ is $\Gamma_i$-invariant. But this implies that
  $\gamma_{i_0,k_0}^{-1} \cdot y \in C_{i,m'}$ for some $m' < m$ since $A
  \subset C_i$, and $\gamma_{i_1,k_1} \ldots \gamma_{i_n,k_n}$ has length
  strictly less than $\delta$. This contradicts the minimality of $m$.
\end{proof}

We can now prove a version of 
Lemma~\ref{equivariant_hom} for the free part of the action:

\begin{lemma}\label{free_equivariant_hom}
  Suppose $I \leq \omega$ and $\{\Gamma_i\}_{i \in I}$ are countable groups.
  Let $\{A_i\}_{i \in I}$ be a Borel partition of $\Free((\prod_i
  \cantor)^{\fpGammaiI})$. Then there exists some $j \in I$ and an
  injective continuous function $f \from
  \Free((\cantor)^{\Gamma_j}) \to \Free((\prod_i
  \cantor)^{\fpGammaiI})$ that is $\Gamma_j$-equivariant with respect to
  the shift actions and such that $\ran(f) \subset A_j$.
\end{lemma}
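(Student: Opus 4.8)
The plan is to reduce to Lemma~\ref{equivariant_hom} by extending the given partition of the free part to a Borel partition of the whole space $Z := \left(\prod_i \cantor\right)^{\fpGammaiI}$, applying Lemma~\ref{equivariant_hom}, and then restricting the resulting map to the free part of its domain. Write $\Free(Z)$ for the free part of the $\fpGammaiI$-action. For each $i$, let $X_i = \{y \in Z : \gamma \cdot y \neq y \text{ for all } \gamma \in \Gamma_i \setminus \{1\}\}$ be the free part of the $\Gamma_i$-subaction; each $X_i$ is Borel and $\Gamma_i$-invariant, so Lemma~\ref{Y_complement_lemma} applies with this choice. Let $Y$ be the largest invariant subset of $\bigcap_i X_i$, as in that lemma. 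Since free points are $\Gamma_i$-free for every $i$ and $\Free(Z)$ is invariant, $\Free(Z) \subset Y$; and one checks directly from the definitions that $Y \setminus \Free(Z)$ consists exactly of the orbits whose stabilizer is nontrivial but meets no conjugate of any factor $\Gamma_i$. Thus $Z$ is partitioned into the three invariant Borel pieces $\Free(Z)$, $Y \setminus \Free(Z)$, and $\comp{Y}$.

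Next I extend $\{A_i\}$ over the two extra pieces. On $\comp{Y}$, Lemma~\ref{Y_complement_lemma} yields a Borel partition $\{C_i\}$ such that every $\Gamma_i$-invariant $A \subset C_i$ has $A \inters X_i = \emptyset$. On $Y \setminus \Free(Z)$, let $E_i$ be the orbit equivalence relation of the $\Gamma_i$-subaction; I claim the $E_i$ are everywhere non-independent there. Indeed, each $\bigvee_i E_i$-class is an $\fpGammaiI$-orbit, and every such orbit inside $Y \setminus \Free(Z)$ carries a nonidentity stabilizer element $h$. Writing a suitable power of $h$ (squaring if necessary) in cyclically reduced form $g_0 g_1 \cdots g_m$, with $g_l \in \Gamma_{j_l}$, $j_l \neq j_{l+1}$, and $m \geq 2$, and setting $v_l = (g_l \cdots g_m)\cdot y$ produces a reduced cycle $v_0 \E_{j_0} v_1 \E_{j_1} \cdots \E_{j_m} v_0$ of distinct points, witnessing non-independence. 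Lemma~\ref{partition_non_independent} then gives a Borel partition $\{B_i\}$ of $Y \setminus \Free(Z)$ such that the complement of $B_i$ meets every $E_i$-class, i.e. $B_i$ contains no full $E_i$-class. Finally set $A_i' = A_i \union C_i \union B_i$; this is a Borel partition of $Z$ extending $\{A_i\}$.

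Now apply Lemma~\ref{equivariant_hom} to $\{A_i'\}$ to obtain $j \in I$ and an injective continuous $\Gamma_j$-equivariant map $f \from (\cantor)^{\Gamma_j} \to Z$ with $\ran(f) \subset A_j'$. Restrict $f$ to $\Free((\cantor)^{\Gamma_j})$ and set $B = f(\Free((\cantor)^{\Gamma_j}))$. Then $B$ is $\Gamma_j$-invariant by equivariance, and $B \subset X_j$: if $x$ is $\Gamma_j$-free then for every $\gamma \in \Gamma_j \setminus \{1\}$, equivariance and injectivity give $\gamma \cdot f(x) = f(\gamma \cdot x) \neq f(x)$. Consequently $B \inters \comp{Y}$ is a $\Gamma_j$-invariant subset of $C_j$ contained in $X_j$, so the defining property of $C_j$ forces $B \inters \comp{Y} = \emptyset$; and $B \inters (Y \setminus \Free(Z))$ is an $E_j$-invariant subset of $B_j$, hence empty since $B_j$ contains no full $E_j$-class. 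Therefore $B \subset \Free(Z)$, and since $A_j' = A_j$ on $\Free(Z)$ we get $B \subset A_j$. The restriction $f \restrict \Free((\cantor)^{\Gamma_j})$ is the desired injective continuous $\Gamma_j$-equivariant map into $\Free(Z)$ with range in $A_j$.

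The main obstacle is the everywhere non-independence claim on $Y \setminus \Free(Z)$: one must extract from an arbitrary nonidentity stabilizer element a genuine reduced cycle of length at least three through distinct vertices. This is where passing to a cyclically reduced power of the stabilizer element, and then verifying that the vertices $v_l$ lie in distinct left cosets of the stabilizer (so that they are distinct points), does the work. The remaining checks---the Borelness and invariance of $X_i$ and $Y$, and the identification of $Y \setminus \Free(Z)$---are routine consequences of the shift-action description of stabilizers.
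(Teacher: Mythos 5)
Your proposal follows the paper's own proof step for step: the same invariant decomposition of $Z$ into $\Free(Z)$, $Y \setminus \Free(Z)$, and $\comp{Y}$, the same use of Lemma~\ref{Y_complement_lemma} to produce $\{C_i\}$ and of Lemma~\ref{partition_non_independent} to produce $\{B_i\}$, the same extension $A_i' = A_i \cup B_i \cup C_i$ followed by Lemma~\ref{equivariant_hom}, and the same two exclusion arguments at the end (the range avoids $C_j$ because it is a $\Gamma_j$-invariant subset of $X_j$, and avoids $B_j$ because it is $E_j$-invariant). All of that is correct. The gap is in the one step you prove that the paper merely asserts: the everywhere non-independence of the $E_i$ on $Y \setminus \Free(Z)$, and specifically your claim that the partial products $v_l = (g_l \cdots g_m)\cdot y$ of a cyclically reduced power of a stabilizer element are distinct points.

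That distinctness claim is false, and the squaring trick is exactly where it breaks. Take $\Gamma_0 = \Gamma_1 = \Z$, so $\fpGammaiI = \langle a \rangle * \langle b \rangle$, and take $y$ with $\Stab(y) = \langle ab \rangle$ exactly; such $y$ exist in the shift action (let $y$ be constant on each right coset $\langle ab \rangle \delta$ and injective on the right coset space), and $y \in Y \setminus \Free(Z)$ since no conjugate of $\langle ab \rangle$ meets a factor. Here every nonidentity stabilizer element is $(ab)^k$, cyclically reduced of length $2|k|$, so your recipe squares $h = ab$ to get $g_0g_1g_2g_3 = abab$ with $m = 3$. But then $v_0 = abab \cdot y = y$ and $v_2 = ab \cdot y = y$, and likewise $v_1 = bab \cdot y = b \cdot y = v_3$: the partial products $abab$ and $ab$ lie in the same left coset of $\Stab(y)$, so the coset verification you invoke cannot be carried out, and indeed every power of $ab$ yields only the two points $y$ and $b \cdot y$. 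Worse, with $\Gamma_0 = \Gamma_1 = \Z/2\Z$ and $\Stab(y) = \langle g_0 g_1 \rangle$, the whole orbit has only two points, so under the all-vertices-distinct reading of independence there is no witnessing cycle at all and the non-independence claim is outright false. The repair is to use the notion of cycle that Lemma~\ref{partition_non_independent} actually needs (as in \cite{MR3454384}): a \emph{reduced} closed path, in which only consecutive points and consecutive indices are required to differ. With that reading no power is needed: for any cyclically reduced conjugate $g_0 \cdots g_m$ ($m \geq 1$) of a nontrivial stabilizer element, consecutive points $v_{l+1}$ and $v_l = g_l \cdot v_{l+1}$ are automatically distinct because stabilizers of points of $Y$ meet every factor trivially, and cyclic reducedness handles the wrap-around pair of indices. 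So your argument is fixed by deleting the squaring and the global distinctness claim, not by trying to verify them.
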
  
\begin{proof}
  Our idea is to extend our partition $\{A_i\}_{i \in I}$ to cover the
  whole space 
  $(\prod_i
  \cantor)^{\fpGammaiI}$ in such a way that when we apply 
  Lemma~\ref{equivariant_hom} to this partition, the resulting function $f$
  will have the property that $\ran(f \restriction
  \Free((\cantor)^{\Gamma_j})) \subset A_j$. 

Let $X_i$ be the set of $y \in (\prod_i \cantor)^{\fpGammaiI}$ on which
$\Gamma_i$ acts freely. That is, 
$X_i = \{y \in (\prod_i \cantor)^{\fpGammaiI} : \forall 1 \neq \gamma \in
\Gamma_i (\gamma \cdot y \neq y)\}$. 
Let $Y$ be the largest invariant set
of $y$ such that $y \in X_i$ for every $i$ and let $\{C_i\}_{i \in I}$ be a
Borel partition of the complement of $Y$ as in Lemma~\ref{Y_complement_lemma}. 

Note that $\Free((\prod_i
\cantor)^{\fpGammaiI})$ is a subset of $Y$. Let $E_i$ be the
equivalence relation on $Y$ where $x \mathrel{E_i} y$ if there exists a
$\gamma \in \Gamma_i$ such that $\gamma \cdot x = y$. Note that the
equivalence relations $\{E_i\}$ are everywhere non-independent on the
complement $Y \setminus \Free((\prod_i \cantor)^{\fpGammaiI})$. So by
Lemma~\ref{partition_non_independent}, let $\{B_i\}_{i \in I}$ be a Borel
partition of $Y \setminus \Free((\prod_i \cantor)^{\fpGammaiI})$
so that 
$\comp{B_i}$ meets every $E_i$-class. 

Let $A_i' = A_i \union B_i \union
C_i$, so that $\{A_i'\}_{i \in I}$ is a Borel partition of $(\prod_i
  \cantor)^{\fpGammaiI}$, and apply Lemma~\ref{equivariant_hom} to
  obtain a continuous injective equivariant function $f \from
  (\cantor)^{\Gamma_j} \to A_j'$. 
  
 Now $\ran(f \restriction
  \Free((\cantor)^{\Gamma_j}))$ is invariant under the $\Gamma_j$ action
  since $f$ is $\Gamma_j$-equivariant. Thus, $\ran(f \restriction
  \Free((\cantor)^{\Gamma_j}))$ does not meet $B_j$
  (whose complement meets every $E_j$-class). Since $f$ is injective we
  also have that $\ran(f \restriction
  \Free((\cantor)^{\Gamma_j})) \subset X_j$ and hence $\ran(f \restriction
  \Free((\cantor)^{\Gamma_j}))$ does not meet $C_j$ by
  Lemma~\ref{Y_complement_lemma}. Hence, $\ran(f \restriction
  \Free((\cantor)^{\Gamma_j})) \subset A_j$.
\end{proof}

There is an interesting application of this lemma to a fact about
complete sections of $F(\F_\omega,\cantor)$.

\begin{thm}\label{complete_section_Fomega}
  Suppose that $A$ is a Borel complete section of $F(\F_\omega,\cantor)$. Then
  there is an $x \in \Free(\omega^{\F_{\omega}})$ and some subgroup $\Gamma$ of
  $\F_\omega$ so that $\Gamma$ is isomorphic to $\F_\omega$ and $\gamma \cdot
  x \in A$ for every $\gamma \in \Gamma$.
\end{thm}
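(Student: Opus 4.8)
The plan is to realize $\F_\omega$ as a free product $\fpGammaiI$ of countably many copies $\Gamma_i\cong\F_\omega$ of itself and to feed a partition built from $A$ into Lemma~\ref{free_equivariant_hom}. Since $\prod_i\cantor$ is equivariantly homeomorphic to $\cantor$, I may identify $(\prod_i\cantor)^{\fpGammaiI}$ with $\cantor^{\F_\omega}$, so that $A$ becomes a Borel complete section of the free part there. I will run the construction of Section~\ref{subsec:games_intro} with the base space $\omega$ in place of $\cantor$ (as permitted, since no special property of $\cantor$ is used), so that the domain of the map produced by Lemma~\ref{free_equivariant_hom} is $\Free(\omega^{\Gamma_j})=\Free(\omega^{\F_\omega})$; the point $x$ asserted by the theorem will be read off from this domain via $\pi_j$ (Definition~\ref{easy_proj}, Remark~\ref{identity_projection}), which is exactly why $x$ can be taken to be $\omega$-valued.

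To build the partition, first apply Luzin--Novikov uniformization to the countable Borel equivalence relation $F(\F_\omega,\cantor)$ to select, in a Borel way, for each $y$ a reduced word $m(y)\in\F_\omega$ of minimal length with $m(y)\cdot y\in A$; such an $m(y)$ exists because $A$ is a complete section. Partition the free part by putting $y$ into piece $A_i$ according to the factor index of the first letter of $m(y)$ (the points with $m(y)=1$, i.e. $y\in A$ itself, are split off and distributed harmlessly). Applying Lemma~\ref{free_equivariant_hom} to $\{A_i\}$ yields some $j$ and a continuous injective $\Gamma_j$-equivariant $f$ whose range $R=\ran(f)\subseteq A_j$ is a copy of the free $\Gamma_j\cong\F_\omega$ shift, every point of which is moved into $A$ by a word beginning in the factor $\Gamma_j$.

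Next I iterate. The set $A$ induces on the subshift $R$ a new complete section for the residual reachability, and the same device applies inside $R$, peeling off one factor-move at a time. The goal is to organize these countably many applications so that the factors selected at successive stages assemble into a single sequence of generators $g_0,g_1,\dots$ freely generating a subgroup $\Gamma\le\F_\omega$ with $\Gamma\cong\F_\omega$, while the finite approximations converge to a single $x\in\Free(\omega^{\F_\omega})$ with $w\cdot x\in A$ for every reduced word $w\in\Gamma$. Freeness of $\Gamma$ is arranged by using a fresh factor at each stage, and convergence of the approximations is automatic from the finitary dependence built into the turn function of Definition~\ref{turn_defn}: each coordinate of the output depends on only finitely many moves, which in turn depend on only finitely many bits of the input.

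The hard part will be the last step: promoting the conclusion of Lemma~\ref{free_equivariant_hom}, which only guarantees that the chosen piece $A_j$ \emph{meets} $A$ along each factor-orbit, to a full orbit genuinely \emph{contained} in $A$. The obstruction is that the witnesses $m(y)$ are in general mixed reduced words, so following them to $A$ leaves the current orbit; one therefore cannot stay inside a single factor and must diagonalize the witnesses across infinitely many factors to manufacture a subgroup whose \emph{entire} orbit lands in $A$. Coordinating this fusion so that it simultaneously preserves injectivity, keeps every approximation inside the free part (which is where Lemma~\ref{free_equivariant_hom} deposits its image), and produces a convergent limit point is the delicate bookkeeping on which the proof turns; it is also the only place where the complete-section hypothesis on $A$ is used in an essential, non-local way rather than merely to define the partition.
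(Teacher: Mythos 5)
Your opening moves match the paper's: write $\F_\omega$ as $\F_\omega * \F_\omega * \ldots$ and feed a partition derived from $A$ into Lemma~\ref{free_equivariant_hom}. But your choice of partition creates a gap that the rest of the proposal never closes. You partition according to the factor index of the first letter of a minimal-length witness $m(y)$ with $m(y)\cdot y \in A$. The trouble is that within a single piece $A_j$ the witnesses $m(y)$ vary from point to point, so knowing $\ran(f)\subseteq A_j$ gives you no single group element carrying $\ran(f)$ into $A$; that is exactly why you are forced into the ``iterate and diagonalize across factors'' step, which you yourself flag as the hard part and leave as unexecuted bookkeeping. That step is essentially the whole theorem: Lemma~\ref{free_equivariant_hom} produces one equivariant copy inside one piece of one partition, and nothing in it lets you intersect conclusions across infinitely many successive applications while preserving injectivity, freeness, and convergence of approximations to a single point whose full orbit under a subgroup isomorphic to $\F_\omega$ lies in $A$. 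As written, the proposal reduces the theorem to an unproved fusion claim.

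The paper's proof avoids iteration entirely with a different partition: enumerate $\F_\omega = \{\gamma_0,\gamma_1,\ldots\}$ and set $A_i = \gamma_i\cdot A \setminus \bigcup_{j<i}A_j$. Because $A$ is a complete section, these sets cover the free part, and --- this is the key property your partition lacks --- each piece satisfies $A_i \subseteq \gamma_i \cdot A$, so the \emph{entire} piece is carried into $A$ by the single element $\gamma_i^{-1}$. Now one application of Lemma~\ref{free_equivariant_hom} gives some $i$ and an injection $f$, equivariant for the $i$th free factor $\Delta\cong\F_\omega$, with $\ran(f)\subseteq A_i \subseteq \gamma_i\cdot A$. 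Since $\ran(f)$ is $\Delta$-invariant, for any $y\in\ran(f)$ and all $\delta\in\Delta$ we get $\delta\cdot y\in\gamma_i\cdot A$; taking $x=\gamma_i^{-1}\cdot y$ and $\Gamma=\gamma_i^{-1}\Delta\gamma_i$ gives $(\gamma_i^{-1}\delta\gamma_i)\cdot x = \gamma_i^{-1}\cdot(\delta\cdot y)\in A$, which is the theorem. A side remark: your plan to read $x$ off ``from the domain via $\pi_j$'' is also misdirected --- $x$ must live in the same space as $A$, and in the correct argument it arises as a translate of a point in the \emph{range} of $f$, not from its domain.
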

\begin{proof}
  Let  $\gamma_0, \gamma_1, \ldots$ be an
  enumeration of all the elements of $\F_\omega$, and define $\{A_i \}_{i
  \in \omega}$ inductively by $A_i = \gamma_i \cdot A \setminus (\union_{j
  < i} A_j)$. Note that the $\{A_i\}_{i \in \omega}$ partition
  $\Free\left(\left(\cantor\right)^{\F_\omega}\right)$. Now if we write 
  $\F_\omega$ as $\F_\omega * \F_\omega * \ldots$, and apply
  Lemma~\ref{free_equivariant_hom},
  then there
  is some $A_i$ and corresponding $i$th copy of $\F_\omega$ such that there is
  an equivariant injection from $\Free(\omega^{\F_\omega})$ into
  $\Free((\cantor)^{\F_2 * \F_2 * \ldots})$ whose range is contained in
  $\gamma_i \cdot A$ for some $i$. Let $\Delta$ be this $i$th copy of
  $\F_\omega$, and note then that if $y \in \ran(f)$ then since $\delta \cdot y \in
  \gamma_i \cdot A$ for all $\delta \in \Delta$, then letting $x =
  \gamma_i^{-1} \cdot y$ and $\Gamma = \gamma_i^{-1} \Delta \gamma_i$, we
  are done. To verify, we check that $\gamma_i^{-1} \delta \gamma_i \cdot x = \gamma_i^{-1}
  \delta \gamma_i \cdot (\gamma_i^{-1} \cdot y) = \gamma_i^{-1} \cdot
  (\delta \cdot y) \in A$ since $\delta \cdot y \in \gamma_i \cdot A$.
\end{proof}

\begin{remark}\label{cs_for_f2}
Theorem~\ref{complete_section_Fomega} is true when we replace
$F(\F_\omega,\cantor)$
by $F(\F_2,\cantor)$. This is because there is a subgroup of $\F_2$ isomorphic to
$\F_\omega$ for which we can equivariantly embed $F(\F_\omega,\cantor)$
into $F(\F_2,\cantor)$ (see~\cite{MR1149121}). Having done this, then 
given any Borel complete
section $A$ of $F(\F_2,\cantor)$, in each equivalence class of 
$F(\F_2,\cantor)$ that meets the range of this embedding of
$F(\F_\omega,\cantor)$, 
we can translate $A$ by the least group element that makes $A$ intersect
the range of this embedding to obtain $A^*$. Now pull $A^*$ back under the
embedding and apply Theorem~\ref{complete_section_Fomega}.
Theorem~\ref{complete_section_Fomega} is also true when we replace
$F(\F_\omega,\cantor)$ with $F(\F_\omega,2)$ by \cite{SewardTuckerDrob}.
Finally, by results of Section~\ref{sec:ultrafilters}, we can find a
$\Gamma$ such that $F(\F_\omega,\cantor)
\restriction \{x : \forall \gamma \in \Gamma(\gamma \cdot x \in A )\}$ 
has the same Borel cardinality as $F(\F_\omega,\cantor)$.
\end{remark}

There is a general theme here that
a complete section of the shift action of $\Gamma$ on
$\Free((\cantor)^\Gamma)$ must have a significant amount of structure. Gao,
Jackson, Khrone, and Seward have some other results which fit into this
theme, which are currently in preparation. See also \cite{MarksNote}.

\subsection{Results in reverse mathematics}

In this section, we show that the lemmas we have proved above giving
continuous equivariant functions into Borel partitions of $(\cantor)^{\fp_i
\Gamma_i}$ truly require the use of determinacy in their proofs, in the
sense that these lemmas imply Borel determinacy. We will also show that the
main lemma from \cite{MR3454384} implies Borel determinacy.
These reversals rely on
the following observation:

\begin{lemma}\label{lem:equivariant_preserves_computable}
  Suppose $X, Y \in \{2,3, \ldots, \omega, \cantor\}$, let $\Gamma = \Delta
  = \Z$, and suppose $f \from \Free(X^\Gamma) \to Y^{\Gamma *
  \Delta}$ is a continuous $\Gamma$-equivariant function whose range is not
  a singleton. Then there is a
  continuous function $g \from \cantor \to \Free(X^\Gamma)$ such that for
  all $x \in \cantor$, we have $f(g(x)) \geq_T x$.
\end{lemma}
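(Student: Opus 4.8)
The plan is to encode an arbitrary real $x \in \cantor$ into a point $g(x) \in \Free(X^\Gamma)$ in such a way that the continuous equivariant function $f$, applied to $g(x)$, reveals enough information to compute $x$. The key insight is that $\Gamma = \Z$ acts on $X^\Gamma$ by shifting, and a point of $\Free(X^\Gamma)$ is essentially a bi-infinite sequence in $X$ with no nontrivial period. Since $f$ is $\Gamma$-equivariant, the value $f(\gamma \cdot g(x)) = \gamma \cdot f(g(x))$ is determined by $f(g(x))$, so the single point $y = f(g(x)) \in Y^{\Gamma * \Delta}$ together with the $\Gamma$-action carries all the information; what we extract from $y$ is controlled by where $g(x)$ sits in the shift.

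First I would exploit the hypothesis that $\ran(f)$ is not a singleton: since $f$ is continuous and $\Gamma$-equivariant and the domain $\Free(X^\Gamma)$ has a dense orbit (or at least, since equivariance forces $f$ to be non-constant on some orbit), there must be two points $a_0, a_1 \in \Free(X^\Gamma)$ with $f(a_0) \neq f(a_1)$, and by continuity this disagreement is witnessed on some finite coordinate $\delta \in \Gamma * \Delta$ and some finite bit. I would then build $g$ so that $g(x)$ is a bi-infinite word that, read off in blocks along the $\Z$-direction, alternates between finite patterns of $a_0$-type and $a_1$-type chosen according to the successive bits $x(n)$ of $x$, separated by long enough ``marker'' blocks to guarantee freeness (no period) and to guarantee that the relevant local behavior of $f$ at the $n$th block depends only on the bit $x(n)$. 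Continuity of $f$ gives a modulus: the value of $f$ at a bounded set of coordinates depends on only finitely many coordinates of the input, so I can make the blocks long enough that each bit of $x$ is faithfully recorded in a corresponding, computably located, block of $y = f(g(x))$.

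The computation $f(g(x)) \geq_T x$ then proceeds as follows: to recover $x(n)$, I would locate the $n$th block in $y$ — its position is computed by a fixed recursive function of $n$ coming from the block-length bookkeeping in the construction of $g$ — read the finitely many bits of $y$ on the coordinates where $f$ distinguished $a_0$ from $a_1$ (shifted appropriately using equivariance), and compare against the two known finite patterns $f(a_0), f(a_1)$ restricted to those coordinates. This comparison is a finite, uniform-in-$n$ decision procedure, so $x$ is computable from $y$. The map $g$ is continuous because each coordinate of $g(x)$ depends on only finitely many bits of $x$ (indeed on a single bit plus the fixed marker data), and $g(x)$ lies in $\Free(X^\Gamma)$ by the aperiodicity forced by the markers.

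The main obstacle will be arranging the freeness and the ``no interference'' conditions simultaneously: I must ensure both that $g(x)$ genuinely has trivial stabilizer under $\Z$ for every $x$ (so that $g$ really maps into $\Free(X^\Gamma)$, which is needed to even apply the hypotheses on $f$), and that the continuity modulus of $f$ does not cause the reading of the $n$th block to be contaminated by neighboring blocks encoding other bits of $x$. Both are handled by making marker blocks long relative to the (a priori unknown but fixed) modulus of continuity of $f$ at the finitely many coordinates in question; the delicate point is that this modulus is extracted from $f$ before $g$ is defined, and the block lengths are then chosen recursively to dominate it, so that the decoding remains a fixed recursive procedure uniform in $n$. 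I expect the bookkeeping that guarantees the $a_0$/$a_1$ distinction survives into $y$ at a computably locatable coordinate to be the technically fussiest part, but conceptually it is just a careful application of the continuity of $f$ together with its equivariance.
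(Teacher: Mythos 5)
Your overall strategy is the same as the paper's: use continuity and non-constancy of $f$ to extract two finite configurations that $f$ distinguishes at a single coordinate, code the bits of $x$ by placing those configurations at computably spaced shifts along the $\Z$-axis, and decode from $y = f(g(x))$ via equivariance, reading the bit $(nk \cdot y)(\delta)(i) = f(nk \cdot g(x))(\delta)(i)$ at computable coordinates. Your interference worry is also resolved exactly as in the paper: since $z \mapsto f(z)(\delta)(i)$ is continuous into a discrete space, it is locally constant, so each of $a_0, a_1$ has a finite window such that \emph{any} free point agreeing with $a_\epsilon$ on that window produces the value $j_\epsilon$; non-overlapping blocks then suffice, and no quantitative "modulus" bookkeeping is needed beyond choosing the spacing larger than the window size.

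There is, however, one step that fails as stated: your freeness mechanism. You claim that making the marker blocks \emph{long} (relative to the modulus of continuity) guarantees that $g(x)$ has trivial stabilizer, but length does nothing for aperiodicity. Concretely, take $x$ to be the constant-$0$ sequence: then every data block is the same $a_0$-pattern and every marker is identical, so if the line is tiled by these (and you never specify what occupies the coordinates not used for blocks and markers, in particular the negative half-line), $g(x)$ is periodic with period equal to the block-plus-marker length, no matter how long the markers are. Then $g(x) \notin \Free(X^\Gamma)$, so $f(g(x))$ is not even defined, and the construction collapses precisely on such inputs. The repair is cheap but is a genuine additional idea rather than bookkeeping: the paper observes that the coding only constrains coordinates $m \geq -k$, and fills the left half-line $m < -k$ with a fixed non-periodic sequence; since a point with nontrivial stabilizer under the shift is periodic, and periodicity of the whole point forces periodicity of its left tail, this single aperiodic tail certifies $g(x) \in \Free(X^\Gamma)$ for every $x$ simultaneously. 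You could equally well fix your version by making the markers themselves non-repeating (e.g., weaving a fixed aperiodic background into the marker coordinates), but some such device must be added explicitly.
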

We are restricting here to the spaces $\{2,3 \ldots, \omega, \cantor\}$ and the groups $\Gamma = \Delta = \Z$ so that we can sensibly talk
about computability in the space $Y^{\Gamma * \Delta}$. More generally, the
same proof will work for any groups $\Gamma$ and $\Delta$ that contain $\Z$
as a subgroup, provided we choose an appropriate way of identifying $\Gamma *
\Delta$ with $\omega$ so that the cosets of $\Z$ are computable. 
\begin{proof}
  We will assume that $X = Y = \cantor$. The proof is similar in the other
  cases. 
  Note that since $f$ is continuous and $\ran(f)$ is not a singleton, 
  we can find basic clopen
  neighborhoods $U_0, U_1$ of $\Free(X^\Gamma)$ so that $f(U_0)$ and
  $f(U_1)$ are disjoint. 
  Furthermore, by refining the basic clopen sets $U_0$ and $U_1$, we may
  assume that this disjointness is witnessed in the 
  following strong way: there is a $\delta \in \Gamma *
  \Delta$, $i \in \omega$, and $j_0 \neq j_1 \in 2$ so that for
  all $x_0 \in U_0$ and $x_1 \in U_1$, 
  \[f(x_0)(\delta)(i) = j_0 \text{ and } f(x_1)(\delta)(i) = j_1.\]
  Since $U_0$ and $U_1$ are basic clopen neighborhoods in $X^\Gamma$, there are
  finite sets $S_0, S_1 \subset \Z$ and $s_{i,n} \in 2^{< \omega}$ for
  $i \in \{0,1\}$ and $n \in S_i$
  so that
  $U_i = \{x \in X^\Z : \forall n \in S_i : x(n) \supset s_{i,n}\}$. 
  Choose $k$ larger than twice the
  absolute value of any element of $S_0$ or $S_1$ so that $k' \cdot U_{0}$ and
  $U_1$ are compatible for any $k' \in \Z = \Gamma$ with $|k'| \geq k$. 
  
  Our idea is to code $x \in \cantor$ into $g(x) \in
  \Free(X^\Gamma)$ by putting $nk \cdot g(x) \in U_i$ if and only if $x(n)
  = i$. 
  Then from the value of $(nk \cdot f(g(x)))(\delta)(i)$ (in particular
  whether it is $j_0$ or $j_1$), we will be able to determine whether $nk
  \cdot g(x) \in U_0$ or $nk \cdot g(x) \in U_1$ and hence the value of
  $x(n)$. This coding only constrains our choice of $g(x)(m)$ for $m \geq -k$.
  Hence, we can define $g(x)(m)$ for $m < -k$ so that the sequence $g(x)(m)$ for $m < -k$ is not periodic.
  This will ensure that $g(x) \in \Free(X^\Gamma)$. It is clear that we can
  find a continuous map $g \from \cantor \to \Free(X^\Gamma)$ with this property.
\end{proof}

Recall that \define{Turing determinacy} for a pointclass $\Lambda$ 
is the
statement that every Turing invariant set $A \in \Lambda$ either contains a
Turing cone, or its complement contains a Turing cone. Turing determinacy
is closely connected to determinacy in general. For example,
$\mathbf{\Sigma}^1_1$ Turing determinacy is equivalent to
$\mathbf{\Sigma}^1_1$ determinacy by Martin~\cite{MR0258637} and
Harrington~\cite{MR518675}, and Woodin has shown that Turing
determinacy is equivalent to $\AD$ in $L(\R)$. For Borel sets, a close
analysis of Friedman's work in~\cite{MR0284327} shows that over $\mathrm{Z}^- +
\Sigma_1-\text{replacement} + \DC$, Borel Turing determinacy implies Borel
determinacy (see~\cite{MR1008089}, \cite{MartinBook}*{Exercises
2.3.6-2.3.11}, and \cite{Hachtman}).

\begin{thm}\label{thm:reversal}
  The following theorems are each equivalent to Borel determinacy over
  the base theory $\ZZ^- + \Sigma_1-\text{replacement} + \DC$:
  Lemmas~\ref{equivariant_hom} and \ref{free_equivariant_hom} and hence
  Theorems~\ref{g1} and \ref{g2} from this paper, and Lemmas 2.1 and
  3.12 from \cite{MR3454384}. 
\end{thm}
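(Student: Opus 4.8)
The forward directions are already in hand: the proofs of Lemmas~\ref{equivariant_hom} and \ref{free_equivariant_hom} (hence of Theorems~\ref{g1} and \ref{g2}), and the corresponding proofs in \cite{MR3454384}, invoke Borel determinacy and are otherwise elementary, so they go through verbatim over $\ZZ^- + \Sigma_1\text{-replacement} + \DC$ once Borel determinacy is assumed. The work is the reversal. My plan is to derive \emph{Borel Turing determinacy} from any one of the listed statements, and then appeal to the fact recalled just above (Friedman~\cite{MR0284327}, together with \cite{MR1008089}, \cite{MartinBook}, \cite{Hachtman}) that Borel Turing determinacy implies Borel determinacy over this base theory. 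As observed in the introduction, it suffices to treat the simplest nontrivial instance $I = 2$ with $\Gamma_0 = \Gamma_1 = \Z$; this is exactly the case for which Lemma~\ref{lem:equivariant_preserves_computable} is tailored, and the same argument covers Lemmas 2.1 and 3.12 of \cite{MR3454384}, which are the free-product-of-two-groups forms of Lemmas~\ref{equivariant_hom} and \ref{free_equivariant_hom}.

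So I would fix a Borel Turing-invariant set $A \subseteq \cantor$ and a computable homeomorphism $r$ of $(\prod_{i \in 2}\cantor)^{\Z * \Z}$ with $\cantor$ (identifying $\Z * \Z$ with $\omega$ computably and $\prod_{i \in 2}\cantor$ with $\cantor$); this is the identification with respect to which computability in the target space of Lemma~\ref{lem:equivariant_preserves_computable} is measured, so $r(y) \equiv_T y$ uniformly. Set $B_0 = \{y : r(y) \in A\}$ and $B_1 = \comp{B_0}$, a Borel partition. Applying Lemma~\ref{equivariant_hom} (or Lemma~\ref{free_equivariant_hom} for the free version, and restricting the domain of $f$ to the free part in the former case) yields some $j \in 2$ and a continuous $\Z$-equivariant injection $f$ with $\ran(f) \subseteq B_j$; say $j = 0$, the case $j = 1$ being symmetric with $A$ replaced by $\comp{A}$. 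Since $f$ is injective on an uncountable domain its range is not a singleton, so Lemma~\ref{lem:equivariant_preserves_computable} provides a continuous $g \from \cantor \to \Free((\cantor)^{\Z})$ with $f(g(x)) \geq_T x$ for all $x$.

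It then remains to extract a Turing cone. Because $f$ and $g$ are continuous, each is computed from its input together with a fixed real coding it; letting $c$ be the join of these codes, one gets $r(f(g(x))) \leq_T x$ whenever $x \geq_T c$. Combined with the inequality $r(f(g(x))) \geq_T x$ from Lemma~\ref{lem:equivariant_preserves_computable}, this yields $r(f(g(x))) \equiv_T x$ for every $x$ in the cone above $c$. But $f(g(x)) \in \ran(f) \subseteq B_0$ means $r(f(g(x))) \in A$, and since $A$ is Turing-invariant we conclude $x \in A$; thus $A$ contains the cone above $c$, establishing Borel Turing determinacy. The genuinely hard step is the one already carried out, namely Lemma~\ref{lem:equivariant_preserves_computable}, which codes an arbitrary real $x$ into the free input $g(x)$ so that the output recovers $x$. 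The remaining obstacle is bookkeeping: arranging $r$ and the continuity codes so that continuity delivers $\leq_T$ on a cone and matches the $\geq_T$ direction exactly, and checking that this reduction to Borel Turing determinacy — the only link not supplied by the cited Friedman-style reversal — is itself carried out within the weak base theory.
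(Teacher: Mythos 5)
Your proposal is correct and takes essentially the same route as the paper's proof: both reduce the reversal to Borel Turing determinacy via the cited Friedman-style result, apply the lemma in question (in the case $I = 2$, $\Gamma_0 = \Gamma_1 = \Z$) to the partition induced by a Turing-invariant set, and then combine Lemma~\ref{lem:equivariant_preserves_computable} with the relative computability of continuous functions to obtain $f(g(x)) \equiv_T x$ on a cone, so that $A$ or its complement contains a Turing cone. The details you spell out — injectivity forcing a non-singleton range, the computable identification of the target space with $\cantor$, and the symmetry between the two cases — are precisely the steps left implicit in the paper's argument.
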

\begin{proof}
  Let $\Gamma = \Delta = \Z$ and suppose $X,Y \in \{2,3, \ldots, \omega,
  \cantor\}$ is appropriate for the theorem or lemma in question
  which we would like to reverse. In the case of the
  Lemmas~\ref{equivariant_hom} and \ref{free_equivariant_hom} in this
  paper, assume that $I = 2$, and $\{\Gamma_i\}_{i
  \in I} = \{\Gamma,\Delta\}$. 

  Suppose $A \subset Y^{\Gamma * \Delta}$ is
  Turing invariant. Since Borel Turing determinacy implies Borel
  determinacy over $\ZZ^- + \Sigma_1-\text{replacement} + \DC$, it will
  suffice to prove that $A$ either contains a Turing cone, or is disjoint
  from a Turing cone. By applying our theorem or lemma, we can find 
  an equivariant continuous map $f$ whose range is contained in $A$ or the
  complement of $A$. The domain of $f$ will at least include 
  $\Free(X^\Gamma)$, and the codomain will be $Y^{\Gamma *
  \Delta}$ for some $X, Y \in \{2,3,\ldots, \omega, \cantor\}$. 
  Hence by Lemma~\ref{lem:equivariant_preserves_computable}, we can find
  a continuous function $g \from \cantor \to \Free(X^\Gamma)$ or $g \from \cantor
  \to \Free(X^\Delta)$ so that for every $x$, $f(g(x)) \geq_T x$. Now
  since $f \circ g$ is continuous, on the cone of $x$ above a code for $f
  \circ g$, $f(g(x)) \equiv_T x$. Hence, the range of $f \circ g$ contains
  representatives of a Turing cone. Hence either $A$ or its complement contains a cone. 
\end{proof}

It is very natural to ask
about the strength in reverse mathematics of the main theorems in both this
paper and 
\cite{MR3454384}, since they are proved using these lemmas which reverse to
Borel determinacy. These are all open problems.\footnote{After a preprint of this paper was posted, Sherwood Hachtman
resolved these problems. 
In particular, Questions~\ref{rq1} and \ref{rq2}
have negative answers. 
Hachtman's proof uses the fact that all these statements have the following
syntactic form: for all Borel
functions $f$, there exists a real $x$ such that $R(x,f)$ where $R$ is a
Borel condition. Thus, for each $f$, each instance is $\Sigma^1_1$ in a
real code for $f$, and
thus these statments are true in levels of $L$ that are $\Sigma^1_1$
correct. This includes levels of $L$ that witness the failure of Borel (or
even $\Sigma^0_4$) determinacy.}
We draw attention to a
pair of questions which we find particularly interesting. 

\begin{question}\label{rq1}
  Does Theorem~\ref{cor:diagonalization} imply Borel determinacy over
  $\ZZ^- + \Sigma_1-\text{replacement} + \DC$? 
\end{question}

Theorem~\ref{cor:diagonalization} stands out to us here because it
resembles (at least in a superficial way) Borel diagonalization theorems
which are known to have strength in reverse mathematics. 

\begin{question}\label{rq2}
  Does \cite[Theorem 3.7]{MR3454384} imply Borel determinacy over
  $\ZZ^- + \Sigma_1-\text{replacement} + \DC$? 
\end{question}

Note that by the proof of Theorem~\ref{thm:reversal} above, \cite[Lemma
2.1]{MR3454384} implies Borel determinacy in the case when $\Gamma$ and
$\Delta$ contain $\Z$. 
However, in the case when $\Gamma$ and $\Delta$ are finite, the
strength of \cite[Lemma
2.1]{MR3454384} is open, and it is in fact equivalent to \cite[Theorem
3.7]{MR3454384}. This is because the spaces $\Free(\N^\Gamma)$ and $\Free(\N^\Delta)$
are countable, so constructing equivariant functions is trivial once we
know $A$ or its complement contain infinitely many $\Gamma$-orbits or
$\Delta$-orbits. Note that we know there cannot be an easy measure
theoretic or Baire category proof of \cite[Theorem
3.7]{MR3454384} by \cite[Theorem 4.5]{MR3454384}, except in the trivial
case $\Gamma = \Delta = \Z/2\Z$. So there is at least some evidence
that \cite[Theorem 3.7]{MR3454384} is hard to prove. 

\pagebreak

\section{Uniform universality}
\label{sec:ri}

\subsection{Introduction to uniform universality}
\label{subsec:intro_to_uu}

In this section, we will investigate a strengthened form of
universality for countable Borel equivalence relations.
The key idea will be to
restrict the class of Borel reductions we consider witnessing $E \leq_B F$
to only those reductions $f$ where there is a way of transforming a witness that $x$ and $y$
are $E$-equivalent into a witness that $f(x)$ and $f(y)$ are $F$-equivalent in a way that is independent of $x$ and $y$. 
To this end, we
will begin this section with a discussion of 
how countable Borel equivalence
relations may be generated. 
Indeed, though the Feldman-Moore theorem~\cite{MR0578656}
implies that every countable Borel equivalence relation can be generated by
the Borel action of a countable group, we will prefer to work with
a more general way of generating equivalence relations since many
equivalence relations from computability theory are not naturally generated by
group actions. Such equivalence relations will play a key role in many
of the theorems we will prove.

Let $E$ be a 
countable Borel equivalence relation on a standard Borel
space $X$.
Then by Lusin-Novikov uniformization \cite{MR1321597}*{Theorem 18.10}, there
exists a countable set $\{\varphi_i\}_{i \in \omega}$ of partial Borel
functions $\varphi_i \from X \to X$ such that $x \mathrel{E} y$ if and only if there is an
$i$ and $j$ such that $\varphi_i(x) = y$ and $\varphi_j(y) = x$.
Conversely, if $X$ is a standard Borel space and $\{\varphi_i\}_{i \in \omega}$ is a countable set
of partial Borel functions on $X$ that is closed under composition and
includes the identity function, then we define $E^X_\ph$ to be the
equivalence relation generated by the functions $\{\varphi_i\}_{i \in \omega}$,
where $x \mathrel{E^X_\ph} y$ if there exists an $i$ and $j$
such that $\varphi_i(x) = y$ and $\varphi_j(y) = x$, in which case we say
\define{$x \mathrel{E^X_\ph} y$ via $(i,j)$}. 
For example, the Turing reductions are a countable set of partial Borel
functions on $\cantor$ which generate Turing equivalence.

Our assumption here that the set $\ph_{i \in \omega}$ is closed under
composition is merely a convenience so that we do not have to discuss
words in the functions $\ph_{i \in \omega}$. For this reason, we will 
assume throughout this section that there is also a computable function $u
\from
\omega^2 \to \omega$ such that $\varphi_i \circ \varphi_j =
\varphi_{u(i,j)}$ for all $(i,j) \in \omega^2$. 

We codify the above into the following convention:

\begin{convention}\label{rec_convention}
Throughout this section, we will let $\ph_{i \in \omega}$ and $\thet_{i \in
\omega}$ denote countable sets
of partial functions on some standard Borel space that contain the identity
function, and are closed under composition as witnessed by some computable
function on indices. 
We will often omit the
indexing on $\ph_{i \in \omega}$ and $\thet_{i \in \omega}$
for clarity.
We will let $E^X_\ph$ and $E^Y_\thet$ denote equivalence
relations on some standard
Borel spaces $X$ and $Y$ that are generated by $\ph$ and $\thet$
respectively.
\end{convention}

Given countable Borel equivalence relations $E^X_\ph$ and $E^Y_\thet$
which are generated by $\ph$ and $\thet$, say that a homomorphism $f \from X \to
Y$ from $E^X_\ph$ to $E^Y_\thet$ is \define{uniform (with respect to $\ph$
and $\thet$)} if there exists a function $u \from \omega^2 \to \omega^2$ such that
for all $x, y \in X$, if $x \mathrel{E^X_{\ph}} y$ via $(i,j)$, then $f(x)
\mathrel{E^Y_{\thet}} f(y)$ via $u(i,j)$, independently of what $x$ and $y$ are.

Now suppose $\Gamma$ is a
countable group equipped with a Borel action on a standard Borel space 
$X$ yielding the countable Borel equivalence relation $E^X_\Gamma$. In this
case we can equivalently regard
$E^X_\Gamma$ as being generated by the 
functions $x \mapsto \gamma \cdot x$ for each $\gamma \in \Gamma$, and so we can
apply our definitions as above in this setting. However, since all of these
functions have inverses, the definitions can be simplified a bit. For
examples, if $E^Y_\Delta$ is generated by a Borel action of the countable
group $\Delta$ on $Y$ and $f$ is a homomorphism from 
$E^X_\Gamma$ to $E^Y_\Delta$, then $f$ is uniform if and
only if there is a function $u \from \Gamma \to \Delta$ such that $u(\gamma)
\cdot f(x) = f(\gamma \cdot x)$ for all $\gamma \in \Gamma$. Hence, in the
setting where the action of $\Delta$ is free, a homomorphism is uniform if
and only if the cocycle associated to it has no dependence on the value of
$x \in X$. This type of cocycle superrigidity is well studied and has many
applications in the field of Borel equivalence relations, as mentioned in
the introduction.

We are ready to give one of the central definitions of this section:
\begin{defn}\label{defn:uniformly_universal}
  A countable Borel equivalence relation $E^X_{\ph}$ generated by $\ph_{i
  \in \omega}$ is said to be \define{uniformly universal (with respect to
  $\ph_{i \in \omega}$)} if for every countable Borel equivalence relation
  $E^Y_\thet$, there is a Borel reduction $f$ from $E^Y_\thet$ to $E^X_\ph$
  that is uniform with respect to $\thet$ and $\ph$.
\end{defn}

The idea of uniform universality was introduced by
Montalb\'an, Reimann, and Slaman (who restricted themselves to the case of
equivalence relations generated by Borel actions of countable groups). They
showed in unpublished work that Turing equivalence is not uniformly
universal with respect to some natural way of generating it by a group.
That Turing equivalence is not uniformly universal as it is usually generated
with Turing reductions is an easy consequence of Slaman and Steel's work
in~\cite{MR960895}.

Note that when we discuss uniform universality, it is important for us to specify the functions $\ph_{i \in \omega}$ that
we use to generate 
the equivalence relation $E^X_\ph$. In particular, we will show that every universal
countable Borel equivalence relation is uniformly universal with respect to
some way of generating it (see Proposition~\ref{some_family}). 

To date, every known universal countable Borel equivalence relation $E$ has
been shown to be universal using a proof that is uniform in the sense of
Definition~\ref{defn:uniformly_universal} (and with respect to some natural way
of generating $E$).
Thus, we can regard the class of uniformly universal countable
Borel equivalence relations as those which we can hope to prove universal
without dramatically new techniques. Indeed, one could make the following
ridiculously optimistic conjecture:

\begin{conj}\label{conj:uniformly_universal}
  If $E$ is a universal countable Borel equivalence relation, then $E$ is
  uniformly universal with respect to every way of generating $E$.
\end{conj}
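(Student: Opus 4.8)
The plan is to reduce the conjecture, using Proposition~\ref{some_family}, to an invariance principle: that uniform universality of a universal relation does not depend on the chosen generating family. Proposition~\ref{some_family} guarantees that every universal $E$ is uniformly universal with respect to at least one family; so fix such a presentation $E^X_\ph$ and let $\ps$ be an arbitrary second family generating the same $E$. Given any target $E^Y_\thet$, Proposition~\ref{some_family} supplies a reduction $f \from Y \to X$ and a transformer $u \from \omega^2 \to \omega^2$ with the property that whenever $y \mathrel{E^Y_\thet} y'$ via $(i,j)$ one has $f(y) \mathrel{E^X_\ph} f(y')$ via $u(i,j)$. The goal is to convert this into a reduction that is uniform with respect to $\ps$ rather than $\ph$.

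First I would attempt the direct route: build a single Borel ``dictionary'' $d \from \omega^2 \to \omega^2$ translating $\ph$-witnesses into $\ps$-witnesses, so that $\varphi_{i'}(a) = b \pand \varphi_{j'}(b) = a$ forces $\psi_{d(i',j')_0}(a) = b \pand \psi_{d(i',j')_1}(b) = a$. If such a $d$ existed independently of $a$ and $b$, then $d \circ u$ would witness uniform universality with respect to $\ps$, and the conjecture would follow. The content of the conjecture is precisely that such dictionaries are forced to exist.

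The crux, and the step I expect to be the main obstacle, is that this dictionary is exactly the kind of object whose uniformity is in doubt. Since $\ph$ and $\ps$ generate the same relation but carry genuinely different witness structures, the $\ps$-witness certifying $a \mathrel{E} b$ is in general a function of the points $a,b$ and not merely of a given $\ph$-witness relating them; the translation is itself a homomorphism whose uniformity we are trying to establish, so the naive approach is circular. A less circular variant is to avoid pointwise translation and instead re-encode the target: one would route the reduction through an auxiliary universal relation presented by a free action---for example a shift action of $\F_2$ as in Theorem~\ref{uu_classifications}(1)---and then use the games of Section~\ref{sec:main_idea}, which manufacture equivariant and hence uniform maps, to land directly inside $E^X_\ps$. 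The obstruction resurfaces as the task of exhibiting, Borel-uniformly and for an \emph{arbitrary} generating family $\ps$ of a universal $E$, enough algebraic structure in $\ps$ to receive such an equivariant map.

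I do not expect to produce a full proof; the conjecture is openly optimistic, and every separating technique developed later in this section (the measure-theoretic obstruction behind Theorem~\ref{uu_properties}(2) and the combinatorial obstruction behind Theorem~\ref{uu_classifications}(5)) yields \emph{non-uniform} reductions rather than ruling out reductions altogether, so none of them can be run in reverse to derive non-universality from non-uniform-universality. The realistic targets are therefore partial: to verify the conjecture for the rigidly presented classes of Theorems~\ref{uu_properties} and \ref{uu_classifications}, where the dictionary $d$ can be constructed by hand, and to isolate the exact Borel uniformization principle---a selection of witnesses coherent across presentations---whose availability would settle the general case.
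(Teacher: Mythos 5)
The statement you were given is not a theorem of the paper at all: it is Conjecture~\ref{conj:uniformly_universal}, which the author explicitly introduces as a ``ridiculously optimistic conjecture'' and leaves open. There is consequently no proof in the paper to compare your attempt against, and your decision not to claim one is correct. What the paper does prove in the vicinity is exactly what you cite: Proposition~\ref{some_family}, which produces \emph{one} generating family with respect to which a universal $E$ is uniformly universal (by embedding $E(\F_\omega,\cantor)$ into $E$ and extending the pushed-forward generators to a family generating all of $E$), and the non-uniform-universality machinery (Theorems~\ref{thm:closed_under_countable_joins}, \ref{measure_E_infty}, \ref{coarser_classification}), which the paper only ever combines \emph{with} the conjecture as a hypothesis to derive consequences (e.g.\ Conjecture~\ref{conj:ri_not_universal}, and the refutation of Thomas' Burnside conjecture); none of it bears on proving the conjecture itself, just as you say. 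Your diagnosis of the central obstacle is also accurate: a witness dictionary $d$ translating $\ph$-witnesses into $\ps$-witnesses for a second generating family $\ps$ is exactly the assertion that the identity map is a uniform homomorphism from $E^X_\ph$ to $E^X_\ps$, so constructing it is an instance of the problem being posed, not a reduction of it.

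Two corrections to your framing. First, the existence of such a dictionary is sufficient for transferring uniform universality from $\ph$ to $\ps$ but not obviously necessary: a uniform reduction of a given $E^Y_\thet$ into $E^X_\ps$ need not factor through the identity map on $X$, so the conjecture is not ``precisely'' the dictionary statement, only implied by it. Second, your proposed partial target---verifying the conjecture for the relations classified in Theorems~\ref{uu_properties} and \ref{uu_classifications}---is weaker than it sounds: those theorems fix a specific, natural generating family for each relation, whereas the conjecture quantifies over \emph{all} generating families, including pathological ones obtained, say, by splitting natural generators into partial functions with strange domains. The paper contains no technique for controlling that quantifier for even a single universal relation, which is presumably why the conjecture is open; hand-built dictionaries between particular pairs of natural presentations would be genuinely partial progress, but they do not ``verify the conjecture'' for those classes.
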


This conjecture is attractive since we understand uniformly
universality far better than mere universality; 
Theorems~\ref{uu_properties} and 
\ref{uu_classifications} from the introduction would settle many open questions
about universal countable Borel equivalence relations if
Conjecture~\ref{conj:uniformly_universal} were true.

The various parts of Theorems~\ref{uu_properties} and
\ref{uu_classifications} will come from applying some of the tools of
Section~\ref{sec:main_idea} together with an analysis of some particular
examples of natural equivalence relations, many of them from computability
theory. 

We finish by mentioning that there is another view one could take here
which is more computability-theoretic. Instead of studying equivalence
relations, we could instead study locally countable quasiorders (which are
often called 
reducibilities). See \cite{MR3305823}. If $X$ is a standard Borel space, and $\ph_{i \in \omega}$ is a countable
collection of partial functions on $X$ that contains the identity
function and is closed under composition, then we let $\leq_{\ph}$ be the
associated quasiorder where $x \leq_{\ph} y$ if there exists an $i \in
\omega$ such that $\varphi_i(y) = x$. In computability theory, there are several
important examples of uniform embeddings between natural quasiorders from
computability theory. For example, Turing reducibility $\leq_T$ embeds into many-one
reducibility $\leq_m$ via the map $x \mapsto x'$, and Turing reducibility also embeds into enumeration
reducibility via the map $x \mapsto x \join \overline{x}$. These
embeddings are uniform in the sense that if $x \leq_T y$ via the $i$th
Turing reduction $\varphi_i(y) = x$, then there is some many-one
reduction/enumeration reduction $\theta_{u(i)}$ depending only on the index $i$ so that the
images of $x$ and $y$ are related by $\theta_{u(i)}$. One might want, then,
to study the general question of how the usual reducibilities from
computability
theory are related under uniform Borel embedding/reduction.

All of our proofs in Section~\ref{sec:ri} work in this
context of locally countable quasiorders. So for example, many-one reducibility on $3^\omega$ is a uniformly universal
locally countable quasiorder (as are poly-time Turing reducibility and
arithmetic reducibility by the proofs of \cite{MarksPoly} and \cite{1109.1875}). However,
any locally countable Borel quasiorder on $\cantor$ coarser than one-one
reducibility and closed under countable uniform joins is not a uniformly
universal locally countable quasiorder.

\subsection{Basic results on uniform universality}

We will begin by proving some basic facts about 
uniform universality. Since the composition of two uniform homomorphisms is
uniform, it is clear that if $E^X_\ph$ is uniformly universal and there is a uniform reduction
of $E^X_\ph$ to $E^Y_\thet$, then $E^Y_\thet$ is also uniformly universal. 
Hence, to demonstrate
that some equivalence relation is uniformly universal, is enough to show
that we can uniformly reduce a single 
uniformly universal equivalence relation to it. For
this purpose, we explicitly show below that both $E(\F_\omega, \cantor)$
and $E(\F_2,\cantor)$ are 
uniformly universal.

\begin{prop}\label{uniform_examples}
  $E(\F_\omega,\cantor)$ and $E(\F_2,\cantor)$ are both uniformly universal
  (as generated by the left shift actions). 
\end{prop}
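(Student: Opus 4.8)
The plan is to prove that both equivalence relations are uniformly universal by exhibiting explicit uniform reductions. Since composition of uniform homomorphisms is uniform, and since $\F_2$ contains a copy of $\F_\omega$ (the standard fact used in \cite{MR1149121}, via the subgroup generated by $\{a^n b a^{-n} : n \in \omega\}$ where $a,b$ generate $\F_2$), it suffices to handle $E(\F_\omega,\cantor)$ directly and then transfer to $E(\F_2,\cantor)$ by composing with the uniform reduction induced by this subgroup embedding. The embedding of $\F_\omega$ into $\F_2$ gives a group homomorphism, which is exactly the condition for the associated cocycle to be trivial, hence the resulting reduction is uniform in the sense of Definition~\ref{defn:uniformly_universal}.

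For the core claim, that $E(\F_\omega,\cantor)$ is uniformly universal, I would start from an arbitrary countable Borel equivalence relation $E^Y_\thet$ on a standard Borel space $Y$. By Feldman--Moore, fix countably many Borel involutions generating $E^Y_\thet$; this realizes $E^Y_\thet$ as the orbit equivalence relation of a Borel action of $\F_\omega$ (mapping the free generators to these involutions, or to the functions $\thet_i$ and their formal inverses). Recall the standard fact that the left shift action of $\Gamma$ on $\cantor^\Gamma$ is \emph{universal} for Borel actions of $\Gamma$: any Borel action of $\F_\omega$ on $Y$ embeds into the shift via $y \mapsto (\gamma \mapsto$ some Borel-coded information about $\gamma^{-1} \cdot y)$. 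The key point is that this embedding is \emph{equivariant}, hence the associated cocycle is the identity homomorphism $\F_\omega \to \F_\omega$, which makes the reduction uniform: a witness $(i,j)$ that $y \mathrel{E^Y_\thet} y'$ (via group element $\gamma$) transforms to the fixed group element $\gamma$ witnessing that the shifts of the images are related, independently of $y,y'$.

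The main obstacle I anticipate is making the universality-of-shift embedding genuinely Borel and genuinely a \emph{reduction} (not just a homomorphism) while keeping track of uniformity at the level of indices rather than group elements. Concretely: the definition of uniform homomorphism in Convention~\ref{rec_convention} is phrased in terms of index pairs $(i,j)$ for the generating families, so I must check that the group-theoretic equivariance translates into an honest \emph{computable} map $u \from \omega^2 \to \omega^2$ on the indices of the two generating families. Because we have assumed the generating families are closed under composition as witnessed by a computable function on indices, and because $\F_\omega$ acts on its shift by the fixed generators, the required index map is obtained by composing the (computable) bookkeeping that translates a word-witness in $\thet$ into the corresponding group element of $\F_\omega$, and then back into the shift generators; I would verify this is computable and independent of the points. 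Injectivity of the reduction, needed for $\leq_B$ as opposed to mere homomorphism, follows from the standard injectivity of the shift embedding $y \mapsto (\gamma \mapsto \ldots \gamma^{-1}\cdot y \ldots)$.

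Finally, to pass from $E(\F_\omega,\cantor)$ to $E(\F_2,\cantor)$, I would fix an isomorphism of $\F_\omega$ onto the subgroup $H \leq \F_2$ above, inducing a continuous equivariant injection $\cantor^{\F_\omega} \hookrightarrow \cantor^{\F_2}$ whose associated cocycle is the inclusion homomorphism $H \hookrightarrow \F_2$. Since a homomorphism gives a point-independent cocycle, this map is a uniform reduction of $E(\F_\omega,\cantor)$ into $E(\F_2,\cantor)$; composing with the uniform universality of $E(\F_\omega,\cantor)$ and using that composites of uniform reductions are uniform yields that $E(\F_2,\cantor)$ is uniformly universal as well. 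I expect the only delicate bookkeeping to be confirming that this subgroup embedding is a Borel \emph{reduction} with $\F_2$-saturation behaving correctly, which is exactly the content already invoked in the excerpt's Remark~\ref{cs_for_f2} and in \cite{MR1149121}.
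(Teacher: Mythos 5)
There is a genuine gap, and it sits exactly where you located the ``delicate bookkeeping.'' The definition of uniform universality quantifies over \emph{presentations}: for every countable Borel equivalence relation given with a generating family $\thet_{i \in \omega}$, you must produce a reduction whose uniformity function $u$ carries a $\thet$-witness pair $(i,j)$ to a witness for the images, independently of the points. Your first move replaces the given family $\thet$ by a Feldman--Moore family of involutions and treats $E^Y_\thet$ as an orbit equivalence relation of $\F_\omega$. The reduction you then build (the equivariant embedding of the action into its shift) is indeed uniform --- but only with respect to the \emph{involution} family. There is no point-independent translation from a $\thet$-witness $(i,j)$ (i.e.\ $\theta_i(x) = y$ and $\theta_j(y) = x$) to the index of a Feldman--Moore involution carrying $x$ to $y$: the Feldman--Moore construction partitions the graph of $E$ into graphs of partial injections, and which piece (hence which involution) contains a given pair $(x,y)$ depends on $(x,y)$ itself, not just on $(i,j)$. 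So the ``computable bookkeeping that translates a word-witness in $\thet$ into the corresponding group element of $\F_\omega$'' that you promise to verify does not exist along this route. The fallback reading in your parenthetical --- mapping the free generators ``to the functions $\thet_i$ and their formal inverses'' --- does not define a group action at all, since the $\theta_i$ are partial and need not be injective.

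The paper's proof avoids group actions on $Y$ entirely, and this is the idea you are missing. Index the free generators of $\F_\omega$ by \emph{pairs} $\gamma_{(i,j)}$, fix a point $p \notin Y$, and define $\theta_{\gamma_{(i,j)}}(y) = \theta_i(y)$ if $\theta_j(\theta_i(y)) = y$, and $p$ otherwise (with $\theta_{\gamma_{(i,j)}^{-1}} = \theta_{\gamma_{(j,i)}}$, and $\theta_w$ for a reduced word $w$ defined by composition). These are not bijections, so they generate no action; instead one defines the reduction directly into the shift space by $f(y)(\alpha) = \theta_\alpha(y)$. Injectivity holds because $f(y)(1) = y$; uniformity is witnessed by the fixed index map $(i,j) \mapsto \gamma_{(i,j)}$; and $f$ is a genuine reduction (not merely a homomorphism) because $p$ is absorbing, so if $\gamma \cdot f(x) = f(y)$ then $y = \theta_{\gamma^{-1}}(x) \neq p$ forces every intermediate value to lie in $Y$ and in the $E$-class of $x$. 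Your second half --- pushing forward along an embedding $\rho \from \F_\omega \to \F_2$ to get uniform universality of $E(\F_2,\cantor)$ --- matches the paper and is fine; it is the first half that needs to be replaced by this direct construction.
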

\begin{proof}
  This proposition simply follows from the proofs of the universality of
  these equivalence relations given in~\cite{MR1149121}. We will
  recapitulate this argument to explicitly demonstrate how these
  constructions are uniform, and because we will eventually require a
  careful analysis of its details. 
  
  Fix some equivalence relation $E^Y_\ph$ generated by $\ph_{i \in \omega}$
  which we wish to uniformly reduce to $E(\F_\omega,\cantor)$. We may
  assume that $Y \subsetneq \cantor$ is a strict subset of $\cantor$, by
  exploiting the isomorphism theorem for standard Borel spaces. Let $p$ be
  some distinguished point in $\cantor \setminus Y$, and let $Y^* = Y
  \union \{p\}$.

  Let the countably many generators of the group $\F_\omega$ be
  $\{\gamma_{i,j}\}_{(i,j) \in \omega^2}$, exploiting some
  bijection between $\omega$ and $\omega^2$.
  For each $(i,j) \in \omega^2$ define the function 
  $\theta_{\gamma_{i,j}} \from Y^* \to Y^*$: 
  \[\theta_{\gamma_{(i,j)}}(y) = \begin{cases} \theta_i(y) & \text{ if $y \in Y$
  and $\theta_j(\theta_i(y)) = y$} \\ p & \text{ otherwise}\end{cases}\]
  Define $\theta_{\gamma_{(i,j)}^{-1}}$ to be $\theta_{\gamma_{(j,i)}}$.
  Finally, we can define $\theta_w$ for any reduced word $w \in F_{\omega^2}$ by
  composing the $\theta_{\gamma_{(i,j)}}$ and
  $\theta_{\gamma_{(i,j)}^{-1}}$ in the obvious way. Let $\theta_1$ be the
  identity function. 
  
  We define our reduction $f \from
  Y \to \left(\cantor \right)^{\F_\omega}$ from $E^Y_\thet$ to $E(\F_\omega,
  \cantor)$ by $f(y)(\alpha) = \theta_\alpha(y)$. This is a
  uniform reduction; if $x E^Y_\thet y$ via $(i,j)$, then $f(x)
  E(\F_\omega,\cantor) f(y)$ via the generator $\gamma_{(i,j)}$. 

  For the case of $E(\F_2,\cantor)$, let $p$ be a distinguished point in
  $\cantor$, and let $\rho \from \F_\omega \to \F_2$ be an embedding
  of the group $\F_\omega$ into $\F_2$. Now we define our uniform Borel embedding $g \from
  \left(\cantor\right)^{\F_\omega} \to
  \left(\cantor\right)^{\F_2}$ from $E(\F_\omega,\cantor)$ to
  $E(\F_2,\cantor)$ by: 
  \[
    g(x)(\alpha) = 
    \begin{cases} x(\rho^{-1}(\alpha)) & \text{ if $\alpha \in \ran(\rho)$} \\
    p & \text{ otherwise}
  \end{cases}
  \]
  The uniformity of this reduction is witnessed by $\rho$. 
\end{proof}

Note that since $E(\F_\omega,\cantor)$ is generated by a group action, a
countable Borel equivalence relation $E^X_\ph$ is uniformly universal if
and only if $E(\F_\omega,\cantor)$ is uniformly reducible to $E^X_\ph$ if
and only if there is a uniform reduction of every equivalence relation
generated by a Borel action of a countable group to $E^X_\ph$. Hence, our
more general definition agrees with the original definition of Montalb\'an,
Reimann, and Slaman if we restrict to the special case of equivalence
relations equipped with group actions generating them. 

Another useful consequence of Theorem~\ref{uniform_examples} is the
following, which says that our uniform reductions can always be assumed to
have their uniformity witnessed by a computable function. 

\begin{lemma}\label{recursive_uniformity_lemma}
  If $\E^X_\ph$ is a uniformly universal countable Borel equivalence
  relation, then for every countable Borel equivalence relation
  $\E^Y_\thet$, there is a uniform reduction from $\E^Y_\thet$ to $\E^X_\ph$
  with the additional property that its 
  uniformity function $u \from \omega^2 \to \omega^2$ is computable. 
\end{lemma}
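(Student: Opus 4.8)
The plan is to factor the desired reduction through the \emph{finitely generated} universal relation $E(\F_2,\cantor)$, exploiting that a uniform reduction out of a finitely generated group action is pinned down by only finitely much data, which is automatically computable.

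First I would produce a uniform reduction $f_{12} \from E^Y_\thet \to E(\F_2,\cantor)$ whose uniformity function is already computable. This comes directly from the proof of Proposition~\ref{uniform_examples}: the reduction constructed there from $E^Y_\thet$ to $E(\F_\omega,\cantor)$ sends a witness $(i,j)$ to the generator $\gamma_{(i,j)}$, which is a computable assignment, and the further reduction $E(\F_\omega,\cantor) \to E(\F_2,\cantor)$ has its uniformity witnessed by the embedding $\rho \from \F_\omega \to \F_2$. Choosing $\rho$ to be one of the standard computable embeddings of $\F_\omega$ into $\F_2$, this second uniformity is computable as well, so the composite $f_{12}$ has a computable uniformity function $u_{12}$.

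Second---and this is the crux---I would uniformly reduce $E(\F_2,\cantor)$ to $E^X_\ph$ with a computable uniformity. By uniform universality of $E^X_\ph$, fix any uniform reduction $f_3 \from E(\F_2,\cantor) \to E^X_\ph$ with some uniformity $v_0$. Regarding $E(\F_2,\cantor)$ as generated by the shift action, a witness that $x$ and $\gamma \cdot x$ are equivalent is essentially the index of $\gamma$, so for each of the finitely many generators $\gamma \in \{a, b, a^{-1}, b^{-1}\}$ the function $v_0$ records a pair with $\varphi_{a_\gamma}(f_3(x)) = f_3(\gamma \cdot x)$ and $\varphi_{b_\gamma}(f_3(\gamma \cdot x)) = f_3(x)$ for all $x$. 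For an arbitrary element, written as a reduced word in these generators, the identity $f_3(\gamma\delta \cdot x) = \varphi_{a_\gamma}(\varphi_{a_\delta}(f_3(x)))$ together with the closure of $\ph$ under composition via a computable index function (Convention~\ref{rec_convention}) shows that composing the recorded generator-values yields a valid witness. I would therefore define a new uniformity $v$ by decoding the group element (computable), writing it as a reduced word (computable), and composing the four hard-coded generator-values via the computable composition function; this $v$ is computable and is still a valid uniformity for $f_3$.

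Finally I set $f = f_3 \circ f_{12}$. If $x \mrel{E^Y_\thet} y$ via $(i,j)$, then $f(x) \mrel{E^X_\ph} f(y)$ via $v(u_{12}(i,j))$, so the uniformity of $f$ is $v \circ u_{12}$, a composition of computable functions and hence computable. The one point demanding care is the second step: passing through $E(\F_\omega,\cantor)$ alone would leave the cocycle determined by infinitely many generator-values with no reason to be computable, which is precisely why I route through the two-generator group $\F_2$, whose associated cocycle is fixed by finitely much data.
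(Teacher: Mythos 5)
Your proposal is correct and takes essentially the same route as the paper's own proof: factor through $E(\F_\omega,\cantor)$ and then $E(\F_2,\cantor)$ using the computable embedding of $\F_\omega$ into $\F_2$, and then exploit the finite generation of $\F_2$ together with the computable composition function of Convention~\ref{rec_convention} to replace the given uniformity function of the reduction into $E^X_\ph$ by a computable one. The only difference is presentational: you spell out the word-decomposition argument (hard-coding the finitely many generator witnesses and composing them) that the paper states in one sentence.
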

\begin{proof}
This follows by analyzing the proof of Proposition~\ref{uniform_examples} above. 
First, the reduction of $E^Y_\thet$ to $E(\F_\omega,\cantor)$ has a 
computable uniformity function. Second, there is a computable
embedding of $\F_\omega$ into $\F_2$ (where the $n$th generator of
$\F_\omega$ is mapped to $\alpha^n
\beta \alpha^{-n}$
where $\alpha$ and $\beta$ are the two generators of $\F_2$). Hence,
composing these two uniform reductions, we get one from 
$E^Y_\thet$ to $E(\F_2,\cantor)$ with a computable uniformity function. 

Finally, take a uniform reduction from $E(\F_2,\cantor)$ to
$E^X_\ph$, since $E^X_\ph$ is assumed to be uniformly universal. This reduction can also
be assumed to have a computable uniformity function since it is enough just
to know how the uniformity of the two generators of $\F_2$ is
witnessed (recall that by Convention~\ref{rec_convention}, we are assuming that
composition of functions in $\ph$ is witnessed by a computable function). Now we are done: compose the reduction of $E^Y_\thet$ to
$E(\F_2,\cantor)$ with the reduction from $E(\F_2,\cantor)$ to $E^X_\ph$. 
\end{proof}

The above lemma will be used in
Theorem~\ref{thm:closed_under_countable_joins} as part of proving that a
large class of equivalence relations are not uniformly universal. 

We note one final consequence of Proposition~\ref{uniform_examples}.

\begin{prop}\label{increasing_union_1}
  If $E^X_{\{\phi_{0,i}\}_{i \in \omega}} \subset E^X_{\{\phi_{1,i}\}_{i \in \omega}} \ldots$ is an
  increasing sequence of countable Borel equivalence relations that are not
  uniformly universal, then their union $E^X_{\thet}$ is not uniformly
  universal, where $\thet$ is the generating family obtained by closing the
  $\{\phi_{j,i}\}$ under composition.
\end{prop}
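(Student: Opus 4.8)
The plan is to argue by contradiction: assuming the union $E^X_{\thet}$ is uniformly universal, I would locate a single level $n$ at which $E^X_{\{\phi_{n,i}\}_{i\in\omega}}$ is already uniformly universal, contradicting the hypothesis. The starting point is Proposition~\ref{uniform_examples}, which tells us that $E(\F_2,\cantor)$ is uniformly universal, together with Lemma~\ref{recursive_uniformity_lemma}: if $E^X_{\thet}$ is uniformly universal then there is a uniform reduction $f$ from $E(\F_2,\cantor)$ to $E^X_{\thet}$ whose uniformity function $u$ is computable. The crucial feature of $E(\F_2,\cantor)$ for this argument is that, as it is generated by a group action, its classes are controlled by the \emph{finitely many} generators of $\F_2$.

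Next I would extract a finite level from this reduction. Let $a,b$ be the generators of $\F_2$. The uniformity function $u$ assigns to the shift by each of $a,b,a^{-1},b^{-1}$ a pair of indices into the family $\thet$; call the resulting four functions $\theta_{p_a},\theta_{q_a},\theta_{p_b},\theta_{q_b}$, so that e.g.\ $\theta_{p_a}(f(x))=f(a\cdot x)$ for all $x$. Since $\thet$ is the closure under composition of $\bigcup_j\{\phi_{j,i}\}_i$, each of these four functions is a composition of finitely many of the $\phi_{j,i}$; let $N$ be the largest level $j$ occurring among them. As the $E^X_{\{\phi_{j,i}\}}$ are increasing, every such $\phi_{j,i}$ with $j\le N$ has graph contained in $E^X_{\{\phi_{N,i}\}}$, and hence so does each of the four witnessing functions. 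Writing any $\F_2$-equivalent pair as $y=w\cdot x$ for a reduced word $w$ in $a,b$ and iterating, I would conclude that $f(x)$ and $f(y)$ are joined by a chain of steps each lying in $E^X_{\{\phi_{N,i}\}}$, so $f$ maps each $E(\F_2,\cantor)$-class into a single $E^X_{\{\phi_{N,i}\}}$-class; conversely $E^X_{\{\phi_{N,i}\}}\subset E^X_{\thet}$ and $f$ is a reduction, so in fact $f$ is a Borel reduction of $E(\F_2,\cantor)$ to $E^X_{\{\phi_{N,i}\}}$.

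Finally I would upgrade this to a \emph{uniform} reduction with respect to $\{\phi_{N,i}\}_i$, which yields that $E^X_{\{\phi_{N,i}\}}$ is uniformly universal and completes the contradiction. This last step is where the real care is needed, and is the main obstacle: uniformity with respect to $\{\phi_{N,i}\}_i$ requires that each generator-step be witnessed by a \emph{single} index of the level-$N$ family, whereas a priori $\theta_{p_a}$ is only a composition of functions drawn from levels $\le N$. The point to verify is that, because the generating families are nested and each is closed under composition (Convention~\ref{rec_convention}), the closure of $\bigcup_{j\le N}\{\phi_{j,i}\}_i$ under composition is exactly $\{\phi_{N,i}\}_i$; thus $\theta_{p_a},\theta_{q_a},\theta_{p_b},\theta_{q_b}$ are themselves members of $\{\phi_{N,i}\}_i$, and reading off their indices gives the required uniformity function for $f$ viewed as a reduction into $E^X_{\{\phi_{N,i}\}}$. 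One should also record that the composite uniformity function remains computable, so that the hypotheses of the level-$N$ instance of uniform universality are genuinely met, contradicting the assumption that no $E^X_{\{\phi_{n,i}\}}$ is uniformly universal.
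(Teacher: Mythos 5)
Your proof is correct and is essentially a detailed unpacking of the paper's one-sentence argument: since $\F_2$ is finitely generated, the uniformity function of a hypothetical uniform reduction of $E(\F_2,\cantor)$ to $E^X_\thet$ involves only finitely many of the $\phi_{j,i}$, all of level $\le N$ for some $N$, so the reduction is in fact a uniform reduction into $E^X_{\{\phi_{N,i}\}_{i\in\omega}}$, contradicting the hypothesis at level $N$ (and your appeal to nestedness of the generating families is the same implicit reading the paper itself needs, both for its proof and for identifying the union with $E^X_\thet$). One small repair: your intermediate claim that each $\phi_{j,i}$ with $j\le N$ ``has graph contained in $E^X_{\{\phi_{N,i}\}_{i\in\omega}}$'' does not follow from the relations being increasing---the graph of a single generating function need not lie in the relation it generates, since equivalence requires witnesses in both directions---but this is harmless, because your final paragraph supplies the correct mechanism, which is also what the paper relies on: the four witnessing functions are themselves members of the level-$N$ family, so each pair $(f(x),f(\gamma\cdot x))$ is $E^X_{\{\phi_{N,i}\}_{i\in\omega}}$-related via that fixed pair of indices.
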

\begin{proof}
  Since $E(\F_2,\cantor)$ is uniformly universal and $\F_2$ is finitely
  generated, any uniform reduction from $E(\F_2,\cantor)$ to
  $E^X_{\thet}$ must be contained inside $E^X_{\{\phi_{j,i}\}_{i \in
  \omega}}$ for some $j$ since
  the two generators of $\F_2$ correspond to two pairs of functions that
  are from some $\{\phi_{i,j}\}_{i \in \omega}$.
\end{proof}

Hence, uniformly universal equivalence relations are not ``approximable
from below'' by non-uniformly universal equivalence relations. 
See~\cite{MR2563815} and \cite{1109.1875} for some related results on
strong ergodicity that show that 
under the assumption of Martin's conjecture, the (weakly) universal
countable Borel equivalence relations are ``much larger'' than the non
(weakly) universal ones. 

Next, we show that every universal countable Borel equivalence relation is
uniformly universal with respect to some way of generating it.

\begin{prop}\label{some_family}
  If $E$ is a universal countable Borel equivalence relation, then there is
  some countably family of functions generating $E$ for which it is
  uniformly universal.
\end{prop}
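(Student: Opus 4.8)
The plan is to build a generating family for $E$ that internalizes a fixed reduction of the uniformly universal relation $E(\F_2,\cantor)$, so that this one reduction becomes uniform. Since $E$ is universal we have $E(\F_2,\cantor) \leq_B E$; using the standard fact that for countable Borel equivalence relations universality with respect to $\leq_B$ coincides with universality with respect to $\embeds_B$ (see~\cite{MR1900547}), I would first fix an injective Borel reduction $f \from \cantor^{\F_2} \to X$ of $E(\F_2,\cantor)$ into $E$. By Lusin--Souslin the set $A = \ran(f)$ is Borel and $f$ is a Borel bijection of $\cantor^{\F_2}$ onto $A$, so $f^{-1} \restriction A$ is Borel as well.

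Next, for each of the two generators $s \in \{a,b\}$ of $\F_2$, I would define a partial Borel function $\phi_s \from A \to A$ by $\phi_s(f(x)) = f(s \cdot x)$, that is $\phi_s = f \circ (s \cdot {-}) \circ f^{-1}$ on $A$. This is well defined and a partial Borel bijection precisely because $f$ is injective, and its graph is contained in $E$ since $x \mathrel{E(\F_2,\cantor)} s \cdot x$ forces $f(x) \mathrel{E} f(s\cdot x)$. I would then take any countable family $\{\psi_i\}$ of Borel functions generating $E$ (Feldman--Moore), and let $\ph$ be the family obtained by closing $\{\mathrm{id}, \psi_i, \phi_a, \phi_b, \phi_a^{-1}, \phi_b^{-1}\}$ under composition, indexed by finite words over these generators with concatenation as the composition function on indices (which is computable). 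This $\ph$ meets Convention~\ref{rec_convention}, all of its members have graphs contained in $E$, and since it contains the $\psi_i$ it generates $E$; thus $E_\ph = E$.

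Finally I would verify that $f$ is a \emph{uniform} reduction of $E(\F_2,\cantor)$, generated by the shift functions $\{\theta_\gamma\}_{\gamma \in \F_2}$, to $E_\ph$. If $\gamma \cdot x = y$ with $\gamma = s_1 \cdots s_k$ in reduced form, then $\phi_{s_1} \circ \cdots \circ \phi_{s_k}$ is a member of $\ph$ whose index is computable from $\gamma$ and which sends $f(x)$ to $f(y)$; applying the same to $\gamma^{-1}$ exhibits $f(x) \mathrel{E_\ph} f(y)$ via a pair of indices depending only on the index of $\gamma$. Hence $f$ is uniform. Since $E(\F_2,\cantor)$ is uniformly universal (Proposition~\ref{uniform_examples}) and uniform reductions compose, for any $E^Y_\thet$ I may uniformly reduce $E^Y_\thet$ to $E(\F_2,\cantor)$ and then apply $f$, obtaining a uniform reduction of $E^Y_\thet$ to $E_\ph$. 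Thus $E_\ph$ is uniformly universal.

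The only genuinely nontrivial point is the opening step: upgrading the reduction $f$ to an injective one. Without injectivity the translation functions $\phi_s$ need not be well defined, so this is where the argument really rests. I expect everything after it to be routine bookkeeping, namely arranging the indexing of $\ph$ so that composition is witnessed computably (as demanded by Convention~\ref{rec_convention}) and reading off the computable uniformity function from the word length of group elements of $\F_2$.
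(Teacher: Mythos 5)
Your proposal is correct and is essentially the paper's own argument: fix an injective Borel reduction of a uniformly universal shift equivalence relation into $E$ (the paper uses $E(\F_\omega,\cantor)$ where you use $E(\F_2,\cantor)$), push its generating functions forward onto the range via conjugation by $f$, and extend these to a countable family generating all of $E$, so that the embedding itself becomes a uniform reduction. The one quibble is the citation for the opening step, which you rightly identify as the crux: the fact that universality implies $F \embeds_B E$ for every countable Borel equivalence relation $F$ is \cite{1109.1875}*{Theorem 3.6} (the reference the paper itself uses), not a result of \cite{MR1900547}.
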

\begin{proof}
  This is a trivial corollary of \cite{1109.1875}*{Theorem 3.6}, that if
  $E$ is a universal countable Borel equivalence relation, then $F
  \embeds_B E$, for every countable Borel equivalence relation $F$. 
  
  To see this, let $F = E(\F_\omega,\cantor)$ which is uniformly universal.
  Then we can embed
  $E(\F_\omega,\cantor)$ into $E$ with an injective Borel function $f$. Now
  take the partial functions generating the image of
  $E(\F_\omega,\cantor)$ on $\ran(f)$ and extend these functions to a larger
  countable set that generates $E$. With respect to this set of generators,
  $E$ is uniformly universal.
\end{proof}

Indeed, by the same argument, there is some group action generating
$E$ with respect to which it is uniformly universal.

We will finish this section with a simple application of the results of
Section~\ref{sec:main_idea} to uniform universality. In particular, we will prove part (1) of
Theorem~\ref{uu_properties}.

\begin{thm}\label{F_2_required}
  Suppose $\Gamma$ is a countable group. Then there exists a Borel action
  of $\Gamma$ on a standard Borel space $X$ such that $E^X_\Gamma$ is
  uniformly universal if and only if $\Gamma$ contains $\F_2$ as a
  subgroup.
\end{thm}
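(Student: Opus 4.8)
The statement splits into two implications; the reverse direction is routine and the forward one carries the content.

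For the reverse direction, suppose $\F_2$ embeds into $\Gamma$ via an injective homomorphism $\iota$. I would take $X = (\cantor)^\Gamma$ with the left shift action, so that $E^X_\Gamma = E(\Gamma,\cantor)$. Fixing a distinguished point $p \in \cantor$, define $g \from (\cantor)^{\F_2} \to (\cantor)^\Gamma$ exactly as in the $\F_\omega \hookrightarrow \F_2$ construction of Proposition~\ref{uniform_examples}: set $g(x)(\gamma) = x(\iota^{-1}(\gamma))$ when $\gamma \in \ran(\iota)$ and $g(x)(\gamma) = p$ otherwise. As there, $g$ is an injective Borel reduction of $E(\F_2,\cantor)$ to $E(\Gamma,\cantor)$ whose uniformity is witnessed by $\iota$. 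Since $E(\F_2,\cantor)$ is uniformly universal by Proposition~\ref{uniform_examples}, and the composition of uniform reductions is uniform, $E(\Gamma,\cantor)$ is uniformly universal.

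For the forward direction, suppose some Borel action of $\Gamma$ on $X$ makes $E^X_\Gamma$ uniformly universal, and generate it by the functions $x \mapsto \gamma \cdot x$ indexed by $\gamma \in \Gamma$. By uniform universality, fix a uniform reduction $f \from (\cantor)^{\F_2} \to X$ of $E(\F_2,\cantor)$ to $E^X_\Gamma$, with uniformity function $u$. The first step is to extract a cocycle. For a group element $w \in \F_2$, the witness that $z$ is related to $w\cdot z$ given by the index of the function $x \mapsto w\cdot x$ is sent by $u$ to a fixed target index, that is, to a fixed $\rho(w) \in \Gamma$; and because the uniformity is independent of the points involved, we obtain $f(w \cdot z) = \rho(w)\cdot f(z)$ for \emph{every} $z$. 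Applying this to the two generators $a,b$ of $\F_2$ and substituting $a^{-1}\cdot z$ (resp. $b^{-1}\cdot z$) for $z$ also yields $f(a^{-1}\cdot z) = \rho(a)^{-1}\cdot f(z)$ and likewise for $b$.

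The key step is then to let $\phi \from \F_2 \to \Gamma$ be the homomorphism determined by $\phi(a) = \rho(a)$ and $\phi(b) = \rho(b)$, and to show that $\phi$ is injective. Fix any $z_0 \in \Free((\cantor)^{\F_2})$ and write $x_0 = f(z_0)$. Reading a reduced word from left to right and using the generator-level cocycle identities just established, an easy induction gives $\phi(w)\cdot x_0 = f(w\cdot z_0)$ for every $w \in \F_2$. The reason for working at a point $z_0$ of the free part is that the orbit map $w \mapsto w\cdot z_0$ is then injective, so since $f$ is injective the points $f(w\cdot z_0) = \phi(w)\cdot x_0$ are pairwise distinct; hence $\phi(w) \neq \phi(w')$ whenever $w \neq w'$, and $\phi$ is an injective homomorphism. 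Thus $\Gamma$ contains a copy of $\F_2$, completing the equivalence.

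I expect the main obstacle to be the bookkeeping around the cocycle rather than any single hard idea. One must check that uniformity really delivers a single group element $\rho(w)$ working simultaneously for all $z$ (this is exactly where the ``independent of $x$ and $y$'' clause in the definition of a uniform reduction is used), and that the substitution identity $\phi(w)\cdot x_0 = f(w\cdot z_0)$ propagates correctly through reduced words, including inverse generators. The conceptual point worth emphasizing is that one does \emph{not} need the given $\Gamma$-action on $X$ to be free: the injectivity of $\phi$ comes entirely from freeness of the source action at the single point $z_0$ together with injectivity of $f$, so stabilizers in the target never need to be analyzed.
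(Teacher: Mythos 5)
Your reverse direction is correct and is essentially the paper's: both come down to Proposition~\ref{uniform_examples} together with the subgroup trick of \cite{MR1149121}. The forward direction, however, has a genuine gap at the words ``since $f$ is injective'': Definition~\ref{defn:uniformly_universal} only provides a uniform Borel \emph{reduction} of $E(\F_2,\cantor)$ to $E^X_\Gamma$, not an embedding, and a reduction gives no injectivity whatsoever within a single orbit. All the points $w \cdot z_0$, $w \in \F_2$, are $E(\F_2,\cantor)$-equivalent, so the reduction property asks only that their images lie in one $E^X_\Gamma$-class; the map $w \mapsto f(w \cdot z_0)$ could perfectly well be constant. Worse, the step is circular: by the cocycle identity $f(w\cdot z) = \rho(w)\cdot f(z)$, if the induced homomorphism $\phi$ fails to be injective then $f$ automatically fails to be injective on \emph{every} orbit, so asserting that $f$ is injective on the orbit of $z_0$ is essentially equivalent to the conclusion you are trying to extract from it.

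This is exactly the difficulty the paper's proof is built to overcome, and it is where the real content (and the determinacy machinery of Section~\ref{sec:main_idea}) enters. The paper takes a uniform reduction $f$ of $F(\F_2 * \F_2 * \ldots, \cantor)$ to $E^X_\Gamma$, applies Lusin--Novikov uniformization to partition $\Free((\cantor)^{\F_2 * \F_2 * \ldots})$ into countably many Borel sets $\{A_i\}_{i\in\omega}$ on which $f$ \emph{is} injective, and then invokes Lemma~\ref{free_equivariant_hom} to produce, for some $j$, an injective Borel map $g$ from $F(\F_2,\cantor)$ into $F(\F_2 * \F_2 * \ldots,\cantor) \restrict A_j$ that is equivariant for the $j$th copy of $\F_2$. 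The composition $f \circ g$ is then an injective uniform homomorphism from $F(\F_2,\cantor)$ to $E^X_\Gamma$, and from that point on your cocycle bookkeeping --- generating a homomorphism $\phi$ from the images of the two generators, propagating $\phi(w)\cdot (f\circ g)(z) = (f\circ g)(w\cdot z)$ through reduced words, and using freeness of the source action plus injectivity to conclude that $\phi$ is injective --- is correct and coincides with the paper's final step. So the missing idea is precisely the passage from a uniform reduction to an \emph{injective} uniform homomorphism; this cannot be arranged pointwise, and it is supplied by the game-theoretic lemma that this theorem was placed in the paper to showcase.
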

\begin{proof}  
  We begin with the forward implication. Since $E^X_\Gamma$ is uniformly
  universal, there exists a uniform Borel reduction of $F(\F_2 * \F_2 *
  \ldots, \cantor)$ to $E^X_\Gamma$.
  Now by Lusin-Novikov uniformization~\cite{MR1321597}*{Theorem 18.10},
  we can partition $\Free((\cantor)^{\F_2 * \F_2 * \ldots})$ into countably
  many Borel sets
  $\{A_i\}_{i \in \omega}$ such that $f$ is injective on each $A_i$. Now by
  Lemma~\ref{free_equivariant_hom}, let $g$ be an equivariant Borel
  injection of $F(\F_2, \cantor)$ into $F(\F_2 * \F_2 * \ldots, \cantor)
  \restrict A_j$ for some $j$ (i.e. equivariant for the $i$th copy of
  $\F_2$). Then $f \circ g$ is a uniform injective
  Borel homomorphism from $F(\F_2, \cantor)$ to $E^X_\Gamma$. If $u \from \F_2
  \to \Gamma$ witnesses this uniformity, then it is clear that the image of
  the two generators of $\F_2$ under $u$ generates a copy of $\F_2$ inside
  $\Gamma$, since $f \circ g$ is injective. 

  The reverse implication follows from the fact that if $\Gamma$ contains
  $\F_2$ as a subgroup, then $E(\Gamma, \cantor)$ is uniformly
  universal by~\cite{MR1149121}. (Following essentially the same
  argument as that in Proposition~\ref{uniform_examples}.)
\end{proof}

Thomas has previously considered the question of which countable groups
admit Borel actions that generate universal countable Borel equivalence
relations~\cite{MR2914864}. 
From our uniform perspective,
Theorem~\ref{F_2_required} gives a complete answer to this question. 
Note that the theorem above combined with
Conjecture~\ref{conj:uniformly_universal} contradicts Thomas'
Conjecture~\cite{MR2914864}*{Conjecture 1.5} that Burnside groups of
sufficiently high exponent can generate universal countable Borel
equivalence relations. 

Another corollary of Theorem~\ref{F_2_required} is a classification of 
which countable groups $\Gamma$ generate uniformly universal countable
Borel equivalence relations with their shift actions on $2^\Gamma$ and
their conjugacy actions on their subgroups.
If
$\F_2 \leq \Gamma$, then both these actions are
uniformly universal by~\cite{MR1149121} and~\cite{MR1815088}.
Theorem~\ref{F_2_required} implies the converses of
these two theorems are true. Hence, we have parts (1) and (2) of
Theorem~\ref{uu_classifications}.

\subsection{Limitations on controlling countable joins}

In this section we will show how results from Section~\ref{sec:main_idea}
can be used to infer a limitation on our ability to control the
computational
power of countable uniform joins, which we will then use to prove that a
number of equivalence relations from computability theory are not uniformly
universal. 

The general problem of controlling the computational power of finite and
countable joins is a frequent topic of investigation in computability theory.
To introduce our lemma let us first recall two contrasting pieces of
folklore. 

First, suppose $\leq_P$ is a Borel
  quasiorder on $\cantor$ with meager sections (e.g. a quasiorder such that
  for every $y$, $\{x : x \leq_P y\}$ is countable). Then there is a continuous
  injection 
  $f \from \cantor \to \cantor$ such for every $x \in \cantor$ and every finite sequence of $y_0, y_1,
  \ldots y_n \in X$ not containing $x$, we have 
  \[f(x) \nleq_P f(y_0) \oplus \ldots \oplus f(y_n)\]
This is easy to show using a simple Baire category argument (see for
instance the Kuratowski-Mycielski theorem \cite{MR1321597}*{Theorem 19.1}).
In computability theory, $\ran(f)$ is called an \define{independent set} for
$\leq_P$. 

Second, suppose we consider countable joins instead of finite joins. Then
the analogue of the above fact becomes false. 
Let us consider \define{one-one-reducibility} here
for concreteness, where $x \leq_1 y$ if there is a computable injection
$\rho \from \omega \to \omega$ such that $x(n) = y(\rho(n))$.
Now if $f \from \cantor \to \cantor$ is any (not necessarily
Borel) function, then there must exist $x \in \cantor$ and a countable
sequence $y_0, y_1, \ldots \in \cantor$ not containing $x$ such that 
\[f(x) \leq_1 f(y_0) \oplus f(y_1) \oplus \ldots \]
To see this, we may clearly assume $f$ is injective (else the statement is
trivial). But then there must be an $n$ such that there are infinitely many
$y$ with $f(y)(n) = 0$ and infinitely many $y$ such that $f(y)(n) = 1$.
Thus, by taking the union of two such countably infinite sets of $y$ and
permuting this set, we can code any real into the sequence $f(y_0)(n),
f(y_1)(n), \ldots$. 

We now prove a substantial strengthening of this second fact of folklore for Borel
functions $f$. 
Suppose
that instead of arbitrary countable joins, we have the dramatically
more modest goal of controlling for each $x$ the countable join of a single
sequence of $y_i$ that depends in a Borel way on $x$. It turns out that this
too is impossible!

\begin{thm}\label{cor:diagonalization}
  Let $\F_\omega$ be the free group on the $\omega$ many generators
  $\gamma_0, \gamma_1, \ldots$ and let $X = \Free((\cantor)^{\F_\omega})$. 
  To each $x \in X$ we associate the single countable sequence
  $\gamma_0 \cdot x, \gamma_1 \cdot x, \ldots$ which does not include $x$.

  Then for all Borel functions $f \from X \to \cantor$, there exists an $x
  \in X$ such that 
  \[f(x) \leq_1 f(\gamma_0 \cdot x) \oplus f(\gamma_1 \cdot x) \oplus \ldots\]
  Indeed, there is an $x$ such that 
  $f(x)(i) = f(\gamma_i \cdot x)(i)$ for all $i \in \omega$.
\end{thm}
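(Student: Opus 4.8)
The plan is to deduce this from Lemma~\ref{free_equivariant_hom}, since the statement is exactly a diagonalization of the sort those games were built to handle. I would write $\F_\omega$ as a free product $\fp_{i \in \omega} \Gamma_i$ where each $\Gamma_i = \Z$ is generated by $\gamma_i$, so that the ambient space is $\Free((\cantor)^{\fpGammaiI})$ with $X = \Free((\cantor)^{\F_\omega})$. The key move is to build a Borel partition $\{A_i\}_{i \in \omega}$ of $X$ out of the single Borel function $f$, designed so that an equivariant injection into $A_j$ forces the desired coincidence $f(x)(j) = f(\gamma_j \cdot x)(j)$.

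\emph{Construction of the partition.} First I would reduce to the case where $f(x)$ depends only on finitely many bits of $x$ in the relevant coordinate, or more precisely argue directly with the values. The natural attempt is to set, for each $j$,
\[
A_j = \{x \in X : f(x)(j) \neq f(\gamma_j \cdot x)(j)\},
\]
but these sets are not disjoint and need not cover $X$, so I would instead peel them off in order: let $A_j$ be the set of $x$ such that $j$ is least with $f(x)(j) \neq f(\gamma_j \cdot x)(j)$, and handle the leftover set (where no such $j$ exists, i.e. where $f(x)(i) = f(\gamma_i \cdot x)(i)$ for all $i$) by absorbing it into one block, say $A_0$, or into an extra block. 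The subtle point is that if the leftover set is nonempty we are already done — any $x$ in it witnesses the conclusion. So I would split into two cases: if the set $\{x : \forall i\, f(x)(i) = f(\gamma_i \cdot x)(i)\}$ is nonempty, take such an $x$ and stop. Otherwise every $x$ lies in some $A_j$ with $j$ least witnessing the inequality, and $\{A_j\}_{j \in \omega}$ is a genuine Borel partition of $X$.

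\emph{Applying the game lemma.} Now I would apply Lemma~\ref{free_equivariant_hom} to this partition, obtaining some $j \in \omega$ and a continuous $\Gamma_j$-equivariant injection $g \from \Free((\cantor)^{\Gamma_j}) \to \Free((\cantor)^{\F_\omega})$ with $\ran(g) \subset A_j$. Pick any $w$ in the domain and set $x = g(w)$. By equivariance, $\gamma_j \cdot x = \gamma_j \cdot g(w) = g(\gamma_j \cdot w)$, and since $\gamma_j$ generates $\Gamma_j$ this image also lies in $\ran(g) \subset A_j$. The point of choosing $A_j$ via a \emph{least} witness is that I want a single block whose points all satisfy $f(\cdot)(j) \neq f(\gamma_j \cdot)(j)$; but $x \in A_j$ gives precisely $f(x)(j) \neq f(\gamma_j \cdot x)(j)$, which is the \emph{opposite} of what I want. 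This is the main obstacle: I have engineered the partition to witness failure in coordinate $j$, yet the conclusion I need is a coincidence.

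\emph{Resolving the obstacle.} The fix is to realize that one single output coordinate is not enough leverage — I should define the partition using a diagonal over \emph{all} coordinates simultaneously, turning ``$x \in A_j$'' into information that constrains $f$ on the whole orbit of $x$ under the $j$th copy. Concretely, I would instead define $A_j$ to record the behavior of $f$ across the generator $\gamma_j$ in a way that, when $g$ maps an entire free $\Gamma_j$-orbit into $A_j$, produces a genuine contradiction unless the coincidence holds; equivalently, I apply the lemma not to force a single bit but to extract an $x$ whose whole forward orbit $\gamma_j \cdot x, \gamma_j^2 \cdot x, \ldots$ lands in $A_j$, then run the folklore two-valued pigeonhole argument (some output bit takes both values infinitely often) \emph{inside} this orbit to contradict the defining property of $A_j$. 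Thus the final step mirrors the second piece of folklore recalled before the theorem, but now the sequence witnessing it is exactly the Borel-definable orbit $\gamma_i \cdot x$, and the game lemma is what guarantees such an $x$ exists once the partition is set up so that no block can contain a full orbit without the coincidence $f(x)(j) = f(\gamma_j \cdot x)(j)$ being forced. The care needed in choosing the blocks $A_j$ so that membership genuinely propagates along the orbit under equivariance is where the real work lies.
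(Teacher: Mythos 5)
Your setup is exactly the paper's: write $\F_\omega = \fp_i \Gamma_i$ with $\Gamma_i = \Z = \langle\gamma_i\rangle$, let $A_j$ be the set of $x$ for which $j$ is least with $f(x)(j) \neq f(\gamma_j\cdot x)(j)$, note that if these sets fail to cover $X$ then any leftover point witnesses the theorem, and otherwise apply Lemma~\ref{free_equivariant_hom} to get some $j$ and a $\Gamma_j$-equivariant Borel injection $h$ with $\ran(h) \subset A_j$. Up to this point you coincide with the paper's proof. The gap is that you then misread your own construction: the fact that every point of $\ran(h)$ satisfies $f(\cdot)(j) \neq f(\gamma_j\cdot\,\cdot)(j)$ is not an obstacle, it \emph{is} the contradiction. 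The argument is by contradiction (you assumed the coincidence set is empty), and the punchline is one line: by equivariance, $f(h(\gamma_j \cdot w))(j) = f(\gamma_j \cdot h(w))(j) \neq f(h(w))(j)$ for every $w$, so $w \mapsto f(h(w))(j)$ is a Borel $2$-coloring of the graph $G(\Z,\cantor)$, and no such Borel coloring exists (by the standard ergodicity argument, \cite{MR1667145}). That is the missing idea; you had every piece in hand and then abandoned the approach one step from the end.

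The replacement you propose cannot be repaired into a proof. You suggest finding an orbit lying entirely in $A_j$ and then running the folklore pigeonhole/coding argument ``inside this orbit'' to contradict the defining property of $A_j$. But there is no contradiction extractable from a single orbit: if the whole $\langle\gamma_j\rangle$-orbit of $x$ lies in $A_j$, then the values $f(\gamma_j^k\cdot x)(j)$ simply alternate between $0$ and $1$ as $k$ ranges over $\Z$, which is a perfectly consistent assignment on a bi-infinite path. Every single orbit of $G(\Z,\cantor)$ admits a proper $2$-coloring; what fails is the existence of a \emph{Borel} such coloring, and that failure is an inherently global fact (proved by ergodicity or Baire category across all orbits), not a combinatorial fact about one orbit. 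Likewise, the folklore coding argument you invoke needs the freedom to select and permute an arbitrary countable family $y_0, y_1, \ldots$ with prescribed bits, which you do not have along a fixed orbit of a fixed Borel function. So your final step, as stated, would fail; the theorem genuinely requires citing the non-$2$-colorability of $G(\Z,\cantor)$ (or reproving it), which your proposal never does.
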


\begin{proof}
  Let $f$ be any Borel function from $X$ to $\cantor$,
  and let $A_i$ be the set of $x \in X$ such that $i$ is the least element
  of $\omega$ such that $f(x)(i) \neq f(\gamma_i \cdot x)(i)$. Assume for a
  contradiction that the sets $A_i$ partition $X$. Then by
  Lemma~\ref{free_equivariant_hom} there
  exists some $i$ such that there is a $\<\gamma_i\>$-equivariant Borel
  injection $h$ from
  $\Free((\cantor)^{\<\gamma_i\>})$ to $X$ such that
  $\ran(h) \subset A_i$. Note then that for all $x$, we have $f(h(x))(i)
  \neq f(h(\gamma_i \cdot x))(i)$. But $x
  \mapsto f(h(x))(i)$ would then give a Borel $2$-coloring of the graph
  $G(\Z,\cantor)$. This is easily seen to be impossible with an ergodicity
  argument. (See \cite{MR1667145}).
\end{proof}

Of course $(\cantor)^{\F_\omega}$ is homeomorphic to $\cantor$,
and so we could regard $X$ above as a subset of $\cantor$.

Now it turns out that controlling countable joins in the way
shown to be impossible by Theorem~\ref{cor:diagonalization}
shows up as a subproblem in many natural 
constructions aimed at showing certain equivalence relations from
computability theory are uniformly universal. Our next goal will be to pivot 
Theorem~\ref{cor:diagonalization}
and these failed constructions into a proof that many equivalence relations
from computability theory are not uniformly universal.

Before we finish, we make one more remark: the lemma above
can be restated in graph-theoretic language:

\begin{thm}\label{countable_3_coloring}
  There is a standard Borel space $X$ and countably many $2$-regular Borel
  graphs $\{G_i\}_{i \in \omega}$ on $X$ such that for every Borel
  set $\{c_i\}_{i \in \omega}$ where $c_i$ is a Borel $3$-coloring of $G_i$, 
  there is an $x \in X$ such that
  $c_i(x) = c_j(x)$ for all $i,j$ (i.e. $x$ is \define{$\{c_i\}$-monochromatic}).
\end{thm}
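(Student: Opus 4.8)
The plan is to show that Theorem~\ref{countable_3_coloring} is essentially a restatement of Theorem~\ref{cor:diagonalization}, so I would prove it by explicitly exhibiting the space $X$ and the $2$-regular graphs $\{G_i\}_{i \in \omega}$ and then translating colorings into Borel functions. Concretely, I would take $X = \Free((\cantor)^{\F_\omega})$ exactly as in Theorem~\ref{cor:diagonalization}, and for each $i \in \omega$ let $G_i = G(\langle \gamma_i \rangle, \cantor)$ be the Borel graph on $X$ whose edges connect $x$ and $\gamma_i \cdot x$. Since $\gamma_i$ has infinite order and the action is free, each $G_i$ is $2$-regular: every $x$ has exactly the two neighbors $\gamma_i \cdot x$ and $\gamma_i^{-1} \cdot x$. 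This gives the combinatorial objects required by the statement.

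Next I would set up the dictionary between $3$-colorings and a single Borel function. Given a family $\{c_i\}_{i \in \omega}$ where each $c_i \from X \to 3$ is a Borel $3$-coloring of $G_i$, I would package them into one Borel function $f \from X \to 3^\omega$ by $f(x)(i) = c_i(x)$, and then identify $3^\omega$ with a subset of $\cantor$ (or simply rerun the argument of Theorem~\ref{cor:diagonalization} with the space $3$ in place of $2$ in the final step). The point is that $x$ being $\{c_i\}$-monochromatic, i.e.\ $c_i(x) = c_j(x)$ for all $i, j$, is exactly the condition that the single value shared by all coordinates of $f(x)$ is well defined. I would then argue as in Theorem~\ref{cor:diagonalization}: consider the sets $A_i$ of points where $i$ is least with $f(x)(i) \neq f(\gamma_i \cdot x)(i)$, assume toward a contradiction that the $A_i$ partition $X$, and apply Lemma~\ref{free_equivariant_hom} to obtain an $i$ and a $\langle \gamma_i \rangle$-equivariant Borel injection $h$ with range in $A_i$. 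Composing, $x \mapsto c_i(h(x)) = f(h(x))(i)$ would be a Borel function on $\Free((\cantor)^{\langle \gamma_i\rangle})$ taking different values at $x$ and $\gamma_i \cdot x$, hence a Borel $3$-coloring of $G(\Z,\cantor)$ that also distinguishes $\gamma_i$-adjacent points.

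The subtlety to handle carefully is that the contradiction must be extracted from a $3$-coloring rather than a $2$-coloring, so the clean ``no Borel $2$-coloring of $G(\Z,\cantor)$'' obstruction from Theorem~\ref{cor:diagonalization} does not apply verbatim. What I actually get from the above is that if the $A_i$ partition $X$ then there is an $i$ and an equivariant injection $h$ such that $c_i \circ h$ disagrees on every $G(\Z,\cantor)$-edge, which merely says $c_i \circ h$ is a Borel $2$-coloring of the subgraph given by a single generator; this is impossible by the same ergodicity argument cited for $G(\Z,\cantor)$ (\cite{MR1667145}), since $x \mapsto c_i(h(x))$ restricted to a single-generator action would give a Borel $2$-coloring of $G(\Z,\cantor)$, contradicting its nonexistence. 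Hence the $A_i$ cannot partition $X$, so some $x$ lies outside all $A_i$, meaning $f(x)(i) = f(\gamma_i \cdot x)(i)$ for all $i$.

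The translation from this fixed point back to monochromaticity is where I expect the main bookkeeping obstacle, and it is not quite automatic: the conclusion $f(x)(i) = f(\gamma_i \cdot x)(i)$ gives agreement between $x$ and its $i$th neighbor coordinatewise, but the statement of Theorem~\ref{countable_3_coloring} asks for $c_i(x) = c_j(x)$ across all $i,j$ at the \emph{single} point $x$. I would therefore reorganize the diagonalization so that the partition sets directly witness failure of monochromaticity: let $A_{\{i,j\}}$ be indexed by unordered pairs with $c_i(x) \neq c_j(x)$, and argue via Lemma~\ref{free_equivariant_hom} applied to a suitable free product indexing that if these cover $X$ then one obtains an equivariant copy on which two fixed colorings disagree, again contradicting the $G(\Z,\cantor)$ ergodicity obstruction after passing to the generator whose action the copy respects. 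Getting this indexing to line up with the hypotheses of Lemma~\ref{free_equivariant_hom}—so that the relevant partition is over a countable index set matching a free product decomposition of $\F_\omega$—is the step requiring the most care, but it is purely organizational once the graph-function dictionary is fixed.
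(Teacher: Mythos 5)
Your choice of $X$ and of the $2$-regular graphs $G_i$ matches the paper's, but the engine of your argument has a genuine gap, and it is exactly the gap the paper's one-line encoding trick is designed to avoid. Theorem~\ref{cor:diagonalization} is a statement about Borel functions into $\cantor = 2^\omega$, and its conclusion is simply false for functions into $3^\omega$: the existence of Borel $3$-colorings of $2$-regular Borel graphs (cited in the paper from \cite{MR1667145}) means there \emph{are} Borel $f \from X \to 3^\omega$ with $f(x)(i) \neq f(\gamma_i \cdot x)(i)$ for every $x$ and $i$ --- that is precisely the $2$-versus-$3$ content of the theorem you are proving, so ``rerunning the argument with $3$ in place of $2$'' is circular. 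Concretely, if you package the colorings as $f(x)(i) = c_i(x)$ and form $A_i = \{x : i \text{ least with } f(x)(i) \neq f(\gamma_i \cdot x)(i)\}$, then $A_0 = X$, because $c_0$ is a proper coloring of $G_0$ and so disagrees on \emph{every} edge; the ``partition'' hypothesis holds trivially, Lemma~\ref{free_equivariant_hom} hands you an equivariant injection $h$ into $A_0 = X$, and $c_0 \circ h$ is then a proper $3$-valued coloring of $G(\Z,\cantor)$ --- which exists, so nothing is contradicted. Your claim that it ``would give a Borel $2$-coloring'' is a non sequitur. The fallback with pair-indexed sets $A_{\{i,j\}}$ also cannot be made to fit Lemma~\ref{free_equivariant_hom}: that lemma needs the partition indices to correspond to the factors of a free-product decomposition, but each generator $\gamma_i$ would have to lie in infinitely many of the factors $\<\gamma_i,\gamma_j\>$; and even granting an equivariant copy inside $\{x : c_i(x) \neq c_j(x)\}$, no contradiction with the $G(\Z,\cantor)$ obstruction follows, since neither $c_i$ nor $c_j$ is $2$-valued there. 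This last step is the mathematical crux, not bookkeeping.

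The idea you are missing is that the paper collapses each $3$-coloring to a \emph{binary} function \emph{before} diagonalizing, using the observation that if $c_i(x) = 2$ then both $G_i$-neighbors $\gamma_i^{\pm 1} \cdot x$ have colors in $\{0,1\}$. One then defines a $\{0,1\}$-valued Borel $f$, for instance $f(x)(i) = c_i(x)$ if $c_i(x) \in \{0,1\}$ and $f(x)(i) = 1 - c_i(\gamma_i^{-1} \cdot x)$ if $c_i(x) = 2$, and applies Theorem~\ref{cor:diagonalization} to this $f$. (The paper writes the second clause as $c_i(g_i(x))$; the orientation and bit-flip in the encoding need care to make the case analysis close, but the idea is the same.) The point of the encoding is that the diagonal condition $f(x)(i) = f(\gamma_i \cdot x)(i)$ is incompatible with $c_i(x) \in \{0,1\}$: if also $c_i(\gamma_i \cdot x) \in \{0,1\}$, the two values are the distinct colors of $G_i$-adjacent vertices, and if $c_i(\gamma_i \cdot x) = 2$, the second value is $1 - c_i(x)$. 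Hence any $x$ witnessing Theorem~\ref{cor:diagonalization} must satisfy $c_i(x) = 2$ for every $i$ simultaneously, which is exactly monochromaticity. Without this reduction to two values, the nonexistence of Borel $2$-colorings of $G(\Z,\cantor)$ --- the only obstruction available --- never gets anything to act on.
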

\begin{proof}  
  Let $X = \Free((\cantor)^{\F_\omega})$ and $G_i$ be the graph
  generated by $g_i$ as in Theorem~\ref{cor:diagonalization}. Then given
  countably many $3$-colorings $c_i$, let $f \from X \to \cantor$ be defined by
  $f(x)(i) = c_i(x)$ if $c_i(x) \in \{0,1\}$ and $f(x)(i) = c_i(g_i(x))$ if
  $c_i(x) \notin \{0,1\}$. Now apply Theorem~\ref{cor:diagonalization}.
\end{proof}

We mention this restatement largely for a historical reason:
in~\cite{MarksPhD}, we showed that the above coloring problem was
equivalent to the uniform universality of many-one equivalence. The proof of the
forward direction of this result is essentially contained in the proof of
Theorem~\ref{universality_of_ri}. The proof of the converse of this theorem
makes essential use of a notion of forcing due to Conley and Miller. We
refer the interested reader to \cite{MarksPhD}*{Section 5.2}. The
equivalence of this coloring problem with the uniform universality of
many-one equivalence was 
the catalyst that led the games studied in this paper and thus most of
our results, as well as the results of~\cite{MR3454384}.

\subsection{Uniform universality and equivalence relations from computability theory}
\label{subsec:impossibility}

In this section, we will show that a large class of equivalence relations
from computability theory are not uniformly universal. Beyond the inherent
interest in classifying such equivalence relations our analysis will also
be used to prove the remaining parts of
Theorem~\ref{uu_properties}. We begin with a simple
lemma.

\begin{lemma}\label{diagonal_permutation}
  Suppose $x, y_0, y_1, \ldots \in \cantor$ are each of the form $\join_{i
  \in \omega} z$ for some $z \in \cantor$, and there is a computable
  function $u \from \omega \to \omega$ such that $x \equiv_1 y_i$ via the
  program $u(i)$ for
  all $i \in \omega$. Then there exists a pair of computable bijections
  $r,s \from \omega \to \omega$ such that $x(r(n)) = y_{s(n)}(n)$ for all $n$.
\end{lemma}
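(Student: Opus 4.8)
The plan is to build $r$ and $s$ by a computable back-and-forth construction, exploiting the fact that the uniform-join hypothesis makes $x$ and every $y_i$ constant along ``columns''. The essential constraint is that $r$ and $s$ must be \emph{computable}, so the construction may never inspect a bit of the (in general non-computable) reals $x$ or $y_i$; every instance of the required equality $x(r(n)) = y_{s(n)}(n)$ has to be forced structurally rather than verified by reading bits.

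First I would record the relevant data. Fix the computable pairing $\<\cdot,\cdot\>$ used to form the joins and write $c(\<k,m\>) = m$ for the column of a position. By hypothesis $x = \bigjoin_i w$ and $y_v = \bigjoin_i z_v$ for suitable reals $w, z_v$, so $x(\<k,m\>) = w(m)$ and $y_v(\<k,m\>) = z_v(m)$; in particular $x$ and each $y_v$ are constant on every column, with value depending only on the column index. From the witness that $x \equiv_1 y_v$ via the program $u(v)$ I extract, uniformly computably in $v$, injections $\sigma_v, \tau_v \from \omega \to \omega$ with $y_v(n) = x(\sigma_v(n))$ and $x(n) = y_v(\tau_v(n))$ for all $n$ (the two directions of $\equiv_1$). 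These two reductions, combined with column-constancy, give two bit-free ways to force a match: for any $v$ and any $n$, every position in the column $c(\sigma_v(n))$ has $x$-value equal to $x(\sigma_v(n)) = y_v(n)$; and for any $v$ and any prescribed position $p$, every $n$ in the column $c(\tau_v(p))$ satisfies $y_v(n) = y_v(\tau_v(p)) = x(p)$. Since each column is infinite, in either case there are infinitely many admissible choices, which supplies exactly the freedom needed to arrange bijectivity.

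Then I would run a back-and-forth producing finite partial injections $r_t, s_t$ with a common finite domain, maintaining the invariant that $x(r_t(n)) = y_{s_t(n)}(n)$ wherever defined, and meeting three families of requirements by dovetailing. To put $n$ into the domain (totality), choose a fresh value $v \notin \ran(s_t)$ for $s(n)$ and place $r(n)$ at a fresh position of column $c(\sigma_v(n))$. To put $v$ into $\ran(s)$ (surjectivity of $s$), choose a fresh $n$ outside the domain, set $s(n) = v$, and define $r(n)$ as before. To put $p$ into $\ran(r)$ (surjectivity of $r$), choose a fresh $v \notin \ran(s_t)$, pick a fresh $n$ in the column $c(\tau_v(p))$, and set $s(n) = v$ and $r(n) = p$. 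Each step preserves injectivity and the matching invariant, which holds by the two facts above, and all the needed witnesses exist because the relevant columns and the supplies of fresh $n, v, p$ are infinite. The resulting $r = \bigunion_t r_t$ and $s = \bigunion_t s_t$ are then computable bijections of $\omega$ with $x(r(n)) = y_{s(n)}(n)$ for all $n$.

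The step I expect to be the main obstacle is surjectivity of $r$. The reduction $y_v \leq_1 x$ by itself does not help here, since the positions $\sigma_v(n)$ need not ever meet the column of a prescribed target $p$; one genuinely needs the reverse reduction $\tau_v$ together with column-constancy of $y_v$ to manufacture infinitely many indices $n$ that are \emph{guaranteed} to satisfy $y_v(n) = x(p)$ without computing either side. Checking that such an $n$ can always be chosen fresh, and that the entire construction remains effective while never querying a single bit of $x$ or any $y_v$, is the crux of the argument.
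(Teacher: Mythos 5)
Your proposal is correct and follows essentially the same route as the paper's proof: a computable back-and-forth that never inspects bits, using the reduction $y_v \leq_1 x$ together with column-constancy of $x$ to place $r(n)$ at a fresh position in the appropriate column (totality and surjectivity of $s$), and the reverse reduction $x \leq_1 y_v$ together with column-constancy of $y_v$ to manufacture fresh indices $n$ hitting a prescribed target $p$ (surjectivity of $r$). The only difference is organizational — you split the requirements into three dovetailed families where the paper alternates even steps (domain plus range of $s$) with odd steps (range of $r$ paired with a fresh $s$-value) — and your identification of surjectivity of $r$ as the step genuinely requiring the reverse reduction matches the paper's odd-step argument exactly.
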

\begin{proof}
  The point of this lemma is that $x$ looks like a ``diagonal'' of
  the join of the $y_i$ after permuting $x$ by $r$, and the $y_i$ by $s$. 
  
  %This will let us
  %use an argument similar to that of Theorem~\ref{cor:diagonalization} to
  %construct recursively isomorphic reals that are images of reals from
  %different equivalence classes from a supposed reduction. 

  Our argument is a simple back and forth construction. At each stage of the
  construction we will have defined $r$ and $s$ on the same finite domain. 

  At even steps, we begin by picking the least $n$ not in the domain of $r$
  and $s$. Let $m$ be the least number not in the range of $s$, and define
  $s(n) = m$. Now since $x \equiv_1 y_m$, we have that $y_m(n) =
  x(\<i,j\>)$ for some $\<i,j\> \in \omega$. It is possible that $\<i,j\>$ is already
  in the range of $r$ but we can always find some $\<i^*,j\>$ not already
  in the range of $r$, and set $r(n) = \<i^*,j\>$, since $y_m(\<i,j\>) =
  y_m(\<i^*,j\>)$ since $y_m$ is of the form $\join_{i \in \omega} z$ for
  some $z \in \cantor$.

  At odd steps, we pick the least $k$ and $m$ that are not in the range of
  $r$ and $s$ respectively. Then since $x$ is recursively isomorphic to
  $y_m$, there is some $\<i,j\>$ such that $x(k) = y_m(\<i,j\>)$. Again,
  we can find some $\<i^*,j\>$ not already in the domain of $r$ and $s$
  and set $r(\<i^*,j\>) = k$ and $s(\<i^*,j\>) = m$. 
\end{proof}

It is a fundamental property of Turing
reducibility that if we computably specify countably
many Turing reductions, then we can run them all simultaneously to produce
the uniform join of their outputs. It is this idea which we encapsulate
into our next definition:

\begin{defn}
  Suppose that $E^{\cantor}_\ph$ is a countable Borel equivalence relation
  on $\cantor$ generated by $\ph_{i \in \omega}$.
  Say that $E$ is \define{closed under countable uniform joins} if whenever
  $x, y_0, y_1, \ldots \in \cantor$, $u \from \omega \to \omega^2$ is
  computable, and
  $x E y_i$ via $u(i)$ for all $i \in \omega$, then $x E \bigjoin_{i \in
  \omega} y_i$.
\end{defn}

For example, many-one equivalence, $tt$ equivalence, Turing
equivalence, and enumeration equivalence all have this property, as
generated by their usual family of reductions.

\begin{thm}\label{thm:closed_under_countable_joins}
  Suppose that $E_\ph$ is a countable Borel equivalence relation
  on $\cantor$ that is coarser than recursive isomorphism and
  is closed under countable uniform joins. Then $E_\ph$ is not
  uniformly universal. 
\end{thm}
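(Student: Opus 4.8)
The plan is to reduce the universality of $E_\ph$ to a violation of Theorem~\ref{cor:diagonalization}, using Lemma~\ref{diagonal_permutation} as the bridge that converts recursive-isomorphism witnesses into the "diagonal" structure needed to build a Borel function defying the theorem. Suppose for contradiction that $E_\ph$ is uniformly universal. Then in particular there is a uniform reduction of $E(\F_\omega,\cantor)$ to $E_\ph$, and by Lemma~\ref{recursive_uniformity_lemma} I may take its uniformity function $u$ to be computable. The target is to manufacture, from such a reduction $f$, a Borel function $g \from X \to \cantor$ on $X = \Free((\cantor)^{\F_\omega})$ witnessing the failure of the "indeed" clause of Theorem~\ref{cor:diagonalization}, namely that for \emph{every} $x$ there is some $i$ with $g(x)(i) \neq g(\gamma_i \cdot x)(i)$. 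Since Theorem~\ref{cor:diagonalization} forbids this, we get a contradiction.

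First I would set up the reduction $f$ from $E(\F_\omega,\cantor)$ (or its free part $F(\F_\omega,\cantor)$) into $E_\ph$ on $\cantor$, with computable uniformity $u$, so that the generators $\gamma_i$ of $\F_\omega$ get mapped to a computably-indexed family of $E_\ph$-witnesses: for each $x$, $f(x) \E_\ph f(\gamma_i \cdot x)$ via the pair $u(i)$. First I would normalize so that each value $f(x)$ has the repetitive form $\join_{k\in\omega} z$ demanded by Lemma~\ref{diagonal_permutation}; this is a harmless preprocessing step (precompose $f$ with $x \mapsto \bigjoin_k f(x)$, or equivalently absorb a fixed recursive "doubling" map, legitimate because $E_\ph$ is coarser than recursive isomorphism so this does not change $E_\ph$-class). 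Now, \emph{because $E_\ph$ is closed under countable uniform joins}, the uniformly-specified witnesses $f(x) \E_\ph f(\gamma_i \cdot x)$ assemble into a single relation $f(x) \E_\ph \bigjoin_i f(\gamma_i \cdot x)$. Since $E_\ph$ is coarser than recursive isomorphism, this in turn gives $f(x) \equiv_1 \bigjoin_i f(\gamma_i \cdot x)$ — or more precisely that each $f(x)$ is recursively isomorphic to $f(\gamma_i\cdot x)$ uniformly in $i$, which is exactly the hypothesis $x \equiv_1 y_i$ via a computable $u$ that Lemma~\ref{diagonal_permutation} requires, with $x := f(x)$ and $y_i := f(\gamma_i\cdot x)$.

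With Lemma~\ref{diagonal_permutation} in hand I obtain computable bijections $r,s \from \omega \to \omega$ (depending only on the computable index data, hence uniform in $x$) so that $f(x)(r(n)) = f(\gamma_{s(n)}\cdot x)(n)$ for all $n$. This is the crux: it says $f(x)$ agrees, entry by entry after a fixed computable permutation, with a genuine \emph{diagonal} across the sequence $f(\gamma_i\cdot x)$. I would then define $g(x)(i) := f(x)(r(s^{-1}(i)))$, or the appropriate relabeling, so that $g(x)(i)$ equals the $i$th diagonal entry $f(\gamma_i \cdot x)(\,\cdot\,)$; after untangling the indices this forces $g(x)(i) = g(\gamma_i \cdot x)(i)$ for all $i$ and all $x$, contradicting Theorem~\ref{cor:diagonalization}'s conclusion that such total agreement is unavoidable. (Equivalently, one packages the argument so that the existence of an $x$ with $f(x)(i)=f(\gamma_i\cdot x)(i)$ for all $i$, guaranteed by Theorem~\ref{cor:diagonalization}, collides with the injectivity that a reduction of $F(\F_\omega,\cantor)$ must preserve on the free part.)

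The main obstacle I anticipate is the \emph{bookkeeping of the permutations}: Lemma~\ref{diagonal_permutation} is stated for a \emph{fixed} tuple $x, y_0, y_1,\dots$, and here the relevant tuple is $f(x), f(\gamma_0\cdot x), f(\gamma_1\cdot x),\dots$, which varies with $x$. I must verify that the computable $u$ coming from uniform universality makes the hypotheses of the lemma hold with indices \emph{independent of} $x$, so that the resulting $r,s$ are themselves independent of $x$ and the composite $g$ is Borel (indeed continuous, since $f$ is Borel and $r,s$ are fixed computable permutations). A secondary subtlety is confirming that the free part is respected — that $x \mapsto (\gamma_i\cdot x)_i$ lands in the non-$x$ sequence demanded by Theorem~\ref{cor:diagonalization}, which holds precisely because we work on $\Free((\cantor)^{\F_\omega})$ where each $\gamma_i \cdot x \neq x$. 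Once the uniformity of $r,s$ in $x$ is pinned down, the contradiction with the $2$-coloring/ergodicity obstruction inside Theorem~\ref{cor:diagonalization} is immediate. The final clause of the theorem statement — that the result \emph{fails} when $2^\omega$ is replaced by $3^\omega$ — is not part of what I would prove here, as it follows from the positive universality result (Theorem~\ref{universality_of_ri}) rather than from this impossibility argument.
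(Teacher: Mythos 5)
There is a genuine gap, and it sits at exactly the point you call the crux. You invoke the hypothesis that $E_\ph$ is \emph{coarser} than recursive isomorphism to pass from $f(x) \mathrel{E_\ph} \bigjoin_i f(\gamma_i \cdot x)$ to $f(x) \equiv_1 \bigjoin_i f(\gamma_i \cdot x)$, and likewise to the claim that ``each $f(x)$ is recursively isomorphic to $f(\gamma_i\cdot x)$ uniformly in $i$.'' This reads the inclusion backwards: coarser means that recursive isomorphism is \emph{contained in} $E_\ph$, i.e.\ recursively isomorphic reals are $E_\ph$-equivalent, not conversely. Turing equivalence satisfies every hypothesis of the theorem, and Turing-equivalent reals are in general not recursively isomorphic, so $E_\ph$-equivalence (even with computable witnesses) can never be upgraded to $\equiv_1$. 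Consequently the hypotheses of Lemma~\ref{diagonal_permutation} are never verified, and the diagonal identity $f(x)(r(n)) = f(\gamma_{s(n)}\cdot x)(n)$ that drives your whole argument is unavailable. A second, independent obstruction would remain even if this were repaired: you reduce $E(\F_\omega,\cantor)$ itself, so every $\gamma_i\cdot x$ is $E$-equivalent to $x$, and the reduction property places no constraint whatsoever forcing $g(x)(i) \neq g(\gamma_i\cdot x)(i)$; a homomorphism that agrees everywhere on these coordinates is perfectly compatible with $f$ being a reduction, so nothing collides with the conclusion of Theorem~\ref{cor:diagonalization}. (Your parenthetical appeal to ``injectivity'' does not help: $f(x)$ being $\leq_1$ a join of images of points \emph{equivalent} to $x$ contradicts nothing.)

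The paper's proof supplies both missing ingredients with a single device that your outline lacks. It takes $\F_\omega$ with generators $\gamma_i$ for $i \leq \omega$, and reduces not the full shift relation but the relation $F_\Gamma$ of the \emph{proper} subgroup $\Gamma = \<\gamma_i\gamma_\omega^{-1} : i < \omega\>$, so that $x$ and $\gamma_\omega\cdot x$ are $F_\Gamma$-inequivalent. Setting $\hat f(x) = \bigjoin_i f(\alpha_i\cdot x)$ over a computable listing of $\Gamma$ with infinite repetitions, the recursive isomorphisms needed for Lemma~\ref{diagonal_permutation} are obtained \emph{combinatorially}, not from $E_\ph$: since $\Gamma\gamma_j = \Gamma\gamma_\omega$ for $j < \omega$, the reals $\hat f(\gamma_\omega\cdot x)$ and $\hat f(\gamma_j\cdot x)$ are joins of the very same family of columns $\{f(\beta\gamma_\omega\cdot x) : \beta \in \Gamma\}$ in permuted order, with permutations depending only on group data (hence uniform in $x$). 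Coarseness is then used only in the valid direction: the reduction forces $\hat f(x)$ and $\hat f(\gamma_\omega\cdot x)$ to be $E_\ph$-inequivalent, hence \emph{not} recursively isomorphic, and this is what forces, for every $x$, some coordinate disagreement $\hat f(x)(i) \neq \hat f(\gamma_{s(i)}\cdot x)(i)$; feeding the resulting partition into Lemma~\ref{free_equivariant_hom} produces the impossible Borel $2$-coloring of $G(\Z,\cantor)$, exactly as in the proof of Theorem~\ref{cor:diagonalization}. So your instinct about which lemmas are relevant is right, but without the auxiliary element $\gamma_\omega$ outside the reduced subgroup, and with the coarseness hypothesis applied in the wrong direction, the proposal does not yield a contradiction.
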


\begin{proof}
Let $\F_\omega = \<\gamma_i: i \leq \omega\>$ be the free group on the
generators 
$\gamma_0, \gamma_1 \ldots, \gamma_\omega$.
Let $\Gamma$ be the subgroup
$\<\gamma_i \gamma_\omega^{-1}: i < \omega\>$, which is isomorphic to
$\F_\omega$. Note that
$\gamma_\omega \notin \Gamma$.
Let $X = \Free((\cantor)^{F_\omega})$, and let 
$F_\Gamma \subset F(\F_\omega,\cantor)$ be the equivalence relation on $X$ where $x
\mathrel{F_\Gamma} y$ if
$\alpha \cdot x = y$ for some $\alpha \in \Gamma$. It is interesting to note that $F_\Gamma$ is
Borel isomorphic to $F(\F_\omega,\cantor)$ and is hence a universal
treeable countable Borel equivalence relation.

Now for a contradiction, let $f$ be a uniform Borel reduction of $F_\Gamma$
to $E_\ph$ with a computable
uniformity function by Lemma~\ref{recursive_uniformity_lemma}. So for all
$x \in X$ and $\alpha \in F_\Gamma$ we have
that $f(x) E f(\alpha \cdot x)$ uniformly in $\alpha$. Let $\alpha_0,
\alpha_1, \ldots$ be a computable listing of all the elements of $\Gamma$ in which
each element appears infinitely many times. Let $\hat{f}(x) = \bigjoin_{i
\in \omega} f(\alpha_i \cdot x)$. 
Then we have $f(x) E
\hat{f}(x)$ since $E$ is closed under countable uniform joins.
It is clear that $\hat{f}(\gamma_\omega \cdot x)$ and $\hat{f}(\gamma_i
\cdot x)$ are recursively isomorphic for all $i \in \omega$ by permuting
columns. Further, since each group element appears infinitely often in our
listing $\{\alpha_i\}_{i \in \omega}$ of $\Gamma$, we can apply
Lemma~\ref{diagonal_permutation} to obtain a pair of computable bijections
$r,s \from \omega \to \omega$ such that $\hat{f}(\gamma_\omega \cdot x)(r(i)) =
\hat{f}(\gamma_{s(i)} \cdot x)(i)$ for all $x \in X$ and all $i \in
\omega$.

Now for all $x \in X$, since $\gamma_\omega \notin \Gamma$, we have that $x
\mathrel{\cancel{F_\Gamma}} \gamma_\omega \cdot x$, and so 
$f(x) \mathrel{\cancel{E}} f(\gamma_\omega \cdot x)$, which implies $\hat{f}(x)
\mathrel{\cancel{E}} \hat{f}(\gamma_\omega \cdot x)$ and thus $\hat{f}(x)$ and
$\hat{f}(\gamma_\omega \cdot x)$ are not recursively isomorphic. Hence, for
each $x \in X$, we have that $\hat{f}(x)(i) \neq \hat{f}(\gamma_{s(i)})(i)$
for some $i \in \omega$, since otherwise $\hat{f}(x)$ and
$\hat{f}(\gamma_\omega \cdot x)$ would be recursively isomorphic via $r$.
We will apply the same idea as the proof of
Theorem~\ref{cor:diagonalization} to obtain a contradiction.

For each $i \in \omega$, let $A_{s(i)}$ be the set of $x \in X$ such that
$i$ is the least element of $\omega$ such that $\hat{f}(x)(i) \neq
\hat{f}(\gamma_{s(i)} \cdot x)(i)$. By 
Lemma~\ref{free_equivariant_hom}, there is some
$i$ and a $\<\gamma_{s(i)}\>$-equivariant Borel function $g$ from
$\Free\left(\left(\cantor\right)^{\<\gamma_{s(i)}\>}\right)$ to
$\Free\left(\left(\cantor\right)^{\F_\omega}\right)$ such that $\ran(g) \subset
A_{s(i)}$. Then $x \mapsto \hat{f}(g(x))(i)$ yields a Borel $2$-coloring of
$G(\Z,\cantor)$, which is a contradiction.
\end{proof}

Later in this section, we will show that this theorem is specific to
$\cantor$; it is not true when $\cantor$ is changed to $3^\omega$. In
particular, we will show that the equivalence relation of many-one
equivalence on $3^\omega$ is a uniformly universal countable Borel
equivalence relation. 

We also remark that the statement of the theorem can
be strengthened slightly:

\begin{remark}\label{remark:careful}
The proof of Theorem~\ref{thm:closed_under_countable_joins} yields a statement that is
actually slightly stronger than what we have stated. Because there is a
uniform embedding of $F(\F_\omega,\cantor)$ into $F(\F_2,\cantor)$,  
the countable uniform joins that were used in this proof are very simple:
they are computable compositions of two pairs of elements of $\ph$. 
Hence Theorem~\ref{thm:closed_under_countable_joins} remains true when we
assume closure under this smaller class of countable uniform joins.
\end{remark}

It is interesting that our proof of
Theorem~\ref{thm:closed_under_countable_joins} only uses that fact that the
universal treeable equivalence relation can not be uniformly reduced to
$E_\ph$. Very little is known about what countable Borel equivalence
relations can be reduced to Turing equivalence, or any other such $E_\ph$
satisfying the hypothesis of
Theorem~\ref{thm:closed_under_countable_joins}. We make the following
conjecture:

\begin{conj}\label{no_universal_treeable}
  The universal treeable countable Borel equivalence relation is not Borel reducible to
  Turing equivalence.
  %, nor to any equivalence relation satisfying the
  %hypothesis of Theorem~\ref{thm:closed_under_countable_joins}. 
\end{conj}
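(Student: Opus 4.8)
The plan is to follow the paper's standard two-part template for non-reducibility: show that (a) there is no \emph{uniform} reduction, and (b) every Borel reduction is equivalent to a uniform one on a ``large'' invariant set. Part (a) is already in hand. Turing equivalence is generated by the Turing reductions, which are closed under countable uniform joins, and it is coarser than recursive isomorphism; hence Theorem~\ref{thm:closed_under_countable_joins} shows that the universal treeable relation $F_\Gamma \cong F(\F_\omega,\cantor)$ admits no uniform Borel reduction into Turing equivalence. So if a reduction existed at all it would have to be essentially non-uniform, and the entire difficulty is concentrated in ruling that out.

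For part (b), suppose toward a contradiction that $f$ is a Borel reduction of $F(\F_\omega,\cantor)$ to $\equiv_T$. For each generator $\gamma_i$ of $\F_\omega$ and each $x$ we have $f(x) \equiv_T f(\gamma_i \cdot x)$, witnessed by a pair of Turing-reduction indices; recording these defines a Borel cocycle $\sigma \from \F_\omega \times X \to \omega^2$, and $f$ is uniform exactly when $\sigma$ can be chosen independent of $x$. The goal is to \emph{straighten} $\sigma$, i.e.\ to make it depend only on the group element after discarding a small set. The natural notion of ``small'' is supplied by the Borel-cardinality-preserving ultrafilter $U$ of Theorem~\ref{KS_properties}(2) on the invariant sets of the universal treeable relation; its hypotheses hold because treeable equivalence relations are closed under binary independent joins (a treeing of an independent join being assembled from treeings of the two factors). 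If $\sigma$ could be shown uniform on some $A \in U$, then $f \restriction A$ would be a uniform reduction of $F(\F_\omega,\cantor) \restriction A$ into $\equiv_T$. Since the cardinality-preservation embeddings produced by the games of Section~\ref{sec:main_idea} (via Lemma~\ref{free_equivariant_hom}) are equivariant and therefore uniform, $F(\F_\omega,\cantor)$ uniformly embeds into $F(\F_\omega,\cantor)\restriction A$; composing, one obtains a uniform reduction of the full universal treeable relation into $\equiv_T$, contradicting Theorem~\ref{thm:closed_under_countable_joins}.

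The hard part, and the reason this is stated as a conjecture rather than a theorem, is precisely the cocycle-straightening in part (b). What is required is a descriptive (non-measure-theoretic) cocycle-superrigidity theorem for homomorphisms of the treeable shift action of $\F_\omega$ into Turing equivalence, asserting that $\sigma$ trivializes on a $U$-large set. This is exactly the sort of conclusion one expects to extract from Martin's conjecture: Slaman and Steel \cite{MR960895} show that Martin's conjecture for Borel functions is equivalent to a uniformization of all Turing-invariant functions on a cone, and the ultrafilter of Theorem~\ref{KS_properties} for the universal treeable relation is the device designed to transport such cone-uniformization across the finer relation $F(\F_\omega,\cantor)$. Absent a proof of Martin's conjecture I do not see how to carry out the straightening, and any unconditional argument would have to produce genuinely new rigidity for Turing equivalence.

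It is worth recording that this strategy is the only one consistent with the rest of the paper. The conjecture is strictly stronger than the non-universality of Turing equivalence, which already follows from Conjecture~\ref{ri_not_universal} via the Turing jump; and because recursive isomorphism (and hence the relevant structure) is measure-universal, no ergodic-theoretic or Baire-category argument can supply the missing step. Thus the only viable route is the non-measure-theoretic one outlined above, combining the game-based lower bound of Theorem~\ref{thm:closed_under_countable_joins} with an ultrafilter-plus-Martin's-conjecture analysis of homomorphisms into $\equiv_T$.
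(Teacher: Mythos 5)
The statement you were asked to prove is Conjecture~\ref{no_universal_treeable}: it is an open problem in the paper, not a theorem, so there is no proof of it in the paper to compare against. Your write-up correctly treats it as such. You do not claim a proof, and the one thing you assert as established --- that there is no \emph{uniform} Borel reduction of the universal treeable relation into Turing equivalence, extracted from the proof of Theorem~\ref{thm:closed_under_countable_joins} (Turing equivalence being coarser than recursive isomorphism and closed under countable uniform joins) --- is precisely the observation the paper makes in the remark immediately preceding the conjecture, and is its stated motivation. Your proposed route to the full statement (record the cocycle of an arbitrary Borel reduction, straighten it on a set belonging to the Borel-cardinality-preserving ultrafilter for the universal treeable relation from Theorem~\ref{KS_properties}(2), then compose with an equivariant, hence uniform, embedding of $E_{\infty T}$ into its restriction to that set to contradict the uniform non-reducibility) is coherent and consistent with how the paper deploys these ultrafilters, including the announced joint work with Day on Martin's conjecture; but, as you say yourself, the cocycle-straightening step is exactly what no one knows how to do. So your submission is an accurate assessment of the state of the problem rather than a proof, which is the correct outcome here.

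One genuine error in your closing paragraph: you claim that no measure-theoretic or Baire-category argument can supply the missing step ``because recursive isomorphism (and hence the relevant structure) is measure-universal.'' That inference is a non sequitur. Recursive isomorphism is \emph{finer} than Turing equivalence, and measure universality does not pass from a finer relation to a coarser one: a Borel reduction of $E \restriction A$ into recursive isomorphism is not a reduction into $\equiv_T$, since the needed implication $f(x) \equiv_T f(y) \implies x \mathrel{E} y$ is exactly what can fail when the target relation is enlarged. The paper's measure-universality results (Corollary~\ref{universality_of_ri}, Theorem~\ref{treeable_measure}) produce conull reductions into recursive isomorphism and many-one equivalence, not into Turing equivalence; whether $E_{\infty T}$ (or every countable Borel equivalence relation) is measure reducible to $\equiv_T$ is not settled by anything in the paper. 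So your conclusion that measure theory will not resolve Conjecture~\ref{no_universal_treeable} may be right in spirit, but it does not follow from the facts you cite.
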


Next, we will turn specifically to the case of equivalence relations
coarser than Turing equivalence. We will begin by considering the following
examples:

\begin{defn}\label{natural_coarse}
Let $\alpha$ be an ordinal less than $\omega_1^{ck}$ that is additively
indecomposable so that if $\beta_0, \beta_1 < \alpha$ then $\beta_0 +
\beta_1 < \alpha$. Define \define{$(< \alpha)$-reducibility}, noted 
$\leq_{(<\alpha)}$, by $x \leq_{(<\alpha)} y$ if and only if
there exists a $\beta < \alpha$ such that $x \leq_T y^{(\beta)}$ where
$y^{(\beta)}$ is the $\beta$th iterate of the Turing jump relative to $y$. 
(Our assumption here that $\alpha$ is additively indecomposable
is needed so that $\leq_{(< \alpha)}$ is transitive).
The symmetrization of this reducibility is the equivalence relation
$\equiv_{(<\alpha)}$.
\end{defn}

Hence, $\leq_{(<1)}$ is Turing
reducibility, arithmetic reducibility is $\leq_{(<\omega)}$, and
so on. Now $(< \alpha)$-reducibility is naturally generated by the
functions obtained by taking the Turing reductions and the
functions $x \mapsto x^{(\beta)}$ for all $\beta < \alpha$, and closing under
composition. We assume henceforth that $(< \alpha)$-equivalence is
generated by these canonical functions. 

Note that we can relativize this definition to any $x \in \cantor$, and
every additively indecomposable $\alpha < \omega_1^x$. Of course, different
$x$ yield different equivalence relations, but any two such definitions of $(<
\alpha)$-equivalence relations for the same ordinal $\alpha$ 
will agree on a Turing cone.

There is a certain sense in which $(< \alpha)$-reducibilities are the only 
``natural'' computability-theoretic reducibilities coarser
than $\leq_T$. We may justify this the following way. 
Suppose that $\leq_P$ is any countable Borel quasiorder
that is coarser than Turing equivalence and closed under finite computable
joins, i.e. if $x \geq_P y, z$, then $x \geq_P y \oplus z$.
Then Slaman~\cite{MR2143887} has shown that on a Turing cone, $\leq_P$ is
$(< \alpha)$-reducibility relative to $x$ for some $x$ and $\alpha <
\omega_1^x$. Thus, up to a Turing cone, 
these reducibilities are the only ones 
coarser than Turing reducibility that are closed under
finite computable joins.

We give a complete description of which of these equivalence
relations are uniformly universal.

\begin{thm}\label{coarser_classification}
  Suppose $\alpha < \omega_1^{ck}$ is additively indecomposable. Then
  $(< \alpha)$-equivalence is uniformly universal if and only if there
  is a $\beta < \alpha$ such that $\alpha = \beta \cdot \omega$.
\end{thm}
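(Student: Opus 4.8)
The plan is to rephrase the arithmetic condition using the fact that every additively indecomposable $\alpha$ has the form $\alpha = \omega^\gamma$. The driving ordinal computation is that for $0 < \beta < \alpha = \omega^\gamma$ with leading Cantor exponent $\delta$ (so $\delta < \gamma$), one has $\beta \cdot \omega = \omega^{\delta+1}$; this equals $\alpha$ iff $\delta + 1 = \gamma$, i.e. iff $\gamma$ is a successor, and otherwise ($\gamma = 0$ or $\gamma$ a limit) one gets $\beta \cdot \omega < \alpha$ for \emph{every} $\beta < \alpha$. So the theorem is equivalent to: $(<\alpha)$-equivalence is uniformly universal iff $\gamma$ is a successor, and I would prove the two cases "$\gamma$ a successor'' and "$\gamma \in \{0\} \cup \mathrm{Lim}$'' separately. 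The one fact I will use repeatedly is that $\beta^* \cdot \omega < \alpha$ holds for all $\beta^* < \alpha$ exactly when $\gamma$ is not a successor.

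For the non-universality direction (the case $\beta \cdot \omega < \alpha$ for all $\beta < \alpha$), I would invoke the strengthening of Theorem~\ref{thm:closed_under_countable_joins} recorded in Remark~\ref{remark:careful}: it suffices to check that $(<\alpha)$-equivalence is coarser than recursive isomorphism and closed under the restricted class of countable uniform joins that are computable compositions of two fixed pairs of generating functions. Coarseness is immediate since recursive isomorphism is contained in Turing equivalence, which is contained in $(<\alpha)$-equivalence. For the closure, each of the finitely many fixed generating functions is bounded by some ordinal below $\alpha$, so there is a single $\beta^* < \alpha$ with $\varphi_w(x) \leq_T x^{(\beta^* \cdot \ell)}$ for every word $w$ of length $\ell$ in these functions; hence for the relevant sequences $y_i = \varphi_{w_i}(x)$ we obtain $\bigjoin_i y_i \leq_T x^{(\beta^* \cdot \omega)}$. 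Because $\gamma$ is $0$ or a limit, $\beta^* \cdot \omega < \alpha$, so $\bigjoin_i y_i \leq_{(<\alpha)} x$ and therefore $x \equiv_{(<\alpha)} \bigjoin_i y_i$. With this closure established, the diagonalization from the proof of Theorem~\ref{thm:closed_under_countable_joins} runs verbatim and produces a Borel $2$-coloring of $G(\Z,\cantor)$, the desired contradiction.

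For the universality direction (the case $\alpha = \beta \cdot \omega$), I would build a uniform reduction of the uniformly universal relation $E(\F_\omega,\cantor)$ (Proposition~\ref{uniform_examples}) into $\equiv_{(<\alpha)}$, generalizing the known proof that arithmetic equivalence—the instance $\beta = 1$, $\alpha = \omega$—is uniformly universal \cite{1109.1875}. The guiding observation is that when $\alpha = \beta \cdot \omega$ the operator $x \mapsto x^{(\beta)}$ plays exactly the role the Turing jump plays for arithmetic equivalence: its finite iterates $x^{(\beta \cdot n)}$ are cofinal among the jumps $x^{(\rho)}$, $\rho < \alpha$, just as the finite jumps are cofinal below $\omega$. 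I would therefore encode the $\F_\omega$-orbit of $x$ into a single real $f(x)$ so that (a) translating by a generator $\gamma_n$ alters $f(x)$ by a fixed number of $\beta$-jumps, giving a computable uniformity function, and (b) reals coding distinct orbits can compute one another only past all the finite $\beta$-levels, i.e. require a jump of order $\geq \beta \cdot \omega = \alpha$, so that they are $\equiv_{(<\alpha)}$-inequivalent; this last point would use freeness of the shift action together with the cofinality of the $x^{(\beta \cdot n)}$.

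I expect the universality direction to be the main obstacle. The non-universality direction is essentially bounded-jump bookkeeping layered on top of an already-proved theorem, whereas in the universality direction the coding must be simultaneously uniform (each generator realized by a \emph{fixed} amount of $\beta$-jumping) and faithful (reals from distinct orbits genuinely $\alpha$-far apart), and it is precisely this balance—and its sensitivity to the exact structure $\alpha = \beta \cdot \omega$—that carries the real content of the proof.
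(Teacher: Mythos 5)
Your non-universality half is correct and is essentially the paper's own argument: both proofs run through Remark~\ref{remark:careful}, observing that the two relevant pairs of generating functions are each bounded by a single $\beta^* < \alpha$, that words of length $\ell$ in them are then bounded by $\beta^*\cdot\ell$ many jumps, and that the countable uniform join is therefore bounded by $x^{(\beta^*\cdot\omega)}$ with $\beta^*\cdot\omega < \alpha$, giving the closure hypothesis needed to run the diagonalization of Theorem~\ref{thm:closed_under_countable_joins}. (Your Cantor-normal-form reformulation is correct but purely cosmetic; the paper works directly with the dichotomy ``$\exists \beta\,(\beta\cdot\omega = \alpha)$ versus $\forall\beta\,(\beta\cdot\omega < \alpha)$,'' which is the same case split.)

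The gap is in the universality half, and you have identified its location yourself: what you give there is a specification of what the coding must achieve --- (a) uniformity and (b) faithfulness --- not a construction that achieves it. The paper's actual content consists of two ingredients you do not supply. First, Definition~\ref{jump_code_defn}: a transfinite extension $J_\beta$ of the Slaman--Steel jump coding, whose genuinely new case is limit $\beta$, where the coding must be defined by a simultaneous induction on bits along a computable fundamental sequence $(\lambda_n) \to \beta$, arranged so that bit $k$ of $f_n$ depends only on bits $k' < k$ of $f_{n+1}$. Second, Lemma~\ref{alpha_forcing}: a forcing lemma for mutually $\Delta^1_1$-generic reals asserting that a $\beta$-jump exactly decodes one level of the coding, and that such generics cannot be computed below the appropriate level. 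With these in hand the reduction is the fixed-point definition $f(x) = J_\beta\bigl(g(x), f(a\cdot x) \join f(a^{-1}\cdot x) \join f(b\cdot x) \join f(b^{-1}\cdot x)\bigr)$, and faithfulness follows from the computation of $f^{(\beta\cdot n)}(x)$ together with part (2) of Lemma~\ref{alpha_forcing}. Note also that the mechanism you propose for (b) --- ``freeness of the shift action'' --- is the wrong one: the shift of $\F_\omega$ on $\cantor^{\F_\omega}$ is not free on the relevant domain, and the paper needs no freeness at all; it reduces $E^X_{\F_2}$ for an \emph{arbitrary} Borel $\F_2$-action, obtaining faithfulness from the mutual genericity of the reals $g(y)$ attached to points by a Borel function $g$. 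Without $J_\beta$ at limit stages and without the forcing lemma, your conditions (a) and (b) remain unverified desiderata, so the right-to-left implication is not proved.
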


We prove the non-uniform universality here by using
Remark~\ref{remark:careful}. In the other case when $\alpha = \beta \cdot
\omega$ for some $\beta < \alpha$, the proof of universality is an easy
extension of Slaman and Steel's proof that arithmetic equivalence is
universal. Recall from \cite{1109.1875} that given 
$x \from \omega \to 2^{< \omega}$ and $y \in
2^\omega$, we define $J(x,y) \in \cantor$ to be the real whose $n$th
column is 
\[\left(J(x,y) \right)^{[n]} = \begin{cases} 
x(n) 1 0000\ldots & \text{ if y(n) = 0}\\
x(n) 0 1111\ldots & \text{ if y(n) = 1}
\end{cases}
\]

We now give an extension of this definition that iterates this type of jump
coding through the transfinite. 

\begin{defn}\label{jump_code_defn}
  Given $x \from \omega \to 2^{< \omega}$ and $y \in
  2^\omega$, and a notation for $\alpha < \omega_1^{ck}$, we define $J_\alpha(x,y) \in
  \cantor$ as follows. For $\alpha = 1$,
  $J_1(x,y) = J(x,y)$. For $\alpha = \beta + 1$ for $\beta > 0$, we define
  $J_\alpha(x,y) = J(x_0,J_\beta(x_1,y))$, where $x = x_0 \oplus x_1$.
  Finally, 
  suppose now that $\alpha$ is a limit ordinal and $(\lambda_n)_{n <
  \omega}$ is an
  computable sequence of computable ordinals whose limit is the computable
  ordinal $\alpha$.
  Then for $n \in \omega$, define the functions $f_n(x,y,(\lambda_n))$ by
  \[f_n(x,y,(\lambda_n)) = J_{\lambda_n}(x^{[n]},y(n) \concat
  f_{n+1}(x,y,(\lambda_n)))\] 
  where $x^{[n]}$ is the
  $n$th column of $x$. Note that this definition is really an inductive definition of
  the $k$th bit $f_n(x,y,(\lambda_n))(k)$ simultaneously for all $n$; the definition of $f_n(x,y,(\lambda_n))(k)$
  will
  only use the values of $f_{n+1}(x,y,(\lambda_n))(k')$ for $k' < k$. 
  Finally, define $J_\alpha(x,y) = f_0(x,y,(\lambda_n))$. 
\end{defn}

Now we have the following: 

\begin{lemma}\label{alpha_forcing}
  If $x_0, \ldots, x_i$, $z_0, \ldots, z_j$, and $w$ are mutually
  $\Delta^1_1$ generic functions from $\omega$ to $2^{< \omega}$, then
  for all $\alpha, \beta, \gamma < \omega_1^{ck}$ and $y_0, \ldots, y_i \in \cantor$
  \begin{enumerate}
  \item $\displaystyle{\left(0^{(\alpha)} \join J_\beta(x_0, y_0) \join \ldots \join
  J_\beta(x_i, y_i) \join z_0 \join \ldots \join
  z_j\right)^{(\beta)}}$
  \vspace{-.2em}
  \begin{flushright}
  $\displaystyle{\equiv_T 0^{(\alpha+\beta)} \join x_0 \join \ldots \join
  x_i \join y_0 \join \ldots \join y_i \join z_0 \join \ldots
  z_j}$\end{flushright}
  \item $\displaystyle{0^{(\alpha)} \join J_\beta(x_0, y_0) \join \ldots \join
  J_\beta(x_i, y_i) \join z_0 \join \ldots \join
  z_j \ngeq_T w}$
  \end{enumerate}
\end{lemma}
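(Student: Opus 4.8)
The plan is to prove both parts simultaneously by transfinite induction on $\beta$, after strengthening part (1) to assert that the displayed Turing equivalence holds \emph{uniformly} — via reductions whose indices are computable from a notation for $\beta$ together with the relevant data. Uniformity is essential, since the induction must push the equivalences through further iterates of the jump, and at limit stages only uniform equivalences compose correctly. Throughout, the role of the mutual $\Delta^1_1$-genericity hypothesis is to make each generic function generic over every hyperarithmetic set, in particular over $0^{(\delta)}$ for every $\delta < \omega_1^{ck}$ and over every real built from the remaining data; this is exactly what is needed to invoke lowness of generics ($(A \oplus G)' \equiv_T A' \oplus G$ for $G$ generic over $A$) and to keep genericity available after taking jumps below $\omega_1^{ck}$.

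For the base case $\beta = 1$ I would adapt the coding lemma for $J = J_1$ from \cite{1109.1875}. The $\geq_T$ direction of (1) has three ingredients: $0^{(\alpha+1)}$ is computed trivially; each $x_k \leq_T J(x_k,y_k)$ is read directly off the columns; and each $y_k$ is recovered by the limit lemma, since $y_k(n)$ is the eventual constant value of the $n$th column of $J(x_k,y_k)$ and this limit provably exists, so one jump suffices. The $z_j$ pass through unchanged. The $\leq_T$ direction — that the jump computes nothing beyond $0^{(\alpha+1)} \oplus \bigoplus_k x_k \oplus \bigoplus_k y_k \oplus \bigoplus_j z_j$, and in particular does not compute any $y_k'$ — is where genericity of the $x_k$ does the work, forcing the jump of the coded join to be as low as possible. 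Part (2) at the base case is easy: the left-hand real is computable from $0^{(\alpha)} \oplus \bigoplus_k x_k \oplus \bigoplus_k y_k \oplus \bigoplus_j z_j$, over which $w$ is $\Delta^1_1$-generic and hence uncomputable.

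For the successor step $\beta+1$ I would unfold Definition~\ref{jump_code_defn}: writing $x = x_0 \oplus x_1$ we have $J_{\beta+1}(x,y) = J(x_0, J_\beta(x_1,y))$. Taking one jump and applying the base case with inner coded real $Y = J_\beta(x_1,y) \leq_T x_1 \oplus y$ (over which $x_0$ is generic, by mutual genericity) collapses the outer layer, yielding
\[\left(0^{(\alpha)} \oplus J_{\beta+1}(x,y) \oplus \cdots\right)' \equiv_T 0^{(\alpha+1)} \oplus x_0 \oplus J_\beta(x_1,y) \oplus \cdots\]
uniformly. Applying $(\cdot)^{(\beta)}$ together with the inductive hypothesis at $\beta$ — with $0^{(\alpha+1)}$ in the role of the base oracle and $x_0$ absorbed into the passive reals $z_j$ — recovers $x_1 \oplus y$ and produces $0^{((\alpha+1)+\beta)} \oplus x_0 \oplus x_1 \oplus y \oplus \cdots$, the desired right-hand side since $(\alpha+1)+\beta = \alpha+(\beta+1)$. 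The uniformity of both the base case and the inductive hypothesis is what licenses composing the equivalence with the extra $\beta$ jumps.

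The limit step is the main obstacle. Here $\beta = \sup_n \lambda_n$ for a computable increasing sequence, and $J_\beta(x,y)$ is the dovetailed tower $f_0$ whose $n$th block $f_n = J_{\lambda_n}(x^{[n]}, y(n) \concat f_{n+1})$ chains the codings so that decoding block $n$ costs $\lambda_n$ jumps and reveals both $y(n)$ and the next block. Since the columns $x^{[n]}$ of a $\Delta^1_1$-generic $x$ are mutually generic, I would apply the inductive hypothesis uniformly at each $\lambda_n$ to peel off block $n$, and then argue that because $\beta = \sup_n \lambda_n$, the operator $(\cdot)^{(\beta)}$ has exactly enough power to carry out all these decodings in the limit — uniformly recovering $0^{(\alpha+\beta)}$, all of $x$ and $y$, and the $z_j$ — while genericity again prevents it from computing anything more. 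The delicate points are the uniform bookkeeping across the infinitely many blocks, so that the limit-level jump decodes them simultaneously rather than one at a time, and matching the notation for $\alpha+\beta$ to the iterated-jump computation; this is the transfinite analogue of the limit lemma used in the base case. Part (2) needs no new idea at successor or limit stages: the left-hand real stays computable from $0^{(\alpha)} \oplus \bigoplus_k x_k \oplus \bigoplus_k y_k \oplus \bigoplus_j z_j$, over which $w$ is generic and therefore uncomputable.
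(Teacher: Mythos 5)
Your overall route — transfinite induction on $\beta$ with a uniformity strengthening of part (1), the base case adapted from the coding lemma of \cite{1109.1875}, and uniformity doing the work at limits — is exactly the paper's intended proof (the paper's own proof is only a two-sentence sketch saying precisely this). However, your successor step contains a genuine error, and it occurs exactly at the transfinite levels that are the point of the lemma. You decode $J_{\beta+1}(x,y) = J(x_0, J_\beta(x_1,y))$ outer layer first: one jump, then $\beta$ jumps. That computes $\bigl(X'\bigr)^{(\beta)} = X^{(1+\beta)}$, not $X^{(\beta+1)} = \bigl(X^{(\beta)}\bigr)'$, and you bridge the two with the identity $(\alpha+1)+\beta = \alpha+(\beta+1)$. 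This identity is false whenever $\beta \geq \omega$: by associativity, $(\alpha+1)+\beta = \alpha+(1+\beta) = \alpha+\beta$, which is strictly less than $\alpha+(\beta+1)$. So for infinite $\beta$ your argument establishes the equivalence for the $\beta$-th jump of the $J_{\beta+1}$-coded real (one jump short), not the $(\beta+1)$-st jump claimed in the lemma. For finite $\beta$ there is no issue, but the induction must pass through infinite successor ordinals (e.g.\ $\omega+1$) to reach any $\beta \geq \omega^2$, which the applications require.

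Closing this gap is not mere bookkeeping. To get from your conclusion to the lemma at $\beta+1$ you must apply one further jump to both sides, i.e.\ you must show
$\bigl(0^{(\alpha+\beta)} \oplus x_0 \oplus x_1 \oplus y \oplus z_0 \oplus \cdots \oplus z_j\bigr)' \equiv_T 0^{(\alpha+\beta+1)} \oplus x_0 \oplus x_1 \oplus y \oplus z_0 \oplus \cdots \oplus z_j$,
a jump computation for a join that now contains the real $y$ \emph{in the clear} rather than hidden behind a coding. This is again a genericity/lowness assertion, it is not covered by your inductive hypothesis or base case, and it is sensitive to how the $y$'s relate to the generics — so it must be argued (and its uniformity tracked) as an additional step. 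Alternatively, the arithmetic closes cleanly if one decodes in the order that matches ordinal addition on the right — i.e.\ treats the $\beta$-fold coding as outermost, so that the inductive hypothesis is applied first ($\beta$ jumps) and the single-jump base case last, giving $(\alpha+\beta)+1 = \alpha+(\beta+1)$, which is a correct identity. Either way, your write-up as it stands asserts a false ordinal identity at the crucial step, and the limit stage inherits a cousin of the same problem: peeling the blocks sequentially costs $\lambda_0 + \lambda_1 + \cdots + \lambda_n$ jumps to reach block $n$, so you need $\sup_n (\lambda_0 + \cdots + \lambda_n) = \beta$, which is more than your stated hypothesis $\sup_n \lambda_n = \beta$ and requires attention to how the $\lambda_n$ are chosen.
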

\begin{proof}
  This is an easy extension of the proof of \cite{1109.1875}*{Lemma 2.4}.
  The successor step is essentially identical, and at limits we use the
  fact that the equivalences at each step are proved uniformly in $\beta$. 
\end{proof}

\begin{proof}[Proof of Theorem~\ref{coarser_classification}]
  Suppose $\alpha < \omega_1^{ck}$ is closed under addition and there
  exists a $\beta < \alpha$ such that $\alpha = \beta \cdot \omega$. We
  must show that $\equiv_{(< \alpha)}$ is a universal countable Borel equivalence
  relation. It is enough to show that if $\F_2 = \<a,b\>$ acts on a Polish
  space $X$, then the resulting equivalence relation $E^X_{\F_2} \leq_B
  \equiv_{(< \alpha)}$. 
  Let $g \from X \to \left(2^{< \omega} \right)^{\omega}$ be a
  Borel function such that for every distinct $x_0, \ldots, x_n \in X$, we
  have that $g(x_0), \ldots, g(x_n)$ are all mutually $\Delta^1_1$ generic
  functions from $\omega$ to $2^{< \omega}$. The definition of the Borel
  reduction witnessing $E^X_{\F_2} \leq_B \equiv_\alpha$ is:
  \[f(x) = J_\beta(g(x),f(a \cdot x) \join f(a^{-1} \cdot x) \join f(b
  \cdot x) \join f(b^{-1} \cdot x))\]
  Like Definition~\ref{jump_code_defn}, this is really an 
  inductive definition of
  $f(x)(k)$, which depends on values of $f(\gamma \cdot x)(k')$ for $\gamma
  \in \{a,a^{-1},b,b^{-1}\}$, but only for $k' < k$. 

  By inductively applying Lemma~\ref{alpha_forcing}, we see that 

  \[f^{(\beta \cdot n)}(x) = 0^{(\beta \cdot n)} \join \bigjoin_{\{\gamma
  \in \F_2: |\gamma| < n\}} g(\gamma \cdot x) \join \bigjoin_{\{\gamma \in
  \F_2: |\gamma| = n\}} f(\gamma \cdot x)\]
  where $|\gamma|$ is the length of $\gamma$ as a word in $\F_2$.
  Hence, by part 2 of Lemma~\ref{alpha_forcing}, we see that
  $f^{(\beta \cdot n)}(x)$ can not compute $g(y)$ for any $y$ not in the
  same $E^X_{\F_2}$-class of $x$. Thus, $f$ is an embedding, since
  therefore $f^{(\beta \cdot (n+1))}(x)$ can not compute $f(y)$ for all $n$
  and all $y$ not in the same $E^X_{\F_2}$-class as $x$.

  Now conversely, suppose that for every $\beta < \alpha$, we have that
  $\beta \cdot \omega < \alpha$. Now if we have pair of $(< \alpha)$-reductions, then they must both be $(< \beta)$-reductions for some
  $\beta < \alpha$. But then given any $x$, if we have a countable uniform
  of reals obtained by computably composing these two reductions applied
  to $x$, then it
  is still a $(< \alpha)$-reduction since $\beta \cdot \omega < \alpha$. Hence, by 
  Remark~\ref{remark:careful},
  $(< \alpha)$-equivalence is not uniformly universal.
\end{proof}

This proves part (4) of Theorem~\ref{uu_classifications}.
This also proves part (3) of
Theorem~\ref{uu_properties} since every countable Borel
equivalence relation is contained in $(< \alpha)$-equivalence relative
to some real. 
Note that 
the equivalence relations $\equiv_{(< \omega)}$,
$\equiv_{(< \omega^2)}$,
$\equiv_{(< \omega^3)}$, \ldots are each uniformly universal by
Theorem~\ref{coarser_classification}, but their union
$\equiv_{(< \omega^\omega)}$ is not. This finishes the proof of part (4)
of Theorem~\ref{uu_properties} together with
Proposition~\ref{increasing_union_1}. 

\subsection{Contrasting results in the measure context and on $3^\omega$}

In this section, we shall prove a number of contrasting results to those in
the previous section by showing that some of the equivalence relations
considered there are universal if we change the space from $\cantor$ to
$3^\omega$, or are willing to discard nullsets.
We will begin with some combinatorial results in
the measure context which we will use in one of our constructions.

\begin{lemma}\label{dcs_small}
  Suppose $E$ and $F$ are countable Borel equivalence relations on a
  standard probability space $(X,\mu)$. Then there is a Borel set $A
  \subset X$ that meets $\mu$-a.e. $E$-class of cardinality $\geq 3$, and
  whose complement meets $\mu$-a.e. $F$-class of cardinality $\geq 3$.
\end{lemma}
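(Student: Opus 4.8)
The plan is to use the probabilistic method together with a Fubini argument, reducing the problem to a statement about a single fixed class. First I would pass to a standard probability space of parameters $(\Omega, P)$ and construct a jointly Borel family $(x,\omega) \mapsto A_\omega(x)$ of Borel sets $A_\omega \subseteq X$ with two properties: for every $E$-class $C$ with $|C| \ge 3$ we have $P(\{\omega : A_\omega \cap C = \emptyset\}) = 0$, and for every $F$-class $D$ with $|D| \ge 3$ we have $P(\{\omega : D \subseteq A_\omega\}) = 0$. Granting this, consider the ``bad'' sets $\mathcal B_E = \{(x,\omega) : |[x]_E| \ge 3 \text{ and } [x]_E \cap A_\omega = \emptyset\}$ and its analogue $\mathcal B_F = \{(x,\omega) : |[x]_F| \ge 3 \text{ and } [x]_F \subseteq A_\omega\}$; these are Borel in $X \times \Omega$ by Luzin--Novikov \cite{MR1321597}*{Theorem 18.10}. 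Since whether $A_\omega$ meets $[x]_E$ depends only on the class of $x$, the first property says every $x$-fiber of $\mathcal B_E$ is $P$-null, so by Fubini $(\mu \times P)(\mathcal B_E) = 0$ and hence $\mu(\{x : (x,\omega) \in \mathcal B_E\}) = 0$ for $P$-a.e.\ $\omega$; likewise for $\mathcal B_F$. Intersecting two conull sets of parameters yields an $\omega^\ast$ for which both failure sets are $\mu$-null, and $A = A_{\omega^\ast}$ is the desired set.

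The construction of the family $A_\omega$ would use independent fair coins indexed by Borel complete sections. By Feldman--Moore \cite{MR0578656} fix countable groups generating $E$ and $F$; on the aperiodic (infinite-class) parts I would choose sequences of pairwise disjoint Borel complete sections $\{S_n\}$ for $E$ and $\{S'_n\}$ for $F$ such that each infinite class meets every member of its sequence, and let $\omega$ supply an independent fair coin deciding whether each $S_n$ (resp.\ $S'_n$) is thrown into $A_\omega$ or into its complement. For a fixed \emph{infinite} $E$-class $C$, the events ``$C$ misses $A_\omega$'' and ``$C \subseteq A_\omega$'' each require infinitely many independent coins to agree, so both have probability exactly $0$; the same holds for infinite $F$-classes. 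Thus the two displayed properties hold automatically on all infinite classes, which is exactly the content that genuinely needs the measure-theoretic averaging.

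The main obstacle is the \emph{finite} classes of size $\ge 3$, where independent coins only force a miss-probability of $2^{-k} > 0$ and so cannot give exact nullity. Here I would instead argue deterministically, using that the set where $[x]_E$ is finite (and likewise for $F$) is smooth, hence carries a Borel transversal permitting a uniform choice on each class. The per-class requirement then becomes the purely combinatorial problem of $2$-colouring a countable set equipped with two partitions (the $E$-classes and the $F$-classes) so that every part of size $\ge 3$ in the first partition receives a point of $A$ and every such part in the second receives a point of the complement. Modelling this as choosing a degree-constrained subgraph of the associated bipartite incidence multigraph, one checks Hall's condition holds once all relevant degrees are $\ge 2$ (the surplus coming from size $\ge 3$ is what makes the estimate go through), so a solution always exists and can be selected in a Borel way on the smooth finite part. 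The remaining difficulty, and the place where the argument must be assembled with care, is the coordination across the four regions determined by whether each of $[x]_E$ and $[x]_F$ is finite or infinite: the deterministic forcing used to hit the finite classes of one relation must be shown compatible with, and never to swallow, the infinite classes of the other, and the whole assignment must be glued into one jointly Borel family with all exceptional behaviour pushed into a $\mu$-null set. I expect this gluing, rather than either the probabilistic estimate or the finite combinatorics in isolation, to be the crux.
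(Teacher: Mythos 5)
The gap is in your treatment of the finite classes, and it is fatal to the proposed architecture rather than a matter of careful gluing. You claim that since $E$ (resp.\ $F$) restricted to its finite part is smooth, the two-partition colouring problem ``can be selected in a Borel way on the smooth finite part.'' But the per-class colouring problem is indexed by the classes of the join $E \vee F$ (each such class is your ``countable set equipped with two partitions''), and $E \vee F$ is in general non-smooth even when every $E$-class and every $F$-class is finite; smoothness of $E$ and $F$ separately gives no Borel way to solve the joint problem class by class. In fact a Borel solution provably fails to exist: let $\Gamma_0 = \Gamma_1 = \Z/3\Z$, let $\Gamma_0 * \Gamma_1$ act by shift on $\Free\left((\cantor)^{\Gamma_0 * \Gamma_1}\right)$, and let $E$ and $F$ be the orbit equivalence relations of the subgroups $\Gamma_0$ and $\Gamma_1$. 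Every $E$-class and every $F$-class has exactly three elements, so both relations are smooth; yet for any Borel set $A$, Lemma~\ref{free_equivariant_hom} applied to the partition $\{\comp{A}, A\}$ (equivalently \cite{MR3454384}*{Lemma 2.1}) yields either a $\Gamma_0$-equivariant injection with range in $\comp{A}$ or a $\Gamma_1$-equivariant injection with range in $A$; since the range of an equivariant map is invariant under the corresponding subgroup, $A$ either misses an entire $E$-class or contains an entire $F$-class. So the finite-class case is exactly where the measure is indispensable --- one can only succeed after discarding a $\mu$-null set --- and no combination of Hall-type combinatorics with smoothness can produce the Borel selection you want; the difficulty you locate in the ``gluing of the four regions'' actually sits squarely inside the finite part itself. (Your probabilistic step, by contrast, is essentially sound for infinite classes, modulo one repair: coins attached to the $E$-sections $S_n$ and the $F$-sections $S'_m$ conflict on $S_n \cap S'_m$, so the random set is not well defined as described; instead flip independent coins on the atoms of the common refinement of the two families of pairwise disjoint complete sections, and then both hitting and co-hitting of every infinite class occur almost surely, after which your Fubini argument goes through.)

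For comparison, the paper does not reprove any measure theory here: its proof is a two-line reduction. One enlarges $X$ to a standard Borel space $Y \supset X$ (with $\mu$ extended so that $Y \setminus X$ is null) and extends $E, F$ to relations $E^*, F^*$ by padding each class of cardinality $< 3$ with new points, so that all $E^*$- and $F^*$-classes have cardinality $\geq 3$ while classes that already had cardinality $\geq 3$ are untouched; then one quotes \cite{MR3454384}*{Theorem 1.7}, which produces a Borel set $A$ meeting $\mu$-a.e.\ $E^*$-class whose complement meets $\mu$-a.e.\ $F^*$-class, and one restricts back to $X$. In other words, the case on which your proposal founders --- finite classes of cardinality $\geq 3$, where only an almost-everywhere selection exists and a Borel one does not --- is precisely the content of the theorem the paper cites. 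A self-contained proof along your lines would have to replace the ``smooth part plus Hall'' step by a genuinely measure-theoretic argument that treats finite and infinite classes on the same footing.
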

\begin{proof}
  We may find some standard Borel space $Y \supset X$ and equivalence
  relations $E^*$ and $F^*$ extending $E$ and $F$ such that all the 
  $E^*$-classes and $F^*$-classes have cardinality $\geq 3$ and for all $x
  \in X$ whenever 
  $[x]_E$ has cardinality $\geq 3$, then $[x]_E = [x]_{E^*}$ and when $[x]_F$
  has cardinality $\geq 3$, then $[x]_F = [x]_{F^*}$. Now by
  \cite{MR3454384}*{Theorem 1.7} there is a Borel set $A \subset Y$ such
  that $A$ meets $\mu$-a.e. $E^*$-class and the complement of $A$ meets
  $\mu$-a.e. $F^*$-class.
\end{proof}

From this, we can conclude the following, which shows that
Theorem~\ref{cor:diagonalization} does not hold after discarding a nullset.

\begin{lemma}\label{measure_diagonalization}
  Suppose that $X$ is a standard Borel space, $\{g_i\}_{i \in \omega}$ is a
  countable collection of partial Borel injections $X \to X$, and $\mu$ is
  a Borel probability measure on $X$. Then there is a Borel set $A$ of full
  measure and countably many Borel functions $c_i \from A \to \cantor$ such that
  for all $x \in \cantor$, if $g_i(x)$ is defined and not equal to $x$ for
  all $i$, then there exists an $i$ such that $c_i(x) \neq c_i(g_i(x))$. 
\end{lemma}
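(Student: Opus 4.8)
The plan is to reduce to two of the graphs and then apply Lemma~\ref{dcs_small} a single time. Write $E_i$ for the countable Borel equivalence relation generated by $g_i$. I first observe that it suffices to construct Borel colorings $c_0, c_1 \from X \to 2$ (valued in $\{0,1\} \subset \cantor$) together with a conull set on which, whenever $g_0(x)$ and $g_1(x)$ are both defined and different from $x$, at least one of $c_0, c_1$ separates $x$ from its image. Indeed, the hypothesis of the lemma demands that \emph{every} $g_i(x)$ be defined and distinct from $x$, which is stronger, so such $x$ are already handled by some $i \in \{0,1\}$; the remaining $c_i$ may be taken constant.

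The main tool is the following coloring construction. The graph on $X$ joining $x$ to $g(x)$, for a single Borel injection $g$, has degree at most two, so each component is a transposition, a finite path, a one-ended ray, a finite cycle, or a bi-infinite line. On transpositions, finite paths, even cycles, and rays there is a Borel proper $2$-coloring (anchoring at the canonical endpoint of a path or ray, and on a Borel transversal of the finite components), so no relevant monochromatic edge occurs there. The only components admitting no Borel proper $2$-coloring are the bi-infinite lines and the odd cycles, and these are exactly the classes of the orbit equivalence relation of $g$ of cardinality at least $3$. The key point is that, given any Borel set $D$ meeting each such class, one can \emph{confine} the monochromatic points to $D$: define $c(x)$ to be the parity of the least $j \geq 0$ with $g^{j}(x) \in D$, using the nearest $D$-point reached by iterating $g$ (with the analogous backward rule on the end-rays). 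This Borel coloring is properly alternating on each interval between consecutive $D$-points, so the only points $x$ with $c(x) = c(g(x))$ lie in $D$; on an odd cycle the parity constraint forces at least one such point, again inside $D$.

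I would then apply Lemma~\ref{dcs_small} to $E = E_0$ and $F = E_1$, obtaining a Borel set $D$ meeting $\mu$-almost every $E_0$-class of cardinality at least $3$ and whose complement $\comp{D}$ meets $\mu$-almost every $E_1$-class of cardinality at least $3$. Using the construction above I would build $c_0$ from $g_0$ and $D$, so that $\{x : c_0(x) = c_0(g_0(x))\} \subset D$ off a null set, and symmetrically build $c_1$ from $g_1$ and $\comp{D}$, so that $\{x : c_1(x) = c_1(g_1(x))\} \subset \comp{D}$ off a null set. Since $D \inters \comp{D} = \emptyset$, the set $B$ of $x$ for which both $c_0$ and $c_1$ fail to separate is contained in the union of the two exceptional null sets, hence null. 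Taking $A = \comp{B}$ and padding with constant $c_i$ for $i \geq 2$ then finishes the proof.

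The step I expect to be the crux is the confinement construction: verifying that the forward-parity coloring relative to the complete section $D$ is Borel and has all of its monochromatic points inside $D$, simultaneously on bi-infinite lines and on odd cycles. The conceptual content is the realization that ``cardinality at least $3$'' in Lemma~\ref{dcs_small} captures both obstructions at once, so that the \emph{single} set $D$ can push the unavoidable defects of $c_0$ into $D$ and those of $c_1$ into $\comp{D}$. Everything else (the proper colorings on the other component types, measurability of the nearest-$D$-point function, and the behaviour on the classes not met by $D$, which lie in the exceptional null set) is routine bookkeeping.
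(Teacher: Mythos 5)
Your proposal is correct and takes essentially the same approach as the paper: a single application of Lemma~\ref{dcs_small} to $E_0$ and $E_1$ produces one set $D$, two colorings are built whose monochromatic points are confined to $D$ and to $\comp{D}$ respectively, and the disjointness of $D$ and $\comp{D}$ finishes the argument, with the remaining $c_i$ taken constant. The only divergence is in how the colorings are realized: the paper deletes the edges between $x$ and $g_0(x)$ for $x \in D$ (resp.\ between $x$ and $g_1(x)$ for $x \in \comp{D}$) and takes Borel $2$-colorings of the resulting graphs, whereas you write down the parity-of-distance-to-$D$ coloring explicitly; these amount to the same construction, and your version is if anything slightly more careful, since it colors transpositions properly and does not pass through the paper's intermediate claim that the cut components become finite on a conull set.
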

\begin{proof}
  We may assume all the $g_i$ are total Borel automorphisms of $X$. This is
  because we may extend the $g_i$ to total Borel automorphisms 
  on some larger standard Borel space $Y \supset X$. 
  
  For each $i \in \omega$, let $E_i$ be the equivalence relation generated
  by $g_i$ and let $G_i$ be the
  graph generated by $g_i$. 

  We may prove this fact with merely two functions 
  $c_0$ and $c_1$. By Lemma \ref{dcs_small}, there is a Borel set $B$ that
  meets $\mu$-a.e. $E_0$
  class of cardinality $\geq 3$ and whose complement meets $\mu$-a.e. $E_1$
  class of cardinality $\geq 3$. Let $G_0^*$ be the graph where we remove
  the edge between $x$ and $g_0(x)$ for every $x \in B$, and let $G_1^*$ be
  the graph where we remove the edge between $x$ and $g_1(x)$ for every $x
  \in \comp{B}$. Now there is a Borel set $A$ of full measure such that
  every connected component of $G_0^* \restrict A$ and $G_1^* \restrict A$
  are finite. Finish by letting $c_0$ be a Borel $2$-coloring of $G_0^*
  \restrict A$ and $c_1$ be a Borel $2$-coloring of $G_1^* \restrict A$.
\end{proof}

We are now ready to prove the following theorem:

\begin{thm}\label{group_ri}
  Suppose $Z$ is some countable set (which is possibly empty) and 
  $G$ is a countable group of permutations of the set $\F_2 \times
  \omega \disjointunion Z$ so that for every $\delta \in \F_2$, there
  exists some $\rho_\delta
  \in G$ so that 
  $\rho_\delta((\gamma,n)) = (\delta \gamma,n)$ for every $(\gamma,n) \in \F_2
  \times \omega$. Then
  \begin{enumerate}
    \item The permutation action of $G$ on $2^{\F_2 \times \omega}$
    generates a measure universal countable Borel equivalence relation.
    \item The permutation action of $G$ on $3^{\F_2 \times \omega}$
    generates a universal countable Borel equivalence relation. 
  \end{enumerate}
\end{thm}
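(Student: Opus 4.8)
The plan is to construct, in both parts, a \emph{rigid} Borel reduction of the universal relation $E(\F_2,\cantor)$ into the $G$-action. The starting observation is that the subgroup $\{\rho_\delta : \delta \in \F_2\} \leq G$ acts on $Y^{\F_2\times\omega}$ exactly as the shift action of $\F_2$: identifying $Y^{\F_2\times\omega}$ with $(Y^\omega)^{\F_2}$ via $y \mapsto (\gamma \mapsto y(\gamma,\cdot))$, one computes $\rho_\delta\cdot y = \delta\cdot y$, the right-hand side being the shift. Consequently any $\{\rho_\delta\}$-equivariant Borel map $f \from (\cantor)^{\F_2} \to (Y^\omega)^{\F_2}$ automatically sends $\F_2$-orbits into $G$-orbits, hence is a homomorphism from $E(\F_2,\cantor)$ (which is universal by~\cite{MR1149121}) to the $G$-action. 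The entire problem is therefore to choose such an $f$ whose image is \emph{rigid}, meaning that $g\cdot f(x) = f(x')$ for some $g\in G$ forces $x$ and $x'$ into the same $\F_2$-orbit. We take the codes to be identically $0$ on the $Z$-coordinates, which $\rho_\delta$ permutes among themselves, so equivariance is unaffected.

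To obtain rigidity I would encode, in each column $\{\gamma\}\times\omega$, the datum $x(\gamma)$ together with a marker system recording the labelled Cayley-graph structure of $\F_2$ around $\gamma$: for each of the two generators $s\in\{a,b\}$, a marker distinguishing $\gamma$ from its neighbours $\gamma s^{\pm 1}$, laid down equivariantly. The intended consequence is that any $g\in G$ mapping one code onto another must carry the marked structure of $f(x)$ isomorphically onto that of $f(x')$; since the marked structure determines the labelled Cayley graph of $\F_2$ up to left translation, $g$ is forced to coincide with some $\rho_\delta$ on the support of the code, whence $x'=\delta\cdot x$. The same marker bookkeeping rules out permutations that scramble cells within a column or that move data into the $Z$-coordinates.

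The only place the two parts diverge is the symbol budget available for these markers. Distinguishing $\gamma$ from $\gamma s^{\pm1}$ amounts to properly colouring the $2$-regular Borel graphs of type $G(\Z,\cdot)$ coming from the cyclic subgroups $\langle a\rangle$ and $\langle b\rangle$ --- of which there are only \emph{finitely} many. On $3^\omega$ the spare third symbol stores such a colouring orthogonally to the binary datum, and these finitely many $2$-regular graphs admit Borel $3$-colourings because every Borel $d$-regular graph has a Borel $(d+1)$-colouring; hence the markers can be placed everywhere and $f$ is a genuine Borel reduction, giving part (2). On $2^\omega$ there is no spare symbol and the required symmetry-breaking cannot be performed Borel-ly on all of the free part --- this is precisely the obstruction recorded by Theorem~\ref{cor:diagonalization} (and underlying the failure of a Borel $2$-colouring of $G(\Z,2)$). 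By Lemma~\ref{measure_diagonalization}, however, the markers can be realized after discarding a $\mu$-null set, so $f$ becomes a reduction on a conull set. Given an arbitrary countable Borel equivalence relation $F$ on $(X,\mu)$, one first reduces $F$ into $E(\F_2,\cantor)$ by universality, pushes $\mu$ forward, and composes with $f$; this yields a reduction of $F$ to the $G$-action modulo a nullset, proving measure universality in part (1).

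The main obstacle is the rigidity verification. An element $g\in G$ is an arbitrary permutation of $\F_2\times\omega\disjointunion Z$ and need not respect the product structure at all, so the marker system must be robust enough to force $g$ to agree with a translation $\rho_\delta$ on the code's support despite this freedom. Designing the markers so that the bookkeeping closes, while keeping the symmetry-breaking requirement down to properly colouring \emph{finitely many} $2$-regular graphs --- so that three symbols genuinely suffice and the countable monochromatic obstruction of Theorem~\ref{countable_3_coloring} is never triggered --- is where essentially all of the work lies.
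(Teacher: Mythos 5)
Your setup is the same as the paper's: the code $\hat{f}(x)((\gamma,n)) = f(\gamma^{-1}\cdot x)(n)$ is automatically a homomorphism because the $\rho_\delta$ realize the shift, and all the work is in rigidity. But your rigidity mechanism is where the proposal breaks down, and the gap is not merely "the hard part left to do" --- the mechanism you describe attacks the wrong group of symmetries. Cayley-graph markers laid down equivariantly can only constrain a permutation $g \in G$ that carries columns $\{\gamma\}\times\omega$ to columns coherently; they say nothing about a $g$ that preserves each row $\F_2 \times \{n\}$ but permutes it by a different, non-translation bijection for each $n$. Such a ``mosaic'' permutation assembles each column of $g \cdot \hat{f}(x)$ out of cells taken from infinitely many different columns of $\hat{f}(x)$, so every marker coordinate it exhibits is a legitimate marker value copied from some genuine code column; matching $g\cdot\hat{f}(x)$ against a code $\hat{f}(x')$ then imposes only coordinate-wise equalities between points related by essentially arbitrary Borel injections, not by the generators of $\F_2$. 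This is why the paper's proof needs three devices your scheme omits: (i) coordinates with forced constant values (the sets $S_0$, $S_1$), chosen combinatorially so that any $\rho$ mixing infinitely many rows, or moving $Z$ across infinitely many rows, sends a forced $1$ onto a forced $0$ and so leaves the range of $\hat{f}$; (ii) an injective identity code on a sector $S_2$ (a reduction of equality to equality mod finite), without which there is no way even to define, for each surviving $\rho$, the partial Borel injection $g_\rho$ sending $x$ to the unique candidate $y$ with $\rho\cdot\hat{f}(x) = \hat{f}(y)$; and (iii) for \emph{each} $\rho$ in the countable group $G'$ of surviving permutations, a dedicated infinite block $S_{3,\rho}$ of coordinates carrying colorings of the degree-$\le 2$ graphs generated by the maps $g_{\rho,n}$ (group translations composed with $g_\rho^{-1}$). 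These graphs depend on $G$ and on the coding itself; they are not the two Cayley graphs, and there are countably many of them, one family per $\rho \in G'$.

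Your insistence on \emph{finitely} many graphs rests on a misreading of Theorem~\ref{countable_3_coloring}. That theorem is not triggered by having countably many graphs per se: in the paper's proof of part (2) one $3$-colors countably many graphs, and this is harmless because each coloring $c_{\rho,n}$ is written into its own coordinate $n \in S_{3,\rho}$, so a point that happens to be monochromatic across different colorings causes no conflict --- all that is needed is that each \emph{individual} $2$-regular graph admits a Borel $3$-coloring, which is always true. The real dichotomy between the two parts is that an aperiodic $2$-regular graph need not admit a Borel $2$-coloring at all (already for a single graph, e.g.\ $G(\Z,2)$), which is why on $2^\omega$ one retreats to a conull set via Lemma~\ref{measure_diagonalization}; Theorem~\ref{countable_3_coloring} and Theorem~\ref{cor:diagonalization} are what show this retreat is unavoidable rather than an artifact of the method. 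So the design goal driving your whole marker construction (keep the number of graphs finite) is both unachievable --- you must diagonalize separately against each of the countably many $\rho \in G'$, whose associated injections $g_\rho$ are unrelated to one another --- and unnecessary for three symbols to suffice.
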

  Suppose briefly that $Z$ is empty and we identify $2^{\F_2 \times
  \omega}$ with $(2^\omega)^{\F_2}$. Then note that the permutations
  $(\gamma,n) \mapsto (\delta \gamma, n)$ which are required to be in the
  group $G$ generate the shift action of $\F_2$ on $(Y^\omega)^{\F_2}$;
  this is their significance.
\begin{proof}
  Throughout we will let $Y \in \{2,3\}$. 
  Let $E_\infty$ be a universal countable Borel equivalence relation
  generated by an action of $\F_2$ on a standard Borel space $X$. If 
  $f \from X \to Y^\omega$ is a function, then define the function 
  $\hat{f} \from X \to Y^{\F_2 \times \omega \disjointunion Z}$ by 
  \[\hat{f}(x)((\gamma,n)) = f(\gamma^{-1} \cdot x)(n)\]
  for $(\gamma,n) \in F_2 \times \omega$, and $\hat{f}(x)(z) = 0$ for all
  $z \in Z$.

  Note that if $x, y \in X$ and $\delta \cdot x = y$, then $\rho_\delta
  \cdot \hat{f}(x) = \hat{f}(y)$, since 
  \begin{multline*}
  \rho_\delta \cdot
  \hat{f}(x)((\gamma,n)) = \hat{f}(x)(\rho_{\delta}^{-1}((\gamma,n)))  =
  \hat{f}(x)((\delta^{-1} \gamma,n))\\ = f(\gamma^{-1} \cdot \delta \cdot
  x)(n) = \hat{f}(\delta \cdot x)((\gamma,n)) = \hat{f}(y)((\gamma,n))
  \end{multline*}
  So given any Borel $f$, the associated $\hat{f}$ is a Borel homomorphism
  from $E_\infty$ to the orbit equivalence relation of the permutation
  action of $G$ on $Y^{\F_2 \times \omega \disjointunion Z}$. 
  We will define a Borel function $f$ so that
  the corresponding $\hat{f}$ becomes our desired Borel reduction. 

  Say a permutation $\rho \in G$ \define{uses $\{n,m\}$} for $n \neq m
  \in \omega$ if there exist group elements $\delta, \gamma \in \F_2$ such
  that $\rho((\gamma,n)) = (\delta,m)$.  
  Let $G'$ be the set of $\rho \in G'$ so $\rho$ does not use infinitely
  many pairs $\{n,m\}$, and there are only finitely many $n$ such that
  $\rho(Z) \union \rho^{-1}(Z)$ meets $\F_2 \times \{n\}$. Note that $G'$
  is a subgroup of $G$.
  
  Let $S_0, S_1 \subset \omega$ be
  disjoint sets so that $S_0 \union S_1$ is coinfinite, and 
  \begin{enumerate}
  \item For every $\rho
  \in G$ that uses infinitely many pairs $\{n,m\}$, there is some pair
  $\{n,m\}$ used by $\rho$ so that $n \in S_0$ and $m \in S_1$. 
  \item For
  every $\rho \in G$ such that 
  there are infinitely many
  $n$ such that $\rho(Z) \union \rho^{-1}(Z)$ meets $F_2 \times \{n\}$, 
  $\rho(Z) \union \rho^{-1}(Z)$ meets $\F_2 \times S_1$. 
  \end{enumerate}

  Our first constraint on the function $f$ will be that for every $x \in
  X$, 
  \[f(x)(n) = 0 \land f(x)(m) = 1 \text{ for every
  $n \in S_0$ and $m \in S_1$}.\]

  We claim now that if $\rho \in G$, but $\rho \notin G'$, then $y \in
  \ran(\hat{f})$ will imply that $\rho
  \cdot y \notin \ran(\hat{f})$. This is because if $\rho((\gamma,m)) =
  (\delta,n)$ and $n \in S_0$ and $m \in S_1$, then
  for all 
  $y \in \ran(\hat{f})$, we have $y((\gamma,m)) = 1$ and $y((\delta,n)) =
  0$. However, $(\rho \cdot y)((\delta,n)) = y((\gamma,m)) = 1$, and so
  $(\rho \cdot y) \notin \ran(\hat{f})$. We argue similarly for $\rho$ such
  that $\rho(Z) \union \rho(Z^{-1})$ meets $\F_2 \times S_1$.

  Let $S_2 \subset \omega$ be an infinite set disjoint from $S_0 \union S_1$
  and so that $S_0 \union S_1 \union S_2$ is coinfinite. Let $h \from X \to
  2^{S_2}$ be a Borel reduction from the equality relation on $X$ to the
  relation of equality mod finite on $2^{S_2}$.
  (For example, we may assume $X = \cantor$, let $\pi \from S_2 \to \omega$
  be an infinite-to-one surjection, and let $h(x)(n) = x(\pi(n))$.) 
  Now define
  \[f(x)(n) = h(x)(n) \text{ for every $n \in S_2$}\tag{*}\]

  Suppose $\rho \in G'$. Then for
  sufficiently large $n \in S_2$, we must have that for every $x \in X$,
  $(\rho \cdot \hat{f})(x)(n)$ has been defined by (*). Because we can recover
  $y \in X$ if we know all but finitely many bits of $h(y)$, there can be
  at most one $y$ such that for all sufficiently large $n \in S_2$, we have 
  $(\rho \cdot \hat{f})(x)(\gamma,n) = \hat{f}(y)(\gamma,n)$ for 
  all $\gamma \in \F_2$. Let
  $g_\rho \from X \to X$ be the partial Borel function mapping each $x \in
  X$ to this unique $y \in X$ if it exists. Note that if $g_\rho(x) = y$
  then $g_{\rho^{-1}}(y) = x$, so $g_\rho$ is a partial injection. 
  For each $\rho$, also define $n_\rho$ to be
  least such that for every 
  $n \geq n_\rho$ and $\gamma \in F_2$ there is a $\delta \in \F_2$ such
  that $\rho((\gamma,n)) = (\delta,n)$. 

  For each $n \geq n_\rho$, let $g_{\rho,n} \from X \to X$ be the partial Borel
  function so $g_{\rho,n}(y) = \gamma \cdot g_{\rho}^{-1}(y)$ where $\gamma
  \in \F_2$ is such that $\rho^{-1}((1,n)) = (\gamma,n)$.
  Hence, if $g_\rho(x) = y$, and $x \mathrel{\cancel{E_\infty}} y$ then $\rho \cdot \hat{f}(x) = \hat{f}(y)$
  would imply 
  $f(y)(n) = \hat{f}(y)((1,n)) = (\rho \cdot \hat{f}(x))((1,n)) =
  \hat{f}(x)(\rho^{-1}(1,n)) = \hat{f}(x)((\gamma,n)) = f(\gamma^{-1} \cdot
  x)(n) = f(g_{\rho,n}(y))(n)$ for every $n \geq n_\rho$.

  Hence, to make $\hat{f}$ a Borel reduction, it suffices to ensure that
  for every $y \in X$
  and every $\rho \in G'$ if
  $g_\rho^{-1}(y)$ is defined, then either 
  $g^{-1}(y) \mathrel{E_\infty} y$, or else there is an $n \geq n_\rho$ such
  that $f(y)(n) \neq f(g_{\rho,n}(y))$. 
  %Note that if $g_{\rho}^{-1}(y)$ is defined
  %and $g_{\rho}^{-1}(y) \mathrel{\cancel{E_\infty}} y$, this implies that
  %$g_{\rho,n}(y) \neq y$ for all $n$. 
  
  For each $\rho \in G'$ let $S_{3,\rho}$
  be an infinite set disjoint from $S_0 \union S_1 \union S_2$ with
  $\min(S_{3,\rho}) \geq n_\rho$, and so that the sets
  $\{S_{3,\rho}\}_{\rho \in G'}$ are all
  pairwise disjoint. Define $f(x)(n)$ arbitrarily for $n \notin S_0 \union S_1 \union S_2
  \union (\bigunion \{S_{3,\rho}\}_{\rho \in G'})$.

  We will now indicate how to finish the construction to
  show both parts (1) and (2). 

  To prove (1), suppose $\mu$ is a Borel probability measure on $X$.   
  By
  Lemma~\ref{measure_diagonalization}, for each $\rho \in G'$ and $n \in
  S_{3,\rho}$, let $A_\rho$ be a $\mu$-conull Borel set and let $c_{\rho,n} \from
  A_\rho \to 2$ be a function so that for every $y \in A_\rho$, there is some $n \in
  S_{3,\rho}$ so that $g_{\rho,n}(y) = y$, $g_{\rho,n}(y)$ is undefined, or 
  $c_{\rho,n}(y) \neq c_{\rho,n}(g_{\rho,n}(y))$. We finish our definition
  of $f$ by letting 
  \[f(y)(n) = c_{\rho,n}(y) \text{ for all $\rho \in G'$ and $n \in
  S_{3,\rho}$}\]
  We claim that 
  $\hat{f} \restriction \biginters_{\rho \in G'} A_\rho$ a reduction.
  Above, we have already shown that if $\rho \notin G'$, then $\rho \cdot
  \hat{f}(y)$ is not in the range of $\hat{f}$ for all $y \in X$. So
  suppose
  $\rho \in G'$ and $y \in A_\rho$. Then if 
  $g_{\rho}^{-1}(y)$ is defined and $g_{\rho}^{-1}(y)
  \mathrel{\cancel{E_\infty}} y$, this implies that $g_{\rho,n}(y) \neq y$
  for all $n$, hence there is some $n \in S_{3,\rho}$ such that
  $c_{\rho,n}(y) \neq c_{\rho,n}(g_{\rho,n}(y))$. Hence, $\rho^{-1} \cdot
  \hat{f}(y)$ is not in the range of $\hat{f}$. 

  To prove (2), each injective partial function $g_{\rho,n}$ generates a
  Borel graph on $X$ of degree at most $2$. Let $c_{\rho,n} \from X \to 3$
  be a Borel $3$-coloring of this graph by \cite[Proposition
  4.6]{MR1667145}, and define 
  \[f(y)(n) = c_{\rho,n}(y) \text{ for all $\rho \in G'$ and $n \in
  S_{3,\rho}$}.\]
\end{proof}

Recall that given $x,y \in \cantor$, we say that $x$ is \define{many-one
reducible to $y$}, noted $x \leq_m y$, if there is a computable function $r
\from \omega \to \omega$ such that $x = r^{-1}(y)$. The associated
symmetrization of this reducibility is many-one equivalence, and is noted
$\equiv_m$. Many-one equivalence is Borel reducible to recursive
isomorphism via the function $x \mapsto \join_{i \in \omega} x$; the
function mapping $x$ to the computable join of $\omega$ many copies of $x$.
Indeed, for all reals $x$ and $y$, we have that $\join_{i \in \omega}x $
and $\join_{i \in \omega} y$ are recursively isomorphic if and only if they
are many-one equivalent. Note that many-one equivalence is closed under
countable uniform joins.

\begin{cor}\label{universality_of_ri}
\mbox{ }
\begin{enumerate}
\item The equivalence relation of recursive isomorphism on $\cantor$ is measure universal.
\item The equivalence relation of recursive isomorphism on $3^\omega$ is
universal.
\end{enumerate}
Further, the same two facts are true for many-one equivalence in place of
recursive isomorphism.
\end{cor}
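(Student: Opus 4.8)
The plan is to read the recursive-isomorphism cases directly off Theorem~\ref{group_ri} by specializing the permutation group, and then to transfer the result to many-one equivalence using that recursive isomorphism and $\equiv_m$ agree on cylinders.

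For recursive isomorphism I would fix a computable bijection identifying $\omega$ with $\F_2 \times \omega$, so that $Y^\omega$ is computably identified with $Y^{\F_2 \times \omega}$ for $Y \in \{2,3\}$, take $Z = \emptyset$, and let $G$ be the group of all computable bijections of $\omega$ viewed, through this identification, as permutations of $\F_2 \times \omega$. Since $\F_2$ has computable multiplication, each left translation $\rho_\delta \colon (\gamma,n) \mapsto (\delta\gamma, n)$ is a computable bijection and hence lies in $G$, so the hypothesis of Theorem~\ref{group_ri} holds. By definition the orbit equivalence relation of the permutation action of the group of all computable bijections on $Y^\omega$ is exactly recursive isomorphism, so Theorem~\ref{group_ri}(1) yields that recursive isomorphism on $\cantor$ is measure universal and Theorem~\ref{group_ri}(2) yields that recursive isomorphism on $3^\omega$ is universal.

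For many-one equivalence I would use the fact recorded just before the statement: for all reals $x,y$ the joins $\join_{i \in \omega} x$ and $\join_{i \in \omega} y$ are recursively isomorphic if and only if they are many-one equivalent, so the two relations coincide on the (recursive-isomorphism-invariant) set of cylinders, and $\equiv_m$ is closed under countable uniform joins. It does not suffice to post-compose the reduction above with $z \mapsto \join_{i \in \omega} z$: this collapses the target to $\equiv_m$ of the original outputs, which is strictly coarser than recursive isomorphism, so the backward implication can fail. Instead I would rerun the construction in the proof of Theorem~\ref{group_ri} with computable many-one reductions playing the role that the elements of $G'$ played there. The countably many computable functions are enumerated and handled one at a time; the marker coordinates $S_0, S_1$, whose values are independent of $x$, restrict how any global many-one reduction between two elements of the range can act, and for each surviving computable reduction a reserved block carrying a Borel $3$-coloring (in the $3^\omega$ case) or a measure coloring supplied by Lemma~\ref{measure_diagonalization} (in the $\cantor$ case) destroys the reductions not induced by $E_\infty$. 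The mod-finite coding $h$ used in that proof already provides the robustness needed against reductions that disturb finitely many columns, and closure of $\equiv_m$ under countable uniform joins is what permits assembling these countably many constraints into one reduction.

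The main obstacle is this many-one rigidity step. A many-one reduction, unlike a permutation, need be neither injective nor surjective and may read each output bit from any position carrying the correct value, so the transport-of-markers argument of Theorem~\ref{group_ri}, which used bijectivity, must be strengthened. The plan for doing so is to arrange the range to consist of cylinders, on which $\equiv_m$, $\equiv_1$ and recursive isomorphism all coincide and a many-one equivalence is forced to behave like a computable bijection of columns, reducing the analysis to the permutation case already handled; a back-and-forth argument in the spirit of Lemma~\ref{diagonal_permutation} converts a uniform family of such column identifications into the computable bijections needed. Once rigidity is in place the same map witnesses both the measure universality of $\equiv_m$ on $\cantor$ and its universality on $3^\omega$.
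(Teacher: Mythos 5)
Your recursive-isomorphism paragraph is correct and is exactly the paper's argument: identify $\omega$ with $\F_2 \times \omega$ by a computable bijection, take $Z = \emptyset$ and $G$ the group of all computable permutations, and quote Theorem~\ref{group_ri}. The gap is in the many-one half.

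There you correctly rule out post-composition with $z \mapsto \join_{i \in \omega} z$, and you correctly identify the property that would finish the proof: arrange that every element of $\ran(\hat{f})$ is a cylinder (every bit duplicated at infinitely many computably locatable positions, which forces $\leq_m$ to imply $\leq_1$), since then $\equiv_m$, $\equiv_1$, and recursive isomorphism coincide on the range by Myhill's theorem, and the same map $\hat{f}$ is a reduction to $\equiv_m$. But you never say how to achieve this, and the mechanism you do sketch --- rerunning the proof of Theorem~\ref{group_ri} with computable many-one reductions playing the role of the elements of $G'$ --- would not go through. That proof leans on the group structure of $G$ at every step: $G'$ is carved out by the ``uses $\{n,m\}$'' analysis of bijections, the maps $g_\rho$ are partial injections precisely because $g_{\rho^{-1}}$ inverts them, and the final step applies Lemma~\ref{measure_diagonalization} or a Borel $3$-coloring to graphs of degree $\leq 2$ generated by partial injections. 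An arbitrary computable $r \from \omega \to \omega$ witnessing a many-one reduction is neither injective nor invertible and may read each output bit from any position carrying the right value, so none of these steps survives. This is exactly the ``rigidity'' obstacle you name, but naming it is not overcoming it: your resolution (``arrange the range to consist of cylinders'') is the goal, not a method. Your appeal to closure of $\equiv_m$ under countable uniform joins is also misplaced; that property drives the \emph{non}-universality results (Theorem~\ref{thm:closed_under_countable_joins}) and plays no role in the universality direction.

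The missing idea is a one-line modification of the construction rather than any new diagonalization. The proof of Theorem~\ref{group_ri} works verbatim with $\F_2$ replaced by $\F_3$ (nothing in it uses rank two), so run it for an action of $\F_3 = \<a,b,c\>$ in which $a,b$ generate a universal countable Borel equivalence relation and $c$ acts trivially. Then for all $k \in \Z$ we get $\hat{f}(x)((c^k\delta,n)) = f(\delta^{-1}c^{-k} \cdot x)(n) = f(\delta^{-1} \cdot x)(n) = \hat{f}(x)((\delta,n))$, so every bit of $\hat{f}(x)$ recurs on the computably locatable columns $(c^k\delta,n)$: every element of the range is a cylinder. Since $c$ acts trivially, the orbit equivalence relation being reduced is still universal, and the resulting $\hat{f}$ is simultaneously a reduction to recursive isomorphism and to $\equiv_m$, in both the measure case on $\cantor$ and the everywhere case on $3^\omega$. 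Beyond the standard cylinder/Myhill facts, no back-and-forth and no extra requirements are needed.
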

\begin{proof}
  Take a computable bijection between $\F_2 \times \omega$ with $\omega$ so
  we can identify $2^{\omega}$ with $2^{\F_2 \times \omega}$, and similarly
  for the base space $3$. Then the group of computable permutations of
  $\omega$ includes the permutations required in the assumptions of
  Theorem~\ref{group_ri}.

  The fact that these facts are also true for many-one equivalence follows
  from the above observation that we may change the group $\F_2$ in the
  proof of Theorem~\ref{group_ri} into the group $\F_3$, and the proof
  will work unchanged to construct a Borel reduction from the equivalence
  relation generated by any Borel action of $\F_3$ on $X$. 
  Now take an action of $\F_3 = \<a,b,c\>$ where the first two generators $a,b$
  generate a universal countable Borel equivalence relation, and the last
  generator $c$ acts trivially so that $c \cdot x = x$ for every $x \in X$.
  Then the range of the resulting $\hat{f}$ will have
  $\hat{f}(x)((c^{k}\delta,n))$ for every $k \in \Z$. Thus, every bit in
  $\hat{f}(x)$ is duplicated infinitely many times, and so 
  $\hat{f}(x)$ and $\hat{f}(y)$ are many-one equivalent iff they
  are recursively isomorphic. 
\end{proof}

Now recursive isomorphism on $\cantor$ is not closed under countable
uniform joins, and hence Theorem~\ref{thm:closed_under_countable_joins}
does not apply to it. However,
because of the close connection between recursive isomorphism and many-one
equivalence, and the fact that all known approaches to the universality of
recursive isomorphism also give the universality of many-one equivalence,
we make the following conjecture:

\begin{conj}\label{conj:ri_not_universal}
  Recursive isomorphism on $\cantor$ is not a universal countable Borel
  equivalence relation.
\end{conj}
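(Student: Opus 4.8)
The plan is to follow the template of Theorem~\ref{thm:closed_under_countable_joins}, which already shows that many-one equivalence $\equiv_m$ (and every $E_\ph$ closed under countable uniform joins) fails to be uniformly universal. Concretely, I would assume toward a contradiction a Borel reduction $f$ of the universal treeable relation $F(\F_\omega,\cantor)$ into $\equiv_1$, form the averaged map $\hat{f}(x)=\bigjoin_i f(\alpha_i\cdot x)$ over a listing $\{\alpha_i\}$ of the subgroup $\Gamma\cong\F_\omega$ used in that proof in which each element recurs infinitely often, use Lemma~\ref{diagonal_permutation} to realise $\hat{f}(\gamma_\omega\cdot x)$ as a computable rearrangement of the columns coming from the generators $\gamma_i$, and extract from the pointwise failure a Borel $2$-colouring of $G(\Z,\cantor)$ via Lemma~\ref{free_equivariant_hom}, contradicting the obstruction of Theorem~\ref{cor:diagonalization}. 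Relative to the many-one case, exactly two ingredients are missing for $\equiv_1$, and isolating them is the whole content of the conjecture.

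The first missing ingredient is closure under countable uniform joins: recursive isomorphism does not have it, but on the set of join-invariant reals $\{\bigjoin_i z:z\in\cantor\}$ it coincides with $\equiv_m$, which does. The first step would therefore be to arrange that the images under consideration are join-invariant, replacing $f$ by $x\mapsto\bigjoin_i f(x)$ so that $\equiv_1$ degenerates to $\equiv_m$ on the range. The subtlety --- and the reason this step is only heuristic --- is that collapsing $\equiv_1$ to $\equiv_m$ can merge distinct $\equiv_1$-classes, so one must check that the reduction property survives the passage. This is precisely the formal shadow of the motivation quoted above: every known technique establishing universality of $\equiv_1$ also establishes universality of $\equiv_m$, so a genuine lower bound should transfer between them.

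The second missing ingredient is the main obstacle, and the reason the statement is stated as a conjecture. Theorem~\ref{thm:closed_under_countable_joins} requires a \emph{uniform} reduction with a computable uniformity function (Lemma~\ref{recursive_uniformity_lemma}), which plain universality --- as opposed to uniform universality --- does not supply. To recover it one needs a uniformisation principle: every Borel reduction of a universal treeable relation into $\equiv_1$ should agree with a uniform one after restricting to some ``large'' invariant set on which the $2$-colouring argument still runs. The standard source of such a large set above $E_0$ is measure-theoretic, and this route is \emph{provably closed} here, since by Corollary~\ref{universality_of_ri}(1) recursive isomorphism on $\cantor$ is measure universal and so remains universal after discarding any nullset. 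One is thus forced toward a non-measure-theoretic tool, the natural candidate being a Borel-cardinality-preserving, Martin-style ultrafilter in the spirit of Section~\ref{sec:ultrafilters}; but those ultrafilters are built for relations of the form $E_{\infty\K}$ and agree with Martin's ultrafilter on Turing-invariant sets, whereas $\equiv_1$ is far finer than Turing equivalence and is not presented as such a universal $\K$-structurable relation. Constructing an ultrafilter adapted to $\equiv_1$ on $\cantor$ and proving a uniformisation with respect to it --- in parallel with Conjecture~\ref{no_universal_treeable} for Turing equivalence --- is the crux that currently blocks a proof.
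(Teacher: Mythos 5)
This statement is Conjecture~\ref{conj:ri_not_universal}; the paper contains no proof of it, only the surrounding motivation, so there is no argument of the paper's to compare yours against, and your write-up --- correctly --- does not claim to supply one. As a map of the obstructions it is accurate and matches the paper's own remarks almost exactly: immediately before stating the conjecture, the paper notes that recursive isomorphism on $\cantor$ is \emph{not} closed under countable uniform joins, so Theorem~\ref{thm:closed_under_countable_joins} does not apply to it; that theorem in any case refutes only \emph{uniform} universality, since the computable uniformity function of Lemma~\ref{recursive_uniformity_lemma} is manufactured from an assumed uniform reduction, not from an arbitrary Borel one; and your observation that the measure-theoretic route to uniformizing a reduction is provably closed, by Corollary~\ref{universality_of_ri}(1), is precisely the point the paper makes when it says the conjecture, if true, would exhibit a Borel-cardinality gap invisible to measure-theoretic tools. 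Note also that the paper leaves open even the \emph{uniform} universality of recursive isomorphism on $\cantor$ (it rules out only ``a large class of uniform proofs'' via the many-one connection), so both of your missing ingredients are genuinely open, not merely unverified.

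One concrete caution on your first step. Postcomposing a hypothetical reduction $f$ into recursive isomorphism with $x \mapsto \bigjoin_{i \in \omega} x$ does not in general yield a reduction of anything: the join map witnesses $\equiv_m \leq_B$ recursive isomorphism, so on join-closed reals recursive isomorphism coincides with many-one equivalence, and since recursive isomorphism refines $\equiv_m$, distinct images $f(x) \not\equiv_1 f(y)$ with $f(x) \equiv_m f(y)$ get merged. You flag this, but it should be stressed that this failure is not a loose end to be ``checked'' --- it is the entire content of the gap. What a proof would actually require is ruling out reductions of $F(\F_\omega,\cantor)$ (or $E_\infty$) into recursive isomorphism whose ranges are \emph{not} join-closed, and nothing in the paper addresses these; in particular the ultrafilters of Section~\ref{sec:ultrafilters} are built for relations of the form $E_{\infty \K}$ and, as you say, refine Martin's ultrafilter on Turing-invariant sets, which gives no purchase on an equivalence relation as fine as recursive isomorphism. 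So the crux you isolate at the end --- a non-measure-theoretic uniformization principle adapted to recursive isomorphism --- is the genuine open problem, and your proposal should be read as a correct reconstruction of why the paper states this as a conjecture rather than as a theorem.
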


We can now prove part (2) of Theorem~\ref{uu_properties}.

\begin{thm}\label{measure_E_infty}
Given any uniformly universal countable Borel equivalence relation
$E^X_{\ph}$ on a standard probability space $(X,\mu)$, there is an
invariant Borel set $A$ of full measure such that $E^X_{\ph} \restrict A$
is not uniformly universal.
\end{thm}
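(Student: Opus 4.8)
The plan is to exhibit, for a suitable conull invariant $A$, a \emph{uniform} reduction of $E^X_\ph \restrict A$ into many-one equivalence $\equiv_m$ on $\cantor$. Since $\equiv_m$ is coarser than recursive isomorphism and closed under countable uniform joins, it is \emph{not} uniformly universal by Theorem~\ref{thm:closed_under_countable_joins}. Because the composition of two uniform homomorphisms is uniform, this is enough: if $E^X_\ph \restrict A$ were uniformly universal, then every countable Borel equivalence relation would uniformly reduce to it and hence, composing, to $\equiv_m$, making $\equiv_m$ uniformly universal and contradicting Theorem~\ref{thm:closed_under_countable_joins}. So the whole theorem reduces to producing this one uniform reduction modulo a nullset. (Note that this argument will not use the hypothesis of uniform universality; it produces the conull invariant set for any $E^X_\ph$, with the uniformly universal case being the interesting one.)

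I would build the reduction in two stages. First, since $E(\F_2,\cantor)$ is uniformly universal (Proposition~\ref{uniform_examples}), there is a uniform reduction $f_1 \from E^X_\ph \to E(\F_2,\cantor)$. Second, I push the measure forward to $\mu_1 = (f_1)_* \mu$ on $(\cantor)^{\F_2}$ and apply the many-one case of Corollary~\ref{universality_of_ri} to $E(\F_2,\cantor)$ on the probability space $((\cantor)^{\F_2}, \mu_1)$, regarding $E(\F_2,\cantor)$ as generated by an $\F_3$-action whose third generator acts trivially (this $\F_3$-presentation is what the many-one case of Corollary~\ref{universality_of_ri} requires so that many-one equivalence on the image coincides with recursive isomorphism). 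This yields a $\mu_1$-conull Borel set $B$ and a reduction $f_2$ of $E(\F_2,\cantor)\restrict B$ into $\equiv_m$. The crucial observation is that the reduction produced by Theorem~\ref{group_ri} is uniform: it satisfies the intertwining identity $\rho_\delta \cdot \hat f(x) = \hat f(\delta \cdot x)$, so each group generator $\delta$ is sent to the fixed permutation $\rho_\delta$, i.e.\ to a fixed recursive isomorphism depending only on $\delta$. Setting $A = f_1^{-1}(B)$ and $g = f_2 \circ f_1$, the composite $g$ is a uniform reduction of $E^X_\ph \restrict A$ into $\equiv_m$, and $\mu(A) = \mu_1(B) = 1$.

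It remains to make $A$ invariant. Passing to its invariant core $\{x : [x]_{E^X_\ph} \subseteq A\}$ works provided saturations of $\mu$-null sets are $\mu$-null, so I would first reduce to the case that $\mu$ is $E^X_\ph$-quasi-invariant; the purely atomic part of $\mu$ is handled separately, since there one may simply restrict to a countable—hence non-uniformly-universal—invariant class of full measure. With $A$ invariant and conull and $g$ a uniform reduction of $E^X_\ph \restrict A$ into the non-uniformly-universal $\equiv_m$, the composition argument of the first paragraph completes the proof.

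The main obstacle I anticipate is the second stage: one must verify carefully that the measure-universality reduction of Theorem~\ref{group_ri} and Corollary~\ref{universality_of_ri} can genuinely be taken uniform with respect to the canonical many-one generators on the target, and that uniformity is preserved at each change of presentation—from the arbitrary generating family $\ph$, to a group action, to many-one equivalence. This is exactly where the uniform bookkeeping of Proposition~\ref{uniform_examples} must be matched up with the intertwining identity from Theorem~\ref{group_ri}, and where the $\F_3$-with-trivial-generator device is needed so that the target may be taken to be $\equiv_m$ rather than recursive isomorphism (whose uniform universality is precisely what we cannot settle). By comparison, the quasi-invariance reduction used to make $A$ invariant is routine.
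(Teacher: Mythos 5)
Your proposal is correct and follows essentially the same route as the paper: the paper's (very terse) proof is exactly your two-step argument, namely uniformly embedding $E^X_\ph \restrict A$ into many-one equivalence on a conull set via Corollary~\ref{universality_of_ri} (whose construction in Theorem~\ref{group_ri} supplies the uniformity through the intertwining identity $\rho_\delta \cdot \hat{f}(x) = \hat{f}(\delta \cdot x)$ that you cite) and then concluding from Theorem~\ref{thm:closed_under_countable_joins} that the restriction cannot be uniformly universal. The only difference is one of detail: the paper's two-sentence proof leaves implicit the composition argument, the passage through $E(\F_2,\cantor)$ and the $\F_3$-with-trivial-generator device, and the quasi-invariance/invariant-core step needed to make $A$ invariant, all of which you spell out correctly (your separate treatment of the atomic part is unnecessary, since the dominating quasi-invariant measure $\sum_n 2^{-n-1}(g_n)_*\mu$ works for arbitrary $\mu$, but it is harmless).
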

\begin{proof}
  There is a uniform embedding of $E^X_{\ph} \restrict A$ into many-one
  equivalence for some $\mu$-conull Borel set $A$ by
  Corollary~\ref{universality_of_ri}. However, since many-one equivalence
  is not uniformly universal by
  Theorem~\ref{thm:closed_under_countable_joins}, $E^X_{\ph} \restrict A$
  can not be uniformly universal. 
\end{proof}

Our proofs above give the following interesting consequence about treeable
equivalence relations.

\begin{thm}\label{treeable_measure}
  Let $E_{\infty T} = F(\F_2,2)$ be the universal treeable countable Borel
  equivalence relation, and let $\mu$ be a Borel probability measure on the
  free part of $2^{\F_2}$. Then there exists a $\mu$-conull Borel set $A$ such
  that $E_{\infty \T}$ does not uniformly reduce to $E_{\infty \T}
  \restrict A$.
\end{thm}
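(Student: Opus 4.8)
The plan is to mirror the proof of Theorem~\ref{measure_E_infty} almost verbatim, but tracking a \emph{reduction} rather than universality, by combining two facts already established in this subsection: that many-one equivalence is measure universal via \emph{uniform} embeddings (Corollary~\ref{universality_of_ri}), and that — as remarked after Theorem~\ref{thm:closed_under_countable_joins} — the proof of that theorem actually shows the universal treeable countable Borel equivalence relation cannot be uniformly reduced to any $E_\ph$ that is coarser than recursive isomorphism and closed under countable uniform joins.

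First I would apply Corollary~\ref{universality_of_ri} to the countable Borel equivalence relation $E_{\infty\T}$ on the standard probability space $(\Free(2^{\F_2}),\mu)$. This produces a $\mu$-conull Borel set $A$ together with a uniform embedding of $E_{\infty\T}\restriction A$ into many-one equivalence $\equiv_m$ on $\cantor$. (The reductions produced by Theorem~\ref{group_ri} arise from cocycles that are group homomorphisms, hence are uniform; this is precisely how Corollary~\ref{universality_of_ri} is invoked in the proof of Theorem~\ref{measure_E_infty}.) This $A$ is the conull set witnessing the theorem.

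Next I would argue by contradiction. Suppose $E_{\infty\T}$ uniformly reduces to $E_{\infty\T}\restriction A$. Since the composition of two uniform homomorphisms is again uniform, composing this reduction with the uniform embedding $E_{\infty\T}\restriction A \embeds_B \equiv_m$ from the previous step yields a uniform reduction of $E_{\infty\T}$ to $\equiv_m$. But $\equiv_m$ is coarser than recursive isomorphism and is closed under countable uniform joins, so it satisfies the hypotheses of Theorem~\ref{thm:closed_under_countable_joins}; and, as noted after that theorem, its proof establishes the stronger conclusion that the universal treeable countable Borel equivalence relation does not uniformly reduce to such an $E_\ph$. Since $E_{\infty\T}$ is the universal treeable countable Borel equivalence relation, this is the desired contradiction.

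The one point requiring care — and the step I expect to be the main obstacle — is the matching of presentations. The proof of Theorem~\ref{thm:closed_under_countable_joins} rules out a uniform reduction of the specific presentation $F_\Gamma\cong F(\F_\omega,\cantor)$, whereas here $E_{\infty\T}=F(\F_2,2)$. To transfer the obstruction cleanly I would note that $F(\F_\omega,\cantor)$ uniformly embeds into $F(\F_2,2)$ (using $\F_\omega\leq\F_2$ together with $2\subset\cantor$, as in Remark~\ref{cs_for_f2} and \cite{MR1149121}), so that a hypothetical uniform reduction of $E_{\infty\T}$ to $\equiv_m$ would compose to a uniform reduction of $F(\F_\omega,\cantor)$ to $\equiv_m$, contradicting the proof of Theorem~\ref{thm:closed_under_countable_joins} directly. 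All the reductions involved are uniform with respect to the natural generating families (the shift actions of $\F_\omega$ and $\F_2$, and the family of many-one reductions), so their composability is unproblematic; the only thing needing verification is that these uniform embeddings between the various presentations of the universal treeable relation genuinely exist, which is routine.
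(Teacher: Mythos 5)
Your proposal is correct and is essentially the paper's own proof: the paper likewise takes the conull set $A$ from the measure universality of many-one equivalence (Corollary~\ref{universality_of_ri}, whose embeddings are uniform), observes that the proof of Theorem~\ref{thm:closed_under_countable_joins} shows $E_{\infty \T}$ does not uniformly reduce to many-one equivalence on $\cantor$, and derives the contradiction by composing uniform reductions exactly as you do. The one slip is in your final paragraph: the inclusion $2 \subset \cantor$ gives an embedding in the wrong direction (it yields $F(\F_2,2) \embeds_B F(\F_2,\cantor)$, not what you need), and the fact actually required --- that $F(\F_\omega,\cantor)$, and hence the relation $F_\Gamma$ from the proof of Theorem~\ref{thm:closed_under_countable_joins}, uniformly embeds into $F(\F_2,2)$ --- rests on the base-space coding of \cite{MR1149121} and Jackson--Kechris--Louveau (which is equivariant, hence uniform), a transfer between presentations that you rightly flag but that the paper itself passes over in silence.
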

\begin{proof}
  In the proof of Theorem~\ref{thm:closed_under_countable_joins}, we see that $E_{\infty \T}$ does not uniformly reduce
  to many-one equivalence on $\cantor$. However, we have shown that
  many-one equivalence is measure universal, and hence $E_{\infty \T}
  \restrict A$
  does uniformly reduce to recursive isomorphism on $\cantor$ for some
  conull set $A$ with respect to every Borel probability measure. Hence, we
  see that $E_{\infty \T}$ cannot uniformly reduce to $E_{\infty \T}
  \restrict A$.
\end{proof}

Hence, it seems reasonable to expect that the Borel cardinality of
$E_{\infty T}$ decreases after we discard a sufficiently complicated
nullset, with respect to any Borel probability measure. At the very least,
any proof that this is not the case must use a complicated
nonuniform construction.

In more recent work joint with Jay Williams, we have generalized part (2)
of Theorem~\ref{universality_of_ri} as follows.

\begin{thm}[joint with Jay Williams]\label{permutation_action_characterization}
  Suppose $X$ is a Polish space with $|X| \geq 3$, and 
  $G$ is any countable subgroup
  of $S_\infty$. Then the permutation action of $G$ on $X^\omega$
  generates a uniformly universal countable Borel equivalence relation if
  and only if there exists some $k \in \omega$ and a subgroup $H \leq
  G$ isomorphic to
  $\F_2$ such that the map $H \to \omega$ given by $\rho \mapsto \rho(k)$ is
  injective. 
\end{thm}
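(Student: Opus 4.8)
The plan is to treat the two implications separately after a reduction of the sufficiency direction to the base space $3$. Since $|X| \geq 3$, fixing three distinct points of $X$ gives a $G$-equivariant continuous injection $3^\omega \hookrightarrow X^\omega$, which is a uniform reduction; hence it suffices to prove that $G \actson 3^\omega$ is uniformly universal when the combinatorial condition holds. For necessity I work directly with $X^\omega$. Throughout I use the equivalent formulation of the condition: $h \mapsto h(k)$ is injective on $H$ exactly when $H \cap \Stab_G(k) = \{1\}$, i.e. the orbit $Hk$ is a free $H$-set.

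For sufficiency, suppose $H \leq G$ is isomorphic to $\F_2 = \<a,b\>$ and $H \cap \Stab_G(k) = \{1\}$, so $Hk$ is a free, regular $H$-set. The idea is to manufacture the hypotheses of Theorem~\ref{group_ri} inside $\omega$. Take the infinite-index subgroup $H' = \<a, bab^{-1}\> \leq H$, which is again free of rank two. Viewing $H$ as a left $H'$-set, its orbits are the right cosets $H'h$, each of which $H'$ acts on freely and regularly, and there are infinitely many of them; a choice of transversal identifies $H$ with $\F_2 \times \omega$ (with $\F_2 \cong H'$) so that each $\delta \in H'$ acts by $(\gamma,n) \mapsto (\delta\gamma,n)$. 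Since $h \mapsto h(k)$ is $H'$-equivariant, transporting through it identifies $Hk \subset \omega$ with $\F_2 \times \omega$; putting $Z = \omega \setminus Hk$ presents $\omega$ as $\F_2 \times \omega \disjointunion Z$, with each required shift $\rho_\delta$ realized by $\delta \in H' \leq G$ (acting arbitrarily on $Z$). Theorem~\ref{group_ri}(2) then yields a Borel reduction of a universal $\F_2$-action to $G \actson 3^\omega$, and because that reduction satisfies $\rho_\delta \cdot \hat f(x) = \hat f(\delta \cdot x)$ its cocycle is the homomorphism $\delta \mapsto \rho_\delta$, so it is uniform; as the source is uniformly universal, so is $G \actson 3^\omega$, and hence $G \actson X^\omega$.

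For necessity, suppose $G \actson X^\omega$ is uniformly universal. Following the proof of Theorem~\ref{F_2_required}, I would fix a uniform reduction of $F(\F_2 * \F_2 * \ldots, \cantor)$ to the action, use Lusin--Novikov to cut the free part into countably many pieces on which the reduction is injective, and apply Lemma~\ref{free_equivariant_hom} to one copy of $\F_2$ to obtain a continuous injective equivariant map $\Phi \from \Free((\cantor)^{\F_2}) \to X^\omega$ whose cocycle is a homomorphism $u \from \F_2 \to G$. Injectivity and freeness of the shift force $u$ to be injective and $\Stab_H(\Phi(\xi)) = \{1\}$ for every $\xi$, where $H = u(\F_2) \cong \F_2$. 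What remains is to upgrade this pointwise freeness to a single free coordinate, namely a $k$ with $H \cap \Stab_G(k) = \{1\}$.

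This upgrade is the main obstacle, and the difficulty is genuine: a function $w \in X^\omega$ can have trivial $H$-stabilizer even when $H$ acts on $\omega$ with every point-stabilizer nontrivial, so no single point $\Phi(\xi)$ suffices and one must exploit the full injective, orbit-separating map. Writing $\Phi_k(\xi) = \Phi(\xi)(k)$, equivariance gives $\Phi_k(\gamma\xi) = \Phi_{u(\gamma)^{-1}k}(\xi)$, so if no coordinate is $H$-free then each $\Phi_k$ is invariant under some nontrivial cyclic subgroup of $\F_2$. I would partition $\Free((\cantor)^{\F_2})$ according to the least coordinate witnessing these stabilizer relations, apply Lemma~\ref{free_equivariant_hom} (exactly as in Theorems~\ref{cor:diagonalization} and \ref{thm:closed_under_countable_joins}) to pull back to a $\<\gamma\>$-equivariant map for a single $\gamma$, and aim to contradict either the injectivity of $\Phi$ or the impossibility of a Borel $2$-coloring of $G(\Z,\cantor)$. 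Turning the absence of a free coordinate into such a forbidden Borel coloring is the technical heart of the argument, and is precisely where the games of Section~\ref{sec:main_idea} are expected to do the real work.
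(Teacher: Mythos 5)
Your sufficiency direction is correct and is essentially the paper's own route: the paper also funnels everything through Theorem~\ref{group_ri}(2) by exhibiting a copy of $\F_2 \times \omega$ inside the single orbit $Hk$. Where you pass to the infinite-index rank-two subgroup $\langle a, bab^{-1}\rangle$ and use all of its right cosets, the paper instead takes a subgroup $\F_3 = \langle \alpha,\beta,\xi\rangle \leq H$ and uses the injection $(\gamma,i) \mapsto \phi(\gamma)\xi^i(k)$ (with $\phi \from \F_2 \to \langle\alpha,\beta\rangle$ an isomorphism), letting $Z$ be the complement of the image; these are interchangeable bookkeeping choices, and your observation that the resulting reduction is uniform (its cocycle is $\delta \mapsto \rho_\delta$) is also how the paper gets uniform universality from a theorem stated only as ``universal.''

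The genuine gap is in the necessity direction, and it is exactly the step you flagged as ``the technical heart'': you never produce the free coordinate, and the route you sketch (Theorem~\ref{F_2_required}-style, Lusin--Novikov plus Lemma~\ref{free_equivariant_hom}, then a coloring contradiction) is not how the paper proceeds --- the paper avoids the upgrade problem entirely, and uses no games at all in this direction. The paper argues by contraposition with a short ergodicity argument: suppose that for every $k \in \omega$ and all $g_0, g_1 \in G$ there is a nontrivial reduced word $h$ in $g_0, g_1$ with $h(k) = k$. Let $f$ be any uniform Borel homomorphism from $E(\F_2,\cantor)$ to the orbit equivalence relation of $G$ on $X^\omega$, with uniformity function $u$; since $\F_2$ is free, $u \from \F_2 \to G$ may be taken to be a group homomorphism. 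Fix $k$. Applying the hypothesis to $u(a), u(b)$ gives a nontrivial $\gamma \in \F_2$ with $u(\gamma)(k) = k$, whence $f(\gamma \cdot x)(k) = \bigl(u(\gamma)\cdot f(x)\bigr)(k) = f(x)\bigl(u(\gamma)^{-1}(k)\bigr) = f(x)(k)$; so $x \mapsto f(x)(k)$ is invariant under the shift by $\gamma$, which is ergodic for the product measure on $(\cantor)^{\F_2}$, and is therefore constant almost everywhere. Since $k$ was arbitrary, $f$ is constant on a conull set, so $f$ is not a reduction; as $E(\F_2,\cantor)$ is uniformly universal, the action of $G$ on $X^\omega$ cannot be uniformly universal. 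The idea you were missing is to negate the combinatorial condition and kill each coordinate separately by measure-theoretic ergodicity of a single shift, rather than start from an injective equivariant map and try to extract a globally free coordinate --- the obstruction you correctly identified (a function can have trivial stabilizer while every coordinate has nontrivial stabilizer) simply never arises. Note also that this direction needs neither determinacy nor the hypothesis $|X| \geq 3$.
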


Essentially, not only must $\Gamma$ contain a copy of $\F_2$, as given in
(1) of Theorem~\ref{uu_properties}, this must be
witnessed in a single orbit.

\begin{proof}
  For the forward direction,
  assume that for all $n \in \omega$ and $g_0,g_1 \in G$, that there exists
  some nontrivial reduced word $h$ in $g_0$ and $g_1$ such that $h(n) = n$. Now let $f$ be
  a uniform Borel homomorphism from $E(\F_2,\cantor)$ to $E^{X^\omega}_G$ with
  uniformity function $u \from \F_2 \to G$. Since $\F_2$ is a free group, we may assume
  that $u$ is a group homomorphism. We claim that $f$ is constant on a set
  of Lebesgue measure $1$. It is enough to show that for each $n
  \in \omega$, $f(x)(n)$ is constant on a set of Lebesgue measure $1$. 

  Let $k \in \omega$ be given. Let $\F_2 = \<\alpha,\beta\>$ and consider words
  in $u(\alpha)$ and $u(\beta)$. By assumption, there must be some
  nonidentity $\gamma \in \F_2$ such that $u(\gamma)(k) = k$. We see that
  $f(\gamma \cdot x)(k) = u(\gamma) \cdot f(x)(k) = f(x)(u(\gamma)^{-1}(k))
  = f(x)(k)$. Hence, the value assigned to $x$ by $x \mapsto f(x)(k)$ is
  invariant under the map $x \mapsto \gamma \cdot x$. However, since $x
  \mapsto \gamma \cdot x$ is an ergodic transformation, the map $x \mapsto
  f(x)(n)$ must therefore be constant a.e. (Note that here we have not used
  the fact that the cardinality of $X$ is $\geq 3$.)

  The reverse direction follows from Lemma~\ref{group_ri}.
  By assumption there is a subgroup $H \leq G$
  isomorphic to $\F_2$ and a $k \in \omega$ so that the map $H \mapsto
  \omega$ defined by $h \mapsto h(k)$ is injective. Let $H' \leq H$ be a
  subgroup isomorphic to $\F_3$, and let $\alpha, \beta, \xi \in H'$ be
  generators of this subgroup. Let $\phi \from \F_2 \to H'$ be an
  isomorphism sending the two generators of $\F_2$ to $\alpha$ and $\beta$,
  and let $\pi \from \F_2 \times \omega \to \omega$ be the injection
  $\pi(\gamma,i) = \phi(\gamma) \circ \xi^i(k)$, so we can view $\omega$ as
  $\ran(\pi) \disjointunion (\omega \setminus \ran(\pi))$, and canonically
  identify it with $\F_2 \times \omega \disjointunion Z$ for $Z = \omega
  \setminus \ran(\pi)$. Viewed this, way $G$ therefore contains the
  required permutations in order to apply Lemma~\ref{group_ri} to show that the
  permutation action of $G$ on $3^\omega$ is universal.
\end{proof}

Note that one implication of this theorem is that 
if $G \leq H \leq S_\infty$ are countable and the permutation action of $G$
on $X^\omega$ generates a uniformly universal countable Borel equivalence
relation, then so does the permutation action of $H$.

With this theorem, we have finished proving all the abstract properties and
classifications of uniform universality that we have discussed in the
introduction.

\pagebreak

\section{Ultrafilters on quotient spaces}
\label{sec:ultrafilters}
\subsection{An introduction to ultrafilters on quotient spaces}

Largeness notions for the subsets of a standard Borel space play 
a central role in descriptive set theory. They are an indispensable tool
whenever we want to prove an impossibility result such as $E \nleq_B F$ for
some Borel equivalence relations $E$ and $F$.
In such situations, we must analyze all possible
Borel functions that could be counterexamples to these statements. While we
have no hope of understanding the behavior of arbitrary Borel functions
everywhere, if we are willing to discard some ``small'' part of the domain
of the function, we may gain a much clearer understanding of its behavior
on the remaining ``large'' complement. 

Formally, a notion of largeness is often taken to mean a $\sigma$-complete
filter, whose corresponding notion of smallness is its dual $\sigma$-ideal.
For example, Baire category and Borel probability measures yield such
notions of largeness: the comeager sets and conull sets. Our main goal in
this section will be to define largeness notions that
are particularly well suited for studying Borel equivalence relations
(though one could equally well study other types 
of Borel objects, such as Borel graphs, or Borel quasiorders). 
They will have much stronger properties than 
merely being $\sigma$-complete filters. 

As a motivating example, consider $E_0$: the equivalence relation of
equality mod finite on infinite binary sequences in $\cantor$. Both Lebesgue measure and Baire category have two properties
that are particularly helpful when they are used to analyze homomorphisms
from $E_0$ to other Borel equivalence relations. 
First, every
$E_0$-invariant Borel set $A \subset \cantor$ has either Lebesgue measure $0$ or
$1$, and likewise is either meager or comeager. Hence, with respect to
either of these two notions, every $E_0$-invariant set is either large or
small. Phrased another way, these largeness notions are ultrafilters when
restricted to $E_0$-invariant sets.
Second, since the restriction of $E_0$ to a comeager or conull
set cannot be smooth by standard ergodicity arguments, it must
therefore be Borel bireducible with $E_0$ by the Glimm-Effros
dichotomy~\cite{MR1057041}. 

We isolate these two phenomena into the
following definitions. Recall that given any $\sigma$-algebra $\Sigma$, an
ultrafilter on $\Sigma$ is a maximal filter of 
$\Sigma$. An ultrafilter $U$ is said to be
\define{$\sigma$-complete} if the intersection of countably many elements
of $U$ is in $U$. 

\begin{defn}
  Suppose $E$ is a countable Borel equivalence relation on a standard Borel
  space $X$ with the associated $\sigma$-algebra $\Sigma$ of $E$-invariant
  Borel sets. A
  \define{$\sigma$-complete ultrafilter on the Borel subsets of $X/E$} is
  a $\sigma$-complete ultrafilter $U$ on $\Sigma$. $U$ is said
  to \define{preserve the Borel cardinality of $E$} if for all $A \in U$,
  we have $E \sim_B E \restrict A$.
\end{defn}

Let us expand our horizons briefly and discuss this situation under the
axiom of determinacy. Under $\AD$, every ultrafilter
must be $\sigma$-complete (see for instance \cite{MR1321144}*{Proposition
28.1}) and so there can not be any nonprincipal
ultrafilter on any Polish space. Nevertheless, many ultrafilters exist on
quotients of Polish spaces by equivalence relations. For example, 
the two examples of ultrafilters we have given above as well as
all the examples we will give below will easily extend to define
ultrafilters on all subsets of the quotient $X/E$ under $\AD$. Hence, the
reader may assume we are working in this context if they prefer, and work
with genuine ultrafilters for the $\sigma$-algebra of all subsets of
$X/E$, instead of just the Borel ones.

We mention here that there are several open questions concerning
the structure of the ultrafilters on the quotient space of a
Borel equivalence relation $E$. Abstractly, we would like to know if there
is a way of classifying such ultrafilters and hence understanding exactly
what types of tools exist with which we can analyze $E$ using ergodicity
arguments. The most natural way of organizing these ultrafilters is
by Rudin-Keisler reducibility. Several important
open questions can be rephrased in this framework. For example, the question
of whether Martin's ultrafilter is $E_0$-ergodic~\cite{MR2563815} is
equivalent to asking whether there is a nonprincipal ultrafilter on the
quotient of $E_0$ that is Rudin-Keisler reducible to Martin's ultrafilter
on the Turing degrees. Along these lines, Zapletal has also asked 
whether the ultrafilters arising from measure and category are a basis for
all the ultrafilters on the quotient of $E_0$.

As described in the introduction, our main goal in this section is the
construction of new ultrafilters on the quotient space of countable Borel
equivalence relations that do not arise from measure or category. We are
motivated to construct ultrafilters in particular because of the central
role that ergodicity plays in the subject, and because of connections with
Martin's ultrafilter on the Turing degrees. Further, our desire for
ultrafilters which preserve Borel cardinality comes from our ultimate goal
of proving sharper theorems than those which are currently possible using
category (which suffers from generic
hyperfiniteness~\cite{MR2095154}*{Theorem 12.1}), and measure (which we
conjecture does not preserve Borel cardinality in many natural cases such
as for $E_{\infty T}$ and $E_\infty$: see Theorems~\ref{measure_E_infty}
and \ref{treeable_measure}).

The search for tools beyond measure theory to use in the study of countable
Borel equivalence relations has long been a central question of the
subject. Indeed, up to the present, in every case where we can prove that
there is no definable reduction from $E$ to $F$ for countable Borel
equivalence relations $E$ and $F$, we have
been able to give a measure-theoretic proof of this fact. 
We may make
this question precise under the axiom $\AD^+$, a slight technical
strengthening of $\AD$ due to Woodin which implies uniformization for
subsets of $\R^2$
with countable sections~\cite{MR2723878}
\footnote{
The Borel version of Question~\ref{reducible_vs_measure_reducible} is false
because of the known difference between $\sigma$-$\Sigma^1_1$ reducibility and Borel reducibility 
for countable Borel equivalence relations~\cite{MR1775739}*{Theorem 5.5}}.
Assume $\AD^+$, and suppose $E$ and $F$ are countable equivalence relations on the
standard Borel space $X$ and $Y$. Then say $E$ is reducible to $F$ if there
is a function $f \from X \to Y$ such that $x E y \iff f(x) F f(y)$. Say that $E$
is measure reducible to $F$ if for every probability measure $\mu$ on $X$,
there exists a $\mu$-conull set $A$ such that $E \restrict A$ is reducible
to $F$. It is open whether these two notions are distinct. That is, whether
every instance of nonreducibility arises for measure-theoretic reasons.

\begin{question}[$\AD^+$]\label{reducible_vs_measure_reducible}
If $E$ and $F$ are countable equivalence relations on standard Borel spaces and $E$ is
measure reducible to $F$, then must $E$ be reducible to $F$?
\end{question}

An affirmative answer to Question~\ref{reducible_vs_measure_reducible}
seems unlikely. For example, we have shown above that 
$E_\infty$ is
measure reducible to the equivalence relation of recursive isomorphism on
$\cantor$ and also conjectured that recursive isomorphism on $\cantor$ is not
universal (see Theorem~\ref{universality_of_ri} and
Conjecture~\ref{conj:ri_not_universal}).

We return now to the realm of Borel sets. The existence of a
$\sigma$-complete ultrafilter on the Borel subsets of $X/E$ already yields
some interesting information about $E$, related to its indivisibility,
which we proceed to define:

\begin{defn}
  Suppose that $E$ and $F$ are Borel equivalence relations. Say
  that $E$ is \define{$F$-indivisible} if for every Borel homomorphism $f$
  from $E$ to $F$, there exists an $F$ equivalence class $C$ such that $E
  \restrict f^{-1}(C)$ is Borel bireducible with $E$. In the case where $F$ is the relation of
  equality on a standard Borel space of cardinality $\kappa$, then we say
  that $E$ is $\kappa$-indivisible.
\end{defn}

Given Borel equivalence relations $E$ and $F$, $E$ being
$F$-indivisible is perhaps the antithesis of having $E$ Borel reducible to
$F$. Not only is $E \nleq_B F$, but any homomorphism $f$ from $E$ to $F$ makes
no progress whatsoever in completely classifying $E$, since there is a
single $F$-class whose preimage under $f$ has the same Borel cardinality as $E$. 

We know many examples of
interesting pairs of uncountable Borel equivalence relations $E$ and $F$
for which $E$ is $F$-indivisible (see for example the book of
Kanovei, Sabok, and Zapletal~\cite{CanonRamsey}). However, no countable Borel equivalence relations 
beyond $E_0$ were known even to be indivisible into two pieces until we
showed this for $E_\infty$ in~\cite{1109.1875}. In contrast, we note that 
Thomas~\cite{MR1903855} has shown the existence of a countable
Borel equivalence relation $E$ on an uncountable standard Borel space that
is not $2$-indivisible. Indeed, there is a countable Borel equivalence
relation $E$ where $E <_B E \oplus E$, where $E \oplus E$ is the direct sum
of two disjoint copies of $E$.

Our study of ultrafilters is connected to indivisibility in the following
way:

\begin{prop}\label{indiv_prop}
  Suppose there exists a $\sigma$-complete Borel cardinality preserving
  ultrafilter $U$ on the Borel $E$-invariant sets. Then $E$ is
  $2^{\aleph_0}$-indivisible.
\end{prop}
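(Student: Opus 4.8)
The plan is to show that the hypothesized ultrafilter $U$ must concentrate on a single fiber of any invariant Borel map into $\cantor$, after which Borel cardinality preservation yields the conclusion immediately. Since all uncountable standard Borel spaces are Borel isomorphic, it suffices to treat the case where $F$ is equality on $\cantor$. Let $f \from X \to \cantor$ be a Borel homomorphism from $E$ to this equality relation; by definition $f$ is constant on $E$-classes, so each fiber $f^{-1}(z)$ is an $E$-invariant Borel set and hence lies in the $\sigma$-algebra $\Sigma$ on which $U$ is defined. The $F$-class of a point $z$ is the singleton $\{z\}$, so what we must produce is a single $z \in \cantor$ with $E \restrict f^{-1}(z) \sim_B E$.

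First I would locate the fiber that $U$ selects by a binary-tree search. For $s \in 2^{<\omega}$ set $N_s = \{w \in \cantor : s \subset w\}$ and $B_s = f^{-1}(N_s)$; each $B_s$ is $E$-invariant and Borel, so $B_s \in \Sigma$. Now build a branch $z \in \cantor$ recursively. We have $B_{\emptyset} = X \in U$. Given $B_{z \restriction n} \in U$, note $B_{z \restriction n}$ is the disjoint union $B_{(z\restriction n)\concat 0} \disjointunion B_{(z\restriction n)\concat 1}$; since $U$ is an ultrafilter on $\Sigma$ and these pieces are disjoint, exactly one of them lies in $U$ (they cannot both lie in $U$ as $U$ is proper, and if $B_{(z\restriction n)\concat 0} \notin U$ then $\comp{B_{(z\restriction n)\concat 0}} \in U$, whence $B_{(z\restriction n)\concat 1} = B_{z\restriction n} \inters \comp{B_{(z\restriction n)\concat 0}} \in U$). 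Let $z(n)$ be the digit making $B_{(z\restriction n)\concat z(n)} \in U$. This puts every initial-segment set $B_{z\restriction n}$ into $U$.

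The crux is then a single appeal to $\sigma$-completeness: $\biginters_{n} B_{z\restriction n} \in U$, and this intersection is exactly $\{x : f(x) = z\} = f^{-1}(z)$, so $f^{-1}(z) \in U$. (Equivalently, one can phrase this as observing that $U_f = \{D : D \subset \cantor \text{ Borel and } f^{-1}(D) \in U\}$ is a $\sigma$-complete ultrafilter on the Borel subsets of $\cantor$, and every such ultrafilter is principal by the same tree argument.) Finally, because $U$ preserves Borel cardinality and $f^{-1}(z) \in U$, we obtain $E \sim_B E \restrict f^{-1}(z)$, which is precisely the required instance of $2^{\aleph_0}$-indivisibility with $C = \{z\}$.

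I do not expect a serious obstacle here. The only point requiring care is the verification that the recursively chosen digit is well-defined, i.e.\ that exactly one child set lies in $U$ at each stage, which is the ultrafilter computation sketched above; and the observation that $\sigma$-completeness is exactly strong enough to collapse the countably many conditions $B_{z\restriction n} \in U$ onto a single fiber. Everything else is routine bookkeeping.
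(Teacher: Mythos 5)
Your proof is correct and is essentially the paper's own argument: the paper performs the same standard ergodicity computation, choosing each bit $z(n)$ via the ultrafilter applied to the invariant sets $\{x : f(x)(n) = i\}$ and then intersecting by $\sigma$-completeness to get $f^{-1}(z) \in U$, after which cardinality preservation finishes. Your nested cylinder-set formulation $B_s = f^{-1}(N_s)$ differs only in bookkeeping (initial segments rather than independent coordinates) and not in substance.
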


\begin{proof}
  This is a standard ergodicity argument.
  Let $f$ be a Borel homomorphism from $E$ to $\Delta(\cantor)$. For each
  $n \in \omega$, let $A_{n,0} = \{x : f(x)(n) = 0\}$ and $A_{n,1} = \{x :
  f(x)(n) = 1\}$. Then for each $n$, either $A_{n,0} \in U$ or $A_{n,1} \in
  U$. Hence, there is an $x \in \cantor$ such that $A_{n,x(n)} \in U$ for
  all $n$ and so $A = f^{-1}(x) = \inters_{n} A_{n,x(n)}$ has $A \in U$, so
  $E \restrict A \sim_B E$.
\end{proof}

In~\cite{1109.1875} we pointed out that for arithmetic equivalence,
the invariant sets containing an arithmetic cone are a $\sigma$-complete
ultrafilter on the $\equiv_A$-invariant sets. Since Slaman and Steel have
shown that arithmetic equivalence is a universal countable Borel
equivalence relation, and their proof relativizes, we noted that this
implies that this ultrafilter preserves Borel cardinality and hence that
$E_\infty$ is $2^{\aleph_0}$-indivisible. This answered questions of
Jackson, Kechris and Louveau, \cite{MR1900547} and Thomas 
\cite{MR2500091}*{question 3.20}, who had asked whether $E_\infty$ is
$2$-indivisible and $2^{\aleph_0}$-indivisible respectively.
In this section, we will give a new proof of this result by constructing a
large family of cardinality preserving ultrafilters on the quotient of
countable Borel equivalence relations of which $E_\infty$ is merely one
example. 

It
remains an open question to classify exactly which countable Borel
equivalence relation $E$ are such that $E_\infty$ is $E$-indivisible.
However, Martin's conjecture provides an answer to this question, as
discussed in~\cite{1109.1875}.

\subsection{A natural ultrafilter on the quotient of $E(\F_\infty,\cantor)$}
\label{subsec:ultrafilter_for_E_infty}

As in Section~\ref{sec:main_idea}, throughout this section we will have 
$I \leq \omega$ and $\{\Gamma_i\}_{i \in I}$ a collection of disjoint countable
groups. For each $i \in I$, we also fix a listing $\gamma_{i,0}, \gamma_{i,1}
\ldots$ of the nonidentity elements of $\Gamma_i$. 

We will begin by proving a strengthening of Lemma~\ref{equivariant_hom} to
show that $f$ may be chosen to be a Borel reduction. 
This will require us to generalize slightly the main game from
Section~\ref{sec:main_idea}.

\begin{defn}[The general game]\label{general_game_defn}
  Fix a strictly increasing sequence of natural numbers $(n_k)_{k \in
  \omega}$. We define the general game $G^A_i((n_k))$ for
  producing $y \in \left( \prod_{i} \cantor \right)^{\fpGammaiI}$
  identically to the game $G^A_i$, except with the following modification:
  the appropriate player will define $y(\alpha)(i)(n)$ on turn $k$ of the
  game if $t(\alpha) \geq k$, and $n_{(k-t(\alpha))} \leq \<i,n\> < n_{(k+1 -
  t(\alpha))}$. 
\end{defn}

Hence, the main game $G^A_i$ from Definition~\ref{game_defn} corresponds
to the general
game $G^A_i((n_k))$ for the sequence $n_k = k$. It is easy
to check that for all the lemmas in Sections~\ref{subsec:games_intro} and
\ref{subsec:comb_F_omega} that use game $G^A_i$, we can use the
more general game $G^A_i((n_k))$ instead.
In all the relevant inductions where we assume the turns
$<k$ have been played, one simply replaces the assumption that
$y(\alpha)(i)(n)$ has been defined for $t(\alpha) + \<i,n\> < k$ with the
assumption that $y(\alpha)(i)(n)$ has been defined where $t(\alpha) < k$ and $\<i,n\> <
n_{k - t(\alpha)}$. 

We have one more technical definition giving a growth criterion for
sequences $(n_k)_{k \in \omega}$. 

\begin{defn}
  Let $S_k = \{\beta \in \Gamma_j : t(\beta) \leq k\}$. Let $N_{\alpha,i,k}
  = \{n \in \omega \colon n_{k-t(\alpha^{-1})} \leq \<i,n\> \leq n_{k + 1 -
  t(\alpha^{-1})}\}$, so that the $n$th bit of $(\alpha \cdot y)(1)(i)$
  (which is equal to the $n$th bit of $y(\alpha^{-1})(i)$) is determined on
  turn $k$ if and only if $n \in N_{\alpha,i,k}$. Say that a sequence $(n_k)_{k \in
  \omega}$ is \define{good} if for every $i \in I$, $d \in \omega$,
  and $\alpha \in \fpGammaiI$, there are infinitely many $k$ such that
  $d^{|S_k|} < 2^{|N_{a,i,k}|}$.
\end{defn}

\begin{lemma}\label{equivariant_emb}
  Assume that $\{A_i\}_{i \in I}$ is a Borel partition of $(\prod_i
  \cantor)^{\fpGammaiI}$ and that player II has a winning strategy in the game $G^{A_j}_j((n_k))$ for a good
  sequence $(n_k)_{k \in \omega}$. 
  Then there exists some $j \in I$ and an
  injective continuous $\Gamma_j$-equivariant function $f \from
  (\cantor)^{\Gamma_j} \to A_j$, but with the additional property that $f$
  is an embedding of $E(\Gamma_j,\cantor)$ into $E(\fpGammaiI,\prod_i
  \cantor)$. 
\end{lemma}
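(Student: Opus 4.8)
The plan is to build $f$ exactly as in the proof of Lemma~\ref{equivariant_hom}, but over the general game, and then to isolate the one genuinely new assertion—that $f$ is a \emph{reduction}—and extract it from the goodness of $(n_k)$ by a counting argument. First I would run the construction of Lemma~\ref{equivariant_hom} verbatim with $G^{A_j}_j((n_k))$ in place of $G^{A_j}_j$. As observed after Definition~\ref{general_game_defn}, the analogue of Lemma~\ref{player_II_lemma} holds for the general game, so from the given winning strategy $s$ for player II I can define $f \from (\cantor)^{\Gamma_j} \to (\prod_i \cantor)^{\fpGammaiI}$ by letting $f(z)$ be the unique $y$ with $\pi_j(y) = z$ such that $\gamma \cdot y$ is an outcome of $s$ for every $\gamma \in \Gamma_j$. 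As in Lemma~\ref{equivariant_hom} and Remark~\ref{identity_projection}, $f$ is continuous, $\Gamma_j$-equivariant, satisfies $\pi_j(f(z)) = z$ (hence is injective), and has $\ran(f) \subset A_j$. Equivariance already gives the forward direction of the reduction.

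The entire new content is the converse: that $f(x) \mathrel{E(\fpGammaiI,\prod_i \cantor)} f(y)$ implies $x \mathrel{E(\Gamma_j,\cantor)} y$. Suppose $\alpha \cdot f(x) = f(y)$. If $\alpha = 1$ then injectivity gives $x = y$, and if $\alpha \in \Gamma_j$ then applying the equivariant $\pi_j$ yields $\alpha \cdot x = \pi_j(\alpha \cdot f(x)) = \pi_j(f(y)) = y$; so it suffices to rule out $\alpha \notin \Gamma_j$. Here I would use the characterization $\ran(f) = \{y : \gamma \cdot y \text{ is an outcome of } s \text{ for all } \gamma \in \Gamma_j\}$, which follows from the uniqueness clause of Lemma~\ref{player_II_lemma}. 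Since $f(x)$ and $f(y) = \alpha \cdot f(x)$ both lie in $\ran(f)$, the element $p := f(x)$ has the property that $\delta \cdot p$ is an outcome of $s$ for every $\delta$ in the union of the two \emph{distinct} right cosets $\Gamma_j$ and $\Gamma_j \alpha$.

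This is the crux, and where goodness enters. Being an outcome of $s$ means that on each player II coordinate the bits of $\delta \cdot p$ are computed by $s$ from player I's moves, which—across the coupled $\Gamma_j$-family of plays producing $\{\gamma \cdot p\}_{\gamma \in \Gamma_j}$—depend through turn $k$ only on the finitely many bits of $\pi_j(p) = x$ on $S_k = \{\beta \in \Gamma_j : t(\beta) \leq k\}$. Hence the number of possible configurations of player I through turn $k$ is at most $d^{|S_k|}$, where $d$ bounds the information contributed per active $\Gamma_j$-element per turn. On the other hand, matching the extra coset $\Gamma_j\alpha$ forces, on turn $k$, the $|N_{\alpha,i,k}|$ freshly determined bits of an appropriate genuinely player-II coordinate (chosen according to whether $\alpha$ ends in a $\Gamma_j$-letter) to be reproduced by $s$ from this bounded data. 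Goodness supplies infinitely many $k$ with $d^{|S_k|} < 2^{|N_{\alpha,i,k}|}$, so for such $k$ there are strictly more possible bit-patterns on that coordinate than there are player I configurations driving $s$; a pigeonhole/diagonalization argument then yields an inconsistency. Thus no single $p = f(x)$ can have $\delta \cdot p$ an outcome of $s$ for all $\delta \in \Gamma_j \cup \Gamma_j\alpha$, ruling out $\alpha \notin \Gamma_j$ and showing $f$ embeds $E(\Gamma_j,\cantor)$ into $E(\fpGammaiI,\prod_i \cantor)$.

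The main obstacle I anticipate is making this counting fully precise: one must carefully justify the bound $d^{|S_k|}$ given that player I's moves at $\Gamma_j$-word coordinates are themselves forced through equivariance by player II's earlier moves in other plays of the family, and one must verify that for every $\alpha \notin \Gamma_j$ there is always a player II coordinate on which the $\Gamma_j\alpha$-constraint bites. This is exactly what the quantification over all $i \in I$ and all $\alpha \in \fpGammaiI$ in the definition of a good sequence is designed to provide.
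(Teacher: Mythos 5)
Your plan founders at its central claim. You build the canonical $f$ of Lemma~\ref{equivariant_hom} (the coding $g(x)=x(1)$) and then assert that, by a pigeonhole argument from goodness, no point $p$ can have $\delta \cdot p$ an outcome of the fixed winning strategy $s$ for every $\delta \in \Gamma_j \cup \Gamma_j\alpha$ with $\alpha \notin \Gamma_j$. That assertion is false, which is exactly why the paper does not use the canonical $f$. Concretely, take $I=2$, $\Gamma_0 = \langle a \rangle$ and $\Gamma_1 = \langle b \rangle$ copies of $\Z$, $A_0$ the whole space, $A_1 = \emptyset$, and the pairing $\langle i,n\rangle = 2n+i$. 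Let $s$ be the strategy for player II in $G_0$ that plays $y(b^k\beta')(i)(n) := y(\beta')(i)(n) \oplus (k \bmod 2)$ for every $\Gamma_1$-word $b^k\beta'$ (with $\beta'$ not a $\Gamma_1$-word, possibly $\beta'=1$), and $y(1)(1)(n) := y(1)(0)(n)$ at the identity coordinate. Every bit consulted is determined on an earlier turn (or earlier in the same turn, since prepending a letter increases $t$ and player I moves first), so $s$ is a legal strategy in $G_0$ and in $G_0((n_k))$ for any good sequence, and it is trivially winning since $A_0$ is everything. One checks directly that every outcome $y$ of $s$ satisfies $b \cdot y = \overline{y}$ (bitwise complement), and that the set of outcomes of $s$ is closed under complementation. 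Hence for the canonical $f$ and any $x$, the point $p = f(x)$ has $\delta \cdot p$ an outcome of $s$ for all $\delta \in \Gamma_0 \cup \Gamma_0 b$, and indeed $b \cdot f(x) = \overline{f(x)} = f(\overline{x})$ by the uniqueness clause of Lemma~\ref{player_II_lemma}. Since $\overline{x}$ is in general not a shift of $x$ (take $x$ identically $0$), the canonical $f$ is not a reduction. No counting argument can exclude this: the strategy was built precisely to reproduce the bit-patterns your pigeonhole step wants to rule out, and goodness of $(n_k)$ is irrelevant to its existence.

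The paper's proof repairs this by changing the construction rather than analyzing the canonical $f$ more finely. It replaces the identity coding by a sufficiently generic continuous injection $g \from (\cantor)^{\Gamma_j} \to \cantor$, defines $f_g$ by $f_g(x)(\gamma)(j) = g(\gamma^{-1}\cdot x)$ together with the strategy $s$, and proves the embedding property by forcing: given cylinders $r,s$ with every $\Gamma_j$-translate of $r$ incompatible with $s$, and a condition $p$, goodness supplies a turn $k$ with $|\ran(p)|^{|S_k|} < 2^{|N_{\alpha,j,k}|}$; the at most $|\ran(p)|^{|S_k|}$ possible values of $(\alpha \cdot f_g(x))(1)(j)\restriction N_{\alpha,j,k}$ (determined by $s$ from the finitely many strings $g(\beta\cdot x)$, $\beta \in S_k$, each an element of $\ran(p)$ padded with zeroes) can then all be avoided by choosing $p^*$ on extensions of the cylinder $s$. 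So the counting enters as a density argument for the generic $g$ — the diagonalization is a choice made while building $g$, a degree of freedom that simply does not exist once you fix $g(x) = x(1)$. If you want to salvage your write-up, the first paragraph (construction, injectivity, equivariance, reduction of the problem to $\alpha \notin \Gamma_j$, and the characterization of $\ran(f)$) is fine, but the rest must be replaced by the generic-coding argument.
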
 
\begin{proof}
  Fix such a winning strategy. The $f$ we produce will be a slight variation on the one
  produced in Lemma~\ref{equivariant_hom}. Given any $g \from
  (\cantor)^{\Gamma_j} \to \cantor$, let $f_g$ be the unique equivariant
  Borel function such that for all $x \in \cantor$ we have
  $f_g(x)(\gamma)(j) = g(\gamma^{-1} \cdot x)$, and such that $f_g(x)$ is a
  winning outcome of player II's winning strategy in the game
  $G^{A_j}_j((n_k)_{k \in \omega})$. This $f_g$ exists by an analogous
  argument to that in the proof of Lemma~\ref{equivariant_hom}. In this
  notation, the $f$ produced in the proof of Lemma~\ref{equivariant_hom} is
  the function $f_{g}$ where $g(x) = x(1)$. We claim that if $g$ is a
  sufficiently generic continuous function, then $f_g$ will be as desired. 
  
  To simplify notation, identify $(\cantor)^{\Gamma_j}$ with $2^{\Gamma_j
  \times \omega}$. Fix some enumeration of $\Gamma_j \times \omega$, and
  let $2^{< \Gamma_j \times \omega}$ be the set of all functions from some
  finite initial segment of $\Gamma_j \times \omega$ to $2$. Hence, if $s,t
  \in 2^{< \Gamma_j \times \omega}$ are compatible, then $s \subset t$ or
  $t \subset s$. In general, if $s, t \in 2^{\Gamma_j \times \omega}$ and
  $\beta \in \Gamma_j$, we will say that \define{$\beta \cdot s$ is
  compatible with $t$} if for all $(\gamma,n) \in \Gamma_j \times \omega$
  so that both $s((\beta^{-1} \gamma, n))$ and $t((\gamma,n))$ are defined,
  we have $s((\beta^{-1} \gamma, n)) = t((\gamma,n))$. Otherwise, we will
  say that \define{$\beta \cdot r$ is incompatible with $s$}.
  
  The sense of genericity that we mean for continuous functions from
  $(\cantor)^{\Gamma_j} \to \cantor$ will be the usual one. Our set $\P$ of
  conditions will be the set of all finite partial functions $p$ from $2^{<
  \Gamma_j \times \omega} \to 2^{< \omega}$ such that if $s, t \in \dom(p)$
  and $s \subset t$, then $p(s) \subset p(t)$. It is clear that a generic
  filter for this poset will yield a unique continuous function $g \from
  (\cantor)^{\Gamma_j} \to \cantor$ in the usual way. 

  Note that a sufficiently generic function $g \from (\cantor)^{\Gamma_j}
  \to \cantor$ will have the property that there exists a function $\rho_j
  \from \omega \to \omega$ such that for all $x \in
  (\cantor)^{\Gamma_j}$, $\gamma \in \Gamma_j$ and $n \in \omega$, we have
  $x(1)(n) = g(x)(\rho_j(n))$. Hence, $g$
  will be injective, and thus so will $f_g$. 

  We must show that whenever $x, y \in
  (\cantor)^{\Gamma_j}$ are not $E(\Gamma_j,\cantor)$-related, and $\alpha
  \in \fpGammaiI$ is not an element of $\Gamma_j$, then 
  $\alpha \cdot f_g(x) \neq f_g(y)$. Of course, if $\alpha \in \Gamma_j$,
  then $\alpha \cdot f_g(x) = f_g(\alpha \cdot x)$, since $f_g$ is
  $\Gamma_j$-equivariant. 
  
  Fix $\alpha \in \fpGammaiI$. It suffices to prove the following. 
  Suppose $r,s \in 2^{< \Gamma_j \times \omega}$ are such
  that $\beta \cdot r$ is incompatible with $s$ for all $\beta \in
  S_{t(\alpha)}$. Then given any
  condition $p \in P$, it is dense to extend $p$ to a condition $p^*$ such
  that for all continuous functions $g$ extending $p^*$, we have that
  $\alpha \cdot f_g(x) \neq f_g(y)$ for all $x \in (\cantor)^{\Gamma_j}$
  extending $r$ and $y \in (\cantor)^{\Gamma_j}$ extending $s$. 

  We can find such a $p^*$ in the following way. Let $\sigma$ be a finite initial
  segment of $\Gamma_j \times \omega$ (according to our fixed enumeration)
  that contains the three sets 
  $\{(\beta^{-1} \gamma, n) : (\gamma, n) \in \dom(r)
  \land \beta \in S_{t(\alpha)}\}$, $\dom(s)$, and $\bigunion \{\dom(t) : t \in
  \dom(p)\}$. Let $k$ be sufficiently large so that 
  \[\min(N_{\alpha,j,k}) >  \sup(\{|p(t)| \colon t \in \dom(t)\}) \text{
  and }
  |\dom(p)|^{S_k} < 2^{|N_{\alpha,j,k}|}\]
  Let $l = \max(N_{1,j,k})$. Let $q$ be an
  extension of $p$ such that 
  \[\dom(q) = \{t \in 2^{< \Gamma_j \times \omega} : \dom(t) \subset \sigma\}\] 
  and for every $t \in \dom(q)$ with
  $\dom(t) = \sigma$, $q(t)$ has length $l$, and consists of a string in
  $\ran(p)$ followed by finitely many zeroes. Note that if $g \from
  (\cantor)^{\Gamma_j} \to \cantor$ is a
  continuous function extending $q$, then for every $x \in
  (\cantor)^{\Gamma_j}$, we have that $g(x) \restriction l$ must be
  an element of $\ran(p)$ followed by finitely many zeroes.
  Our desired $p^*$
  will be equal to some $q$ except on extensions of $s$.

  Now if $n \in N_{t(\alpha^{-1}),j,k}$, then the
  $n$th bit of $(\alpha \cdot f_g(x))(1)(j)$ will have been defined
  in the game $G^{A_j}_j((n_k))$ by player II on turn $k$,
  and hence will depend only on the values of $g(\beta \cdot x)(n')$ for $\beta \in
  \Gamma_j$ with $t(\beta) \leq k$, and $n'$ such that 
  \begin{enumerate}
    \item $\<j,n'\> < n_{k+1}$ if $t(\beta) \leq t(\alpha)$.
    \item $\<j,n'\> < n_{k - t(\alpha^{-1})}$ if $t(\beta) > t(\alpha)$.
  \end{enumerate}

  Suppose $g$ is a generic continuous function extending $q$, and $x$
  extends $r$. Then for any $\beta \in \Gamma_j$,
  \begin{enumerate}
    \item if $t(\beta) \leq t(\alpha)$, then since $\beta \cdot x$ is
    incompatible with $x$, $g(x) \restriction \{n' : \<j,n'\> < n_{k+1}\}$ must be an
    element of $\dom(p)$ followed by finitely many $0$s.
    \item if $t(\beta) > t(\alpha)$, then $g(x) \restriction \{n' :
    \<j,n'\> < n_{k - t(\alpha^{-1})}\}$ must also be an element of
    $\dom(p)$ followed by finitely many $0$s.
  \end{enumerate}
  Hence, there are at most $w = |\ran(p)|^{|S_k|}$ ways to associate an element
  in $\ran(p)$ to each $\beta \in S_k$, and so at most $|\ran(p)|^{|S_k|}$
  possible values that 
  $\alpha \cdot f_g(x)(i)(j) \restriction N_{\alpha,j,k}$ could take. Let
  $v_0, \ldots, v_{w-1} \from N_{\alpha,j,k} \to 2$ enumerate these
  possibilities. 
  Let $p^* \subset p$ be a condition with $\dom(p^*) = \dom(q)$ and such that for
  every $t$ with $\dom(t) = \sigma$, if $t$ is incompatible with $s$, then
  $p^*(t) = q(t)$. Otherwise, if $t$ extends $s$, then $p^*(t)$ is
  incompatible with $v_0, \ldots, v_{w-1}$.
\end{proof}

Theorems~\ref{g1} and \ref{g2} from the
introduction are now easy corollaries of Lemma~\ref{equivariant_emb}. Note
that as we discussed at the beginning of the introduction, $\prod_i
\cantor$ and $\cantor$ are homeomorphic, so it is fine to use the space 
$(\prod_i \cantor)^{\fpGammaiI}$ instead of $(\cantor)^{\fpGammaiI}$.

\begin{proof}[Proof of Theorem~\ref{g1}]
  Let $(n_k)_{k \in \omega}$ be a good
  sequence, and $\{A_i\}_{i \in I}$ be a Borel partition of $(\prod_i
  \cantor)^{\fpGammaiI}$. By Lemma~\ref{player_I_lemma} but for the game
  $G_i((n_k))$, there must be some $j \in I$ so that player II wins the game
  $G^{A_j}_j((n_k))$. Now apply Lemma~\ref{equivariant_emb}.
\end{proof}

The proof of Theorem~\ref{g2} simply adds the ideas from the proof of 
Lemma~\ref{free_equivariant_hom}.

\begin{proof}[Proof of Theorem~\ref{g2}]
  Let $(n_k)_{k \in \omega}$ be a good
  sequence, and $\{A_i\}_{i \in I}$ be a Borel partition of $\Free((\prod_i
  \cantor)^{\fpGammaiI})$. Now define the partitions $\{B_i\}_{i \in I}$ and
  $\{C_i\}_{i \in I}$ identically as in Lemma~\ref{free_equivariant_hom}, and
  let $A_i' = A_i \union B_i \union C_i$ so that $\{A_i'\}_{i \in I}$ is a
  Borel partition of $(\prod_i \cantor)^{\fpGammaiI}$. Player II wins 
  $G^{A_j'}_j((n_k))$ for some $j \in I$, so we can apply
  Lemma~\ref{equivariant_emb} to obtain a continuous equivariant injective
  function $f \from (\cantor)^{\Gamma_i} \to (\prod_i
  \cantor)^{\fpGammaiI}$. But then arguing identically as in the proof of
  Lemma~\ref{equivariant_emb}, $\ran(f \restriction
  \Free((\cantor)^{\Gamma_i}))$ must be contained in $A_j$.
\end{proof}

We will make a remark here on the extent to which the function
$g$ in the proof of Lemma~\ref{equivariant_emb} depends on the winning strategy for player II in the game $G_j$. 

\begin{remark}\label{inject_remark}
  We have remarked in the proof of Lemma~\ref{equivariant_emb} above that a
  sufficiently generic function $g \from (\cantor)^{\Gamma_j} \to \cantor$
  will have the property that there exists a function $\rho_j \from
  \omega \to \omega$ such that for all $x \in
  (\cantor)^{\Gamma_j}$ and $n \in \omega$,
  $x(1)(n) = g(x)(\rho_j(n))$. We remark here these functions
  $\rho_j$ can be chosen so that they will work regardless of what the
  winning strategy for player II is in the game $G^{A_j}_j((n_k))$.
  
  Precisely, we mean that if we fix a good sequence $(n_k)_{k \in \omega}$,
  then there exists a
  choice of function $\rho_i \from \omega \to \omega$ for
  each $i \in I$ such that if player II wins the game $G^{A_j}_j((n_k))$,
  then there is a continuous function $g
  \from (\cantor)^{\Gamma_j} \to \cantor$ such that $x(1)(n) =
  g(x)(\rho_j(n))$ for all $x$ and $n$, and 
  an injective continuous
  $\Gamma_j$-equivariant function $f \from (\cantor)^{\Gamma_j} \to A_j$
  that is an embedding of $E(\Gamma_j,\cantor)$ into $E(\fpGammaiI, \prod_i
  \cantor)$ and such that $f_g(x)(1)(j)
  = g(x)$ for every $x \in (\cantor)^{\Gamma_j}$.

  The reason this is true is that when we extend the condition $p$ to $p^*$
  in the proof of Lemma~\ref{equivariant_emb}, the condition $p^*$ will
  depend on the winning strategy for player II in the game $G_j$ (since $p^*$
  must diagonalize against how the strategy works), however the length of
  elements in the range of $p^*$ will always be $l$ independent of what the
  winning strategy is. Hence, if we
  choose $\rho_j$ in advance to be sufficiently fast growing, we will be
  able to alternate meeting the dense sets defined in the proof of
  Lemma~\ref{equivariant_emb} (where we extend $p$ to $p^*$) with dense
  sets ensuring that $x(1)(n) = g(x)(\rho_j(n))$.
\end{remark}

Next, we will show how we can combine Lemma~\ref{equivariant_emb} and
Lemma~\ref{player_II_lemma} to construct a 
Borel cardinality preserving ultrafilter for $E_\infty$, which will give a
new way of proving many of the theorems from Section 3
of~\cite{1109.1875}. 

\begin{lemma} \label{ultrafilter_lemma}
  Suppose $I = 2$ and $\Gamma_0$ and $\Gamma_1$ are countably infinite
  groups, and $(n_k)_{k \in \omega}$ is strictly increasing. Let
  $\mathcal{S}$ be the $\sigma$-algebra of $E(\fpGammaiI, \prod_i
  \cantor)$-invariant Borel sets. Define $U \subset \mathcal{S}$ to
  be the set of $A \in \mathcal{S}$ such that player II has a winning strategy in
  $G_0^{A}((n_k))$. Then $U$ is a $\sigma$-complete ultrafilter.
\end{lemma}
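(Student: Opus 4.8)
The plan is to verify the four defining properties in turn, using Borel determinacy together with Lemmas~\ref{player_I_lemma} and \ref{player_II_lemma} (which, as remarked after Definition~\ref{general_game_defn}, apply verbatim to the general game $G^A_j((n_k))$), and crucially the fact that every $A \in \mathcal{S}$ is $E(\fpGammaiI, \prod_i \cantor)$-invariant, so that $\gamma \cdot y \in A \iff y \in A$ for every $\gamma$. The trivial properties come first: the whole space lies in $U$ since every outcome of $G^X_0((n_k))$ is in $X$, and $\emptyset \notin U$ since no outcome lies in $\emptyset$; upward closure is immediate because a winning strategy for player II in $G^A_0((n_k))$ remains winning for any invariant $B \supset A$, as every produced $y$ lands in $A \subset B$. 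Closure under finite intersection will fall out of $\sigma$-completeness below, so $U$ will be a proper filter, and once the dichotomy is established it will be an ultrafilter.

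Next I would prove the \textbf{dichotomy}: for each invariant Borel $A$, exactly one of $A, \comp{A}$ lies in $U$. For \emph{not both}, suppose player II wins both $G^A_0((n_k))$ via $\tau$ and $G^{\comp{A}}_0((n_k))$ via $\tau'$. Fixing any $1 \neq \gamma \in \Gamma_0$ and applying Lemma~\ref{player_II_lemma} with $s_1 = \tau$ and $s_\gamma = \tau'$ produces a $y$ with $y \in A$ and $\gamma \cdot y \in \comp{A}$; invariance forces $y \in \comp{A}$, a contradiction. For \emph{at least one}, suppose toward a contradiction that player II wins neither game, so by Borel determinacy player I wins both $G^A_0((n_k))$ and $G^{\comp{A}}_0((n_k))$. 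Applying Lemma~\ref{player_I_lemma} to the partition $\{A, \comp{A}\}$ with the two labelings $\{A_0, A_1\} = \{A, \comp{A}\}$ and $\{A_0, A_1\} = \{\comp{A}, A\}$, and using that player I already wins the two $G_0$-games, yields that player I cannot win $G^{\comp{A}}_1((n_k))$ nor $G^A_1((n_k))$; hence by determinacy player II wins both $G^A_1((n_k))$ and $G^{\comp{A}}_1((n_k))$. Now the \emph{not both} argument run inside the game $G_1$ (using a non-identity element of $\Gamma_1$ and the $j=1$ instance of Lemma~\ref{player_II_lemma}) gives the desired contradiction. This is where both groups being countably infinite is used, since I must have non-identity elements available in each $\Gamma_j$.

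For \textbf{$\sigma$-completeness} I would argue by contradiction using the merging power of Lemma~\ref{player_II_lemma}. Given $A_0, A_1, \ldots \in U$, set $B = \biginters_n A_n$ and suppose $B \notin U$. By the dichotomy $\comp{B} \in U$, so player II wins $G^{\comp{B}}_0((n_k))$ via a strategy $\tau$, and each $A_n \in U$ gives a winning player-II strategy $\tau_n$ in $G^{A_n}_0((n_k))$. Enumerate $\Gamma_0 = \{\gamma_0 = 1, \gamma_1, \gamma_2, \ldots\}$ (possible since $\Gamma_0$ is countably infinite) and assign $s_{\gamma_0} = \tau$, $s_{\gamma_{n+1}} = \tau_n$. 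Lemma~\ref{player_II_lemma} produces a single $y$ such that $\gamma \cdot y$ is an outcome of $G_0$ in which player II follows $s_\gamma$, for every $\gamma \in \Gamma_0$. Since each $s_\gamma$ is \emph{winning}, $y = \gamma_0 \cdot y \in \comp{B}$ while $\gamma_{n+1} \cdot y \in A_n$, and invariance of each $A_n$ gives $y \in A_n$ for all $n$, i.e. $y \in B$ — contradicting $y \in \comp{B}$. Hence $B \in U$; taking all but finitely many $A_n$ equal to $X$ also yields closure under finite intersections, completing the verification that $U$ is a $\sigma$-complete filter, and with the dichotomy, a $\sigma$-complete ultrafilter.

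The main obstacle I anticipate is the asymmetry between the two games: swapping the payoff set does \emph{not} swap the winner, because players I and II control the disjoint but differently-shaped coordinate sets $W_0$ and $W_1$. The trick to overcome this is precisely the combined use of Lemma~\ref{player_I_lemma} (to transfer a failure of the dichotomy in $G_0$ into a pair of player-II wins in $G_1$) and Lemma~\ref{player_II_lemma} (to amalgamate winning player-II strategies, indexed by a \emph{countably infinite} $\Gamma_j$, into one outcome whose translates all lie in their respective payoff sets). The only genuine computation needed is to confirm that these two lemmas indeed hold for the general game $G^A_j((n_k))$ rather than just $G^A_j$, which was already granted in the text.
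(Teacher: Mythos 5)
Your proof is correct and follows essentially the same route as the paper: both arguments rest on amalgamating winning strategies for player II via Lemma~\ref{player_II_lemma} and deriving a contradiction from invariance of the sets, and on combining Lemma~\ref{player_I_lemma} with Borel determinacy to convert a failure of player II in the game $G_0$ into a player II win in the game $G_1$. The only difference is organizational — you establish the dichotomy for a single invariant set and then deduce $\sigma$-completeness from it, whereas the paper runs both arguments directly for countable sequences of sets — an inessential rearrangement.
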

\begin{proof}
  First, it is clear that if $A \in U$ and $B \supset A$, then $B \in U$.
  Now if $B_1, B_2, \ldots \in U$ then we claim $\comp{(\inters_{m \geq 1 }
  B_m)} \notin U$. This is by contradiction, if $(\inters_{m \geq 1} B_m)
  \in U$, then by Lemma~\ref{player_II_lemma}, we could obtain a $y$ that is an outcome of
  a winning strategy for player II in $G^{(\inters_{m \geq 1}
  B_m)}_0((n_k))$ and so that
  $\gamma_{0,i} \cdot y$ is an outcome of a winning strategy for player II
  in $G^{B_m}_0((n_k))$, for each $m \geq 1$. Hence, 
  $y \in \inters_{k \geq 1} B_k$, but $\gamma_{0,k} \cdot y \in B_k$ for
  all $k \geq 1$, which is a contradiction, since the $B_k$ are invariant. 

  An analogous argument shows that if $B_1, B_2, \ldots
  \notin U$ then, $\comp{(\inters_{m \geq 1 }
  B_m)} \in U$; if player II does not have a winning strategy in
  $G_0^A((n_k))$,
  then player II has a winning strategy in $G_1^{\comp{A}}((n_k))$ by
  Lemma~\ref{player_I_lemma}. Note that here we are using the fact observed
  above that both Lemmas~\ref{player_I_lemma} and \ref{player_II_lemma} adapt to the general game
  $G_i((n_k))$.
\end{proof}

Now if $\Gamma_0 = \Gamma_1 = \F_\omega$, then
Theorem~\ref{equivariant_emb} implies that the ultrafilter from
Lemma~\ref{ultrafilter_lemma} preserves Borel cardinality.
Hence, we have a new proof
of the following theorem, by applying Proposition~\ref{indiv_prop}.

\begin{thm}[\cite{1109.1875}]
  $E_\infty$ is $2^{\aleph_0}$-indivisible. \qed
\end{thm}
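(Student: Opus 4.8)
The plan is to specialize the ultrafilter construction of Lemma~\ref{ultrafilter_lemma} to the case $\Gamma_0 = \Gamma_1 = \F_\omega$ and then feed the resulting object directly into Proposition~\ref{indiv_prop}. First I would record that $\F_\omega * \F_\omega \cong \F_\omega$, so the free product $\fpGammaiI$ contains $\F_2$ as a subgroup and $E(\fpGammaiI, \prod_i \cantor)$ is a universal countable Borel equivalence relation; this is the relation we are calling $E_\infty$. Fixing any good sequence $(n_k)_{k \in \omega}$ — such sequences exist because by taking the $n_k$ sufficiently sparse we can make $|N_{\alpha,i,k}|$ grow arbitrarily quickly relative to the slowly growing $|S_k|$, so that $d^{|S_k|} < 2^{|N_{\alpha,i,k}|}$ holds for infinitely many $k$ — Lemma~\ref{ultrafilter_lemma} produces a $\sigma$-complete ultrafilter $U$ on the $\sigma$-algebra of $E_\infty$-invariant Borel sets, namely the invariant $A$ for which player II wins $G_0^A((n_k))$.

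The key step is to verify that $U$ preserves Borel cardinality. Given $A \in U$, player II wins $G_0^A((n_k))$ by the definition of $U$, so I would apply Lemma~\ref{equivariant_emb} to the two-piece partition $\{A, \comp{A}\}$ with $j = 0$. This yields an injective continuous $\Gamma_0$-equivariant function $f \from (\cantor)^{\Gamma_0} \to A_0 = A$ that is moreover a Borel embedding of $E(\Gamma_0, \cantor) = E(\F_\omega, \cantor)$ into $E_\infty$, with range contained in the invariant set $A$. Since $E(\F_\omega, \cantor)$ is itself universal, this witnesses $E_\infty \embeds_B E_\infty \restrict A$; as the reverse reduction is trivial, we obtain $E_\infty \sim_B E_\infty \restrict A$ for every $A \in U$. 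Hence $U$ is a $\sigma$-complete Borel cardinality preserving ultrafilter on the $E_\infty$-invariant sets.

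Finally I would invoke Proposition~\ref{indiv_prop}, whose hypothesis is precisely the existence of such an ultrafilter, to conclude that $E_\infty$ is $2^{\aleph_0}$-indivisible. The genuine mathematical content has already been isolated in Lemmas~\ref{ultrafilter_lemma} and \ref{equivariant_emb}, so the main obstacle is not a new argument but confirming that the specialized setup meets every hypothesis: that $\F_\omega * \F_\omega$ really returns a copy of $E_\infty$, that a single good sequence can be fixed throughout, and above all that the function delivered by Lemma~\ref{equivariant_emb} has range \emph{contained in} $A$ rather than merely meeting it, so that the full Borel cardinality of $E_\infty$ is recaptured on the $U$-large piece. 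Once these points are checked, the conclusion is immediate.
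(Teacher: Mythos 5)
Your proposal is correct and takes essentially the same route as the paper: the paper's proof also sets $\Gamma_0 = \Gamma_1 = \F_\omega$, observes via Lemma~\ref{equivariant_emb} that the ultrafilter of Lemma~\ref{ultrafilter_lemma} preserves Borel cardinality (exactly your argument that $A \in U$ gives an embedding of the universal $E(\F_\omega,\cantor)$ into the restriction to $A$), and then applies Proposition~\ref{indiv_prop}. You merely make explicit some points the paper leaves implicit, such as the existence of good sequences and the identification of $E(\F_\omega * \F_\omega, \prod_i \cantor)$ with $E_\infty$ up to Borel bireducibility.
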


In addition, Lemma~\ref{equivariant_emb} gives us a new way of proving a
couple of the other theorems from Section 3 of \cite{1109.1875}. For
example, we have the following:

\begin{thm}[\cite{1109.1875}*{Corollary 3.1}]\label{embedding_theorem}
  If $E$ is a universal countable Borel equivalence relation on a standard
  Borel space $X$, and $\{B_i\}_{i \in \omega}$ is a Borel partition of
  $X$, then there must exist a $B_i$ such that $E(\F_\omega,\cantor)
  \embeds_B E \restrict B_i$.
\end{thm}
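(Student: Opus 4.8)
The plan is to combine the universality of $E$ with the \emph{reduction} strengthening of Theorem~\ref{g1}. First, since $E$ is universal, \cite{1109.1875}*{Theorem 3.6} (exactly as invoked in the proof of Proposition~\ref{some_family}) yields an injective Borel reduction $g \from (\cantor)^{\F_\omega} \to X$ witnessing $E(\F_\omega,\cantor) \embeds_B E$. Next I would realize $\F_\omega$ as a free product $\fpGammaiI$ of countably many groups $\{\Gamma_i\}_{i \in \omega}$, each isomorphic to $\F_\omega$; this identifies $E(\F_\omega,\cantor)$ with the orbit equivalence relation of the shift action of $\fpGammaiI$.

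I would then pull the given partition back along $g$ by setting $A_i = g^{-1}(B_i)$. Since $\{B_i\}_{i\in\omega}$ partitions $X$ and $g$ is a function, $\{A_i\}_{i \in \omega}$ is a Borel partition of $(\cantor)^{\F_\omega}$, which I transport to a Borel partition of $(\prod_i \cantor)^{\fpGammaiI}$ using the homeomorphism $\prod_i \cantor \cong \cantor$ applied coordinatewise. This coordinatewise homeomorphism commutes with the shift action (the action only permutes the $\fpGammaiI$-index and does not touch the base), so it is an isomorphism of the two orbit equivalence relations.

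Now apply Theorem~\ref{g1} to $\{A_i\}_{i\in\omega}$: there is some $j \in \omega$ and an injective continuous $\Gamma_j$-equivariant map $f \from (\cantor)^{\Gamma_j} \to (\prod_i \cantor)^{\fpGammaiI}$ with $\ran(f) \subset A_j$ which is moreover a Borel reduction of $E(\Gamma_j,\cantor)$ into $E(\fpGammaiI, \prod_i \cantor)$. Since $\Gamma_j \cong \F_\omega$ the source relation is $E(\F_\omega,\cantor)$, and under the identification above the target is again $E(\F_\omega,\cantor)$; thus $f$ is a Borel embedding of $E(\F_\omega,\cantor)$ into itself whose range lies inside $A_j = g^{-1}(B_j)$. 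Composing, $g \circ f$ is a Borel injection, it is a reduction because compositions of injective Borel reductions are injective Borel reductions, and its range lies in $g(A_j) \subset B_j$. Hence $g \circ f$ witnesses $E(\F_\omega,\cantor) \embeds_B E \restrict B_j$, as desired.

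The step that genuinely requires the work of this section is the appeal to Theorem~\ref{g1} rather than the weaker Lemma~\ref{equivariant_hom}. The bare equivariant injection produced by Lemma~\ref{equivariant_hom} is only a homomorphism: its range need not be a union of full classes, so it could identify points in distinct $E(\Gamma_j,\cantor)$-classes that merge under the larger free-product action, and then $g\circ f$ would fail to be class-injective. It is precisely the ``Furthermore'' (reduction) conclusion of Theorem~\ref{g1}, established via Lemma~\ref{equivariant_emb}, that forbids this; everything else is routine bookkeeping with the composition of reductions.
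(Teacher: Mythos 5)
Your argument from the second step onward is sound and structurally parallel to the paper's own proof: realize $\F_\omega$ as $\fp_{i \in \omega} \Gamma_i$ with each $\Gamma_i \cong \F_\omega$, pull the partition back, apply the reduction (``Furthermore'') clause of Theorem~\ref{g1} --- i.e.\ Lemma~\ref{equivariant_emb} rather than Lemma~\ref{equivariant_hom} --- to land an embedded copy of $E(\F_\omega,\cantor)$ inside a single piece, and compose. Your closing paragraph explaining why the bare equivariant injection of Lemma~\ref{equivariant_hom} would not suffice is exactly right.

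The genuine problem is your first step. You obtain a globally \emph{injective} reduction $g$ by citing \cite{1109.1875}*{Theorem 3.6} (universality implies $F \embeds_B E$ for every countable Borel $F$). But, as the paper notes immediately after its proof of Theorem~\ref{embedding_theorem}, that embedding theorem was obtained in \cite{1109.1875} as a \emph{corollary of the very statement you are proving}. So your argument derives the theorem from one of its own consequences; it escapes formal circularity only because \cite{1109.1875} contains an independent, Martin-ultrafilter-based proof of the present statement --- which is precisely the proof that this section of the paper is designed to replace with the game machinery. The idea you are missing is how the paper gets by without any embedding theorem: universality alone gives a (possibly non-injective) Borel reduction $g$ from $E(\fp_{i < \omega} \F_\omega, \prod_{i < \omega} \cantor)$ to $E$, and one then applies Lusin-Novikov uniformization to split each $g^{-1}(B_i)$ into countably many Borel pieces on which $g$ \emph{is} injective. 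Lemma~\ref{equivariant_emb} is applied to this refined (still countable) partition $\{A_j\}_{j \in \omega}$; the resulting embedding $f$ has range in one piece $A_j$, on which $g$ is injective and whose image lies in a single $B_i$, so $g \circ f$ is again an injective reduction into $E \restrict B_i$. This costs nothing, since the game lemma handles arbitrary countable Borel partitions, and it is what makes the proof self-contained --- indeed it is what then allows the embedding theorem ($F \embeds_B E$ for universal $E$) to be deduced as a corollary rather than assumed.
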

\begin{proof} 
Let $g$ be a Borel
  reduction from $E(\fp_{i < \omega} \F_\omega, \prod_{i < \omega} \cantor)$
  to $E$. By Lusin-Novikov uniformization~\cite{MR1321597}*{Theorem 18.10}
  we can partition each 
  $g^{-1}(B_i)$ into countably many Borel sets on which $g$ is injective.
  Let $\{A_j\}_{j \in \omega}$ be the union of these countably many sets
  over every $i \in \omega$. 
  Now by Lemma~\ref{equivariant_emb},
  find a $j \in \omega$ and a Borel embedding $f$ of
  $E(\F_\omega,\cantor)$ into $E(\fp_{i < \omega} \F_\omega, \prod_{i <
  \omega} \cantor) \restrict A_j$. Then $g \circ f$ is an injective reduction
  of $E(\F_\omega, \cantor)$ to $E \restrict B_i$ for some $i$.
\end{proof}

This theorem was used in \cite{1109.1875} to also obtain the corollary that
if $E$ is a universal (under $\leq_B$) countable Borel equivalence
relation, then $F \embeds_B E$ for every countable Borel equivalence
relation $F$.

Of course, our proofs above are not so far from those of 
\cite{1109.1875}. In particular, every ultrafilter given by
Lemma~\ref{ultrafilter_lemma} refines Martin's ultrafilter on the Turing
degrees in the following way.

\begin{prop}\label{Martin_measure_refinement}
  Let $\Gamma_0, \Gamma_1$ be countably infinite computable groups, and
  $(n_k)_{k \in \omega}$ be a computable good sequence. It is
  clear that 
  $\left( \prod_i \cantor \right)^{\fpGammaiI}$ is effectively homeomorphic
  to $\cantor$, and viewed this way, the shift equivalence relation $E(\fpGammaiI,
  \prod_i \cantor)$ is a subequivalence
  relation of Turing equivalence. Now suppose that  
  $A$ is a Borel
  subset of $\left( \prod_i \cantor \right)^{\fpGammaiI}$ that is Turing
  invariant, and hence $E(\fpGammaiI, \prod_i \cantor)$-invariant. 
  Then $A$
  is in Martin's ultrafilter if and only if $A$ is in the ultrafilter $U$ given
  by Lemma~\ref{ultrafilter_lemma}.
\end{prop}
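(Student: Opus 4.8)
The plan is to prove the forward implication---that membership of $A$ in the ultrafilter $U$ forces $A$ to contain a Turing cone---and then to deduce the reverse implication formally from the fact that both $U$ and Martin's ultrafilter are ultrafilters on the same $\sigma$-algebra, namely the Turing-invariant (equivalently $E(\fpGammaiI, \prod_i \cantor)$-invariant) Borel sets. Recall that $A$ lies in Martin's ultrafilter precisely when $A$ contains a Turing cone.

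For the forward direction, suppose $A \in U$, so that player II has a winning strategy in $G_0^A((n_k))$. Applying the general-game analogue of Lemma~\ref{equivariant_hom} together with Remark~\ref{identity_projection}, we obtain a continuous $\Gamma_0$-equivariant function $f \from (\cantor)^{\Gamma_0} \to (\prod_i \cantor)^{\fpGammaiI}$ with $\ran(f) \subset A$ and with the additional property that $\pi_0(f(x)) = x$ for every $x$. The remaining argument is purely computability-theoretic and parallels Theorem~\ref{thm:reversal}. Since $f$ is continuous and the ambient spaces are effectively homeomorphic to $\cantor$, there is a real $z$ coding $f$ such that $f(x) \leq_T x \oplus z$ for all $x$; and since $\pi_0$ is a computable projection with $\pi_0(f(x)) = x$, we also have $x \leq_T f(x)$. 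Hence $f(x) \equiv_T x$ whenever $x \geq_T z$. Now fix any real $w \geq_T z$ and realize it as an $x \in (\cantor)^{\Gamma_0}$ with $x \equiv_T w$ under the effective homeomorphism; then $f(x) \in A$ and $f(x) \equiv_T w$. As $A$ is Turing invariant, every real of the same Turing degree as $f(x)$ lies in $A$, so $w \in A$. Thus $A$ contains the cone of reals above $z$, and $A$ is in Martin's ultrafilter.

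For the reverse implication I would argue by complementation. Suppose $A$ is in Martin's ultrafilter but $A \notin U$. Since $U$ is an ultrafilter by Lemma~\ref{ultrafilter_lemma} and $\comp{A}$ is again Turing invariant (hence $E$-invariant), we have $\comp{A} \in U$, so by the forward direction $\comp{A}$ contains a Turing cone. But $A$ also contains a cone, and any two cones intersect (the cone above $x \oplus y$ is contained in each), contradicting $A \cap \comp{A} = \emptyset$; hence $A \in U$. I expect the only genuine subtlety to be the computability bookkeeping in the second paragraph: one must check that the continuous equivariant $f$ is computable from a single real $z$---which follows from continuity and the effective homeomorphisms and does \emph{not} require the winning strategy itself to be computable---and that the projection $\pi_0$ recovering $x$ is computable, which is exactly where the hypotheses that $\Gamma_0, \Gamma_1$ are computable groups and $(n_k)$ is a computable good sequence are used.
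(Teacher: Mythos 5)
Your proof is correct, and its reverse direction (complementation via the ultrafilter property from Lemma~\ref{ultrafilter_lemma}, plus the fact that two Turing cones always intersect) is exactly the paper's argument. The forward direction, however, is finished by a genuinely different mechanism. The paper works pointwise with the winning strategy: by Lemma~\ref{player_II_lemma}, for every $z \in \cantor$ there is an outcome $y$ of player II's winning strategy with $y(1)(j) = z$; such a $y$ lies in $A$ and computes $z$, so $A$ meets every cone, and the paper then concludes that $A$ is in Martin's ultrafilter --- a step that implicitly invokes Martin's cone theorem (Borel Turing determinacy: a Turing-invariant Borel set meeting every cone must contain one). You instead assemble the outcomes into the continuous $\Gamma_0$-equivariant map $f$ given by the general-game analogue of Lemma~\ref{equivariant_hom} (strictly speaking, by its proof, since the lemma as stated takes a partition as input while you only have a strategy for the single set $A$) with $\pi_0 \circ f = \mathrm{id}$ as in Remark~\ref{identity_projection}, and then run the relativization argument of Theorem~\ref{thm:reversal}: above a code $z$ for the continuous function $f$ one has $f(x) \equiv_T x$, so Turing invariance of $A$ places the entire cone above $z$ inside $A$. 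What your route buys: the forward implication makes no appeal to Martin's theorem and exhibits an explicit base for the cone, namely a code for $f$. What the paper's route buys: it is shorter, needing only the pointwise consequence of Lemma~\ref{player_II_lemma} rather than continuity of the assembled map and its coding; and since the proposition concerns Borel sets, the appeal to Borel Turing determinacy costs nothing. Your closing bookkeeping remarks are also the right ones: computability of $\Gamma_0, \Gamma_1$ and of $(n_k)$ is what makes the identification with $\cantor$ and the projection $\pi_0$ effective, and the code $z$ depends on the strategy but need not itself be computable.
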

\begin{proof}
  Suppose $s$ is a winning strategy for player II in the game
  $G^A_j$.
  By Lemma~\ref{player_II_lemma}, for every $z \in \cantor$ there exists a
  $y$ that is an outcome of the strategy $s$ such that $y(1)(j) =
  z$. Hence $A$ must contain reals of arbitrarily large Turing degree and
  hence must be in Martin's ultrafilter. Conversely, if $A$ is in Martin's
  ultrafilter, then it must also be in the ultrafilter $U$ from
  Lemma~\ref{player_II_lemma}, as $\comp{A} \in U$ would imply $\comp{A}$
  is in Martin's ultrafilter.
\end{proof}

Indeed, our game in Definition~\ref{game_defn} is actually Martin's game
from~\cite{MR0227022} if $A$ is Turing invariant. The bits of $y$ are
partitioned into two computable sets: one set that player I determines, and
one set that player II determines, and then player II wins if and only if
$y$ is in $A$.

\subsection{$\K$-structurable equivalence relations}\label{subsec:k_structurable}

In this section we use the terminology and notation of
\cite{MR1900547}*{Section 2.5}. Suppose $L = \{R_i: i \in I\}$ is a
countable relational language and $X_L$ is the associate space of
$L$-structures whose universe is some set $I \leq \omega$. Let $\K \subset X_L$
be a Borel class of $L$-structures closed under isomorphism. By a theorem
of Lopez-Escobar, such $\K$ are exactly those classes of structures defined
by some $L_{\omega_1,\omega}$ sentence~\cite{MR1321597}*{Theorem 16.8}. A
countable Borel equivalence relation $E$ on a standard Borel space $X$ is
said to be \define{Borel $\K$-structurable} if there is a Borel assignment
of a $\K$-structure to each $E$-class whose universe is that class.
Precisely, we mean that there are Borel relations $\{Q_i : i \in I\}$ on
$X$ where $Q_i$ and $R_i$ have the same arity for all $i \in I$, and so
that for each $E$-class $C$, the structure whose universe is $C$ and whose
relations are $Q_i \restrict C$ is isomorphic to some structure in $\K$.

Recall that if $E$ and $F$ are Borel equivalence relations on the standard
Borel spaces $X$ and $Y$, then a Borel homomorphism $f \from X \to Y$ from
$E$ to $F$ is said to be \define{class bijective} if for every $E$-class
$C$, $f \restriction C$ is a bijection between $C$ and some $F$-class. It
is easy to see that if there is a class bijective homomorphism Borel
homomorphism from $E$ to $F$, and $F$ is Borel $\K$-structurable, then $E$
is also Borel $\K$-structurable.

Many natural classes of countable Borel equivalence relations are exactly
the $\K$-structurable equivalence relations for some such class of
structures $\K$. For example, the class $\K$ of trees yields the class of
treeable equivalence relations. Similarly, the class of hyperfinite
equivalence relations corresponds to the class of structures that are
increasing sequences of finite equivalence relations whose union is the
universe of the structure. Further, given a class $\mathcal{C}$ of
$\K$-structurable equivalence relations for some $\K$, the class of
equivalence relations that are finite index over elements of $\mathcal{C}$,
and the class of equivalence relations that are increasing unions of
elements of $\mathcal{C}$ are also both structurability classes. 

Ben Miller has pointed out the following theorem, which generalizes many
ad-hoc universality proofs in the field of countable Borel equivalence
relations, such as \cite[Proposition 7.1]{MR1667145} for graphs.
The proof we give is a simplified version of our earlier
argument suggested by Kechris. Recall that the notation $E \embeds_B^i F$
indicates that there is an injective Borel reduction from $E$ to $F$ whose
range is $F$-invariant.

\begin{thm}[Miller, after Jackson, Kechris, and
Louveau~\cite{MR1900547} and Kechris, Solecki, Todorcevic~\cite{MR1667145}]\label{universal_structurable}
  If $\K$ is a Borel class of countable relational structures closed under
  isomorphism, then there is a universal countable Borel $\K$-structurable
  equivalence relation, which we note $E_{\infty \K}$. That is, given any
  countable Borel $\K$-structurable equivalence relation $E$, then $E
  \embeds_B^i E_{\infty \K}$.
\end{thm}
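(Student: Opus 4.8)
The plan is to realize $E_{\infty\K}$ as a shift equivalence relation on a space of ``$\K$-structured orbits'', and to derive universality from the fact that the canonical coding of any countable Borel equivalence relation into a shift action is \emph{equivariant}, and therefore automatically an invariant embedding. First I would fix $\Gamma = \F_\omega$ and record the engine of the argument. By Feldman--Moore~\cite{MR0578656}, any countable Borel equivalence relation $E$ on $Z$ is generated by a Borel action $\Gamma \actson Z$. Fixing a Borel injection $c \from Z \to \cantor$, the map $a \from Z \to (\cantor)^\Gamma$ given by $a_z(\gamma) = c(\gamma^{-1} \cdot z)$ is injective, $\Gamma$-equivariant, and satisfies $\Stab(a_z) = \Stab(z)$; since it is equivariant, its range is $\Gamma$-invariant, so $a$ is an invariant embedding of $E$ into the shift $E(\Gamma,\cantor)$. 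The point is to carry a $\K$-structuring along this embedding.

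Next I would build the target. Let $X_L$ denote the space of $L$-structures with universe $\Gamma$, on which $\Gamma$ acts by relabeling, and let $Y_{\infty\K} \subset (\cantor)^\Gamma \times X_L$ be the set of pairs $(x,M)$ such that (a) $M$ is invariant under $\Stab(x)$, so that $M$ descends to a structure on the orbit $[x] \cong \Gamma/\Stab(x)$, and (b) this induced structure on $[x]$ lies in $\K$. Both conditions are Borel: membership in $\K$ is a Borel, isomorphism-invariant property (by Lopez--Escobar), so condition (b) does not depend on how the orbit is enumerated. The set $Y_{\infty\K}$ is invariant under the diagonal shift of $\Gamma$; I let $E_{\infty\K}$ be the resulting orbit equivalence relation. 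Reading the relations of $M$ off on each orbit then exhibits a Borel assignment of a $\K$-structure to each $E_{\infty\K}$-class whose universe is that class, so $E_{\infty\K}$ is $\K$-structurable by construction.

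For universality, given a $\K$-structurable $E$ with Borel $\K$-structuring $\{Q_i\}$, I would set $f(z) = (a_z, \tilde N_z)$, where $\tilde N_z \in X_L$ is the pullback of the structure to $\Gamma$, defined by $R_i^{\tilde N_z}(\gamma_1,\ldots) \iff Q_i(\gamma_1^{-1} z,\ldots)$. Since each $Q_i$ is a relation on orbits, $\tilde N_z$ is invariant under $\Stab(z) = \Stab(a_z)$ and descends to the orbit, where it induces precisely the transported $\K$-structure; hence $f(z) \in Y_{\infty\K}$. A direct computation (using the same conventions that made $a$ equivariant) shows that $f$ is $\Gamma$-equivariant, and $f$ is injective because $a$ is. Equivariance makes $f$ a reduction with $\Gamma$-invariant range, so $f$ is an invariant embedding $E \embeds_B^i E_{\infty\K}$, as required; invariance comes for free precisely because the whole map is equivariant rather than merely a reduction.

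The genuinely necessary bookkeeping is all concentrated in the target space: checking that the descent condition (a) and the $\K$-condition (b) are Borel, and that the resulting structure truly lives on the orbit rather than on $\Gamma$. I expect the main obstacle to be the treatment of stabilizers, since $E(\Gamma,\cantor)$ is not free; restricting to the free part $\Free((\cantor)^\Gamma)$ would confine us to treeable relations, whereas a general $\K$-structurable relation forces nontrivial stabilizers. The descent condition is exactly what allows the construction to absorb this, and verifying that it is respected both by the diagonal shift on $Y_{\infty\K}$ and by the pullback $\tilde N_z$ is where the care lies.
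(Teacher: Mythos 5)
Your construction works, but it is not the paper's argument: the two realize the same idea --- pair each point of a universal relation with a structure --- through different mechanisms. The paper black-boxes the invariant universality of $E_\infty = E(\F_\omega,\cantor)$, keeps the structures on an abstract universe $I \leq \omega$, and fixes (by Lusin--Novikov) a Borel assignment $y \mapsto f_y$ of bijections between such universes and the classes $[y]_{E_\infty}$; two pairs are $E_{\infty \K}$-equivalent iff their second coordinates are $E_\infty$-equivalent and the structures pushed forward onto the common class by these bijections coincide, and universality is then a one-line composition of structure transfer with an invariant embedding into $E_\infty$. You instead keep the structures on $\Gamma$ itself, cut the space down by the stabilizer-descent condition, take $E_{\infty \K}$ to be the orbit relation of the diagonal $\F_\omega$-action, and prove universality by unwinding the black box: your coding $z \mapsto a_z$ is precisely the standard proof that the shift is invariantly universal. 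Your route buys something the paper must argue separately: your $E_{\infty \K}$ is by definition induced by an explicit Borel action of $\F_\omega$, a fact the paper invokes later (in the proof of Theorem~\ref{universal_on_null}) and which for the paper's model requires transporting the action through the maps $f_y$. The paper's route buys freedom from stabilizers: because classes are enumerated abstractly, no descent condition is ever needed, and Borelness of the equivalence relation and of the structuring is immediate.

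Two points in your sketch need to be stated with more care, though neither is a gap. First, ``$M$ is invariant under $\Stab(x)$'' must mean invariance under \emph{independent} translation of each coordinate of each relation by elements of $\Stab(x)$, on the correct side: with your convention $\gamma \mapsto \gamma^{-1} \cdot x$ the fibers of the identification are the right cosets $\Stab(x)\gamma$, so descent requires $R_i^M(\sigma_1\gamma_1,\ldots,\sigma_k\gamma_k) \iff R_i^M(\gamma_1,\ldots,\gamma_k)$ for arbitrary $\sigma_1,\ldots,\sigma_k \in \Stab(x)$; invariance under the diagonal relabeling action alone does not make relations descend to the coset space. Your pullbacks $\tilde N_z$ do satisfy the independent version, since $\sigma_j \in \Stab(z)$ gives $Q_i((\sigma_1\gamma_1)^{-1}\cdot z,\ldots) \iff Q_i(\gamma_1^{-1}\cdot z,\ldots)$, and the diagonal instance of this invariance is also exactly what yields $\Stab((x,M)) = \Stab(x)$, which you need so that the $E_{\infty \K}$-class of $(x,M)$ projects bijectively onto $[x]$ --- otherwise ``reading the relations of $M$ off on each orbit'' would not produce a structure whose universe is the class itself. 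Second, to see that your condition (b) is Borel you still need a Lusin--Novikov-style Borel enumeration of the orbit $[x]$ in order to transfer the induced structure to a standard universe before testing membership in $\K$; so the paper's device $f_y$ does not disappear from your proof, it just migrates into the verification that $Y_{\infty \K}$ is Borel.
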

\begin{proof}
  Let $E_\infty$ be an invariantly universal
  countable Borel equivalence relation on the space $Y_\infty$ so that for all countable Borel
  equivalence relations $F$, we have that 
  $F \embeds_B^i E_\infty$ (see \cite{MR1149121}). For example, let $E_\infty =
  E(\F_\omega, \cantor)$. Let $y \mapsto f_y$ be a Borel map from
  $Y_\infty \to (Y_\infty)^{\leq \omega}$ associating
  to each $y \in Y_\infty$ a bijection $f_y: I \to Y_\infty$ from some $I
  \leq \omega$ to the equivalence class $[y]_\infty$ of $y$.

  Now let $Y_{\infty \K}$ be the set of $(x,y) \in X_L \times
  Y_\infty$ such that $x \in \K$, and the universe of $x$ has the same
  cardinality as $[y]_{E_\infty}$. Now $E_{\infty \K}$ is defined to be the
  equivalence relation on $A$ where $(x_0,y_0) \mathrel{E_{\infty \K}}
  (x_1, y_1)$ iff $y_0 E_\infty y_1$ and the structure on $[y_0]_\infty$
  induced by pushing forward $x_0$ under $f_{y_0}$ is equal to the
  structure on $[y_1]_\infty$ obtained by pushing forward $x_1$ under
  $f_{y_1}$.

  Suppose $F$ is a $\K$-structurable countable Borel equivalence
  relation on $X$ and let $Q_0, Q_1, \ldots$ be Borel relations on $X$
  giving a $\K$-structuring of $F$. Let $g \from X \to Y_\infty$
  witness that $F \embeds_B^i E_\infty$. Then $F \embeds_B^i E_{\infty
  \K}$ via the function $x \mapsto (h(x),g(x))$ where $h \from X \to X_L$
  is the function sending $x$ to the structure $h(x) \in X_L$ on the set
  $\dom(f_{g(x)})$ where 
  \[R_i(n_0, \ldots, n_k) \iff Q_i(f_{g(x)}(n_0), \ldots, f_{g(x)}(n_k))\]

  It is also clear that $E_{\infty \K}$ is Borel $\K$-structurable. Define the
  relations $Q_i$ on $A$ witnessing $E_{\infty \K}$ is Borel
  $\K$-structurable by setting 
  \[Q_i((x_0,y_0), \ldots (x_n,y_n)) \iff x_0 \models R_i(f^{-1}_{y_0}(y_0), f^{-1}_{y_0}(y_1),
  \ldots, f^{-1}_{y_0}(y_n))\]
\end{proof}

We have the following trivial corollary of Sacks's theorem that Turing
cones have measure zero, which generalizes \cite[Theorem 3.10]{1109.1875}.

\begin{thm} \label{universal_on_null}
  Suppose $\K$ is a Borel class of countable structures closed under
  isomorphism, and let $E_{\infty \K}$ be the universal $\K$-structurable
  equivalence relation on the space $Y_{\infty \K}$. 
  Then if $\mu$ is a Borel probability measure on $Y_{\infty \K}$, there
  is a $\mu$-null Borel set $A$ so that $E_{\infty \K} \embeds^i_B
  E_{\infty \K} \restriction A$.
\end{thm}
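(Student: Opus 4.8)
The plan is to exhibit an explicit $\mu$-null invariant set $A$ by cutting down the ``base'' copy of $E(\F_\omega,\cantor)$ used to build $E_{\infty\K}$ to a Turing cone: such a cone is null by Sacks's theorem, yet still carries an invariantly universal countable Borel equivalence relation. Recall from the construction in Theorem~\ref{universal_structurable} that $Y_{\infty\K}\subseteq X_L\times Y_\infty$, where $Y_\infty$ carries the invariantly universal relation $E_\infty=E(\F_\omega,\cantor)$ on $(\cantor)^{\F_\omega}$, and that $(x_0,y_0)\mathrel{E_{\infty\K}}(x_1,y_1)$ requires $y_0\mathrel{E_\infty}y_1$. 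Fixing a computable bijection $\F_\omega\cong\omega$ and identifying $(\cantor)^{\F_\omega}$ with $\cantor$ computably, the shift action of $\F_\omega$ permutes the $\F_\omega$-indexed blocks computably, so $\gamma\cdot y\equiv_T y$ for every $\gamma$; hence the shift relation $E_\infty$ is contained in Turing equivalence $\equiv_T$. Consequently every Turing cone $C_z=\{y : y\geq_T z\}$ is $E_\infty$-invariant, so that $A=Y_{\infty\K}\cap(X_L\times C_z)$ is an $E_{\infty\K}$-invariant Borel set (if $(x_0,y_0)\in A$ and $(x_0,y_0)\mathrel{E_{\infty\K}}(x_1,y_1)$, then $y_1\equiv_T y_0\geq_T z$).

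First I would choose $z$ so that $A$ is null. Letting $\nu$ be the pushforward of $\mu$ under the projection to the $\cantor$-coordinate, it suffices to find $z$ with $\nu(C_z)=0$; this is the measure form of Sacks's theorem. Concretely, for each Turing functional $\Phi_e$ the partial map $y\mapsto\Phi_e^y$ pushes $\nu$ forward to a Borel measure having only countably many atoms, so choosing $z$ outside the countable set of all these atoms forces $\nu(\{y:\Phi_e^y=z\})=0$ for every $e$; since $C_z=\bigunion_e\{y:\Phi_e^y=z\}$, we get $\nu(C_z)=0$ and hence $\mu(A)\le\mu(X_L\times C_z)=\nu(C_z)=0$.

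Next I would verify that $E_{\infty\K}\restriction A$ is again a universal $\K$-structurable relation. The key observation is that $A$ is precisely the space produced by the construction of Theorem~\ref{universal_structurable} when the base relation is taken to be $E_\infty\restriction C_z$ rather than $E_\infty$: since $C_z$ is $E_\infty$-invariant we have $[y]_{E_\infty\restriction C_z}=[y]_{E_\infty}$ for all $y\in C_z$, so the cardinality condition matching $\K$-structures to classes is unchanged, and $E_{\infty\K}\restriction A$ is the corresponding relation. Thus it is enough to check that $E_\infty\restriction C_z$ is itself invariantly universal. Given any countable Borel equivalence relation $F$, take a Borel invariant embedding $f_0\from F\embeds_B^i E_\infty$ (which exists since $E_\infty$ is invariantly universal) and replace it by the map $f_1$ with $f_1(x)(\gamma)=f_0(x)(\gamma)\oplus z$, joining the fixed real $z$ uniformly into every coordinate. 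Padding every coordinate by the same $z$ commutes with the coordinate permutations, so $f_1$ is still an injective Borel reduction with $E_\infty$-invariant range, and now $f_1(x)\geq_T z$ for all $x$, whence $\ran(f_1)\subseteq C_z$. Feeding $f_1$ into the embedding of Theorem~\ref{universal_structurable} in place of $f_0$ (and reading off the structure relations $Q_i$ exactly as there) produces an invariant Borel embedding of $E_{\infty\K}$ into $E_{\infty\K}\restriction A$, which is the desired conclusion.

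I expect the only real subtlety to be the bookkeeping of the last paragraph: confirming that restricting the base to the invariant cone $C_z$ yields exactly $E_{\infty\K}\restriction A$, that the structure relations witnessing $\K$-structurability restrict correctly, and that the coordinatewise padding by $z$ preserves both equivariance and invariance of the range. The genuinely essential input — that a single Turing cone is simultaneously $\mu$-null and large enough to carry an invariantly universal, hence universal $\K$-structurable, relation — is exactly Sacks's theorem, which is why the statement is a corollary of it.
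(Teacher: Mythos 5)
Your proposal is correct and follows essentially the same route as the paper's proof: a Turing cone made $\mu$-null by Sacks's theorem, an invariant Borel embedding of $E_{\infty \K}$ into $E_\infty$ padded coordinatewise by the base of that cone, and then the construction of Theorem~\ref{universal_structurable} applied to this padded embedding to land $E_{\infty \K}$ invariantly inside the null set. The only cosmetic differences are that the paper places the cone in the whole space $Y_{\infty \K}$ (citing the relativized Sacks theorem directly, with the cone's base chosen above a representation of $\mu$), whereas you place the cone in the $Y_\infty$-coordinate, pass to the pushforward measure, and reprove the null-cone fact via the countable-atoms argument.
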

\begin{proof}
  Our definition of $E_{\infty \K}$ in Theorem~\ref{universal_structurable}
  has $Y_\infty = (\cantor)^{\F_\omega}$ which is computably homeomorphic to
  $\cantor$, and $X_L$ also computably homeomorphic to $\cantor$ relative
  to the language $L$. Hence, it makes sense to discuss computability of
  elements of the set $Y_{\infty K} \subset X_L \times Y_\infty$ on which $E_{\infty \K}$ is defined. 

  By the relativized version of Sacks's theorem~\cite{MR0186554}, if 
  $x \in \cantor$ can
  compute a representation of $\mu$, and $y >_T x$, then the cone $\{z \in
  \cantor : z \geq y\}$ has $\mu$-measure $0$. Choose such a $y$, and note
  that since $E_{\infty \K}$ is induced by a Borel action of $\F_\omega$,
  there is an invariant Borel embedding $E_{\infty K}$ into $E_{\infty}$ via the
  function $g(x)(\gamma) =  y \oplus \gamma^{-1} \cdot x$. Using $g$ to
  define an invariant Borel embedding of $F = E_{\infty \K}$ into $E_{\infty \K}$ as in the proof of
  Theorem~\ref{universal_structurable}, then the range of the resulting
  function is contained in the cone
  $\{z : z \geq_T y\}$, which therefore has $\mu$-measure $0$.
\end{proof}

We mention here that there is a wealth of open problems related to how
model theoretic properties of $\K$ are related to the Borel cardinality of
$E_{\infty \K}$. For example, suppose $\K$ is the isomorphism class of a
single structure. Can we characterize when $E_{\infty \K}$ is smooth? How
about in the special case when the structure is a 
Fra\"iss\'e Limit?\footnote{These questions have been resolved by \cite{ChenKechris}}

\subsection{Ultrafilters for $\K$-structurable equivalence relations closed
under independent joins}

Our next goal is to use arguments similar to those in
Section~\ref{subsec:ultrafilter_for_E_infty}
to obtain ultrafilters for a class
of universal $\K$-structurable equivalence
relations. The key property $\K$ must have to allow our arguments to work
is that the class of $\K$-structurable equivalence relations must be closed
under independent joins. 

Say that the collection of $\K$-structurable countable Borel equivalence
relations is \define{closed under binary independent joins} if whenever $E$
and $F$ are independent countable Borel $\K$-structurable equivalence
relations on a standard Borel space $X$,
then their join $E \vee F$ is also $\K$-structurable Borel
equivalence relations. Say that the $\K$-structurable countable Borel equivalence
relations are \define{closed under countable independent joins} if whenever
$\{E_i\}_{i \in I}$ are
independent countable Borel $\K$-structurable equivalence relations on the
same standard Borel space, then their join $\bigvee_{i \in I} E_i$ is also
a $\K$-structurable Borel equivalence relation. 

Trees, contractible simplicial complexes of dimension $\leq n$, and the
class of all countable structures are all examples of classes of structures
$\K$ so that the $\K$-structurable Borel equivalence relations are closed
under countable independent joins. So are increasing unions of these
examples. Locally finite trees and Cayley graphs of groups $\Gamma$ for
which $\Gamma * \Gamma \cong \Gamma$ are examples of classes of structures
$\K$ so that the $\K$-structurable Borel equivalence relations are closed
under binary independent joins.

For the specific case of contractible simplicial complexes of dimension
$\leq n$, we note that the universal structurable equivalence relation for
this class is Borel bireducible with the equivalence relation of
isomorphism of contractible simplicial complexes of dimension $\leq n$ by
standard arguments. Gaboriau has shown~\cite{MR1953191} that these
equivalence relations form a proper hierarchy under $\leq_B$. See
also~\cite{MR2155451}.

\begin{thm}\label{K_ultrafilters}
  Suppose $\K$ is a Borel class of countable structures so that the class
  of $\K$-structurable Borel equivalence relations is closed under binary
  independent joins. 
  Then there is an ultrafilter on the
  invariant Borel sets of $E_{\infty \K}$ that preserves Borel cardinality.
  Hence, $E_{\infty \K}$ is $2^{\aleph_0}$-indivisible.
\end{thm}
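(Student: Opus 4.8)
The plan is to mimic the ultrafilter construction of Lemma~\ref{ultrafilter_lemma} from Section~\ref{subsec:ultrafilter_for_E_infty}, replacing the free product of two copies of $\F_\omega$ with a free-product-like decomposition of $E_{\infty \K}$ itself. The essential point is that Lemma~\ref{ultrafilter_lemma} and the proof that its ultrafilter preserves Borel cardinality only used two features of the shift action on $\left(\prod_i \cantor\right)^{\Gamma_0 * \Gamma_1}$: first, that the space carries a free-product decomposition on which the games $G_j^A((n_k))$ and Lemmas~\ref{player_I_lemma}, \ref{player_II_lemma}, and~\ref{equivariant_emb} apply; and second, that each ``piece'' of the free product, when embedded back by Lemma~\ref{equivariant_emb}, carries a copy of the full universal relation. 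The closure of the $\K$-structurable relations under binary independent joins is precisely what lets us realize such a decomposition \emph{inside} $E_{\infty \K}$.

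First I would observe that, by Theorem~\ref{universal_structurable}, if $E$ and $F$ are $\K$-structurable and their join $E \vee F$ is $\K$-structurable (which holds for independent joins by hypothesis), then $E \vee F \embeds_B^i E_{\infty \K}$. Apply this with $E_{\infty \K}$ in the role of both $\Gamma_0$ and $\Gamma_1$: take two independent copies of $E_{\infty \K}$ on a common space, realized as the orbit equivalence relations of the shift actions of $\Gamma_0 = \Gamma_1 = \F_\omega$ on the free product space $\left(\prod_i \cantor\right)^{\F_\omega * \F_\omega}$ (each $E_i$ being the restriction to $\Gamma_i$-orbits). Their join is an independent join of two $\K$-structurable relations, hence $\K$-structurable, hence invariantly embeds into $E_{\infty \K}$. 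Through this embedding I transport the ultrafilter $U$ produced by Lemma~\ref{ultrafilter_lemma} to the invariant Borel sets of $E_{\infty \K}$: a $\K$-structuring of the join gives an invariant Borel embedding realizing the free-product dynamics on an invariant piece of $E_{\infty \K}$, and I define $A$ to be ``large'' when player II wins $G_0^A((n_k))$ on the pulled-back partition. Lemma~\ref{ultrafilter_lemma} then gives that $U$ is a $\sigma$-complete ultrafilter, and Lemma~\ref{equivariant_emb} (with a good sequence $(n_k)$) gives that any $A \in U$ contains an invariant copy of $E(\F_\omega,\cantor) \sim_B E_{\infty \K}$, so $U$ preserves Borel cardinality. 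The $2^{\aleph_0}$-indivisibility is then immediate from Proposition~\ref{indiv_prop}.

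The main obstacle I anticipate is bookkeeping around the structurability closure: I must verify that the two copies of $E_{\infty \K}$ chosen inside $\left(\prod_i \cantor\right)^{\F_\omega * \F_\omega}$ really are \emph{independent} (so that their join is genuinely a binary independent join, not merely a join), and that the structuring of the join pulls back correctly through the invariant embedding without destroying the game-theoretic structure needed for Lemmas~\ref{player_I_lemma}, \ref{player_II_lemma}, and~\ref{equivariant_emb}. Independence should follow from the tree-like structure of the free product $\F_\omega * \F_\omega$: a nontrivial reduced word alternating between $\Gamma_0$ and $\Gamma_1$ moves a free point off itself, so no cycle of the required form exists, giving independence of $E_0$ and $E_1$ everywhere on the free part. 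The only subtlety is that $\K$-structurability of the ambient $E_{\infty \K}$ must be compatible with the $\K$-structuring of the embedded join; but since the embedding is class-bijective and invariant, and $\K$-structurability transfers along class-bijective homomorphisms (as noted in Section~\ref{subsec:k_structurable}), this is routine once the independence is in hand.

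Concretely, then, the proof is: realize two independent $\K$-structurable copies of $E_{\infty \K}$ as the $\Gamma_0$- and $\Gamma_1$-shift relations on the free product space; their independent join is $\K$-structurable by hypothesis, hence invariantly embeds into $E_{\infty \K}$ by Theorem~\ref{universal_structurable}; pull back this embedding to define $U$ via Lemma~\ref{ultrafilter_lemma}, obtaining a $\sigma$-complete ultrafilter; use Lemma~\ref{equivariant_emb} with a good sequence to show each $A \in U$ satisfies $E_{\infty \K} \embeds_B E_{\infty \K} \restriction A$, so $U$ preserves Borel cardinality; and conclude $2^{\aleph_0}$-indivisibility by Proposition~\ref{indiv_prop}.
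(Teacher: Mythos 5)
Your overall plan---run the games of Lemma~\ref{ultrafilter_lemma} on $\left(\prod_i \cantor\right)^{\F_\omega * \F_\omega}$ and use closure under binary independent joins to transport the resulting ultrafilter to $E_{\infty \K}$---is indeed the paper's strategy, but your realization of the two ``copies of $E_{\infty \K}$'' contains a genuine gap that breaks both halves of the argument. The orbit equivalence relation $E_i$ of the sub-shift action of $\Gamma_i = \F_\omega$ on the free product space is \emph{not} a copy of $E_{\infty \K}$, and is not even $\K$-structurable in general: on the whole space $E_i$ is Borel isomorphic to $E(\F_\omega,\cantor)$, a \emph{universal} countable Borel equivalence relation, while on the free part it is induced by a free action of $\F_\omega$ and is therefore treeable. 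So whichever reading of your construction one takes, something fails for a general $\K$ in the theorem's scope. If you work on the whole space (as your final step does, when you claim each large set contains a copy of $E(\F_\omega,\cantor) \sim_B E_{\infty \K}$): for $\K$ the class of trees, the blocks $E_i$ are universal, hence not $\K$-structurable, so the independent-join hypothesis cannot be applied, the join does not invariantly embed into $E_{\infty \K} = E_{\infty T}$, and the asserted equivalence $E(\F_\omega,\cantor) \sim_B E_{\infty T}$ is false ($E_\infty$ is not treeable, and treeability is downward closed under $\leq_B$). If instead you restrict to the free part to secure independence: for $\K$ the class of all countable structures (so $E_{\infty \K} = E_\infty$), the blocks and their join are treeable and everything embeds, but then Lemma~\ref{equivariant_emb}/Theorem~\ref{g2} only plants a copy of the universal \emph{treeable} relation $F(\F_\omega,\cantor)$ in each large set, which is strictly below $E_\infty$, so the ultrafilter does not preserve the Borel cardinality of $E_{\infty \K}$.

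The paper's proof repairs exactly this point, and the repair is where the work lies. One fixes a class bijective Borel embedding of $E_{\infty \K}$ into $E(\Gamma_i,\cantor)$ with range $\hat{X_i}$, and pulls it back along the modified projections $\pi_i'$ of Remark~\ref{inject_remark} to get $\Gamma_i$-invariant sets $X_i$ in the product space; one then takes $Y$ the largest invariant set contained in every $X_i$, and $\hat{Y} \subset Y$ the largest invariant set on which the $E_i$ are independent. Class bijectivity of $\pi_i'$ is what makes each $E_i \restriction \hat{Y}$ $\K$-structurable, so the hypothesis applies to $E = \bigvee_i (E_i \restriction \hat{Y})$ and gives $E \leq_B E_{\infty \K}$. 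The games are then played with payoff sets $A_j \union B_j \union C_j$, where $B_j$ and $C_j$ come from Lemmas~\ref{partition_non_independent} and \ref{Y_complement_lemma} and partition the bad region outside $\hat{Y}$; the winning embedding $f_g$ of Lemma~\ref{equivariant_emb} satisfies $\pi_j'(f_g(x)) = x$, so $f_g \restriction \hat{X_j}$ avoids $B_j$ and $C_j$ and lands inside $\hat{Y}$. It is this restricted map---an embedding of $E(\Gamma_j,\cantor) \restriction \hat{X_j} \cong E_{\infty \K}$ into $E \restriction A$---that yields both $E \sim_B E_{\infty \K}$ and the Borel cardinality preservation. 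Without the $\hat{X_i}$, $\pi_i'$, $\hat{Y}$, $B_i$, $C_i$ apparatus, your argument proves at best a statement about the universal or the universal treeable relation, not about $E_{\infty \K}$.
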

\begin{proof}
  The proof of this theorem uses ideas from the proofs of 
  Lemma~\ref{free_equivariant_hom}, Remark~\ref{inject_remark}, and
  Lemma~\ref{ultrafilter_lemma}.

  Let $I = 2$, and $\Gamma_i = \F_\omega$ for $i \in I$. Let $\hat{X_i} \subset
  (\cantor)^{\Gamma_i}$ be the range of a class bijective Borel embedding
  of $E_\infty \K$ into $E(\Gamma_i, \cantor)$, so 
  $E(\Gamma_i, \cantor)\restriction \hat{X_i}$ is Borel isomorphic to $E_{\infty
  \K}$ for every $i$. 

  Fix functions $\rho_i \from \omega \to \omega$
  as in Remark~\ref{inject_remark}, and let $\pi_i' \from (\prod_i
  \cantor)^{\fpGammaiI} \to (\cantor)^{\Gamma_i}$ be the function
  \[\pi_i'(x)(\gamma)(n) = x(\gamma)(i)(\rho_i(n))\]
  so that $\pi_i'$ is $\Gamma_i$-equivariant. Note that $\pi_i'$ is
  essentially just the function $\pi_i$ from Definition~\ref{easy_proj}
  modified by $\rho_i$.

  In our proof we will be considering functions $f_g \from
  (\cantor)^{\Gamma_i} \to (\prod_i \cantor)^{\fpGammaiI}$ as defined in the
  proof of Lemma~\ref{free_equivariant_hom} and Remark~\ref{inject_remark},
  and so together, these functions will have the property that 
  \[\pi_i'(f_g(x)) = x.\]
  Let
  $X_i = \{y \in (\prod_i \cantor)^{\fpGammaiI}: \pi_i'(y) \in \hat{X_i} \land \forall \gamma \in
  \Gamma_i (\gamma \cdot y \neq y \implies \gamma \cdot \pi_i'(y) \neq
  \pi_i'(y))\}$
  so $X_i$ is $\Gamma_i$-invariant.

  Let $Y$ be the largest invariant set of $y \in Y$ such that $y \in X_i$
  for every $i \in I$, and let $\{C_i\}_{i \in I}$ be a Borel partition of the
  complement of $Y$ as in Lemma~\ref{Y_complement_lemma}. 
  Let $E_i$ be the
  equivalence relation on $Y$ where $x \mathrel{E_i} y$ if $\exists \gamma
  \in \Gamma_i (\gamma \cdot x = y)$. Let $\hat{Y}$ be the
  largest $\fpGammaiI$-invariant subset of $Y$ on which the $E_i$ are
  independent. Hence the $E_i$ are everywhere non-independent on $Y
  \setminus \hat{Y}$, so let $\{B_i\}_{i \in I}$ be a Borel partition of $Y
  \setminus \hat{Y}$ as in Lemma~\ref{partition_non_independent}.

  Now for every $i \in I$, $E_i \restriction \hat{Y}$ is $\K$-structurable,
  since $\pi_i'$ gives a class bijective homomorphism from $E_i \restriction
  \hat{Y}$ to $E(\F_\omega,\F_\omega) \restriction X$ which is
  $\K$-structurable. Hence the join $\bigvee_i (E_i \restriction \hat{Y})$
  on $\hat{Y}$ is also $\K$-structurable by assumption. Let $E = \bigvee_i
  (E_i \restriction \hat{Y})$ be the equivalence relation on $\hat{Y}$
  generated by the shift action of $\fpGammaiI$. 

  Given any Borel partition $\{A_i\}_{i \in I}$ of $\hat{Y}$, let $A_i = A_i
  \union B_i \union C_i$, so $\{A_i'\}_{i \in I}$ will be a Borel partition
  of the whole space $(\prod_i \cantor)^{\fpGammaiI}$.

  Let $(n_k)_{k \in \omega}$ be a good sequence. Suppose player II wins the
  game $G^{A_j'}_j((n_k))$ and 
  $f_g$ is the associated injective continuous
  $\Gamma_j$-equivariant function $f_g \from (\cantor)^{\Gamma_j} \to A_j'$
  constructed by Lemma~\ref{equivariant_emb} and
  Remark~\ref{inject_remark}. Then since $\pi'_j(f_g(x)) = x$ for all $x$,
  we see that $\ran(f_g \restriction \hat{X_j})$ is contained in $X_j$, and so
  $\ran(f_g \restriction \hat{X_j})$ is contained in $\hat{Y}$, since it
  cannot meet $B_j$ by Lemma~\ref{partition_non_independent} or $C_j$ by
  Lemma~\ref{Y_complement_lemma} (as in the proof of Lemma~\ref{free_equivariant_hom}). 

  Since $E(\Gamma_i,\cantor) \restriction \hat{X_j}$ is Borel isomorphic to
  $E_{\infty \K}$, we have just shown that $E_{\infty \K} \leq_B E$, and
  since $E$ is itself $\K$-structurable and so $E \leq_B E_{\infty \K}$, we
  have that $E_{\infty \K} \sim_B E$. 

  Now the collection of $E$-invariant
  Borel sets $A$ such that player II wins $G^{A \union B_0 \union C_0}_0((n_k))$
  is an ultrafilter on the $E$-invariant Borel sets by an
  identical proof to Lemma~\ref{ultrafilter_lemma}. Furthermore, if player II
  wins this game, then $E_{\infty \K} \leq_B E \restriction A$ by our
  argument above. 
\end{proof}

This constitutes part (2) of Theorem~\ref{KS_properties} from the
introduction. We finish by proving part (3) of this theorem.

\begin{thm}\label{K_embedding}
  Suppose $\K$ is a Borel class of countable structures closed under
  isomorphism, so that the class of $\K$-structurable countable Borel
  equivalence relations is closed under countable independent joins. Let
  $E_{\infty \K}$ be the universal $\K$-structurable Borel equivalence
  relation on $Y_{\infty \K}$. Then if 
  $\{A_i\}_{i \in \omega}$ is a Borel partition of $Y_{\infty \K}$ into
  countably many Borel pieces, there is some $A_i$ such that $E_{\infty \K}
  \embeds_B E_{\infty \K} \restriction A_i$. Hence, for all countable Borel
  equivalence relations $F$, $E_{\infty
  \K} \leq_B F$ implies $E_{\infty \K} \embeds_B F$. 
\end{thm}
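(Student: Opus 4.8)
The plan is to establish the displayed partition property first, and then deduce the final ``Hence'' clause by a routine Lusin--Novikov argument. For the partition property I would run the construction of Theorem~\ref{K_ultrafilters} essentially verbatim, but with $I=\omega$ rather than $I=2$, and combine it with the pull-back strategy used to prove Theorem~\ref{embedding_theorem}. Concretely, set $\Gamma_i = \F_\omega$ for every $i\in\omega$ and build, exactly as in Theorem~\ref{K_ultrafilters}, the sets $\hat{X_i}\subset(\cantor)^{\Gamma_i}$ (ranges of class-bijective embeddings of $E_{\infty\K}$, so $E(\Gamma_i,\cantor)\restriction\hat{X_i}\cong E_{\infty\K}$), the functions $\rho_i$ and projections $\pi_i'$ of Remark~\ref{inject_remark}, the $\Gamma_i$-invariant sets $X_i$, the set $Y$ with its complementary partition $\{C_i\}$ from Lemma~\ref{Y_complement_lemma}, the relations $E_i$, the largest invariant set $\hat Y$ on which the $E_i$ are independent, and the partition $\{B_i\}$ of $Y\setminus\hat Y$ from Lemma~\ref{partition_non_independent}.

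The one genuinely new ingredient is that here I form the \emph{countable} join $E=\bigvee_{i\in\omega}(E_i\restriction\hat Y)$. Since each $\pi_i'$ is a class-bijective homomorphism from $E_i\restriction\hat Y$ onto a copy of $E_{\infty\K}$, each $E_i\restriction\hat Y$ is $\K$-structurable, and so, invoking the hypothesis that the $\K$-structurable relations are closed under \emph{countable} independent joins, $E$ is $\K$-structurable. By Theorem~\ref{universal_structurable} this gives an invariant Borel embedding $g\from\hat Y\to Y_{\infty\K}$ of $E$ into $E_{\infty\K}$; in particular $g$ is an injective Borel reduction.

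Now given a Borel partition $\{A_i\}_{i\in\omega}$ of $Y_{\infty\K}$, pull it back along the injection $g$ to the partition $\{g^{-1}(A_i)\}_{i\in\omega}$ of $\hat Y$, and set $A_i''=g^{-1}(A_i)\cup B_i\cup C_i$, a Borel partition of the whole space $(\prod_i\cantor)^{\fpGammaiI}$. Fix a good sequence $(n_k)$. By Lemma~\ref{player_I_lemma} (in its general-game form) player I cannot win every $G^{A_i''}_i((n_k))$, so player II wins $G^{A_j''}_j((n_k))$ for some $j$; applying Lemma~\ref{equivariant_emb} together with Remark~\ref{inject_remark} produces an injective continuous $\Gamma_j$-equivariant embedding $f$ of $E(\Gamma_j,\cantor)$ into this payoff set with $\pi_j'(f(x))=x$. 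Exactly as in Theorem~\ref{K_ultrafilters} (and in the proof of Lemma~\ref{free_equivariant_hom}), $\ran(f\restriction\hat{X_j})\subset X_j\subset\hat Y$ and avoids both $B_j$ and $C_j$, so it lands in $A_j''\cap\hat Y=g^{-1}(A_j)$. Since $E(\Gamma_j,\cantor)\restriction\hat{X_j}\cong E_{\infty\K}$, this gives an embedding of $E_{\infty\K}$ into $E\restriction g^{-1}(A_j)$, and composing it with $g$ (which carries $g^{-1}(A_j)$ into $A_j$) yields $E_{\infty\K}\embeds_B E_{\infty\K}\restriction A_j$.

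Finally, for the last sentence, suppose $E_{\infty\K}\leq_B F$ via a Borel reduction $h$. By Lusin--Novikov uniformization~\cite{MR1321597}*{Theorem 18.10} partition $Y_{\infty\K}$ into countably many Borel sets $\{A_i\}$ on which $h$ is injective, apply the partition property just proved to obtain some $A_j$ with $E_{\infty\K}\embeds_B E_{\infty\K}\restriction A_j$, and note that $h\restriction A_j$ is an injective reduction, so $E_{\infty\K}\restriction A_j\embeds_B F$; composing gives $E_{\infty\K}\embeds_B F$. The only steps requiring real care beyond transcribing Theorem~\ref{K_ultrafilters} are the structurability of the \emph{countable} join $E$ (the single place the strengthened hypothesis is used) and the verification that the equivariant copy lands inside the correct pulled-back piece $g^{-1}(A_j)$ rather than in the auxiliary sets $B_j,C_j$; everything else is bookkeeping, since the games of Section~\ref{sec:main_idea} were already set up to allow $I=\omega$.
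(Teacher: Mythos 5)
Your proposal is correct and takes essentially the same route as the paper: set $I=\omega$ in the construction of Theorem~\ref{K_ultrafilters}, use the countable-independent-joins hypothesis exactly once to see that $E=\bigvee_{i\in\omega}(E_i\restrict\hat{Y})$ is $\K$-structurable, obtain the partition property on $\hat{Y}$ from the games, and finish the ``Hence'' clause by Lusin--Novikov. The only difference is one of exposition: where the paper simply says the result transfers to $Y_{\infty\K}$ ``since $E$ and $E_{\infty\K}$ are bi-embeddable,'' you make that transfer explicit by pulling the partition back along the invariant embedding $g$ and pushing the resulting copy of $E_{\infty\K}$ forward into $A_j$, which is precisely the argument the paper intends.
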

\begin{proof}
  If we let $I = \omega$, then the proof of Theorem~\ref{K_ultrafilters}
  shows that $E = \bigvee (E_i \restriction \hat{Y})$ is
  $\K$-structurable, and hence $E \embeds_B E_{\infty \K}$, and if
  $\{A_i\}_{i \in \omega}$ is any Borel partition of $\hat{Y}$, then there
  is some $j$ such that $E_{\infty \K} \embeds_B E \restriction A_j$ (since
  there must be some $j$ so that 
  player II wins the game $G_j^{A_j \union B_j \union C_j}$). Hence, since
  $E$ and $E_{\infty \K}$ are bi-embeddable, the first half of the theorem
  follows for $E_{\infty \K}$.

  Suppose now that $F$ is a countable Borel equivalence relation on the
  standard Borel space $X$, and 
  $E_{\infty \K} \leq_B F$ via the Borel reduction $g \from Y_{\infty \K}
  \to X$. By
  Lusin-Novikov uniformization, we can partition $Y_{\infty \K}$ into 
  countably many Borel sets $\{A_i\}_{i < \omega}$ on which $g$ is
  injective. But then $E_{\infty \K} \embeds_B E_{\infty K} \restriction
  A_i$ for some $j$.
\end{proof}

\bibliography{references}

\end{document}